\newtheorem{thm}{Theorem}[section]
\newtheorem{cor}[thm]{Corollary}
\newtheorem{lem}[thm]{Lemma}
\newtheorem{prop}[thm]{Proposition}
\newtheorem{defn}[thm]{ \bf{Definition}}
\theoremstyle{remark}
\newtheorem{remark}[thm]{Remark}
\newcommand{\EQ}[1]{\begin{align*}\begin{split} #1 \end{split}\end{align*}}
\newcommand{\EQn}[1]{\begin{align}\begin{split} #1 \end{split}\end{align}}
\newcommand{\EQnn}[1]{\begin{align} #1 \end{align}}
\newcommand{\EQnnsub}[1]{\begin{subequations}\begin{align} #1 \end{align}\end{subequations}}
\newcommand{\Del}[1]{}
\def\norm#1{\left\|#1\right\|}
\def\normo#1{\|#1\|}
\def\normb#1{\big\|#1\big\|}
\def\abs#1{\left|#1\right|}
\def\absb#1{\big|#1\big|}
\def\brk#1{\left(#1\right)}
\def\brko#1{(#1)}
\def\brkb#1{\big(#1\big)}
\def\fbrk#1{\left\lbrace#1\right\rbrace}
\def\fbrkb#1{\big\lbrace#1\big\rbrace}
\def\jb#1{\langle#1\rangle}
\def\wt#1{\widetilde{#1}}
\def\wh#1{\widehat{#1}}
\def\wb#1{\overline{#1}}
\def\pd{\partial}
\newcommand{\ra}{{\rightarrow}}
\newcommand{\wra}{{\rightharpoonup}}
\def\loe{\le}
\def\goe{\ge}
\def\lsm{\lesssim}
\def\gsm{\gtrsim}
\newcommand{\N}{{\mathbb N}}
\newcommand{\R}{{\mathbb R}}
\newcommand{\C}{{\mathbb C}}
\newcommand{\Z}{{\mathbb Z}}
\newcommand{\PP}{{\mathbb P}}
\newcommand{\F}{{\mathcal{F}}}
\newcommand{\A}{{\mathcal{A}}}
\newcommand{\HH}{{\mathcal{H}}}
\newcommand{\E}{{\mathcal{E}}}
\newcommand{\U}{{\mathcal{U}}}
\newcommand{\V}{{\mathcal{V}}}
\newcommand{\W}{{\mathcal{W}}}
\newcommand{\dd}{{\mathrm{d}}}
\newcommand{\Q}{{\mathcal{Q}}}
\newcommand{\TT}{{\mathcal{T}}}
\def\dx{\mathrm{\ d} x}
\def\dy{\mathrm{\ d} y}
\def\ds{\mathrm{\ d} s}
\newcommand{\re}{{\mathrm{Re}}}
\newcommand{\im}{{\mathrm{Im}}}
\def\ep{\varepsilon}
\def\al{\alpha}
\def\Om{\Omega}
\def\om{\omega}
\def\ph{\varphi}
\def\th{\theta}
\def\de{\delta}
\def\De{\Delta}
\def\ta{\tau}
\def\la{\lambda}
\def\ga{\gamma}
\newcommand{\I}{\infty}
\def\rev#1{\frac{1}{#1}}
\def\half#1{\frac{#1}{2}}
\numberwithin{equation}{section}
\begin{document}
\title[NLS]{Scattering for defocusing mass sub-critical NLS}

\subjclass[2010]{}
\keywords{}

\author{Jia Shen}
\address{(J. Shen) Center for Applied Mathematics\\
	Tianjin University\\
	Tianjin 300072, China}
\email{shenjia@pku.edu.cn}

\author{Yifei Wu}
\address{(Y.Wu) Center for Applied Mathematics\\
	Tianjin University\\
	Tianjin 300072, China}
\email{yerfmath@gmail.com}
\thanks{}

\date{}

\begin{abstract}\noindent
%In this paper, we mainly study the almost sure scattering for the defocusing NLS with the nonlinear power $\frac 2d<p<\frac4d$. Previously, the related results have been studied by Burq and Thomann \cite{BT20scattering} in one dimension and by Latocca \cite{Lat20scattering} in the high dimensions for radial data. In particular, we remove the radial assumption in \cite{Lat20scattering}. Our argument is different from the quasi-invariance  measure. Instead, we adopt the Tsutsumi-Yajima's method directly. 
In this paper, we consider the $L_x^2$-scattering of defocusing mass sub-critical nonlinear Schr\"odinger equations with low weighted initial condition. 

%Previously, it is proved in \cite{TY84BAMS,HO88poinc} that when initial data belongs to $\F H^1$, the the solution scatters in $L^2$. Moreover, the inverse wave operator is H\"older continuous on $\F H^1$, and the continuity breaks down merely in $L^2$.
In prior work, Tsutsumi-Yajima \cite{TY84BAMS} established the $L^2$-scattering result for initial data $u_0\in H^1 \cap \F H^1$, and this condition was later refined to $\F H^1$ by Hayashi-Ozawa \cite{HO88poinc}. Furthermore, Lee \cite{Lee19IMRN} proved that the inverse wave operator does not exhibit H\"older's continuity solely within $L^2$, indicating the necessity of introducing a weighted condition for scattering. The determination of how many levels of weighting are required becomes a natural inquiry. Moreover, for large $\F H^s$-data with $s<1$, there only exists the wave operator result, but scattering results are lacking. 

In the first portion of our result, we establish the $L^2$-scattering within the framework of $\F H^s$, where $s<1$, thus improving the weighted condition proposed in earlier studies. To the best of our knowledge, this is the first result that surmounts the constraint of the customary condition $\mathcal FH^1$, which was hitherto employed to ensure the validity of the pseudo-conformal energy. Our approach hinges on the application of the pseudo-conformal transformation, which serves to convert the mass-subcritical Cauchy problem into a mass-supercritical final data problem. This allows us to implement a linear-nonlinear decomposition, leading to the establishment of an almost energy conservation, supported by the utilization of a localized interaction Morawetz estimate that controls $\|t^\frac{3p}{16}u\|_{L^4_{t,x}}$ in the framework of $\F H^s, s<1$.

%the pseudo-conformal transform cannot be applied directly and pseudo-conformal energy is not conserved.

%To prove our theorem, we first prove the scattering for the deterministic problem when the initial data $u_0\in L^2$ and $xu_0\in L^2$. This improves the classical result of Tsutsumi-Yajima by weaken the conditions from the hypothesis of $u_0\in H^1$ and $xu_0\in L^2$.  Particularly, we do not need the $H^1$-regularity assumption on the initial data.

%Based on this result, we further observe that the pseudo-conformal transform $\mathcal T$ hits a linear solution is changed into  the Fourier transform, that is 
%$$
%\TT v_L(t,x)=\hat f(t,x),\quad \mbox{where } v_L=e^{it\Delta} u_0. 
%$$
%This gives a nice estimate on $\TT v_L$, roughly as  
%$$
%(1-\Delta)\TT v\sim   \langle x\rangle^2u_0. 
%$$

To totally remove the weighted condition, one may consider the randomized data problem. Previously, the almost sure scattering results have been studied by Burq and Thomann \cite{BT20scattering} in one dimension, and by Latocca \cite{Lat20scattering} in the high dimensions for radial data. 

Our second result is the almost sure scattering by introducing a ``narrowed" Wiener randomization in physical space, which gives some improved weighted spacetime estimates for linear solution when the initial data is merely in $L^2$. In particular, our result does not rely on any smallness, radial, or weighted assumption. We apply the method in our first result in the probabilistic setting, which is different from the quasi-invariant measure approach as in \cite{BT20scattering,Lat20scattering}.

\end{abstract}

\maketitle

\tableofcontents

\vskip 1.5cm
\section{Introduction}
\vskip .5cm
In this paper, we consider the nonlinear Schr\"odinger equations (NLS):
\EQn{
	\label{eq:nls}
	\left\{ \aligned
	&i\pd_t u + \frac12\De u = \mu |u|^p u, \\
	& u(0,x) = u_0(x),
	\endaligned
	\right.
}
where $p>0$, $\mu=\pm1$, and
$u(t,x):\R\times\R^d\rightarrow \C$ is a complex-valued function. The positive sign ``$+$" in nonlinear term of \eqref{eq:nls} denotes defocusing source,   and the negative sign ``$-$" denotes the focusing one. The equation \eqref{eq:nls} has conserved mass
\EQ{
	M(u(t)) :=\int_{\R^d} \abs{u(t,x)}^2 \dx=M(u_0),
}
and energy
\EQ{
	E(u(t)) := \frac14 \int_{\R^d} \abs{\nabla u(t,x)}^2 \dx + \rev{p+2}\mu \int_{\R^d}  \abs{u(t,x)}^{p+2} \dx=E(u_0).
}

The class of solutions to equation (\ref{eq:nls}) is invariant under the scaling
\begin{equation}\label{eqs:scaling-alpha}
u(t,x)\to u_\lambda(t,x) = \lambda^{\frac{2}{p}} u(\lambda^2 t, \lambda x) \ \ {\rm for}\ \ \lambda>0.
\end{equation}
Denote
$$
s_c=\frac d2-\frac{2}{p},
$$
then the scaling  leaves  $\dot{H}^{s_{c}}$ norm invariant, that is,
\begin{eqnarray*}
	\|u(0)\|_{\dot H^{s_{c}}}=\|u_{\lambda}(0)\|_{\dot H^{s_{c}}}.
\end{eqnarray*}
This gives the scaling critical exponent $s_c$. Let
$$
2^*=\I, \mbox{ when } d=1 \mbox{ or } d=2;  \quad
2^*=\frac4{d-2}, \mbox{ when }  d\goe 3.
$$
Therefore, according to the conservation law, the equation is called mass or $L_x^2$ \textit{critical} when $p=\frac 4d$, and energy or $\dot H_x^1$ \textit{critical} when $p=\frac4{d-2}$. In this paper, we mainly concern the mass subcritical case when $p<\frac4d$. %Moreover, when $\frac 4d<p<2^*$, we say that the equation is \textit{inter-critical}.

Now, we give the definition of $L_x^2$-scattering theory about NLS. Let $u\in C(\R;L_x^2(\R^d))$ be the global solution to \eqref{eq:nls}. We say that the solution $u$ \textit{scatters} forward in time (or backward in time), if there exists \textit{final data} or \textit{scattering state} $u_+$ (or $u_-$) in $L_x^2(\R^d)$, such that
\EQ{
\lim_{t\ra+\I}\normb{u(t)-e^{\frac12it\De}u_{+}}_{L_x^2} = 0\quad(\text{or }\lim_{t\ra-\I}\normb{u(t)-e^{\frac12it\De}u_{-}}_{L_x^2} = 0).
}
The operator $\Pi_\pm$ is said to be \textit{wave operator}, if there exists a subspace $X_{\pm}\subset L_x^2(\R^d)$ such that for all $u_\pm\in X$, there exists a unique $u_0\in L_x^2$ with the global solution $u$ of \eqref{eq:nls} scatters to $u_\pm$, and we define
\EQ{
\Pi_\pm(u_\pm) = u_0.
}
We also say that the equation \eqref{eq:nls} is \textit{asymptotic completeness} on $Y_\pm$, if there exists $Y_\pm\subset L_x^2(\R^d)$ such that the inverse wave operator $\Pi_{\pm}^{-1}$ is well-defined on $Y_\pm$.

\subsection{Historical background}
The asymptotic completeness and wave operator theory of NLS have been extensively studied. In this paper, we mainly focus on the mass subcritical case, namely $p<\frac 4d$. Notice that there are two important exponents that play the key role in the study of scattering theory: the short range critical exponent $\frac2d$ and the Strauss exponent 
\EQ{
\ga(d):=\frac{2-d+\sqrt{d^2+12d+4}}{2d},
}
which is defined as the larger solution to the quadratic equation $d\ga(\ga+1)=2(\ga+2)$.
Note that, for $d\goe 1$, we have the relation
\EQ{
	\frac 2d<\ga(d)<\frac4d.
}
We define $\F H^s(\R^d):=L^2(\R^d;\jb{x}^s\dd x)$, and $\Sigma^s:=H^s\cap \F H^s(\R^d)$, where $\F$ denotes the Fourier transform. 

The exponent $\ga(d)$ is first proposed by Strauss in \cite{Str81JFA}, and he proved the scattering for mass subcritical NLS with some smallness conditions in $L^{\frac{p+2}{p+1}}$ when $\ga(d)< p < \frac 4d$. Tsutsumi and Yajima \cite{TY84BAMS} proved the existence of scattering states for $\frac 2d< p <\frac 4d$ with initial data $u_0 \in \Sigma^1$ in the defocusing case, where the scattering holds in $L_x^2$ sense. This result was extended to $\F H^1$-data by Hayashi and Ozawa \cite{HO88poinc,HO89JFA}, see Remark 2 in \cite{HO88poinc}. In the very recently, the result was improved in the sense of $H_x^1$-scattering by Burq, Georgiev, Tzvetkov, and Visciglia \cite{BGTV21NLS}.

Additionally, the construction of inverse wave operator requires the uniqueness of scattering state. The wave and inverse wave operators were constructed on $\Sigma^1$ by Tsutsumi \cite{Tsu85poincare} for $\ga(d)<p<\frac4d$, and by Cazenave-Weissler \cite{CW92CMP} and Nakanishi-Ozawa \cite{NO02NoDEA} for the endpoint exponent $p=\ga(d)$. Furthermore, Cazenave-Weissler \cite{CW92CMP} also proved the inverse wave operator with small $\Sigma^1$-data when $p>\frac{4}{d+2}$, and the $\Sigma^s$-data case was studied by Nakanishi-Ozawa \cite{NO02NoDEA} for $p>\frac{4}{d+2s}$ with $0<s<\min\fbrk{\frac d2,p}$. In the case of $0 < s < 2$, the wave operator has been constructed by Ginibre-Ozawa-Velo \cite{GOV94poincare} on $\Sigma^s$, subject to the condition $p>\max\fbrk{\frac 2d,\frac{4}{d+2s},s}$. Nevertheless, a large data scattering result in $\Sigma^s$ with $0 < s < 1$ remains elusive.

While the above scattering results rely on the $\Sigma^s$-weighted or $L^{\frac{p+2}{p+1}}$ decaying condition,  Lee \cite{Lee19IMRN} demonstrated that the wave and inverse wave operators do not exhibit H\"older's continuity solely within $L^2$, indicating the necessity of introducing a weighted (or decaying) condition for scattering. 

The radial assumption is also another direction to study the mass subcritical scattering. Guo and Wang \cite{GW14JAM} first obtained the small data scattering with radial data in $\dot H^{s_c}$ subject to certain restriction on $p$.  As for the large data case, there is currently no general scattering result, while a large data global well-posedness for compact radial data in $\dot H^{s_c}$ was established in \cite{BDSW21Adv}. Moreover, Killip, Masaki, Murphy, and Visan \cite{KMMV19DCDS} proved that if the critical Sobolev norm of solution is bounded, then the solution must be global and scatter for defocusing case. They also proved that if the critical weighted $\F H^{-s_c}$-norm with $s_c=\frac d2-\frac2p$ of the solution is uniformly bounded, then the same result holds in both focusing and defocusing cases, see \cite{KMMV17NoDEA}. 

Another approach to studying the scattering for mass sub-critical NLS involves considering the randomized data problem. The almost sure scattering has been proved by Burq and Thomann \cite{BT20scattering} in one dimension case and by Latocca \cite{Lat20scattering} in the high dimensions for radial data. Their results concerned randomization based on the countable eigen-basis of the Schr\"odinger operator with the harmonic oscillator potential, also known as the Hermite operator. This approach was initially introduced by Burq, Thomann, and Tzvetkov \cite{BTT13Fourier}. They transformed the NLS into the ones with a harmonic potential through a lens transform, and then constructed a quasi-invariant Gaussian measure building on the countable eigen-basis of 1D Hermite operator. This theory currently has since been extended to higher dimensional case for radial data in \cite{Den12APDE,Lat20scattering}, and for algebraic nonlinear terms in \cite{BPT23NLS}.

The probabilistic method has also been applied to the study of wave operator. In \cite{Nak01siam}, Nakanshi observed the deterministic wave operator problem for mass sub-critical NLS in $L^2$ is supercritical, and demonstrated the existence of a final data problem in $L^2$ or $H^1$ for $\frac{2}{d} < p < \frac{4}{d}$ with $d \geq 3$, although uniqueness was not established (resulting in an indeterminate wave operator). However, Murphy \cite{Mur19ProAMS} introduced the spatial Wiener randomization and obtained the existence and uniqueness of the final data problem for NLS almost surely in $L_x^2$ when $\ga(d)<p<\frac4d$,  thus ensuring the well-defined nature of the wave operator. It is worth noting that in the deterministic setting, the existence in $L_x^2$ has been proven, but the uniqueness remains unknown (see \cite{Nak01siam}). Subsequently, Nakanishi and Yamamoto \cite{NY19MRL} further extended the almost sure wave operator below the Strauss exponent $\ga(d)$.

The exponent $\frac 2d$ is sharp for scattering results, since any $L_x^2$-scattering solution to \eqref{eq:nls} must be trivial in the long range case when $0<p\loe \frac2d$, as shown in \cite{Str74scattering,Bar84JMP}. For the critical exponent $p=\frac2d$, Ozawa \cite{Oza91CMP} first constructed the modified wave operator for 1D cubic NLS. The modified scattering was initially proved by Hayashi and Naumkin \cite{HN98AJM}.

\ 

Above all, the previous large data scattering results for mass subcritical NLS are divided into two settings.
\begin{itemize}
\item 
The first type of results relies on the $\F H^1$-\textit{weighted} condition (\cite{TY84BAMS,HO88poinc,HO89JFA,Tsu85poincare,CW92CMP,BGTV21NLS}).
\item
The second type relies on the \textit{radial} assumption when $d\goe 2$ in the small data setting (\cite{GW14JAM}), or with the a priori assumption (\cite{KMMV19DCDS}), or by utilizing invariant-measure method (\cite{Lat20scattering}).
\end{itemize}
Moreover, 
\begin{itemize}
\item When considering the initial data in $\F H^s$ with $s<1$, there are only results for small data scattering (\cite{NO02NoDEA}), conditional scattering (\cite{KMMV17NoDEA}), or wave operator (\cite{GOV94poincare}). 
\item Lee \cite{Lee19IMRN} showed that the inverse wave operator from $L^2$ to $L^2$ cannot be $\al$-H\"older continuous for some $\al$.
\end{itemize}
The 1D almost sure scattering in \cite{BT20scattering} appears to be the only result with non-decaying and non-radial data. Therefore, it is natural to inquire about the behavior between $\F H^1$ and $L^2$-data, which has not been studied yet.

In this paper, our \textit{main purpose} is to study the large data scattering for defocusing mass subcritical NLS with low-weighted and non-radial data. 

Our first result presents a systematic study on the scattering on $\F H^s$ for certain $s<1$, without any restrictions on smallness or radial symmetry. This extends the positive results of \cite{TY84BAMS, HO88poinc} to spaces with lower weights. 

Our second result is the almost sure scattering on $L^2$ by introducing a ``narrowed'' Wiener randomization in physical space. For mass subcritical NLS when $d\goe2$, this result represents the first scattering result without imposing any conditions related to smallness, radial symmetry, or weighted properties on the initial data.

%\bigskip

\subsection{Main result I: fractional weighted scattering}
In this section, we present our first main result regarding the scattering of defocusing mass subcritical NLS in the fractional weighted space $\F H^s$ with some $s<1$. The result is divided into two parts: first, we provide a result in the 3D case with $2/3<p<4/3$ to show our main idea, and then apply our method to encompass more general dimensions and nonlinear terms.
\begin{thm}[Fractional weighted scattering: 3D case]\label{thm:frac-weighted-3d}
Suppose that $d= 3$, $\mu=1$, and $\frac23< p < \frac43$. For any $\frac{14}{15}<s<1$, take the initial data $u_0$ such that $\jb{x}^su_0 \in L_x^2(\R^3)$. Then, there exists a unique solution $u\in C(\R;L_x^2(\R^3))$ to the equation \eqref{eq:nls}. Moreover, the solution scatters in $L_x^2(\R^3)$, namely there exist $u_\pm\in L_x^2(\R^3)$ such that
	\EQ{
		\lim_{t\ra\pm\I}\normb{u(t)-e^{\frac12it\De }u_\pm}_{L_x^2(\R^3)} =0.
	}
\end{thm}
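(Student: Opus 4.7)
The plan is to follow the scheme announced in the abstract: use the pseudo-conformal transformation to convert the mass-subcritical Cauchy scattering problem into a singular, effectively mass-supercritical, final-value problem near $t=0$, and close the analysis via a linear-nonlinear decomposition together with a time-weighted interaction Morawetz estimate compatible with the regularity $\F H^s$, $s<1$.

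The first step is the pseudo-conformal transformation. Setting $v(t,x) := (it)^{-3/2}\,\overline{u(1/t, x/t)}\,e^{i|x|^2/(2t)}$ for $t>0$ (and the analogous expression for $t<0$), the assumption $\jb{x}^s u_0\in L^2$ translates into $v|_{t=1}\in H^s(\R^3)$, and $v$ satisfies
\begin{equation*}
i\partial_t v + \tfrac12 \Delta v \,=\, \mu\, t^{\frac{3p}{2}-2}\,|v|^p v \qquad \text{on } (0,1],
\end{equation*}
with $L^2$-scattering of $u$ at $t\to +\infty$ equivalent to the existence of a limit of $v(t)$ in $L^2$ as $t\to 0^+$. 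For $\frac23<p<\frac43$ the exponent $\frac{3p}{2}-2$ is strictly negative, so the nonlinearity is singular at $t=0$; this is exactly what makes the transformed problem mass-supercritical near the origin.

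Next I would perform a linear-nonlinear decomposition $v = v_L + w$, where $v_L(t):= e^{i(t-1)\Delta/2}v(1)$ carries only the $H^s$-regularity of the data, and $w$ is the Duhamel correction. The aim is to place $w$ in a space strictly more regular than $H^s$ (morally $H^1$, possibly with an appropriate time weight), exploiting the smoothing effect of the Duhamel integral against the singular factor $t^{\frac{3p}{2}-2}$. I would then derive an almost-conservation identity for a renormalized pseudo-conformal energy of $w$, with error terms produced by $v_L$-$w$ cross interactions, and control these errors by combining Strichartz estimates for $v_L$ with a localized interaction Morawetz estimate yielding, in the original variables, the bound $\|t^{3p/16}u\|_{L^4_{t,x}}<\infty$. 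The exponent $\frac{3p}{16}$ is dictated by scaling and the PC conjugation, and is precisely what is needed to absorb $t^{\frac{3p}{2}-2}$ via H\"older's inequality in the energy identity. A standard bootstrap/continuity argument then propagates these a priori bounds down to $t=0^+$ and produces the scattering state for $u$.

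The main obstacle is making the almost energy conservation consistent with $w$ being built from $\F H^s$-data with $s<1$ rather than from the classical $\F H^1$-data. In the $\F H^1$ setting one controls $\|tu\|_{L^4_{t,x}}$ by applying the standard bilinear Morawetz to the pseudo-conformal energy, but for $s<1$ this full weight is not available; one must instead prove a frequency- and time-localized version producing only the weaker weight $t^{3p/16}$. The threshold $\frac{14}{15}<s<1$ should emerge from optimizing the trade-off between the regularity transferred to $w$ by Duhamel against the singular time weight, the decay provided by this localized Morawetz, and the Strichartz integrability of $v_L$ needed to close the bootstrap. Ensuring that all three balance simultaneously in every term of the energy identity — and that the residual contribution from $v_L$ in the Morawetz-type quantities is itself controllable under merely $H^s$-regularity — is where the technical heart of the argument lies.
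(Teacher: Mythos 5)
Your road map (pseudo-conformal transform, linear--nonlinear decomposition, almost conservation of a pseudo-conformal energy for the nonlinear part, a time-weighted/localized interaction Morawetz bound, then a Tsutsumi--Yajima-type limiting argument) is indeed the paper's road map, but the specific decomposition you propose has a genuine gap, and it is exactly where the content of the proof lies. You take $v_L(t)=e^{\frac12 i(t-1)\Delta}v(1)$, the linear flow of the \emph{full} $H^s$ datum, and let $w$ be the Duhamel correction, asserting that the Duhamel integral against $t^{\frac{3p}{2}-2}$ places $w$ in (weighted) $H^1$. For the Schr\"odinger flow there is no such gain of derivatives: $|v|^pv$ is no smoother than $v$, so the Duhamel term is generically only as regular as the data, i.e.\ $H^s$; the Bourgain/Da Prato--Debussche gain materializes only when the rough linear part is small, high-frequency localized, or random, none of which holds for your $v_L$. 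Moreover, even granting an a priori $H^1$ bound for $w$ on some interval, the cross terms in your energy identity are of size comparable to the full (large) $H^s$ norm of $v_L$, so the bootstrap has no small parameter to close with, and the threshold $\frac{14}{15}$ cannot ``emerge from optimizing'' quantities that are all $O(1)$-large.

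What the paper does instead is a \emph{physical-space high--low decomposition of the initial data}: $v_0=\chi_{\ge N_0}u_0$, $w_0=\chi_{\le N_0}u_0$ with $N_0=N_0(\de_0)$ chosen so that $\norm{\jb{x}^s v_0}_{L^2}\le\de_0$ (Section \ref{sec:fra-initial}). Then $\jb{x}w_0\in L^2$ with norm $\lsm N_0^{1-s}$, so after the pseudo-conformal transform the nonlinear part $\W$ has genuine $H^1$ final data of size $N_0^{1-s}$ --- no smoothing is needed --- while the linear part $\V$ has \emph{small} $H^s$ data supported at frequencies $\gsm N_0$. The modified energy \eqref{defn:pseudo-conformal-energy} is then finite (of size $N_0^{2(1-s)}$, Corollary \ref{cor:fra-local-w-3d}), every error term in the almost-conservation estimate carries a factor $\de_0$ from $\V$ together with at most $N_0^{2(1-s)}$ --- tracked via the frequency separation through the localized bilinear Strichartz estimates of Lemma \ref{lem:fra-energy-bound-bilinear-3d} and the dyadic-in-time Morawetz bound of Proposition \ref{prop:interaction-morawetz}, which grows like $N_0^{\frac14(1-s)}$ per interval rather than being the uniform global bound you assert --- and the bootstrap closes by taking $\de_0$ small. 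Two further points your sketch misses: the potential term of the energy must be taken in the full solution $\U$ rather than in $w$ (Remark \ref{rem:reason-for-modified-energy}), since otherwise the increment contains $\int\nabla\W\cdot\nabla\W\,O(\V\W^p)$, which is not controllable near $t=0$ when $p$ is close to $\frac23$; and one separately needs the estimate near $t=\infty$ (Proposition \ref{prop:fra-local-w-3d}) to know the energy is finite at the starting time $T_0$ of the backward-in-time bootstrap. Without the high--low splitting, which supplies both the finiteness of the energy and the small parameter $\de_0$, your scheme cannot be closed.
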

\begin{thm}[Fractional weighted scattering: general dimensions]\label{thm:frac-weighted-dd}
Suppose that $d\goe1$, $\mu=1$, and
\begin{enumerate}
	\item $\frac2d< p < \frac4d$, when $1\loe d\loe 8$;
	\item $\frac2d< p < \frac{2}{d-4}$, when $9\loe d\loe 11$;
	\item $\frac2d< p < \frac{2}{d-2}$, when $d\goe 12$.
\end{enumerate}
Then, there exists some $s_0=s_0(d,p)<1$ satisfying the following properties. For any $s_0<s<1$, take the initial data $u_0$ such that $\jb{x}^su_0 \in L_x^2(\R)$. Then, there exists a unique solution $u\in C(\R;L_x^2(\R^d))$ to the equation \eqref{eq:nls}. Moreover, the solution scatters in $L_x^2(\R^d)$, namely there exist $u_\pm\in L_x^2(\R^d)$ such that
\EQ{
\lim_{t\ra\pm\I}\normb{u(t)-e^{\frac12it\De }u_\pm}_{L_x^2(\R^d)} =0.
}
\end{thm}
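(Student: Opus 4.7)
The plan is to adapt the strategy already sketched for Theorem~\ref{thm:frac-weighted-3d} to arbitrary dimension, tracking how each estimate depends on $d$ and $p$ in order to locate the threshold $s_0(d,p)<1$. The starting point is the pseudo-conformal transformation: writing $v(\tau,y) = \tau^{-d/2}e^{i|y|^2/(2\tau)}\bar u(1/\tau,y/\tau)$ converts the forward Cauchy problem for \eqref{eq:nls} on $[1,\infty)$ into a backward (final data) problem on $(0,1]$ for an NLS with the time-weighted nonlinearity $\tau^{dp/2-2}|v|^pv$. Since $p<4/d$ the exponent $\alpha:=dp/2-2$ is negative, so the new nonlinearity is effectively mass \emph{supercritical} but with a favorable vanishing factor as $\tau\to 0^+$. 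Under this transformation the weight $\jb{x}^s u_0\in L_x^2$ corresponds to $H^s$-regularity of the final state $v(0,\cdot)=\overline{u_+}$, so the whole problem is recast as constructing $v$ on $(0,1]$ from $H^s$ final data with $s<1$.

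Next, following the template of the 3D proof, I would split $v=\vl+\vnl$ with $\vl(\tau)=e^{\frac{i}{2}(\tau-0)\Delta}\overline{u_+}$ the free solution of the final value problem and $\vnl$ satisfying a Duhamel equation with the full nonlinearity. The goal is to propagate an $\dot H^1$-type ``almost energy'' bound on $\vnl$ uniformly as $\tau\to 0^+$, despite the final data lying only in $H^s$ with $s<1$. The energy identity acquires error terms of the schematic form $\tau^{\alpha}\int |\vl|^{j}|\vnl|^{p+2-j}\nabla\vnl\,dx$ for $j\ge 1$, which must all be absorbed after integrating in $\tau$ via H\"older, Sobolev, and Strichartz. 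Here the key linear bounds on $\vl$ are the standard Strichartz estimates plus fractional versions that exploit the $H^s$-regularity of $\overline{u_+}$; the favorable $\tau^{\alpha}$ factor provides the integrability near $\tau=0$ that would otherwise fail at the endpoint $s=1$.

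The main new analytic input, parallel to what is asserted in the 3D case, is a localized interaction Morawetz estimate giving control of $\|\tau^{3p/16}v\|_{L^4_{\tau,y}}$ (equivalently, of $\|t^{3p/16}u\|_{L^4_{t,x}}$ after unwinding the transform) in the framework of $\F H^s$ with $s<1$. This is used to upgrade the a priori control of $\vnl$ to a spacetime bound that closes the energy increment at low regularity and simultaneously supplies the global spacetime norm needed for scattering. Once $\vnl\in L^\infty_\tau\dot H^1\cap \text{(appropriate Strichartz)}$ is obtained on $(0,1]$, standard arguments yield a limit $\vnl(0)$, from which $L^2$-scattering for $u$ and uniqueness of the scattering state follow.

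Finally, the quantitative threshold $s_0(d,p)$ and the three case split in the statement come from optimizing the interplay between the Sobolev embedding into $L^{p+2}$, the Morawetz gain $\tau^{3p/16}$, and the time integrability $\tau^{\alpha}$ in the energy estimate. The main obstacle will be the high-dimensional regime: for $d\ge 9$ the nonlinearity $|v|^p v$ is only barely $H^1$-differentiable, so the chain rule and fractional Leibniz estimates on the error terms require $p$ to be not too small relative to $d$, which is precisely why the admissible $p$-range contracts to $p<2/(d-4)$ and then $p<2/(d-2)$. Carefully identifying the least restrictive $s_0<1$ consistent with all of these constraints is where the bulk of the work will lie; the overall scheme, by contrast, is a direct generalization of the 3D argument.
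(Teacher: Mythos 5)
Your high-level skeleton (pseudo-conformal transform, linear--nonlinear decomposition, almost conservation of a pseudo-conformal energy, Tsutsumi--Yajima argument) does match the paper, but two essential ingredients are missing or wrong. First, your decomposition takes the linear part to be the free evolution of the \emph{full} final data $\overline{u_+}\in H^s$, with the nonlinear remainder having zero data at infinity. This has no small parameter: the energy increment terms such as $\int \tau^{\frac{dp}{2}-2}\nabla\W\cdot\nabla\V\,|\U|^p$ are then of size $O(1)$ times powers of the energy, and the bootstrap for the almost conservation law cannot close. The paper instead performs a ``high-low'' cutoff in \emph{physical space}, $u_0=\chi_{\goe N_0}u_0+\chi_{\loe N_0}u_0$, so that after the transform the rough linear part $\V$ is small, $\norm{\jb{\nabla}^s\V}_{S^0}\lsm\de_0$ (Lemma \ref{lem:fra-linear}), while the nonlinear part has finite $\dot H^1$ data of controlled size $N_0^{1-s}$ (Lemma \ref{lem:fra-initial-data}, Corollary \ref{cor:fra-local-w-dd}); every increment is then bounded by $\de_0 N_0^{2(1-s)}$ and absorbed into $\frac12 A N_0^{2(1-s)}$. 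Without this smallness your scheme fails at the bootstrap step. Relatedly, the factor $\tau^{dp/2-2}$ is \emph{singular} (not vanishing) as $\tau\to0^+$ since $dp/2-2<0$; handling this singularity, worst when $p$ is near $2/d$, is a central difficulty, not a help.

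Second, you identify the localized interaction Morawetz estimate as the main new input for general $d$, but in the paper it is used only in the 3D proof (Proposition \ref{prop:interaction-morawetz}) to optimize the weight to $14/15$; the general-dimension theorem does not use it at all. Instead, for $p<\frac{2}{d-2}$ the increment is closed via $\int\norm{\nabla\W}_{L^2}\norm{\nabla\V}_{L^{2d/(d-2)}}\norm{\W}_{L^{dp}}^p\,dt$ together with bilinear Strichartz estimates to shave the derivative off $\nabla\V$ (Lemma \ref{lem:fra-energy-dd}), and the constraint $p<\frac{2}{d-2}$ comes from the Sobolev-critical exponent in $\norm{\W}_{L^{dp}}$, not from differentiability of $|v|^pv$. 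For $\frac{2}{d-2}\loe p<\frac{2}{d-4}$ with $5\loe d\loe 11$, one needs genuinely new global spacetime estimates obtained from the Duhamel formula for $\W$, Sobolev embeddings with carefully chosen regularity $l$ (different for $5\loe d\loe 7$ versus $8\loe d\loe 11$), and an absorption argument exploiting $p<1$ (Lemmas \ref{lem:fra-energy-global-spacetime-estimate}--\ref{lem:fra-energy-global-spacetime-estimate-sobolev}, Lemma \ref{lem:fra-energy-5d}); the dimension restriction $d\loe 11$ and the bound $p<\frac{2}{d-4}$ arise from the temporal singularity and $N_0$-growth these estimates introduce. Your proposal offers no mechanism for this supercritical-$L^{dp}$ regime, so as written it could at best reach $p<\frac{2}{d-2}$ and would not cover the stated ranges for $5\loe d\loe 11$.
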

\begin{remark} We highlight some important points regarding these two results.
\begin{enumerate}
\item 
All the previous large data scattering results in weighted space $\F H^s$ have typically required $s \goe 1$, whereas our results give the first large data scattering in weighted space with weight orders less than $1$. 
\item Our results do not rely on any assumptions of smallness or radial symmetry.
\item In Theorem \ref{thm:frac-weighted-3d}, the lower bound $\frac{14}{15}$ is uniformly in $p$. Notably, when $\frac23<p<\frac{60}{73}$, our assumption is ``supercritical" under scaling, as evidenced by $\F H^{-s_c}\subset \F H^{\frac{14}{15}}$. Furthermore, it is worth mentioning that this lower bound is not optimal.
\end{enumerate}
\end{remark}

The main difficulty in proving our theorems arises from the absence of a pseudo-conformal conservation law. We overcome this difficulty by showing that the pseudo-conformal energy for the nonlinear part of the solution is almost conserved. The key lies in transferring the first-order vector field from the linear part to the nonlinear remainder. Then, the scattering follows by employing a modification of  Tsutsumi-Yajima's argument in \cite{TY84BAMS}.

We further remark that our method is inspired by the study of dispersive equations when the energy conservation is not available. This line of research dates back to Bourgain's early work \cite{Bou98IMRN}. Recently, studies have focused on the application of this approach to problems involving randomized data, starting with the work of Burq and Tzvetkov \cite{BT14JEMS}. We would like to mention the pioneering contributions in this field, such as the first global well-posedness result for energy critical equation by Pocovnicu \cite{Poc17JEMS}, the first scattering result by Dodson-L\"uhrmann-Mendelson \cite{DLM20AJM}, and the first non-radial scattering result by Bringmann \cite{Bri21AJM}. Furthermore, our result can be viewed as a generalization of these studies on mass supercritical NLS to the case of pseudo-conformal energy for mass subcritical NLS, building on the observations made by Nakanishi and Ozawa \cite{NO02NoDEA} that there exists some symmetry between the mass supercritical and subcritical NLS. 

Despite both theories sharing the common objective of establishing an almost conservation law, a key distinction arises between these two approaches. In scenarios involving scattering of mass supercritical equations, it becomes imperative to encompass the entire time integral in the energy increment. This requirement is exemplified in works like \cite{DLM20AJM,Bri21AJM,Bri20APDE}. In contrast, our method, following the application of the pseudo-conformal transform, places a primary emphasis on addressing the singularity near the time origin, particularly when $p$ approximates $\frac{2}{d}$. 
%Our proof further relies on a specific form of time-localized interaction Morawetz estimate, which is below sub-$L^2$ scaling.

\subsection{The randomization}
In order to study the scattering with $L^2$-data, we consider the random data problem. To begin with, we give the precise definition of randomization. Let $a\in\N$. First, for $N\in 2^{\N}$, we denote the cube sets 
\EQ{
	O_N=&\fbrkb{x\in \R^d:  |x_j| \le N, j=1,2,\cdots,d},
}
and then we define
\EQ{
	Q_N=& O_{2N}\setminus O_{N}.
}

Next, we make a further decomposition of $Q_N$.
Note that $a$ is a positive integer, we make a partition of the $Q_N$ with the essentially disjoint sub-cubes
\EQ{
	\A(Q_N):=\fbrk{Q: Q\text{ is a dyadic cube with length } N^{-a}\text{ and }Q\subset Q_N}.
}
Then, we have $\sharp \A( Q_N)=(2^d-1)N^{d(a+1)}$ and  $ Q_N=\cup_{Q\in\A(Q_N)}Q$.  

Now, we can define the final decomposition
\EQ{
	\Q:=\fbrk{O_1} \cup \fbrk{Q: Q\in \A( Q_N)\text{ and } N\in2^\N}.
}
By the above construction,
\EQ{
	\R^d=  O_1 \cup \brk{ \cup_{N\in 2^\N} Q_N} 
	= O_1\cup\brk{\cup_{N\in 2^\N}\cup_{Q\in\A(Q_N)}Q} =\cup_{Q\in\Q}Q.
}
Therefore, $\Q$ is a countable family of essentially disjoint caps covering $\R^d$. We can renumber the cubes in $\Q$ as follows:
\EQ{
	\Q=\fbrk{Q_j:j\in \N}.
}

\begin{defn}\label{defn:randomization}
	Let $d\goe 2$. Given $\frac2d<p<\frac4d$, we set the parameters $\ep>0$ and $a\in\N$ such that   
	\EQ{
		\ep= \frac{4-dp}{2} \text{, and }a\goe \max\fbrk{\frac{d+4}{d\ep},\frac{5(p+2)}{dp}}.
	}
	Let $\wt \psi_j \in C_0^\I(\R^d)$ be a real-valued function such that $0\loe \wt \psi_j\loe 1$ and
	\EQ{
		\wt \psi_j(x) =\left\{ \aligned
		&1\text{, when $x\in Q_j$,}\\
		&\text{smooth, otherwise,}\\
		&0\text{, when $x\notin 2Q_j$,}
		\endaligned
		\right.
	}
	where we use the notation that $2Q_j$ is the cube with the same center as $Q_j$ and with $\mathrm{diam}(2Q_j)=2\mathrm{diam}(Q_j)$. Now, let
	\EQ{
		\psi_j(x):=\frac{\wt \psi_j(x)}{\sum_{j'\in\N}\wt \psi_{j'}(x)}.
	} 
	Then, $\psi_j\in C_0^\I([0,+\I))$ is a real-valued function, satisfying $0\loe \psi_j\loe 1$ and for all $x \in\R^d$, $\sum_{j\in\N}\psi_j(x)=1$.
	
	Let $(\Om, \A, \PP)$ be a probability space. Let $\fbrk{g_j}_{j\in\N}$ be a sequence of zero-mean, complex-valued Gaussian random variables on $\Om$, where the real and imaginary parts of $g_j$ are independent. Then, for any function $f$, we define its randomization $f^\om$ by
	\EQn{\label{eq:randomization}
		f^\om=\sum_{j\in\N} g_j(\om)\psi_j f.
	}
\end{defn}
%This kind of randomization is inspired by \cite{BOP15TranB,Mur19ProAMS,Bri20APDE,SSW21criticalNLS}.

\subsection{Main result II: almost sure scattering}
Now, we consider the defocusing NLS with the randomized initial data:
\EQn{
\label{eq:nls-random}
\left\{ \aligned
&i\pd_t u + \frac12\De u = |u|^p u, \\
& u(0,x) = f^\om(x).
\endaligned
\right.
}
The main result in this section is as follows.
\begin{thm}[Probabilistic scattering in $L_x^2$]\label{thm:main}
Suppose that $d\goe 1$ and $\frac{2}{d}< p < \frac{4}{d}$. Let $f\in L_x^2(\R^d)$ and $f^\om$ be defined in Definition \ref{defn:randomization}. Then, for almost every $\om\in\Om$, Then, there exists a unique solution $u\in C(\R;L_x^2(\R^d))$ to the equation \eqref{eq:nls-random}. Moreover, the solution scatters in $L_x^2(\R^d)$, namely there exist $u_\pm\in L_x^2(\R^d)$ such that
\EQ{
\lim_{t\ra\pm\I}\normb{u(t)-e^{\frac12it\De}u_\pm}_{L_x^2(\R^d)} =0.
}
\end{thm}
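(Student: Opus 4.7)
The plan is to transport the almost-energy strategy of Theorem \ref{thm:frac-weighted-dd} into the probabilistic setting, letting almost-sure improved spacetime estimates for $e^{\frac12it\De}f^\om$ play the role that the deterministic $\F H^s$ weight played there. Concretely, I would decompose $u=\vl+\vnl$ with $\vl(t)=e^{\frac12it\De}f^\om$, and treat $\vnl$ as the genuine nonlinear unknown, solving $(i\pd_t+\frac12\De)\vnl=|\vl+\vnl|^p(\vl+\vnl)$ with $\vnl(0)=0$.

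First I would prove the key linear probabilistic estimates. The narrowed scale $N^{-a}$ in Definition \ref{defn:randomization} is chosen so that each physical-space piece $\psi_j f$ is concentrated on a cube of small diameter, hence has Fourier support spread over a large ball; after applying Bernstein, the dispersive decay, and Khintchine's inequality for the Gaussian sum, one should obtain, for almost every $\om$, weighted bounds of the form $\|\jb{t}^{\si}\vl\|_{L^q_t L^r_x}\lsm \|f\|_{L^2_x}$ for an appropriate range of $(\si,q,r)$. The lower bounds $a\goe (d+4)/(d\ep)$ and $a\goe 5(p+2)/(dp)$ imposed in the definition are calibrated precisely to reproduce, on the probabilistic side, the same weighted-in-$t$ gain that the $\F H^s$-weight supplied in the first main theorem.

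Next I would apply the pseudo-conformal transform to convert \eqref{eq:nls-random} into a final-data problem in a new time variable, and then run a bootstrap on an almost pseudo-conformal energy $E(\vnl)$. Differentiating $E(\vnl)$ in time produces multilinear source terms in $\vl$ and $\vnl$: the pieces cubic in $\vnl$ are absorbed via Strichartz, while the pieces carrying at least one factor of $\vl$ are estimated by pairing the probabilistic bounds above with the localized interaction Morawetz control on $\|\jb{t}^{3p/16}u\|_{L^4_{t,x}}$ already developed for Theorem \ref{thm:frac-weighted-dd}. This should yield $E(\vnl)(T)\lsm 1$ uniformly in $T$, which upgrades to a global Strichartz bound on $\vnl$ and, after undoing the pseudo-conformal transform, to $L_x^2$-scattering of $u$ along the lines of Tsutsumi-Yajima \cite{TY84BAMS}.

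The main obstacle will be constructing those linear probabilistic estimates with the correct time-weights: because $f^\om$ carries no spatial weight at all, the $\F H^s$-type gain used in the first theorem must be reassembled purely from the interplay of Gaussian independence, the near-orthogonality of $\{\psi_j f\}$, and the fine partition scale $N^{-a}$, and it must be done in norms compatible with the $L^4_{t,x}$ Morawetz quantity used downstream. A secondary subtlety is handling the singularity of the pseudo-conformal transform at the new time origin uniformly in $\om$, which becomes delicate as $p\to\tfrac{2}{d}$; the localized interaction Morawetz estimate is exactly the tool needed to absorb this near-origin contribution. Once these two ingredients are in place, the bootstrap and the scattering conclusion follow the deterministic blueprint of Theorem \ref{thm:frac-weighted-dd} almost verbatim, and the full measure statement follows by excluding a null set of $\om$.
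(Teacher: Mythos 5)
Your overall architecture (linear--nonlinear decomposition, pseudo-conformal transform, almost conservation of a pseudo-conformal energy, Tsutsumi--Yajima, and reduction to a deterministic statement for a.e.\ $\om$) does match the paper, but the mechanism you propose for closing the energy estimate near the time origin has a genuine gap. You plan to control the terms containing $\vl$ by the localized interaction Morawetz estimate ``already developed for Theorem \ref{thm:frac-weighted-dd}''. That estimate is in fact only established for $d=3$ (it is used for Theorem \ref{thm:frac-weighted-3d}; its proof uses $\De\nabla\cdot\frac{x}{|x|}=-8\pi\de(x)$), and, more importantly, its proof bounds the Morawetz action by $\norm{\U}_{L_x^2}^2\norm{\U}_{\dot H_x^{1/2}}^2$ and then invokes $\norm{\V(t)}_{\dot H_x^{1/2}}\lsm\de_0$, which comes from $\V_+=\F^{-1}\wb v_0\in H^s$, i.e.\ precisely from the spatial weight on the data. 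For $f$ merely in $L^2$ one has $\norm{\F^{-1}\wb{f^\om}}_{\dot H^{1/2}}=\norm{|x|^{1/2}f^\om}_{L_x^2}$, and the randomization improves neither regularity nor decay, so this quantity is a.s.\ infinite; the control you want to import cannot be established in your setting. The same objection applies to running ``the Theorem \ref{thm:frac-weighted-dd} blueprint almost verbatim'': that proof uses throughout $\jb{\nabla}^s\V$-Strichartz bounds with $s$ close to $1$ and bilinear Strichartz estimates on $\V_N$, all consequences of $\V_+\in H^s$; your proposed a.s.\ bounds of the form $\norm{\jb{t}^{\si}\vl}_{L_t^qL_x^r}\lsm\norm{f}_{L^2}$ carry no derivatives on $\V$ and are not strong enough substitutes.

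The missing idea is the second, and decisive, almost-sure linear estimate of the paper: besides $J v\in X^{-\ep}([0,1])$ (close in spirit to your weighted bounds, and used only for the local-in-time piece giving $\E(T_0)\lsm A^2$), one proves $\jb{\nabla}^2\TT v\in L_t^\I L_x^{p+2}$ a.s., using the identity $\TT S(t)f=S(t)\F^{-1}\wb f$, so that $\V(t,\xi)=\wh V(t,-\xi)$ with $V=e^{-\frac12 it|x|^2}\wb{f^\om}$, and then Hausdorff--Young plus H\"older on the narrow cubes of side $N^{-a}$: the weight $\jb{x}^{2}$ costs $N^{2}$ but H\"older from $L^2$ to $L^{\frac{p+2}{p+1}}$ on such a cube gains $N^{-\frac{dpa}{2(p+2)}}$, and $a\goe\frac{5(p+2)}{dp}$ makes the net power nonpositive. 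With this estimate the energy increment closes with no Morawetz, no bilinear Strichartz and no frequency decomposition: from $\pd_t\E(t)\goe\re\int_{\R^d}|\U|^p\U\,\wb{\De\V}\dx$ and H\"older one gets $0\loe\frac{\dd}{\dd t}\E+C\norm{\De\V}_{L_x^{p+2}}\E^{\frac{p+1}{p+2}}$, i.e.\ a Gr\"onwall-type inequality for $\E^{1/(p+2)}$, and $\sup_{0<t\loe T_0}\E(t)\loe C(T_0,A)$ follows by integration in $t$ for each fixed $\om$ in the full-measure set where $\norm{f^\om}_{L^2}+\norm{v}_Y<\I$ (so no uniformity in $\om$ is needed). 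Also note the energy must be the modified one, with kinetic part in $\W$ but potential part in the full $\U$ (cf.\ Remark \ref{rem:reason-for-modified-energy}); an energy built from $\W$ alone as you suggest produces terms that your scheme cannot absorb. Unless you either prove an a.s.\ estimate of comparable strength to $\jb{\nabla}^2\TT v\in L_t^\I L_x^{p+2}$ or supply new a.s.\ replacements for the $H^s$-level bounds on $\V$, your bootstrap near $t=0$ does not close.
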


\begin{remark}
We highlight some important points regarding this result.
\begin{enumerate}
\item
The randomization in Definition \ref{defn:randomization} does not improve the regularity, weighted properties, or decay characteristics of the initial data $f^\om$ itself, see the discussion in Remark 1.2 in \cite{BT08inventI}. Furthermore, the initial data does not possess any radial or size restrictions.
\item
Our theory differs from the results presented in the works of Burq-Thomann and Latocca's results \cite{BT20scattering,Lat20scattering}. Firstly, we consider a ``large'' subset of the $L_x^2$, while in their theory, the initial data is almost in $L_x^2$ but not precisely in $L_x^2$ itself. Secondly, our proof framework shares similarities with Theorems \ref{thm:frac-weighted-3d} and \ref{thm:frac-weighted-dd}, while the results in \cite{BT20scattering,Lat20scattering} are based on the lens transform and the quasi-invariant measure method.
\item
Our result can be compared to the previous almost sure wave operator results in \cite{Mur19ProAMS,NY19MRL}. They employed the Wiener randomization in physical space, while we utilize a ``narrowed'' Wiener randomization in physical space. Moreover, their result established the uniqueness of initial data, allowing for the definition of the wave operator. In contrast, our method only provides the existence of final data, which prevents us from verifying the inverse wave operator.
\end{enumerate}
\end{remark}

Now, we give a detailed explanation of the difference in initial data between our result and the previous almost sure scattering result in \cite{BT20scattering,Lat20scattering}. In Theorem \ref{thm:main}, the initial data is a "large" subset of the $L_x^2$ space without any decay assumption. In other words, the randomized data does not belong to any space $\F H^\ep$, $\ep>0$ or $L_x^p$, $p\ne 2$. Moreover, for each given $f\in L^2$, let $\mu_f$ be the image measure on $L^2$ of the mapping $\om\mapsto f^\om$, defined as:
\EQ{
	\mu_f:=\PP\circ (f^\om)^{-1}.
}
Naturally, $\mu_f(L^2)=1$ , and we consider the initial data on the support of $\mu_f$ for each $f\in L^2$.

In the theory in \cite{BT20scattering,Lat20scattering}, the initial data is essentially in $L_x^2$ up to logarithmic divergence in terms of the harmonic oscillator. More specifically, they consider the initial data on the support of a measure $\mu_0$ that satisfies:
\EQ{
\mu_0(\cap_{\ep>0} \HH^{-\ep})=1 \text{, and }\mu_0(L^2)=0.
}
The space $\HH^{-s}$ with $s\goe 0$ denotes the dual space of $\HH^s$, defined as:
\EQ{
\HH^{s} := \fbrk{f\in L^2:\jb{\nabla}^s f\in L^2\text{ and } \jb{x}^s f\in L^2}.
}

\subsection{Main ideas}
In this section, we give a general description of our method.
\subsubsection{Main framework: the adaptation of Tsutsumi-Yajima's argument} Our proofs for the fractional weighted scattering in Theorems \ref{thm:frac-weighted-3d} and \ref{thm:frac-weighted-dd}, and the almost sure scattering in Theorem \ref{thm:main} are all based on the combination of linear-nonlinear decomposition (which is usually referred as Bourgain's method or Da Prato-Debousche's method) and Tsutsumi-Yajima's argument. The common point is that the initial data does not belong to $\F H_x^{1}$, thus the usual pseudo-conformal energy fails.

After applying the pseudo-conformal transform, it suffices to consider the equation
\EQ{
i\pd_t \U + \frac12 \De \U = t^{\frac{dp}{2}-2}|\U|^p\U,
}
with final data $\U_+=\F^{-1}\wb u_0$. Moreover, this is an $L_x^2$-supercritical problem, and the weighted condition of $\F H^s$ is changed to derivative condition of $H^s$.

We decompose $\U=\V+\W$ with $\V$ being some linear solution, and $\W$ solves the nonlinear equation with the forced term $\V$. Even though $\nabla\V\notin L_x^2$, we can still prove the almost conservation law of the pseudo-conformal energy:
\EQ{
	\E(t) = \frac{1}{4}t^{2-\frac{dp}{2}}\int_{\R^d}|\nabla\W(t,x)|^2 \dx + \frac{1}{p+2} \int_{\R^d}|\U(t,x)|^{p+2} \dx,
}
and then the scattering follows by some adaptation of Tsutsumi-Yajima's argument.

For the forward-in-time case, the proof is divided in two parts. Firstly, we prove $\sup_{t\in[T_0,+\I)}\norm{\W(t)}_{\dot H^1}<+\I$, which is actually a local problem. Secondly, the main part of proof is the $(0,T_0]$ case, which relies on the almost conservation of $\E$. The main ingredient of the energy increment is 
\EQ{
\int_0^{T_0}\int_{\R^d} \nabla\W \cdot \nabla\V \W^p \dd x\dd t.
}
Then, it suffices to reduce the derivative with respect to $\nabla\V$ and close the energy estimate. Note that the $\dot H^1$-bound creates some singularity near $t=0$, namely
\EQ{
\norm{\nabla\W(t)}_{L_x^2} \lsm t^{\frac d4p-1} \E^{1/2},
}
then an additional main task is to remove this singularity from the time integral in the energy increment. Next, we state the main ingredients of the proof for Theorems \ref{thm:frac-weighted-3d} and \ref{thm:frac-weighted-dd}, as well as Theorem \ref{thm:main}.

\subsubsection{Main ingredient of the fractional weighted scattering in 3D case}
To improve the weighted condition, we utilize a combination of tools as outlined below.
\begin{enumerate}
\item (``High-low" decomposition). The high-low frequency decomposition is introduced by Bourgain \cite{Bou98IMRN}. Here, we apply the similar idea to the physical space.
\item
(Localized interaction Morawetz estimate). We have derived a mass subcritical spacetime estimate based on the almost pseudo-conformal conservation law, which differs from the usual interaction Morawetz estimate in \cite{CKSTT04CPAM}. Despite the presence of a time singularity near the origin, the key advantage of this estimate is its relatively small energy increase. This property makes it particularly suitable for handling the case where $p$ is close to $4/d$. To quantify the singularity, we consider the spacetime estimate over dyadic temporal intervals. In the original variable, the spacetime estimate can be roughly expressed as follows:
\EQ{
\normb{t^{\frac3{16}p}u}_{L_{t,x}^4([2^k,2^{k+1}]\times\R^3)} \lsm \E^{1/8},\quad\text{for any k}\ge0.
}
This estimate exhibits the same scaling as $L_t^\I \dot H_x^{\frac{2-3p}{8}}$ and demonstrates a significantly lower energy increase compared to all other known estimates. For more detailed information, please refer to Proposition \ref{prop:interaction-morawetz}.
\item
(Various spacetime estimate). To decrease the derivative of $\nabla\V$ in the energy increment, we need to apply the bilinear Strichartz estimate on $(0,T_0]$. This necessitates the use of dual Strichartz estimates for the nonlinear term $t^{\frac{3p}{2}-2}|\U|^p\U$. We observe that the dual $L_t^1L_x^2$-estimate exhibits better time decay near the origin, making it more suitable for the case when $p$ is close to $2/3$. Additionally, the dual $L_t^{2}L_x^{6/5}$-one gives better energy increase of $N_0$ and fewer loss in derivatives, providing an advantage for dealing with the case when $p$ is near $4/3$. We also invoke the aforementioned interaction Morawetz estimate into the proof of $L_t^{2}L_x^{6/5}$-estimate to further reduce the energy increase of $N_0$. Consequently, these two estimates are used in different cases of $p$.
\end{enumerate}

\subsubsection{Ideas of the fractional weighted scattering in general dimensional cases}
In addition, we intend to address more general cases, without pursuing a favorable weighted condition. However, we cannot find a universal method that can handle all dimensions in the whole range when $\frac2d<p<\frac4d$. Instead, it requires the use of various distinct ways to prove the main result. Notably, there exists a key difference between the proofs for the cases when $p<\frac{2}{d-2}$ and $p\goe \frac{2}{d-2}$. 

$\bullet$ \textit{Main idea when $p<\frac{2}{d-2}$.} We consider the limit case when $s=1$. The main term in the energy increment can be bounded as follows:
\EQ{
\int_0^T \int_{\R^d} \nabla\W \cdot \nabla\V \W^p \dd x\dd t \lsm \int_0^T \norm{\nabla\W}_{L_x^2}  \norm{\nabla\V}_{L_x^{\frac{2d}{d-2}}} \norm{\W}_{L_x^{dp}}^p\dd t.
}
Here, the natural choice for $\nabla\W$ is $L_x^2$, and for $\norm{\nabla\V}_{L_t^q L_x^r}$, we have $r\loe \frac{2d}{d-2}$. Therefore, the estimate for $\W$ is at least $L_x^{dp}$, which cannot exceed the Sobolev critical exponent $\frac{2d}{d-2}$. This leads to the restriction $p<\frac{2}{d-2}$. Since $\frac{2}{d-2}\goe \frac4d$ when $1\loe d\loe 4$, we are able to cover the entire range $\frac2d<p<\frac4d$ when $1\loe d\loe 4$, and $\frac2d<p<\frac{2}{d-2}$ when $d\goe 5$. 

To further reduce the weighted condition, we can apply the bilinear Strichartz estimate to handle the high-low interaction part:
\EQ{
& \int_0^T \int_{\R^d} \nabla\W \cdot \nabla\V_{hi} \W_{low}^p \dd x\dd t \\
\lsm & \norm{\nabla\W}_{L_t^\I L_x^2} \norm{\nabla\V_{hi}\W_{low}}_{L_{t,x}^2}^{\th} \norm{\nabla\V_{hi}}_{L_t^2 L_x^{\frac{2d}{d-2}}}^{1-\th} \norm{\W_{low}}_{L_t^{q}L_x^r}^{p-\th}, 
}
where $\th$ is some suitable parameter, and $(q,r)$ is some perturbation of the exponent $(2p,dp)$. This approach reduces the derivative of $\nabla \V$ and hence decreases the weighted condition less than $1$.

Additionally, we note that the proof in the 1D case slightly differs from the $d\ge 2$ case. The main reason is that in the 1D case, the Strichartz norm $L_t^q L_x^r$ requires $q\ge4$, and that $p$ may less than $1$ when $d\ge 2$. Therefore, we divide the proof into two subcases, see  Section \ref{sec:fra-energy-dd-1} and Section \ref{sec:fra-energy-dd-2}.

$\bullet$ \textit{Main idea when $p\goe\frac{2}{d-2}$.} In this case, we restrict the dimension to $d\goe 5$, which requires the use of some global spacetime estimates. Heuristically, we also examine the case when $s=1$. By Sobolev's inequality, for $\frac{2}{d-2}\loe p <\frac{2}{d-4}$,
\EQ{
\int_0^T \int_{\R^d} \nabla\W \cdot \nabla\V \W^p \dd x\dd t \lsm &  \int_0^T \norm{\nabla\W}_{L_x^2}  \norm{\nabla\V}_{L_x^{\frac{2d}{d-2}}} \norm{\W}_{L_x^{dp}}^p\dd t \\
\lsm & \int_0^T \norm{\nabla\W}_{L_x^2}  \norm{\nabla\V}_{L_x^{\frac{2d}{d-2}}} \norm{\jb{\nabla}\W}_{L_x^{\frac{2d}{d-2}}}^p\dd t.
}
Next, applying the equation of $\W$,
\EQ{
\norm{\jb{\nabla}\W}_{L_t^2 L_x^{\frac{2d}{d-2}}} \lsm & \norm{\jb{\nabla}\W(T)}_{L_x^2} + \normb{t^{\frac{3p}{2}-2}\nabla(|\U|^p\U)}_{L_t^2 L_x^{\frac{2d}{d+2}}} \\
\lsm & \norm{\jb{\nabla}\W(T)}_{L_x^2} + \normb{t^{\frac{3p}{2}-2}\nabla(|\W|^p\W)}_{L_t^2 L_x^{\frac{2d}{d+2}}} + \text{easier terms} \\
\lsm & \norm{\jb{\nabla}\W(T)}_{L_x^2} + \norm{\nabla\W}_{L_t^2 L_x^{\frac{2d}{d-2}}}^{p} \normb{t^{\frac{3p}{2}-2}\norm{\nabla\W}_{L_x^{2}}}_{L_t^{\frac{2}{1-p}}} + \text{easier terms}.
}
Since $d\goe 5$ and $p<\frac4d$, we have $p<1$. Therefore, we are able to obtain the estimate for $\norm{\jb{\nabla}\W}_{L_t^2 L_x^{\frac{2d}{d-2}}}$ and close the energy increment. Similar to the case when $p<\frac{2}{d-2}$, we also invoke the bilinear Strichartz estimate to reduce the weighted condition. This approach allows us to extend  to the case when $p>\frac{2}{d-4}$. Note that $\frac{2}{d-4}\goe\frac4d$ when $d\loe 8$, which means our method can cover the range $\frac2d<p<\frac4d$ when $5\loe d\loe 8$. However, this method only works in the range $\frac{2}{d-2}\loe p< \frac{2}{d-4}$ when $d\loe 11$, since the derivation of global spacetime estimate brings more temporal singularity near the origin and more energy increase of $N_0$. 

Moreover, the proof differs between $d\loe 7$ and $d\goe 8$. As we can see from the discussion above, the Sobolev's inequality $\norm{\W}_{L_x^{dp}} \lsm \norm{\jb{\nabla}\W}_{L_x^{\frac{2d}{d-2}}}$ plays a crucial role. However, when $d\loe 7$, $p$ cannot reach the upper bound $\frac{2}{d-4}$, specifically $p<\frac4d<\frac{2}{d-4}$. Then, applying the above Sobolev inequality leads to poor time singularity. Therefore, we apply two different Sobolev's inequality
\EQ{
\norm{\W}_{L_x^{dp}} \lsm \normb{\jb{\nabla}^{\frac{1-p}{p}}\W}_{L_x^{\frac{2d}{d-2}}}\text{, when }5\loe d\loe 7,
}
and
\EQ{
\norm{\W}_{L_x^{dp}} \lsm \normb{\jb{\nabla}^{1-}\W}_{L_x^{\frac{2d}{d-2}}}\text{, when }8\loe d\loe 11.
}
These inequalities effectively handle the temporal singularity and energy increase simultaneously, see Lemma \ref{lem:fra-energy-global-spacetime-estimate-sobolev} below.

\subsubsection{Ideas of almost sure scattering} The proof framework of almost sure scattering is similar to the fractional scattering. However, there are some new aspects specific to the randomized data problem, which enable us to obtain a scattering result without any decaying condition. 

$\bullet$ \textit{``Narrowed"-scale physical randomization.} 
To totally remove the weighted condition, we introduce ``narrowed"-scale physical space randomization. The aim is to give some improved global $L^2$-subcritical spacetime estimates, considering the initial data being merely in $L^2$, compared to the deterministic setting. 

The improvement of space-time estimate for the linear flow can be observed in two aspects. First, by the dispersive inequality, we can obtain for some $L^2$ subcritical exponents $(q,r)$, 
\EQ{
&\normb{(x+it\nabla)e^{\frac12it\De}f^\om}_{L_t^q L_x^r}<+\I\text{, a. e. }\om\in\Om.}
Second, using the observation that applying the pseudo-conformal transform to a linear flow is essentially equivalent to the Fourier transform, namely 
\EQ{
\TT e^{\frac12it\De} f = e^{\frac12it\De} \F^{-1}\wb f,
}
where $\TT$ denotes the pseudo-conformal transform, we can deduce that for some $L^2$ supercritical exponents $(q,r)$, 
\EQ{
&\normb{\Delta \mathcal T(e^{\frac12it\De}f^\om)}_{L_t^q L_x^r}<+\I\text{, a. e. }\om\in\Om.
}
The above estimates holds for $f$ merely in $L_x^2$. Therefore, the improvement is one of the key elements in proving the scattering with non-decaying data.

$\bullet$ \textit{The adaptation of the Tsutsumi-Yajima's argument.} We also need to apply the Tsutsumi-Yajima's argument in the probabilistic setting. More precisely, we consider $v=S(t)f^\om$ as the linear part and $w=u-v$ be the nonlinear part of the solution $u$. Furthermore, we introduce the $1$-measure space
\EQ{
	\fbrkb{\om\in\Om:\norm{f^\om}_{L_x^2} + \norm{v}_X <+\I},
}
where $X$ is some required space-time norm. Then, we consider each $\om$ separately, and prove the scattering for the given initial data $f^\om$.

\subsection{Organization of the paper} In Section \ref{sec:preliminary}, we first give some notation and lemmas. In Section \ref{sec:scattering-criterion}, we give a general scattering criterion that will be applied to proof of Theorems \ref{thm:frac-weighted-3d}, \ref{thm:frac-weighted-dd}, and \ref{thm:main}. Then, we prove Theorem \ref{thm:frac-weighted-3d} in Section \ref{sec:fractional}, Theorem \ref{thm:frac-weighted-dd} in Section \ref{sec:fractional-dd}, and Theorem \ref{thm:main} in Section \ref{sec:almost-sure-scattering}.

\vskip 1.5cm
\section{Preliminary}\label{sec:preliminary}
\vskip .5cm

\subsection{Notation}
For any $a\in\R$, $a\pm:=a\pm\epsilon$ for arbitrary small $\epsilon>0$. For any $z\in\C$, we define $\re z$ and $\im z$ as the real and imaginary part of $z$, respectively. For any $x\in\R^d$, we denote that $\jb{x}:=(1+|x|^2)^{1/2}$.
%For any set $A$, we denote $\sharp A$ as the cardinal number of $A$.

$C>0$ represents some constant that may vary from line to line. We write $C(a)>0$ for some constant depending on coefficient $a$. If $f\loe C g$, we write $f\lsm g$. If $f\loe C g$ and $g\loe C f$, we write $f\sim g$. Suppose further that $C=C(a)$ depends on $a$, then we write $f\lsm_a g$ and $f\sim_a g$, respectively. 
%If $f\loe 2^{-5}g$, we denote $f\ll g$ or $g\gg f$.

For complex-valued functions $f$ and $g$, we denote $O(fg)$ as the linear combination:
\EQ{
O(fg) := c_1 fg + c_2 f\wb g+c_3 \wb fg+ c_4 \wb f\wb g,
}
for some $c_i\in\C$, $i=1,2,3,4$. For the functions $f$, $g$ and $h$, $O(fgh)$ is defined similarly.

We write ``a.e. $\om\in\Om$'' to stand for ``almost every $\om\in\Om$''.

We use $\wh f$ or $\F f$ to denote the Fourier transform of $f$:
\EQ{
	\wh f(\xi)=\F f(\xi):= \rev{(2\pi)^{d/2}}\int_{\R^d} e^{-ix\cdot\xi}f(x)\rm dx.
}
We also define
\EQ{
	\F^{-1} g(x):= \rev{(2\pi)^{d/2}}\int_{\R^d} e^{ix\cdot\xi}g(\xi)\rm d\xi.
}
Using the Fourier transform, we can define the fractional derivative $\abs{\nabla} := \F^{-1}|\xi|\F $ and $\abs{\nabla}^s:=\F^{-1}|\xi|^s\F $.  

We also need the usual inhomogeneous dyadic Littlewood-Paley decomposition. Take a cut-off function $\phi\in C_{0}^{\infty}(\R)$ such that $\phi(r)=1$ if $0\loe r\loe1$ and $\phi(r)=0$ if $r>2$. For dyadic $N\in 2^\N$, let $\phi_{N}(r) := \phi(N^{-1}r)$. Then, we define
\EQ{
	\ph_1(r):=\phi(r)\text{, and }\ph_N(r):=\phi_N(r)-\phi_{N/2}(r)\text{, for any }N\goe 2.
}
We define the inhomogeneous Littlewood-Paley dyadic operator: for any $N\in2^\N$,
\EQ{
	f_{N}= P_N f := \mathcal{F}^{-1}\brko{ \ph_N(|\xi|) \hat{f}(\xi)}.
}
Then, by definition, we have $f=\sum_{N\in2^\N}f_N$. Moreover, we also need the following: for any $N\in2^\N$,
\EQ{
	f_{\loe N}=P_{\loe N} f := &\mathcal{F}^{-1}\brko{ \phi_{N}(|\xi|) \hat{f}(\xi)}, \\
	f_{\ll N}=P_{\ll N} f :=& \mathcal{F}^{-1}\brko{\phi_{N}(2^{5}|\xi|) \hat{f}(\xi)}, \\
	f_{\lsm N}=P_{\lsm N} f :=& \mathcal{F}^{-1}\brko{ \phi_{N}(2^{-5}|\xi|) \hat{f}(\xi)},
}
and
\EQ{
	f_{\sim N}=P_{\sim N}f:= f_{\lsm N}-f_{\ll N}.
}
We also denote that $f_{\goe N}=P_{\goe N} f := f- P_{\loe N} f$, $f_{\gg N}=P_{\gg N} f:=f-P_{\lsm N}f$, and $f_{\gsm N}=P_{\gsm N}f:= f-P_{\ll N}f$.

Given $1\loe p \loe \I$, $L^p(\R^d)$ denotes the usual Lebesgue space. We also define $\jb{\cdot,\cdot}$ as real $L^2$ inner product:
\EQ{
	\jb{f,g} := \re\int f(x)\wb{g}(x)\dx.
}

For any $1\loe p <\I$, define $l_N^p=l_{N\in 2^\N}^p$ by its norm
\EQ{
	\norm{c_N}_{l_{N\in 2^\N}^p}^p:=\sum_{N\in 2^\N}|c_N|^p.
}
The space $l_N^\I=l_{N\in2^\N}^\I$ is defined by $\norm{c_N}_{l_{N\in 2^\N}^\I}:=\sup_{N\in 2^\N}|c_N|$. In this paper, we use the following abbreviations
\EQ{
	\sum_{N:N\loe N_1}:=\sum_{N\in2^\N:N\loe N_1}\text{, and }\sum_{N\loe N_1}:=\sum_{N,N_1\in2^\N:N\loe N_1}.
}

We then define the vector-valued norms. For $1\loe q< \I$, $1\loe r\loe \I$, and the function $u(t,x)$, we define
\EQ{
	\norm{u}_{L_t^q L_x^r(\R\times \R^d)}^q:= \int_{\R}\norm{u(t,\cdot)}_{L_x^r}^q\dd t,
}
and for the function $u_N(x)$, we define
\EQ{
	\norm{u_N}_{l_N^q L_x^r(2^\N\times \R^d)}^q:= \sum_{N}\norm{u_N(\cdot)}_{L_x^r}^q.
}
The $q=\I$ case can be defined similarly.

For any $0\loe\gamma\loe1$, we call that the exponent pair $(q,r)\in\R^2$ is $\dot H^\ga$-$admissible$, if $\frac{2}{q}+\frac{d}{r}=\half d-\ga$, $2\loe q\loe\I$, $2\loe r\loe\I$, and $(q,r,d)\ne(2,\I,2)$. If $\ga=0$, we say that $(q,r)$ is $L^2$-$admissible$. 
%If $(q,r)$ satisfies $2\loe q\loe\I$, $2\loe r<\I$, and
%\EQ{
%\frac{2}{q}+\frac{d}{r}>\half d,
%}
%then we call that $(q,r)$ is $L_x^2$ sub-admissible. 
For any time interval $I\subset\R$, we define $S^0(I)$ by
\EQn{\label{defn:s0}
\norm{u}_{S^0(I)} := \sup\fbrkb{\norm{u}_{L_t^q L_x^r(I\times\R^d)}:(q,r)\text{ is }L^2\text{-admissible}}.
}

Next, we recall some useful basic lemmas.
\begin{lem}[Schur's test]\label{lem:schurtest}
For any $a>0$, let sequences $\fbrk{a_N}$,  $\fbrk{b_N}\in l_{N\in2^\N}^2$, then we have
\EQ{
\sum_{N_1\loe N} \brkb{\frac{N_1}{N}}^a a_N b_{N_1} \lsm \norm{a_N}_{l_N^2} \norm{b_N}_{l_N^2}.
}
\end{lem}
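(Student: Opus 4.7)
The plan is a direct application of the Schur test in its classical ``row/column sum'' form, which for this particular kernel reduces to a geometric series computation. I would write the left-hand side as a bilinear pairing with kernel $K(N,N_1):=(N_1/N)^a\,\mathbf{1}_{N_1\le N}$, i.e.
\[
\sum_{N_1\le N}\Big(\tfrac{N_1}{N}\Big)^a a_N b_{N_1}=\sum_{N,N_1\in 2^{\mathbb N}}K(N,N_1)\,a_N\,b_{N_1},
\]
and verify that the row and column sums of $K$ are uniformly bounded.

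The key arithmetic step is: parametrize $N=2^n$, $N_1=2^{n_1}$, so that $(N_1/N)^a=2^{-(n-n_1)a}$. For any fixed $N$,
\[
\sum_{N_1\le N}\Big(\tfrac{N_1}{N}\Big)^a=\sum_{k\ge 0}2^{-ka}=\frac{1}{1-2^{-a}}=:C_a<\infty,
\]
using $a>0$. Symmetrically, for any fixed $N_1$,
\[
\sum_{N\ge N_1}\Big(\tfrac{N_1}{N}\Big)^a\le C_a.
\]

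To conclude, I would split the weight as $(N_1/N)^a=(N_1/N)^{a/2}(N_1/N)^{a/2}$ and apply the Cauchy--Schwarz inequality in the double sum, obtaining
\[
\sum_{N_1\le N}\Big(\tfrac{N_1}{N}\Big)^a a_N b_{N_1}\le\Bigl(\sum_{N_1\le N}\Big(\tfrac{N_1}{N}\Big)^a a_N^2\Bigr)^{1/2}\Bigl(\sum_{N_1\le N}\Big(\tfrac{N_1}{N}\Big)^a b_{N_1}^2\Bigr)^{1/2}.
\]
In the first factor I freeze $N$ and sum over $N_1$, using the row-sum bound to get $\le C_a\sum_N a_N^2$; in the second factor I freeze $N_1$ and sum over $N\ge N_1$, using the column-sum bound to get $\le C_a\sum_{N_1}b_{N_1}^2$. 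Multiplying gives the desired estimate with implicit constant $C_a=(1-2^{-a})^{-1}$.

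There is no real obstacle here: the only thing to be slightly careful about is the convergence of the geometric series, which is exactly where the hypothesis $a>0$ is used, and ensuring that the Cauchy--Schwarz split is done with the same weight $(N_1/N)^{a/2}$ on both factors so that both resulting sums are indexed over $N_1\le N$ and can be reorganized independently.
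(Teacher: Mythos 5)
Your proof is correct and is the standard argument for this lemma, which the paper states without proof: split the weight as $(N_1/N)^{a/2}(N_1/N)^{a/2}$, apply Cauchy--Schwarz, and sum the resulting geometric series in each factor, using $a>0$ for convergence. The only cosmetic point is that the Cauchy--Schwarz step as written presupposes nonnegative terms, so one should first pass to $|a_N|$, $|b_{N_1}|$ (which is harmless since the conclusion is an upper bound by the $l^2$ norms).
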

\begin{lem}[Fractional chain rule,\cite{CW91JFA}]\label{lem:frac-chain}
	Suppose that $G\in C^1(\C)$, $s\in(0,1]$, and $1<p,p_1,p_2<\I$ are such that $\rev p=\rev{p_1}+\rev{p_2}$. Then,
	\EQ{
		\norm{\abs{\nabla}^sG(u)}_{L_x^p}\lsm\norm{G'(u)}_{L_x^{p_1}}\norm{\abs{\nabla}^su}_{L_x^{p_2}}.
	}
\end{lem}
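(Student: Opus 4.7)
Since this is the classical Christ--Weinstein fractional chain rule, a proof would recapitulate the argument of \cite{CW91JFA}. My plan is as follows. First, the case $s=1$ reduces to the ordinary chain rule $\nabla G(u) = G'(u)\nabla u$ combined with H\"older, so the content lies in $s \in (0,1)$. For this range I would replace $\norm{\abs{\nabla}^s f}_{L^p}$ by the equivalent finite--difference square function
\EQ{
\mathcal{D}_s f(x) := \brkbb{\int_{\R^d} \frac{|f(x+h) - f(x)|^2}{|h|^{d+2s}}\,dh}^{1/2},
}
which satisfies $\norm{\mathcal{D}_s f}_{L^p} \sim \norm{\abs{\nabla}^s f}_{L^p}$ whenever $1 < p < \infty$; this equivalence is the standard Triebel--Lizorkin characterization of $\dot W^{s,p}$.

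Next, I would apply the Newton--Leibniz identity
\EQ{
G(u(x+h)) - G(u(x)) = \brkb{u(x+h) - u(x)} \int_0^1 G'\brkb{u(x) + \tau(u(x+h) - u(x))}\,d\tau,
}
insert it into the definition of $\mathcal{D}_s G(u)(x)$, square, and use Cauchy--Schwarz in $\tau$ to obtain the schematic pointwise bound
\EQ{
\mathcal{D}_s G(u)(x)^2 \lsm \int_0^1 \int_{\R^d} \absb{G'\brkb{u(x) + \tau(u(x+h) - u(x))}}^2 \frac{|u(x+h) - u(x)|^2}{|h|^{d+2s}}\,dh\,d\tau.
}

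The main obstacle is to control the $G'$ factor, which is evaluated at the convex combination $u(x) + \tau(u(x+h) - u(x))$ rather than at $u(x)$ itself---delicate because only continuity of $G'$ is assumed. Following Christ--Weinstein, I would split the $h$--integral at a scale $R(x)$ adapted to $x$: on $|h| \le R(x)$, a Hardy--Littlewood maximal function argument yields $\sup_{\tau \in [0,1]} |G'(\,\cdots\,)| \lsm M(G'(u))(x)$; on $|h| > R(x)$, a direct triangle--inequality estimate suffices. Optimizing the cutoff $R(x)$ produces the pointwise inequality $\mathcal{D}_s G(u)(x) \lsm M(G'(u))(x)\,\mathcal{D}_s u(x)$. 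Taking $L^p$--norms, applying H\"older with $1/p = 1/p_1 + 1/p_2$, and then the $L^{p_1}$--boundedness of $M$ (which requires exactly $p_1 > 1$) closes the argument.
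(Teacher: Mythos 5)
The paper does not prove this lemma at all: it is quoted verbatim from Christ--Weinstein \cite{CW91JFA}, so there is no in-paper argument to compare with, and your plan is indeed a reconstruction of the cited proof. The overall strategy (difference characterization of $\norm{\abs{\nabla}^s\cdot}_{L^p}$, Newton--Leibniz, maximal function, H\"older) is the right one, but as written it has two genuine gaps. First, your square function $\mathcal{D}_s f(x)=\brkb{\int |f(x+h)-f(x)|^2|h|^{-d-2s}\,dh}^{1/2}$ is \emph{not} equivalent to $\norm{\abs{\nabla}^s f}_{L^p}$ for all $1<p<\infty$: by Stein's theorem the equivalence requires $p>\frac{2d}{d+2s}$, and this restriction is sharp --- for a smooth bump $f$ one has $\mathcal{D}_s f(x)\sim |x|^{-(d+2s)/2}$ at infinity while $\abs{\nabla}^s f$ decays like $|x|^{-d-s}$, so $\norm{\mathcal{D}_s f}_{L^p}=\infty$ but $\norm{\abs{\nabla}^s f}_{L^p}<\infty$ when $1<p\le\frac{2d}{d+2s}$ (a nontrivial range whenever $d\ge2$). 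The direction that fails, $\norm{\mathcal{D}_s u}_{L^{p_2}}\lsm\norm{\abs{\nabla}^s u}_{L^{p_2}}$, is exactly the one your final H\"older step needs, so your argument does not reach the full stated range $1<p_2<\infty$. Christ--Weinstein avoid this by using Strichartz's \emph{ball-averaged} square function, built from $\omega(f)(x,t)=t^{-d}\int_{|h|\le t}|f(x+h)-f(x)|\,dh$, for which the $L^p$-equivalence does hold for all $1<p<\infty$, $0<s<1$; with that substitution (or with a Littlewood--Paley/paraproduct proof) the scheme closes.

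Second, the step $\sup_{\tau\in[0,1]}\absb{G'\brko{u(x)+\tau(u(x+h)-u(x))}}\lsm M(G'(u))(x)$ is not a consequence of the Hardy--Littlewood maximal function alone: $M(G'(u))(x)$ controls averages of $y\mapsto G'(u(y))$ over balls in the \emph{physical} variable, whereas here $G'$ is evaluated at points of a segment in the \emph{target} space $\C$ joining $u(x)$ to $u(x+h)$, which need not be values of $u$ at all. To run this step one needs a quasi-monotonicity hypothesis such as $|G'(\tau a+(1-\tau)b)|\lsm |G'(a)|+|G'(b)|$ (or H\"older continuity of $G'$), which holds for the power nonlinearities $G(u)=|u|^pu$ used throughout the paper but not for an arbitrary $G\in C^1(\C)$; the same caveat applies to your treatment of the region $|h|>R(x)$, where the triangle inequality produces $|G(u(x))|$ rather than a quantity controlled by $G'(u)$ unless one again uses $G(0)=0$ and the mean value structure. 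So either add the structural hypothesis on $G'$ (matching how the lemma is actually applied) or carry out the averaged-square-function argument of \cite{CW91JFA} verbatim.
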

Next, we also recall the sharp Gagliardo-Nirenberg type inequality, which was initially introduced in \cite{Nag41,Wei83CMP}. Although the implicit constant can be expressed in terms of the ground state, we will omit it here as it is not necessary for our purposes.
\begin{lem}[Sharp Gagliardo-Nirenberg inequality]\label{lem:GN}
Let $d\goe 1$. Suppose that $0<p<+\I$ if $d=1,2$, and $0<p<\frac{4}{d-2}$ if $d\ge3$. Then, we have
\EQ{
\norm{f}_{L_x^{p+2}(\R^d)}^{p+2} \lsm \norm{f}_{L_x^2(\R^d)}^{p+2-\frac{pd}{2}} \norm{\nabla f}_{L_x^2(\R^d)}^{\frac{dp}{2}}.
}
\end{lem}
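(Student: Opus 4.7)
The plan is to deduce the inequality from the Sobolev embedding $\dot H^1(\R^d)\hookrightarrow L^{2^\ast}(\R^d)$ combined with the log-convexity of the $L^p$-scale, which is the standard route taken in \cite{Wei83CMP}. As a sanity check I would first verify that the dilation $f_\lambda(x)=f(\lambda x)$ forces precisely the exponents appearing in the statement: both sides must each scale like $\lambda^{-d}$, and a direct computation shows this singles out the powers $p+2-\tfrac{dp}{2}$ and $\tfrac{dp}{2}$ uniquely, so there is no other coherent formulation.

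For $d\ge 3$ the main step is to set $\theta=1-\tfrac{dp}{2(p+2)}$, which lies in $(0,1)$ precisely under the hypothesis $0<p<\tfrac{4}{d-2}$ and satisfies $\tfrac{1}{p+2}=\tfrac{\theta}{2}+\tfrac{1-\theta}{2^\ast}$. By log-convexity (i.e.\ H\"older), $\|f\|_{L^{p+2}}\le \|f\|_{L^2}^{\theta}\|f\|_{L^{2^\ast}}^{1-\theta}$; the Sobolev embedding then bounds $\|f\|_{L^{2^\ast}}$ by $\|\nabla f\|_{L^2}$, and raising to the $(p+2)$-th power produces $\|f\|_{L^2}^{p+2-dp/2}\|\nabla f\|_{L^2}^{dp/2}$ as claimed; a short bookkeeping check confirms $\theta(p+2)=p+2-\tfrac{dp}{2}$ and $(1-\theta)(p+2)=\tfrac{dp}{2}$.

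For $d=1,2$ the Sobolev endpoint $2^\ast$ is absent and one substitutes the standard replacements. In $d=1$ the estimate $\|f\|_{L^\infty}^2\lesssim \|f\|_{L^2}\|\nabla f\|_{L^2}$ follows from applying the fundamental theorem of calculus to $|f|^2$ and Cauchy--Schwarz, and interpolating between $L^2$ and $L^\infty$ then covers every $0<p<\infty$. In $d=2$ I would similarly interpolate, using, for instance, $\|f\|_{L^q}\lesssim q^{1/2}\|f\|_{L^2}^{2/q}\|\nabla f\|_{L^2}^{1-2/q}$ for $2\le q<\infty$, obtained from a layer-cake decomposition or a Moser-type argument. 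The main obstacle is absent at this level of the statement, because the implicit constant is not being tracked; the truly hard part, namely identifying the optimizer as a rescaled ground state of $-\tfrac12\Delta Q + Q = Q^{p+1}$ to get the best constant, is exactly what the $\lesssim$ formulation sidesteps here, and for the sharp version one would simply cite \cite{Wei83CMP}.
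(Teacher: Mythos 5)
Your argument is correct. Note first that the paper does not actually prove this lemma: it only cites \cite{Nag41,Wei83CMP} and explicitly declines to track the sharp constant, so any complete elementary derivation of the non-sharp $\lsm$ form is acceptable here. Your route is the standard one and all the exponent bookkeeping checks out: for $d\ge 3$, with $\theta=1-\frac{dp}{2(p+2)}$ one has $\frac1{p+2}=\frac\theta2+\frac{1-\theta}{2^*}$, $\theta\in(0,1)$ exactly when $p<\frac4{d-2}$, and $\theta(p+2)=p+2-\frac{dp}{2}$, $(1-\theta)(p+2)=\frac{dp}{2}$, so H\"older plus $\dot H^1\hra L^{2^*}$ gives the claim; for $d=1$ the Agmon-type bound $\norm{f}_{L^\infty}^2\lsm\norm{f}_{L^2}\norm{f'}_{L^2}$ combined with $\norm{f}_{L^{p+2}}^{p+2}\le\norm{f}_{L^\infty}^p\norm{f}_{L^2}^2$ yields exactly the exponents $2+\frac p2$ and $\frac p2$; for $d=2$ the Ladyzhenskaya-type bound with $q=p+2$ gives $\norm{f}_{L^{p+2}}^{p+2}\lsm\norm{f}_{L^2}^2\norm{\nabla f}_{L^2}^p$, again matching the statement. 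Two cosmetic remarks: the $q^{1/2}$ growth of the two-dimensional constant is irrelevant here since $p$ is fixed and the implicit constant is allowed to depend on $(d,p)$; and your closing observation is accurate \textemdash{} the identification of the optimal constant via the ground state of $-\frac12\De Q+Q=Q^{p+1}$ is precisely what the paper sidesteps by using $\lsm$, so citing \cite{Wei83CMP} for sharpness is the right disposition. No gaps.
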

\subsection{Schr\"odinger equations}
We denote that $e^{\frac12it\De}u_0$ as the solution of 
\EQ{
\left\{ \aligned
&i\pd_t u + \frac12\De u = 0, \\
& u(0,x) = u_0,
\endaligned
\right.
}
and we denote that $S(t):=e^{\frac12it\De}$. Then, we have the explicit formula
\EQn{\label{linear-explicit}
S(t)u_0 = \frac{1}{(2\pi it)^{d/2}} \int_{\R^d} e^{i\frac{|x-y|^2}{2t}} u_0(y)\dy.
}
For the Schr\"odinger equation, we have the following classical Strichartz estimates:
\begin{lem}[Strichartz estimate, \cite{KT98AJM}]\label{lem:strichartz}
	Let $I\subset \R$. Suppose that $(q,r)$ and $(\wt{q},\wt{r})$ are $L_x^2$-admissible.
	Then,
	\EQ{
		\norm{S(t)\ph}_{L_t^qL_x^r(\R\times\R^d)} \lsm \norm{\ph}_{L_x^2},
	}
	and
	\EQ{
		\normb{\int_{t_0}^TS(t-\ta) F(\ta)\dd\ta}_{L_t^qL_x^r(\R\times \R^d)} \lsm \norm{F}_{L_t^{\wt{q}'} L_x^{\wt{r}'}(\R\times\R^d)}.
	}
\end{lem}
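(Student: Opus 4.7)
\textbf{Proof Proposal for Lemma \ref{lem:strichartz}.} The plan is to follow the classical $TT^*$ strategy of Keel--Tao, starting from the two basic bounds on the free propagator $S(t) = e^{\frac12 it\Delta}$: the $L^2$ conservation $\|S(t)\varphi\|_{L^2_x} = \|\varphi\|_{L^2_x}$, which is immediate from Plancherel, and the dispersive bound $\|S(t)\varphi\|_{L^\infty_x} \lesssim |t|^{-d/2}\|\varphi\|_{L^1_x}$, which follows directly from the explicit representation \eqref{linear-explicit} by reading off the $L^1 \to L^\infty$ norm of the kernel. Interpolating between these two endpoint estimates yields $\|S(t)\varphi\|_{L^r_x} \lesssim |t|^{-d(\frac12 - \frac{1}{r})}\|\varphi\|_{L^{r'}_x}$ for every $2 \le r \le \infty$.

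First I would establish the homogeneous estimate. Writing $T\varphi(t,x) = S(t)\varphi(x)$, one has the formal adjoint $T^*F = \int_{\R}S(-\tau)F(\tau)\,d\tau$, and hence
\begin{equation*}
TT^*F(t,x) = \int_{\R} S(t-\tau)F(\tau,x)\,d\tau.
\end{equation*}
By duality, the bound $\|T\varphi\|_{L^q_t L^r_x} \lesssim \|\varphi\|_{L^2_x}$ is equivalent to $\|TT^*F\|_{L^q_t L^r_x} \lesssim \|F\|_{L^{q'}_t L^{r'}_x}$. For admissible pairs with $q>2$, combining the dispersive bound in $x$ with the Hardy--Littlewood--Sobolev inequality in $t$ (noting that the exponent $d(\tfrac12-\tfrac{1}{r}) = \tfrac{2}{q}$ matches the scaling for HLS on $\R$) gives this bound directly. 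This covers all admissible $(q,r)$ except the endpoint $(2, \tfrac{2d}{d-2})$ in dimensions $d\ge 3$, which is precisely the case excluded from the lemma in $d=2$ and requires the more delicate bilinear interpolation argument of Keel--Tao: dyadically decompose $TT^*F$ according to $|t-\tau|\sim 2^j$, prove off-diagonal bilinear estimates controlling $\langle TT^*_j F, G\rangle$ in two different pairs of Lebesgue exponents, and interpolate to sum the geometric series in $j$.

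Next I would pass to the inhomogeneous estimate for distinct admissible pairs $(q,r)$ and $(\tilde q, \tilde r)$. The non-retarded version
\begin{equation*}
\Big\|\int_{\R} S(t-\tau) F(\tau)\,d\tau\Big\|_{L^q_t L^r_x} \lesssim \|F\|_{L^{\tilde q'}_t L^{\tilde r'}_x}
\end{equation*}
follows by writing the left side as $T(T^*F)$ (where the $T$ and $T^*$ operators are now allowed to correspond to different admissible pairs) and combining the homogeneous estimate with its dual. To obtain the retarded form with integration over $[t_0, T]$ rather than all of $\R$, I would invoke the Christ--Kiselev lemma, which applies whenever $q > \tilde q'$ --- exactly the range needed here because the diagonal case $q = \tilde q'$ is either trivial (for $q>2$) or handled separately in the endpoint argument.

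The main obstacle is of course the endpoint $(2, \tfrac{2d}{d-2})$ when $d\ge 3$, since the HLS inequality fails at this borderline integrability and the standard $TT^*$ argument only yields a weak-type bound. Overcoming this requires the full Keel--Tao bilinear interpolation machinery; for the non-endpoint cases the argument is entirely routine. Since the lemma as stated is the classical result of \cite{KT98AJM}, I would simply cite that reference for the endpoint and present the $TT^*$ argument for the remaining admissible pairs.
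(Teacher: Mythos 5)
The paper offers no proof of this lemma—it is quoted directly from Keel--Tao \cite{KT98AJM}—and your sketch (dispersive estimate from \eqref{linear-explicit}, $TT^*$ with Hardy--Littlewood--Sobolev for non-endpoint pairs, Keel--Tao bilinear interpolation at the endpoint, Christ--Kiselev for the retarded integral) is exactly the standard argument underlying that citation. Your outline is correct and takes essentially the same (i.e., the cited classical) approach, so nothing further is needed.
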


The bilinear Strichartz estimate was first introduced by Bourgain \cite{Bou98IMRN}, and further extended in \cite{CKSTT08Annals,Vis07Duke}, when $q=r=2$. The $q,r<2$ case was referred to bilinear restriction estimates for paraboloid, first obtained by Tao \cite{Tao03GAFA}, based on the method developed by Wolff \cite{Wol01Annals}. We will frequently use the version of bi-linear estimate for general functions, see  \cite{Can19MathAnn,Vis07Duke}.
\begin{lem}[Bilinear Strichartz estimate]\label{lem:bilinearstrichartz}
Let $I\subset\R$, $1\loe q,r \loe 2$, $\rev q + \frac{d-1}{r}<d-1$, and  $M,N\in2^\N$ satisfy $M\ll N$. Let also $(\wt q,\wt r)$ be some $L^2$-admissible pair with $\wt q\ne 2$. Suppose that for any $t\in I$,  $\wh{u}(t,\cdot)$ is supported on $\fbrk{\xi:\abs{\xi}\sim N}$, and $\wh{v}(t,\cdot)$ is supported on $\fbrk{\xi:\abs{\xi}\sim M}$. Then,
\EQn{\label{eq:bilinearstrichartz}
\norm{uv}_{L_t^qL_x^r(I\times\R^d)} \lsm &  \frac{M^{d+1-\frac{d+1}{r}-\frac{2}{q}}}{N^{1-\rev r}}\norm{u}_{S^*(I)} \norm{v}_{S^*(I)},
}
where $a\in I$, and
\EQn{
\norm{u}_{S^*(I)}:=\norm{u(a)}_{L_x^2}+\normb{(i\pd_t \pm\frac12\De)u}_{L_t^{\wt q'}L_x^{\wt r'}(I\times\R^d)}.
}
\end{lem}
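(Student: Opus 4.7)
\textbf{Proof plan for Lemma \ref{lem:bilinearstrichartz}.}

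The strategy is to first establish the corresponding estimate for free Schr\"odinger solutions and then transfer to the general case via Duhamel's formula. For the homogeneous step, suppose $u(t) = S(t-a)\phi$ and $v(t) = S(t-a)\psi$ with $\widehat{\phi}$ supported in $\{|\xi|\sim N\}$ and $\widehat{\psi}$ supported in $\{|\xi|\sim M\}$. The condition $\frac{1}{q}+\frac{d-1}{r}<d-1$ is precisely the range in which Tao's bilinear adjoint restriction estimate for the paraboloid (\cite{Tao03GAFA}, building on \cite{Wol01Annals}) yields
\begin{equation*}
\|S(t)\phi\cdot S(t)\psi\|_{L_t^qL_x^r(\R\times\R^d)}\lsm \|\phi\|_{L^2}\|\psi\|_{L^2}
\end{equation*}
when the frequencies are normalised to $\sim 1$ and $\sim M/N\ll 1$. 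Rescaling $(t,x)\mapsto(N^{-2}t,N^{-1}x)$ to return to frequencies $\sim N$ and $\sim M$, keeping track of the dimensional factors coming from $\|\cdot\|_{L^2}$ (invariant up to $N^{d/2}$), from the $L_t^qL_x^r$-norm (scaling as $N^{2/q+d/r}$), and from the low-frequency support rescaling, produces exactly the prefactor $M^{d+1-\frac{d+1}{r}-\frac{2}{q}}/N^{1-\frac{1}{r}}$. This is the main technical step and also the place where one must verify the exponent bookkeeping carefully.

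Next I pass from free solutions to the general case via Duhamel's formula. Setting $F_u:=(i\pd_t\pm\tfrac12\De)u$ and $F_v:=(i\pd_t\pm\tfrac12\De)v$, write
\begin{equation*}
u(t)=S(t-a)u(a)-i\int_a^t S(t-s)F_u(s)\,ds=:u_L(t)+u_D(t),
\end{equation*}
and analogously $v=v_L+v_D$. The frequency supports of $u$ and $v$ are preserved under this decomposition (apply $P_{\sim N}$, $P_{\sim M}$ to both sides if needed, which is harmless on the equations). Expanding $uv=u_Lv_L+u_Lv_D+u_Dv_L+u_Dv_D$ reduces matters to bounding each of the four pieces in $L_t^qL_x^r(I\times\R^d)$.

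The term $u_Lv_L$ is handled immediately by the homogeneous bilinear estimate from the previous step, with $\|\phi\|_{L^2}=\|u(a)\|_{L^2}$ and $\|\psi\|_{L^2}=\|v(a)\|_{L^2}$. For the three remaining terms, which involve retarded Duhamel integrals, I invoke the Christ--Kiselev lemma to replace $\int_a^t$ by the untruncated operator $\int_a^{\sup I}$; this is legitimate precisely because $q>1$ and the admissible pair $(\tilde q,\tilde r)$ satisfies $\tilde q\neq 2$, so the relevant exponent gap is strict. After this reduction, the bilinear estimates for $u_Lv_D$, $u_Dv_L$, $u_Dv_D$ follow by inserting the homogeneous bilinear inequality in the time variable and then applying the standard (dual) Strichartz estimate of Lemma \ref{lem:strichartz} in the $L^2$-factors: $\|S(-s)F_u(s)\,ds$-type integrals are bounded in $L^2$ by $\|F_u\|_{L_t^{\tilde q'}L_x^{\tilde r'}}$, and similarly for $F_v$. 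Summing the contributions yields the claimed bound in $\|u\|_{S^*(I)}\|v\|_{S^*(I)}$.

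The principal obstacle is the first step: producing the precise $M^{d+1-(d+1)/r-2/q}/N^{1-1/r}$ prefactor from Tao's estimate by the scaling argument, which requires some attention to how the two different frequency scales transform. The rest of the argument is a standard Duhamel plus Christ--Kiselev reduction and should present no difficulty once the homogeneous estimate is in hand; indeed, the version stated coincides with the formulation in \cite{Can19MathAnn,Vis07Duke}, so ultimately the proof can be completed simply by citing these works.
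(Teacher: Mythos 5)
The paper does not prove this lemma at all: it is stated as a quoted result, with the sentence preceding it pointing to \cite{Can19MathAnn,Vis07Duke} (and to Tao--Wolff for the underlying bilinear restriction estimate), so your closing remark that the proof "can be completed simply by citing these works" is exactly the paper's approach, and your sketch of the route behind those references (rescaled bilinear restriction for free solutions, then transference to the $S^*$ class) is the standard one; your scaling bookkeeping is also consistent, since at $q=r=2$ your prefactor reduces to the classical $M^{(d-1)/2}N^{-1/2}$.

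One caution about the sketch itself, were you to carry it out rather than cite: the Christ--Kiselev reduction for the Duhamel terms requires the strict inequality $q>\wt q'$ between the output and input time exponents, not merely $q>1$ and $\wt q\ne 2$. Since the lemma allows $q$ all the way down to $1$ while $\wt q'$ can be close to $2$, the Christ--Kiselev route as you describe it does not cover the full stated range; this is precisely what the $U^2$/$V^2$-transference argument in \cite{Can19MathAnn} is designed to avoid (the hypothesis $\wt q\ne 2$ is what guarantees $u\in U^2_\Delta\subset V^2_\Delta$ from the $S^*$ bound), and is presumably why the paper cites that work rather than running the Duhamel--Christ--Kiselev argument. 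Since your proposal ultimately defers to the same references as the paper, this does not affect its validity as used here.
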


Next, we recall the standard $L_x^2$ well-posedness theory of $L_x^2$-subcritical NLS, which was first obtained by Tsutsumi \cite{Tsu87Funk}. 
\begin{lem}\label{lem:local-l2}
Let $d\goe 1$, $0<p<\frac 4d$ and $\mu=\pm 1$. Assume that the initial data $u_0\in L_x^2(\R^d)$. Then, we have the following conclusions.
\begin{enumerate}
\item (Local well-posedness)
There exists $T=T(\norm{u_0}_{L_x^2(\R^d)})>0$ such that \eqref{eq:nls} admits a unique solution $u\in C_t([0,T];L_x^2(\R^d)) \cap L_{t,x}^{\frac{2(d+2)}{d}}([0,T]\times\R^d)$. Moreover,
\EQ{
\norm{u}_{S^0([0,T])} \lsm \norm{u_0}_{L_x^2(\R^d)}.
}
\item (Global well-posedness)
There exists a unique solution $u\in C(\R;L_x^2(\R^d))$ to \eqref{eq:nls}.
\end{enumerate}
\end{lem}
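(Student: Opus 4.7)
\textbf{Proof plan for Lemma \ref{lem:local-l2}.} The plan is to carry out the classical Tsutsumi contraction mapping argument, combined with mass conservation, in a Strichartz space adapted to the mass-subcritical exponent $p$. First I would fix the $L^2$-admissible pair $(q,r):=\bigl(\tfrac{4(p+2)}{dp},p+2\bigr)$, which satisfies $\tfrac{2}{q}+\tfrac{d}{r}=\tfrac{d}{2}$, and on a ball
\EQ{
B_{T,A}:=\fbrk{u\in C_t([0,T];L^2)\cap L_t^q L_x^r([0,T]\times\R^d):\ \norm{u}_{L_t^\I L_x^2}+\norm{u}_{L_t^q L_x^r}\leq A}
}
consider the Duhamel map $\Phi(u)(t):=S(t)u_0-i\mu\int_0^t S(t-\tau)\brk{|u|^p u}(\tau)\dd\tau$. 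By Strichartz (Lemma \ref{lem:strichartz}) applied with the same admissible pair on both sides, the standard pointwise identity $||u|^p u-|v|^p v|\lsm (|u|^p+|v|^p)|u-v|$, H\"older in space with exponent $\tfrac{p+2}{p+1}$, and H\"older in time on $[0,T]$, I would obtain estimates of the form
\EQ{
\norm{\Phi(u)}_{S^0([0,T])}\lsm \norm{u_0}_{L^2}+T^{1-\frac{dp}{4}}\norm{u}_{L_t^q L_x^r}^{p+1},
}
together with an analogous Lipschitz bound for $\Phi(u)-\Phi(v)$. The exponent $1-\tfrac{dp}{4}$ is strictly positive exactly because $p<\tfrac{4}{d}$, which is the crucial use of subcriticality.

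Next, I would choose $A=2C\norm{u_0}_{L^2}$ and then take $T=T(\norm{u_0}_{L^2})>0$ small enough so that $CT^{1-\frac{dp}{4}}A^{p}\leq\tfrac{1}{2}$. This makes $\Phi$ a contraction on $B_{T,A}$, producing a unique fixed point $u\in B_{T,A}$ that solves \eqref{eq:nls} in the Duhamel sense. Applying the Strichartz inequality once more with an arbitrary $L^2$-admissible pair $(\tilde q,\tilde r)$ on the left then yields the full Strichartz control $\norm{u}_{S^0([0,T])}\lsm\norm{u_0}_{L^2}$. To upgrade uniqueness from $B_{T,A}$ to the whole class $C_t L_x^2\cap L_{t,x}^{\frac{2(d+2)}{d}}$ stated in the lemma, I would run the same Strichartz/H\"older argument on the difference of two such solutions over short subintervals: since the $L_{t,x}^{2(d+2)/d}$-norm is absolutely continuous in time, on each sufficiently short subinterval the Gronwall-type bound forces the difference to vanish, and this can be bootstrapped across $[0,T]$.

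For the global statement, I would rely on mass conservation. The identity $\norm{u(t)}_{L^2}=\norm{u_0}_{L^2}$ is first established on smooth approximations (e.g.\ for data in $H^2$, where it follows by a standard multiplication by $\bar u$ and integration by parts) and then propagated to $L^2$ data by the stability of the fixed-point construction: approximating $u_0$ in $L^2$ by $u_0^{(n)}\in H^2$ and using the Lipschitz dependence on data from the contraction gives $L^2$-convergence of the corresponding solutions, hence conservation in the limit. Since the local existence time $T$ depends only on $\norm{u(t_0)}_{L^2}$ and mass is conserved, iterating the local theory from times $t_0=0,T,2T,\dots$ (and analogously for negative times) yields a unique solution $u\in C(\R;L_x^2)$, proving the global statement.

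The main obstacle in this plan is the uniqueness upgrade to the class $C_t L_x^2\cap L_{t,x}^{\frac{2(d+2)}{d}}$: a priori a solution in this class may fail to lie in $L_t^q L_x^r$ with the auxiliary pair used in the contraction, so one cannot directly compare it to the fixed point. The standard remedy, which I would follow, is to localize in time on intervals where $\norm{u}_{L_{t,x}^{2(d+2)/d}}$ is small, apply Strichartz on the difference with a single admissible pair on both sides (using that the diagonal pair is itself dual-admissible after the H\"older step above), and close the estimate by absorption. This is the only step that requires real care; everything else is a direct application of Lemmas \ref{lem:strichartz} and standard H\"older inequalities.
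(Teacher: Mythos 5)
The paper does not prove this lemma at all: it is recalled as a classical result and attributed to Tsutsumi \cite{Tsu87Funk}, so there is no in-paper argument to compare against. Your proposal is exactly the standard proof of that classical result (contraction in $C_tL^2\cap L_t^qL_x^r$ with $(q,r)=(\tfrac{4(p+2)}{dp},p+2)$, the gain $T^{1-\frac{dp}{4}}$ from mass-subcriticality, mass conservation via approximation by smooth data, and iteration), and the outline is correct, including the final application of Strichartz with an arbitrary admissible pair to get the full $S^0$ bound.

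One remark on the step you flag as the main obstacle: the worry that a solution in $C_tL^2\cap L_{t,x}^{\frac{2(d+2)}{d}}$ might fail to lie in $L_t^qL_x^r$ is unfounded. Interpolating the norms, with $\theta=\frac{p(d+2)}{2(p+2)}\in(0,1]$ one has
\EQ{
\norm{u}_{L_t^{q}L_x^{p+2}([0,T]\times\R^d)}\loe \norm{u}_{L_t^\I L_x^2}^{1-\theta}\,\norm{u}_{L_{t,x}^{\frac{2(d+2)}{d}}}^{\theta},
}
precisely because $p<\tfrac4d$ (so $p+2\loe\tfrac{2(d+2)}{d}$), and the resulting time exponent is exactly your $q$. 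Hence any solution in the stated class automatically lies in the auxiliary space on $[0,T]$, and uniqueness follows directly from your difference estimate on short subintervals (absorbing via the $T^{1-\frac{dp}{4}}$ factor or the smallness of the diagonal norm), without any further device.
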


\subsection{Pseudo-conformal transform}
Now, we recall some notation and the properties for the pseudo-conformal transform. 
We define the pseudo-conformal  transform $\mathcal T$ to be 
\EQn{\label{def:PC-transform}
	\mathcal T f(t,x) := \frac{1}{(it)^{d/2}} e^{\frac{i|x|^2}{2t}} \bar f(\frac{1}{t},\frac{x}{t}).
}
Then we have  that  
$$
\mathcal T^{-1}=\mathcal T.
$$
If $u$ is the solution of \eqref{eq:nls}, then $\U=\TT u$ solves
\EQn{\label{eq:nls-pc}
i\pd_t \U + \frac12 \De \U = t^{\frac{dp}{2}-2}|\U|^p\U.
}
Now, we define
\EQ{
	M(t) := e^{\frac{i|x|^2}{2t}}.
}
Then, we define the vector field
\EQ{
J(t):=x+it\nabla,
}
which also has following presentation
\EQ{
J(t)=S(t)xS(-t)=M(t)(it\nabla)M(-t).
}
Then, we define the fractional vector field for any $s>0$,
\EQ{
|J(t)|^{s}:= S(t)|x|^sS(-t).
}
Moreover, we also have
\EQ{
|J(t)|^{s}=M(t)|t\nabla|^sM(-t).
}
We usually omit $t$ and write $J,|J|^s$ for short.

Then, we gather the following classical properties, which can be checked directly. 
\begin{lem}\label{lem:J-T} The following statements hold:
\begin{itemize}
\item[(1)] $\nabla \mathcal T=i \mathcal T J$ and $|\nabla|^s \mathcal T= \mathcal T |J|^s$.
\item[(2)] $J(i\partial_t+\frac12\Delta)=(i\partial_t+\frac12\Delta)J$, and the same property also holds for $|J|^s$.
\item[(3)] $|J(|u|^pu)|\lsm_p |u|^p|Ju|$.
\item[(4)]
$\TT S(t) f = S(t)\F^{-1}\wb f$.
\end{itemize}
\end{lem}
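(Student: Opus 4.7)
The four items are classical algebraic/analytic identities about $\mathcal T$ and $J$, and the plan is to establish each by direct computation, reducing the fractional versions to their integer counterparts using the involution $\mathcal T^2 = \mathrm{Id}$ and functional calculus. I would verify them in the order (1), (4), (2), (3). For (1), the integer identity $\nabla\mathcal T = i\mathcal T J$ is a chain-rule computation: differentiating $\mathcal T f(t,x) = (it)^{-d/2} M(t)\bar f(1/t, x/t)$ in $x$ produces an $(ix/t)$-contribution from $M(t)$ and a $t^{-1}\nabla\bar f$-contribution from the rescaled argument, and the same two terms appear on the right once one expands $\overline{Jf} = x\bar f - it\nabla\bar f$. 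For the fractional version, I would first verify $\mathcal T^2 = \mathrm{Id}$ by a direct Gaussian integral (the phases $e^{i|x|^2/(2t)}$ cancel against their complex conjugates and the prefactors combine to $1$ after taking principal branches), rewrite $|J|^s = M(t)|t\nabla|^s M(-t)$, and exploit the dilation identity $|t\nabla|^s[h(x/t)] = (|\nabla|^s h)(x/t)$ (checked in Fourier variables) to obtain $|J|^s\mathcal T f = \mathcal T(|\nabla|^s f)$; composing with $\mathcal T$ on the right then gives $|\nabla|^s\mathcal T = \mathcal T|J|^s$.

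For (4), I would insert the kernel formula \eqref{linear-explicit} for $S(1/t)f$ into the definition of $\mathcal T$, expand the phase $(t/2)|x/t-y|^2 = |x|^2/(2t) - x\cdot y + t|y|^2/2$, and observe that the $e^{i|x|^2/(2t)}$ produced by completing the square exactly cancels the $M(t)$ factor in $\mathcal T$; the surviving integral equals $\F^{-1}[e^{-it|\xi|^2/2}\bar f(\xi)](x) = S(t)\F^{-1}\bar f(x)$, since $\widehat{\F^{-1}\bar f} = \bar f$. A short branch-of-square-root check yields the overall constant $(it)^{-d/2}/\overline{(i/t)^{d/2}} = 1$ for $t>0$, and the $t<0$ case is analogous.

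For the integer case of (2), it is the direct commutator computation $[i\pd_t,\,x+it\nabla] = -\nabla$ cancelled by $[\tfrac12\De,\,x+it\nabla] = \nabla$. For the fractional version, I would use $|J|^2 = \sum_k J_k^2$ (which follows from $J_k = S(t)x_k S(-t)$ together with $|J|^2 = S(t)|x|^2 S(-t)$) to conclude $[L, |J|^{2n}] = 0$ for all integer $n$, and then pass to arbitrary real $s > 0$ by functional calculus of the formally selfadjoint operator $|J|^2$. Alternatively, one can conjugate by $\mathcal T$ using item (1) to reduce the identity to $[\,|\nabla|^s,\,\tfrac12\De\,] = 0$, which is trivial in the Fourier variable.

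For (3), the algebraic key step is the pointwise identity
\[
J(|u|^pu) = \brkb{1+\tfrac{p}{2}}\,|u|^p\,Ju \;-\; \tfrac{p}{2}\,|u|^{p-2}\,u^2\,\overline{Ju},
\]
which I would derive by first writing $J(|u|^pu) = |u|^p Ju + it\,u\,\nabla|u|^p$ and then substituting $\nabla|u|^p = \tfrac{p}{2}|u|^{p-2}(u\nabla\bar u + \bar u\nabla u)$ together with $it\nabla u = Ju - xu$ and $it\nabla\bar u = x\bar u - \overline{Ju}$; the stray $xu$-contributions cancel, leaving the claimed formula, and taking absolute values yields $|J(|u|^pu)| \le (1+p)|u|^p|Ju|$. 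The only real bookkeeping points are tracking principal branches of the complex square roots in (1) and (4) and justifying the fractional functional calculus in (2) rigorously on an appropriate Schwartz class; no deeper obstacle is expected.
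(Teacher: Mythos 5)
Your proposal is correct: the paper gives no proof of this lemma (it only remarks that the identities ``can be checked directly''), and your computations — the chain rule giving the $ix/t$ and $t^{-1}\nabla\bar f$ terms for (1), completing the square in the kernel \eqref{linear-explicit} for (4), the cancellation $[i\pd_t,J]=-\nabla=-[\tfrac12\De,J]$ for (2), and the pointwise identity $J(|u|^pu)=(1+\tfrac p2)|u|^pJu-\tfrac p2|u|^{p-2}u^2\overline{Ju}$ for (3) — are precisely the direct verifications intended, and each checks out. The only mildly informal step is the functional-calculus passage to fractional $s$ in (2); it can be bypassed by using the paper's own definition $|J(t)|^s=S(t)|x|^sS(-t)$ together with $(i\pd_t+\tfrac12\De)\brk{S(t)h(t)}=S(t)\,i\pd_t h(t)$, which yields $(i\pd_t+\tfrac12\De)|J|^su=|J|^s(i\pd_t+\tfrac12\De)u$ in one line (likewise $\TT^2=\mathrm{Id}$ is a direct substitution — the phases and prefactors cancel pointwise, no Gaussian integral is actually needed).
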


As a direct consequence of Lemma \ref{lem:J-T} (1) and the definition \eqref{def:PC-transform}, for any function $f(t,x)\in L_t^\I L_x^2$ and any $t\in\R\backslash\fbrk{0}$,
\begin{align}\label{H1-T}
\|\mathcal Tf(t)\|_{L^2_x}=\big\|f(\frac1t)\big\|_{L^2_x}\text{, and }
\|\nabla \mathcal Tf(t)\|_{L^2_x}=\big\|Jf(\frac1t)\big\|_{L^2_x}.
\end{align}
Finally, we give a general fact for spacetime norms under the pseudo-conformal transform: 
\begin{remark}\label{rem:spacetime-exponent-transform}
Let $u(t,x)$ be a function and
\EQ{
\U(s,y)=\TT u(s,y)=  \frac{1}{(is)^{d/2}} e^{\frac{i|y|^2}{2s}} \wb u(\frac{1}{s},\frac{y}{s}).
}
Then, for any $1\le q,r\le\I$, we have
\EQ{
\norm{u}_{L_t^q L_x^r(\R\times\R^d)} = \normb{s^{\frac d2-\frac2q-\frac dr}\U(s,y)}_{L_s^q L_y^r(\R\times\R^d)}.
}
Particularly, for any $\de>0$, we have
\EQ{
\norm{u}_{S^0((0,\de])} = \norm{\U}_{S^0([1/\de,+\I))}\text{, and }\norm{|J|^su}_{S^0((0,\de])} \sim \norm{\abs{\nabla}^s\U}_{S^0([1/\de,+\I))}.
}
\end{remark}
\subsection{Probabilistic theory}
We recall the large deviation estimate, which holds for the random variable sequence $\fbrk{\re g_k,\im g_k}$ in the Definition \ref{defn:randomization}.
\begin{lem}[Large deviation estimate, \cite{BT08inventI}]\label{lem:large-deviation}
	Let $(\Om, \A, \PP)$ be a probability space. Let $\fbrk{g_n}_{n\in\N^+}$ be a sequence of real-valued, independent, zero-mean random variables with associated distributions $\fbrk{\mu_n}_{n\in\N^+}$ on $\Om$. Suppose $\fbrk{\mu_n}_{n\in\N^+}$ satisfies that there exists $c>0$ such that for all $\ga\in\R$ and $n\in\N^+$
	\EQ{
		\absb{\int_{\R}e^{\ga x}\mathrm d \mu_n(x)}\loe e^{c\ga^2},
	}
	then there exists $\al>0$ such that for any $\la>0$ and any complex-valued sequence $\fbrk{c_n}_{n\in\N^+}\in l_n^2$, we have
	\EQ{
		\PP\brkb{\fbrkb{\om:\absb{\sum_{n=1}^\I c_n g_n(\om)}>\la}}\loe 2\exp\fbrkb{-\al\la\norm{c_n}_{l_n^2}^{-2}}.
	}
	Furthermore, there exists $C>0$ such that for any $2\loe \rho<\I$ and complex-valued sequence $\fbrk{c_n}_{n\in\N^+}\in l_n^2$, we have
	\EQ{
		\normb{\sum_{n=1}^\I c_n g_n(\om)}_{L_\om^\rho(\Om)} \loe C\sqrt{\rho} \norm{c_n}_{l_n^2}.
	}
\end{lem}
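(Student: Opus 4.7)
My plan is to prove the lemma by the classical Chernoff/sub-Gaussian machinery: the hypothesis on the Laplace transform $\int e^{\gamma x}\,d\mu_n(x)\le e^{c\gamma^2}$ says exactly that each $g_n$ is sub-Gaussian with a uniform constant, and the two conclusions are standard consequences. The only slightly delicate points are that the $c_n$ are complex while the $g_n$ are real, and that the second statement is a moment bound rather than a tail bound, so I will need to pass from one to the other via layer-cake.

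First I would reduce to real linear combinations. Writing $c_n=a_n+ib_n$, set $X=\sum a_n g_n$ and $Y=\sum b_n g_n$; by independence and the hypothesis,
\[
\mathbb{E}[e^{\gamma X}]=\prod_n\int e^{\gamma a_n x}\,d\mu_n(x)\le \prod_n e^{c\gamma^2 a_n^2}=e^{c\gamma^2\|a\|_{\ell^2}^2},
\]
and similarly for $Y$. The exponential Chebyshev inequality then gives $\PP(X>\lambda)\le e^{-\gamma\lambda+c\gamma^2\|a\|_{\ell^2}^2}$, and optimizing in $\gamma=\lambda/(2c\|a\|_{\ell^2}^2)$ yields the sub-Gaussian tail $\PP(X>\lambda)\le e^{-\lambda^2/(4c\|a\|_{\ell^2}^2)}$; applying the same with $-\gamma$ handles $\PP(X<-\lambda)$, and the same argument handles $Y$. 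Combining with $|{\textstyle\sum}c_n g_n|\le|X|+|Y|$ and $\|a\|_{\ell^2}^2+\|b\|_{\ell^2}^2=\|c_n\|_{\ell^2}^2$, I obtain the first assertion (in the stronger $\lambda^2$-form) with some absolute $\alpha>0$.

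For the moment estimate I would apply the layer-cake representation
\[
\Bigl\|\sum c_n g_n\Bigr\|_{L^\rho_\omega}^\rho=\rho\int_0^\infty \lambda^{\rho-1}\PP\bigl(|{\textstyle\sum}c_n g_n|>\lambda\bigr)\,d\lambda,
\]
insert the tail bound just established, and change variables $u=\alpha\lambda^2\|c_n\|_{\ell^2}^{-2}$ to reduce the integral to $\Gamma(\rho/2)$. Stirling's formula $\Gamma(\rho/2)^{1/\rho}\lesssim\sqrt{\rho}$ then produces $\|\sum c_n g_n\|_{L^\rho_\omega}\le C\sqrt{\rho}\,\|c_n\|_{\ell^2}$, which is the second claim. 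Convergence of the series $\sum c_n g_n$ in $L^\rho_\omega$ follows from the same moment bound applied to partial-sum differences, since $c_n\in\ell^2$ implies the tail sums vanish.

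I do not expect any serious obstacle: the only mildly non-routine points are (i) keeping track of real versus complex coefficients when applying the sub-Gaussian moment generating function bound, and (ii) making sure the Chernoff optimization in $\gamma$ is uniform in $n$ so that one ends up with a single constant $\alpha$ independent of the sequence $\{c_n\}$. Both are handled transparently by the splitting into $X$ and $Y$ and by the homogeneity of the estimate in $\|c_n\|_{\ell^2}$.
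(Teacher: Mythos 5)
Your argument is correct and is exactly the standard sub-Gaussian proof (Chernoff bound with optimization in $\gamma$, then the moment bound via layer-cake and Stirling); the paper does not prove this lemma itself but cites Burq--Tzvetkov \cite{BT08inventI}, whose proof is the same argument, so there is nothing to compare beyond that. One remark: the tail bound as printed in the statement, $2\exp\{-\al\la\norm{c_n}_{l_n^2}^{-2}\}$, is a typo (with $\la$ in place of $\la^2$ it is false, e.g.\ for a single Gaussian with small coefficient); your proof yields the correct $\la^2$ form, which is the version in \cite{BT08inventI} and the only one used later (via Lemma \ref{lem:probability-estimate}), so you should simply note that you are proving the corrected statement.
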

Then, we recall the following lemma in \cite{LM14CPDE}, which can be proved by the method in \cite{Tzv10gibbs}.
\begin{lem}\label{lem:probability-estimate}
	Let $F$ be a real-valued measurable function on a probability space $(\Om, \A, \PP)$. Suppose that there exists $C_0>0$, $K>0$ and $\rho_0\goe 1$ such that for any $\rho\goe \rho_0$, we have
	\EQ{
		\norm{F}_{L_\om^\rho(\Om)} \loe \sqrt{\rho} C_0 K.
	}
	Then, there exist $c>0$ and $C_1>0$, depending on $C_0$ and $p_0$ but independent of $K$, such that for any $\la>0$,
	\EQ{
		\PP\brkb{\fbrkb{\om\in\Om:|F(\om)|>\la}} \loe C_1 e^{-c\la^2K^{-2}}.
	}
	Particularly, we have
	\EQ{
		\PP\brkb{\fbrkb{\om\in\Om:|F(\om)|<\I}}=1.
	}
\end{lem}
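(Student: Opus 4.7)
The plan is to obtain the exponential tail bound by combining Chebyshev's inequality with a well-chosen optimization in $\rho$. The key observation is that the hypothesis gives, for every $\rho \geq \rho_0$,
\EQ{
\PP\brko{|F(\om)|>\la} \loe \la^{-\rho}\norm{F}_{L_\om^\rho}^\rho \loe \brkb{\frac{\sqrt{\rho}\, C_0 K}{\la}}^{\rho}.
}
So the whole problem reduces to choosing $\rho$ as a function of $\la,K$ so that the right-hand side has the desired Gaussian decay.

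The natural choice is to take $\rho$ proportional to $\la^2/K^2$. Concretely, I would set $\rho = \la^2/(e\, C_0^2 K^2)$. With this choice $\sqrt{\rho}\, C_0 K/\la = e^{-1/2}$, so the bound above becomes $e^{-\rho/2} = \exp\brko{-\la^2/(2e\,C_0^2 K^2)}$, which is exactly the desired Gaussian decay with $c = 1/(2eC_0^2)$.

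The one technical issue is that this choice is only admissible when $\rho \geq \rho_0$, i.e.\ when $\la \geq \sqrt{e\rho_0}\,C_0 K$. In the opposite regime $\la < \sqrt{e\rho_0}\,C_0 K$, I would simply use the trivial bound $\PP(|F|>\la) \loe 1$ and absorb it into the constant: since $\la^2 K^{-2}$ is bounded by $e\rho_0 C_0^2$ in that regime, we have $1 \loe e^{e\rho_0 C_0^2} \cdot e^{-c\la^2 K^{-2}}$ for the chosen $c$, which simply enlarges $C_1$. This splitting into two regimes is the only mildly delicate step; otherwise the proof is essentially a direct application of the layer-cake / Chebyshev trick.

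Finally, for the ``particularly'' assertion $\PP(|F(\om)|<\I)=1$, I would write $\fbrko{|F|=\I} = \cap_{n\in\N}\fbrko{|F|>n}$ and apply the tail estimate just proved: $\PP(|F|>n) \loe C_1 e^{-cn^2 K^{-2}} \to 0$ as $n\to\I$, so $\PP(|F|=\I)=0$. I do not anticipate any real obstacle here; the main (minor) care is to track that the constants $c,C_1$ depend only on $C_0,\rho_0$ and not on $K$, which is automatic from the homogeneity of the argument above.
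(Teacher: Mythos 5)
Your proof is correct, and it is the standard argument behind this lemma: the paper itself gives no proof (it simply cites \cite{LM14CPDE}, noting the method of \cite{Tzv10gibbs}), and that method is exactly your Chebyshev bound $\PP(|F|>\la)\loe(\sqrt{\rho}C_0K/\la)^{\rho}$ optimized at $\rho\sim\la^2K^{-2}$, with the small-$\la$ regime absorbed into $C_1$. The only nitpick is the constant in that regime: with $c=1/(2eC_0^2)$ one has $c\la^2K^{-2}\loe\rho_0/2$ there, so the clean choice is $C_1=e^{\rho_0/2}$ (your $e^{e\rho_0C_0^2}$ only dominates when $C_0^2\goe 1/(2e)$), which depends only on $C_0,\rho_0$ as required.
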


\vskip 1.5cm
\section{A general scattering criterion}\label{sec:scattering-criterion}
\vskip .5cm

In this section, we give a scattering criterion when the pseudo-conformal conservation law is not available, which will be applied to both the fractional weighted problem and the randomized data problem. The proof is basically a modification of Tsutsumi-Yajima's argument in \cite{TY84BAMS}.
\begin{prop}[Scattering criterion]\label{prop:scattering-criterion}
Suppose that $d\goe 1$, $\mu=1$, and $\frac2d< p < \frac4d$. Let $u$ be the solution to \eqref{eq:nls}, and denote that $u_0=v_0+w_0$, $v=S(t)v_0$, and $w=u-v$.  Assume that for some $0\loe s<1$ and $T_0>0$,
\EQn{\label{eq:scattering-criterion-assumption-initial}
\jb{x}^sv_0 \in L_x^2(\R^d),\quad \jb{x}w_0 \in L_x^2(\R^d),
}
and
\EQn{\label{eq:scattering-criterion-assumption-energy}
\sup_{0<t\le T_0}\fbrk{t^{1-\frac{dp}{4}}\norm{\TT w(t)}_{\dot H_x^1} + \norm{\TT w(t)}_{L_x^{p+2}}+ \norm{\TT v(t)}_{L_x^{p+2}} }  < C,
}
where the constant $C$ may depend on $T_0$, $\norm{\jb{x}^sv_0}_{L_x^2}$, and $\norm{\jb{x}w_0}_{L_x^2}$. Then, there exists a unique solution $u\in C(\R;L_x^2(\R^d))$ to the equation \eqref{eq:nls}. Moreover, the solution scatters forward in $L_x^2$, namely there exists $u_+\in L_x^2(\R^d)$ such that
\EQ{
\lim_{t\ra+\I}\norm{u(t)-S(t)u_+}_{L_x^2(\R^d)} =0.
}
\end{prop}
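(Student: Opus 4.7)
The plan is to apply the pseudo-conformal transform (Section 2.3), converting forward $L^2$-scattering of $u$ as $t\to+\infty$ into the existence of an $L^2$-limit at $\tau=0^+$ for the transformed variable $\U:=\TT u$ on $(0,T_0]$. Setting $\V:=\TT v$ and $\W:=\TT w$, Lemma \ref{lem:J-T}(4) identifies $\V(\tau)=S(\tau)\F^{-1}\bar{v_0}$ as a free Schr\"odinger evolution with initial datum in $H^s$ (since $\langle x\rangle^s v_0\in L^2\Leftrightarrow \F^{-1}\bar{v_0}\in H^s$), while $\W=\U-\V$ solves
\begin{align*}
    i\partial_\tau\W + \tfrac12\Delta\W = \tau^{dp/2-2}|\V+\W|^p(\V+\W)\quad \text{on } (0,T_0].
\end{align*}
The $L^2$-invariance $\|\TT f(\tau)\|_{L^2}=\|f(1/\tau)\|_{L^2}$ from \eqref{H1-T}, combined with Lemma \ref{lem:J-T}(4), makes forward $L^2$-scattering of $u$ equivalent to the existence of $\U_0\in L^2$ with $\U(\tau)\to \U_0$ in $L^2$ as $\tau\to 0^+$; the scattering state is then $u_+:=\overline{\F\U_0}$. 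Since $\V\in C(\R;L^2_x)$ by Strichartz, this reduces further to producing $\W_0\in L^2$ with $\W(\tau)\to\W_0$ in $L^2$ as $\tau\to 0^+$.

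I would establish this limit by showing that $\tau\mapsto S(-\tau)\W(\tau)$ is Cauchy in $L^2$ near $\tau=0^+$. By Duhamel,
\begin{align*}
    S(-\tau_2)\W(\tau_2)-S(-\tau_1)\W(\tau_1) = -i\int_{\tau_1}^{\tau_2}S(-\tau)\,\tau^{dp/2-2}\,|\V+\W|^p(\V+\W)\,d\tau,
\end{align*}
so everything boils down to controlling the right-hand side in $L^2_x$ in a way that vanishes as $\tau_1,\tau_2\to 0^+$. Expanding the nonlinearity as a sum of pointwise monomials of type $O(\V^a\W^b)$ with $a+b=p+1$ and applying dual Strichartz (Lemma \ref{lem:strichartz}) on admissible pairs tailored to each term, the estimate rests on three inputs: (i) the uniform $L^\infty_\tau L^{p+2}_x$-bounds on $\V$ and $\W$ provided by \eqref{eq:scattering-criterion-assumption-energy}; (ii) the singular gradient bound $\|\nabla\W(\tau)\|_{L^2}\lsm \tau^{dp/4-1}$ from the same hypothesis, whose negative power is absorbed by Gagliardo-Nirenberg interpolation (Lemma \ref{lem:GN}) against the conserved mass $\|\U(\tau)\|_{L^2}=\|u_0\|_{L^2}$ to yield a net positive power of $\tau$; and (iii) Strichartz and fractional Sobolev control on the linear flow $\V$ coming from $\F^{-1}\bar{v_0}\in H^s$, which is the only place where the weight parameter $s$ is used. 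After such a decomposition, the integrability of the leading singularity at $\tau=0$ reduces to $dp/2-2>-1$, i.e.\ $p>2/d$, the classical short-range threshold.

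Once $\W_0$ is produced, the identity $\|u(t)-S(t)u_+\|_{L^2}=\|\U(1/t)-S(1/t)\U_0\|_{L^2}$ combined with strong continuity of $\{S(\tau)\}$ on $L^2$ at $\tau=0$ delivers $u(t)-S(t)u_+\to 0$ in $L^2$ as $t\to+\infty$; the backward case is symmetric. Global existence and uniqueness of $u\in C(\R;L^2)$ are free from Lemma \ref{lem:local-l2}. The step I expect to be most delicate is the second one: orchestrating the Strichartz exponents so that, for every allocation $\V^a\W^b$ in the expansion of $|\V+\W|^p(\V+\W)$, the singular weight $\tau^{dp/2-2}$ is beaten by a genuine positive power of $\tau$ harvested from $\|\nabla\W\|_{L^2}^{dp/2}$ via Gagliardo-Nirenberg, uniformly across the whole subcritical range $\frac2d<p<\frac4d$ and without demanding more than $H^s$-regularity from $\V$.
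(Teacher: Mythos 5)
Your reduction (global well-posedness from Lemma \ref{lem:local-l2}, pseudo-conformal transform, and the equivalence of forward $L^2$-scattering of $u$ with the existence of an $L^2$-limit of $\W(\tau)$ as $\tau\to0^+$) matches the paper, but the way you propose to produce that limit has a genuine gap. You want $S(-\tau)\W(\tau)$ to be Cauchy by estimating the Duhamel integral of $\tau^{dp/2-2}|\U|^p\U$ in $L^2_x$ via dual Strichartz, beating the weight with ``a net positive power of $\tau$ harvested from $\|\nabla\W\|_{L^2}^{dp/2}$ via Gagliardo--Nirenberg.'' This is backwards: the hypothesis gives $\norm{\nabla\W(\tau)}_{L_x^2}\lsm\tau^{\frac{dp}{4}-1}$ with a \emph{negative} exponent, so interpolating against the conserved mass only mitigates, never reverses, the singularity. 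Concretely, for the pure-$\W$ contribution the only dual pair compatible with the bare $L^1_\tau$-integrability of $\tau^{dp/2-2}$ is $(q',r')=(1,2)$, and then Gagliardo--Nirenberg gives the total time exponent $\frac{(dp)^2}{8}-2$, which is $\le-1$ whenever $dp\le 2\sqrt2$; taking instead $r'=\frac{p+2}{p+1}$ (so that only the assumed $L^{p+2}$-bounds are used) forces the weight into $L^{q'}_\tau$ with $q'>1$, and $(\frac{dp}{2}-2)q'\le-1$ near $dp=2$. Moreover the proposition allows $s$ as low as $0$, in which case $\V$ carries no fixed-time integrability beyond $L^2\cap L^{p+2}$, so the $\V$-dominant monomials cannot be placed in the spaces your scheme needs either. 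So the advertised tools do not give the Cauchy estimate uniformly on $\frac2d<p<\frac4d$.

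The paper avoids any norm estimate of the Duhamel term. It pairs the equation for $\W$ with a fixed $\ph\in H^1_x$: the gradient term contributes $|t_1^{dp/4}-t_2^{dp/4}|\,\norm{\ph}_{H^1}$ and the nonlinearity contributes $|t_1^{dp/2-1}-t_2^{dp/2-1}|\,\norm{\ph}_{L^{p+2}}$, using only $\norm{\U}_{L^{p+2}}\lsm1$, $H^1\subset L^{p+2}$, and $p>\frac2d$; this yields weak $L^2$-convergence $\W(t)\wra\W_0$. The strong convergence is then recovered not by a Duhamel bound but by the choice $\ph=\W(t_1)$, where the blow-up $\norm{\W(t_1)}_{H^1}\lsm t_1^{dp/4-1}$ is exactly cancelled by the factor $|t_1^{dp/4}-t_2^{dp/4}|\lsm t_1^{dp/4}$, giving $|\jb{\W(t_1)-\W_0,\W(t_1)}|\lsm t_1^{dp/2-1}\to0$ and hence $\norm{\W(t_1)-\W_0}_{L^2}\to0$. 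If you want to salvage your plan, you should replace the Strichartz/Cauchy step with this weak-convergence-plus-testing argument (or find some other mechanism that exploits the pairing structure rather than absolute norm bounds); as written, your central step fails precisely in the regime $p$ close to $\frac2d$ that the proposition must cover.
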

\begin{proof}
By $u_0=w_0+v_0\in L_x^2(\R^d)$ and Lemma \ref{lem:local-l2}, we have the global well-posedness of \eqref{eq:nls}, then it suffices to prove the scattering. We apply the pseudo-conformal transform in \ref{def:PC-transform}, 
\EQ{
\U:=\TT u, \quad \W:=\TT w,\quad\text{and}\quad\V:=\TT v.
}
Then, $\U$ satisfies \eqref{eq:nls-pc}, and $\W$ satisfies the equation
\EQ{
i\pd_t \W + \frac12 \De \W = t^{\frac{dp}{2}-2}|\U|^p\U,
}
with asymptotic condition:
\EQ{
\lim_{t\ra+\I}\norm{\W(t)-S(t)\F^{-1}\wb w_0}_{L_x^2}=0.
}
By the assumption \eqref{eq:scattering-criterion-assumption-energy}, we have for any $0<t<T_0$,
\EQn{\label{esti:scattering-criterion-assumption-energy}
\norm{\nabla \W(t)}_{L_x^2(\R^d)}\lsm t^{\frac{dp}{4}-1}\text{, and } \norm{\W(t)}_{L_x^{p+2}(\R^d)}+ \norm{\U(t)}_{L_x^{p+2}(\R^d)} \lsm 1.
}
	
Using the equation of $\W$, for any $\ph \in H_x^1(\R^d)$ and $0<t_2<t_1$,
\EQ{
\jb{\W(t_1)-\W(t_2),\ph} = & -i\int_{t_2}^{t_1} \jb{\nabla_x \W(\ta), \nabla_x \ph} \dd\ta \\
& -i\int_{t_2}^{t_1} \ta^{\frac{dp}{2}-2}\jb{|\U|^p\U,\ph}\dd\ta.
}
Then, by $H^1(\R^d) \subset L^{p+2}(\R^d)$ and \eqref{esti:scattering-criterion-assumption-energy},
\EQ{
\abs{\jb{\W(t_1)-\W(t_2),\ph}} \lsm &  \brkb{\norm{\ph}_{H_x^1}\absb{t_1^{\frac{dp}{4}}-t_2^{\frac{dp}{4}}} + \norm{\ph}_{L_x^{p+2}} \absb{t_1^{\frac{dp}{2}-1}-t_2^{\frac{dp}{2}-1}}}\\
\lsm &  \norm{\ph}_{H_x^1} \brkb{\absb{t_1^{\frac{dp}{4}}-t_2^{\frac{dp}{4}}} + \absb{t_1^{\frac{dp}{2}-1}-t_2^{\frac{dp}{2}-1}}}.
}
Since $p>\frac2d$, this implies that $\fbrk{\W(t)}$ is weakly convergent in $H_x^{-1}(\R^d)$ when $t\ra 0$. Using the density argument, there exists $\W_0\in L_x^2(\R^d)$, such that
\EQ{
\W(t)\wra \W_0\text{, when }t\ra 0^+\text{, weakly in }L_x^2(\R^d).
}
Particularly, taking $\ph=\W(t_1)$ in the above argument and by \eqref{esti:scattering-criterion-assumption-energy}, 
\EQ{
\abs{\jb{\W(t_1)-\W(t_2),\W(t_1)}}\lsm &   \norm{\W(t_1)}_{H_x^1}\absb{t_1^{\frac{dp}{4}}-t_2^{\frac{dp}{4}}} + \norm{\W(t_1)}_{L_x^{p+2}} \absb{t_1^{\frac{dp}{2}-1}-t_2^{\frac{dp}{2}-1}} \\
\lsm & \absb{t_1^{\frac{dp}{2}-1}-t_1^{\frac{dp}{4}-1}t_2^{\frac{dp}{4}}} + \absb{t_1^{\frac{dp}{2}-1}-t_2^{\frac{dp}{2}-1}}.
}
Noting that $\frac{dp}{2}-1>0$ and $\W(t_2)\wra \W_0$, let $t_2\ra0^+$,
\EQ{
\abs{\jb{\W(t_1)-\W_0,\W(t_1)}} \lsm & t_1^{\frac{dp}{2}-1}.
}
Therefore, by $\W(t_1)\wra \W_0$,
\EQ{
\lim_{t\ra 0^+}\norm{\W(t)-\W_0}_{L_x^2(\R^d)}^2= \lim_{t_1\ra 0^+} \brkb{ \jb{\W(t_1)-\W_0,\W(t_1)} - \jb{\W(t_1)-\W_0,\W_0}} =0,
}
then by Plancherel's identity,
\EQ{
\lim_{t\ra +\I}\normb{w(t)-S(t) \F^{-1}\wb \W_0}_{L_x^2(\R^d)}^2 =0,
}
which implies
\EQ{
\lim_{t\ra +\I}\normb{u(t)-S(t) \brk{\F^{-1}\wb \W_0 +v_0}}_{L_x^2(\R^d)}^2 =0.
}
The scattering statement follows by taking $u_+=\F^{-1}\wb \W_0 +v_0$. 
\end{proof}

\vskip 1.5cm
\section{Scattering in fractional weighted space: 3D case}\label{sec:fractional}
\vskip .5cm

\subsection{Fractional weighted condition}\label{sec:fra-initial}
We first make some basic settings for Theorem \ref{thm:frac-weighted-3d} and \ref{thm:frac-weighted-dd}, and the contents in Section \ref{sec:fra-initial} and \ref{sec:fra-local} work for general dimensions. 

Now in Section \ref{sec:fra-initial}, we make a ``high-low'' decomposition of the initial data. Fix some $0<s<1$, and let $\jb{x}^su_0\in L_x^2$. Throughout the proof of these two theorems, the norm $\norm{\jb{x}^su_0}_{L_x^2(\R^d)}$ can be viewed as a constant, then we usually omit this dependence, and write
\EQ{
C=C(\norm{\jb{x}^su_0}_{L_x^2(\R^d)})
}
for short. Fix a sufficiently small constant $\de_0>0$. Since $\jb{x}^su_0\in L_x^2$, then there exists $N_0=N_0(\de_0)\in2^\N$ such that
\EQn{
\norm{\jb{x}^s\chi_{\goe N_0}u_0}_{L_x^2(\R^d)} \loe \de_0.
}
Take the initial data as
\EQ{
v_0= \chi_{\goe N_0}u_0 \text{, and }w_0= \chi_{\loe N_0}u_0.
}
Then, define
\EQ{
v=S(t)v_0\text{, and }w=u-v.
}
Therefore, $w$ solves the equation
\EQ{
\left\{ \aligned
&i\pd_t w +\frac12\De w=|u|^pu, \\
& w(0,x) = w_0.
\endaligned
\right.
}
Then, we apply the pseudo-conformal transform, and define
\EQ{
\V:=\TT v\text{, }\W:=\TT w\text{, and }\U:=\TT u.
}
Then, denote the final data as
\EQ{
\U_+:= \F^{-1} \wb u_0\text{, } \V_+ := \F^{-1} \wb v_0\text{, and }\W_+ := \F^{-1} \wb w_0.
}
Therefore, we have
\EQ{
\V = S(t)\V_+,
}
and
\EQn{\label{eq:nls-w-pc-duhamel}
\W = S(t)\W_+ +i\int_t^{+\I}S(t-\ta)(\ta^{\frac{dp}{2}-2}|\U|^p\U) \dd \ta,
}
where the integral is in the sense that
\EQ{
\lim_{t'\ra+\I}\int_t^{t'}S(t-\ta)(\ta^{\frac{dp}{2}-2}|\U|^p\U) \dd \ta\quad\text{in }L^2.
}

Now, we give the proof of Theorem \ref{thm:frac-weighted-3d}. Define the modified pseudo-conformal energy for the nonlinear part $\W$:
\EQn{\label{defn:pseudo-conformal-energy}
	\E(t) := \frac{1}{4}t^{2-\frac{dp}{2}}\int_{\R^d}|\nabla\W(t,x)|^2 \dx + \frac{1}{p+2} \int_{\R^d}|\U(t,x)|^{p+2} \dx.
}
\begin{remark}\label{rem:reason-for-modified-energy}
We now explain why we use the modified energy as in \eqref{defn:pseudo-conformal-energy}, rather than the following 
\EQ{
\wt\E(t) := \frac{1}{4}t^{2-\frac{dp}{2}}\int_{\R^d}|\nabla\W(t,x)|^2 \dx + \frac{1}{p+2} \int_{\R^d}|\W(t,x)|^{p+2} \dx.
}
In fact, this modification is necessary from the perspective of energy estimate. The energy increment of $\wt\E$ includes the term
\EQn{\label{eq:reason-for-modified-energy}
\int_0^T\int_\R \nabla\W_{hi}\cdot\nabla\W_{hi} O(\V_{low}\W_{low}^p)\dd x\dd t.
}
First, if we simply use energy bound for $\nabla \W$, then
\EQ{
\int_0^T t^{\frac d2p-2} \wt \E(t)  \norm{\V_{low}}_{L_x^\I}\norm{\W_{low}}_{L_x^\I}^p\dd t.
}
Since $\norm{\V}_{L_x^\I}$ and $\norm{\W}_{L_x^\I}$ also create additional time singularity near the origin,  the above integral in $t$ would be divergent when $p$ is close to $2/d$. Second, if we attempt to apply the bilinear Strichartz estimate to $\nabla\W_{hi}\V_{low}$, then
\EQ{
\eqref{eq:reason-for-modified-energy} \lsm & \norm{\nabla\W_{hi}\V_{low}}_{L_{t,x}^2} \norm{\nabla\W_{hi} \W^p}_{L_{t,x}^2} \\
\lsm & \norm{\nabla\W_{hi}\V_{low}}_{L_{t,x}^2}  \normb{t^{\frac d4p-1} \wt \E(t) \norm{\W}_{L_x^{\I}}^p}_{L_{t}^2}.
}
We can see that the latter integral in $t$ would still be divergent when $p$ is close to $2/d$. Then, it would be very difficult to handle the term \eqref{eq:reason-for-modified-energy} using our method. Therefore, we make the modification in \eqref{defn:pseudo-conformal-energy} to avoid such difficulties.
\end{remark}  

First, we give the estimates for final data:
\begin{lem}\label{lem:fra-initial-data}
Let $\de_0$, $N_0$, $\V_+$, and $\W^+$ be defined as above. Suppose that $0<s<1$, and $\jb{x}^su_0\in L_x^2$. Then,
\EQn{\label{eq:fra-bound-v0}
\norm{\V_+}_{H_x^s}\lsm\de_0,
}
and
\EQn{\label{eq:fra-bound-w0}
\norm{\W_+}_{H_x^s} + N_0^{s-1}\norm{\W_+}_{\dot H_x^1} \lsm 1.
}
\end{lem}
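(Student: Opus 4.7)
My plan hinges on a single elementary observation: under the operation $f\mapsto \F^{-1}\wb f$, polynomial spatial decay is converted into Sobolev regularity. Concretely, for any $f\in L_x^2(\R^d)$, if $g:=\F^{-1}\wb f$ then $\wh g=\wb f$, so Plancherel gives
\EQ{
\norm{g}_{H_x^s}=\normb{\jb{\xi}^s\wb f(\xi)}_{L_\xi^2}=\norm{\jb{x}^s f}_{L_x^2}\text{, and }\norm{g}_{\dot H_x^1}=\norm{|x|f}_{L_x^2}.
}
I would record this identity first, as it mechanizes both bounds.

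With this identity in hand, the first bound \eqref{eq:fra-bound-v0} is immediate: applying it to $v_0=\chi_{\goe N_0}u_0$ yields
\EQ{
\norm{\V_+}_{H_x^s}=\norm{\jb{x}^s\chi_{\goe N_0}u_0}_{L_x^2}\loe \de_0
}
by the very choice of $N_0$ at the start of Section \ref{sec:fra-initial}.

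For the second bound I would apply the same identity to $w_0=\chi_{\loe N_0}u_0$. The $H_x^s$ part follows at once from $|\chi_{\loe N_0}|\loe 1$ and the standing assumption $\jb{x}^s u_0\in L_x^2$, giving $\norm{\W_+}_{H_x^s}=\norm{\jb{x}^s\chi_{\loe N_0}u_0}_{L_x^2}\lsm 1$. For the $\dot H_x^1$ part, the key ingredient is that $\chi_{\loe N_0}$ is supported where $|x|\lsm N_0$, so on this support one has the pointwise interpolation $|x|\loe N_0^{1-s}\jb{x}^s$. This yields
\EQ{
\norm{\W_+}_{\dot H_x^1}=\norm{|x|\,\chi_{\loe N_0}u_0}_{L_x^2}\lsm N_0^{1-s}\norm{\jb{x}^s u_0}_{L_x^2}\lsm N_0^{1-s},
}
and hence $N_0^{s-1}\norm{\W_+}_{\dot H_x^1}\lsm 1$ as desired. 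There is no genuine obstacle here: the only delicate points are the Fourier-conjugate identity trading spatial weights for derivatives and the fact that the loss $N_0^{1-s}$ from replacing $|x|$ by $\jb{x}^s$ is exactly compensated by the prefactor $N_0^{s-1}$ in the lemma statement, so that both the smallness in \eqref{eq:fra-bound-v0} and the $N_0$-independent bound in \eqref{eq:fra-bound-w0} come out cleanly.
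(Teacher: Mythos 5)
Your proof is correct and is exactly the computation the paper has in mind: the paper simply states that the lemma "follows by the definition of $\V_+$ and $\W_+$," i.e.\ by Plancherel $\norm{\F^{-1}\wb f}_{H^s_x}=\norm{\jb{x}^sf}_{L^2_x}$ and $\norm{\F^{-1}\wb f}_{\dot H^1_x}=\norm{|x|f}_{L^2_x}$, combined with the choice of $N_0$ and the support of the spatial cutoffs, just as you argue.
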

\begin{remark}
Recall that the implicit constant may depend on $\norm{\jb{x}^su}_{L_x^2(\R^d)}$.
\end{remark}
This lemma follows by the definition of $\V_+$ and $\W_+$. By \eqref{eq:fra-bound-v0} and the Strichartz estimate, we obtain:
\begin{lem}[Linear estimates]\label{lem:fra-linear}
Let $\de_0$, $N_0$, $\V_+$, and $\W_+$ be defined as above. Suppose that $0<s<1$, and $\jb{x}^su_0\in L_x^2$. Then,
\EQn{\label{eq:fra-bound-v}
\norm{\jb{\nabla}^s\V}_{S^0(\R)} \lsm\de_0,
}
where $S^0$ is defined in \eqref{defn:s0}.
\end{lem}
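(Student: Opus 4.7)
The plan is to reduce the claim directly to the standard Strichartz estimate (Lemma \ref{lem:strichartz}) combined with the bound on the final data already recorded in Lemma \ref{lem:fra-initial-data}. Since $\V = S(t)\V_+$ and the Fourier multiplier $\jb{\nabla}^s$ commutes with the Schr\"odinger propagator $S(t)$, we can first commute the derivative through and then apply the homogeneous Strichartz inequality.

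More precisely, for any $L^2$-admissible pair $(q,r)$, I would write
\EQ{
\norm{\jb{\nabla}^s \V}_{L_t^q L_x^r(\R\times\R^d)} = \norm{S(t)\jb{\nabla}^s \V_+}_{L_t^q L_x^r(\R\times\R^d)} \lsm \norm{\jb{\nabla}^s \V_+}_{L_x^2(\R^d)} = \norm{\V_+}_{H_x^s(\R^d)}.
}
Taking the supremum over all $L^2$-admissible $(q,r)$ then yields $\norm{\jb{\nabla}^s\V}_{S^0(\R)}\lsm \norm{\V_+}_{H_x^s}$ by the definition \eqref{defn:s0} of $S^0$. Invoking the smallness bound \eqref{eq:fra-bound-v0} from Lemma \ref{lem:fra-initial-data}, namely $\norm{\V_+}_{H_x^s}\lsm \de_0$, closes the estimate.

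There is no real obstacle here: the argument is a two-line invocation of known tools, and the smallness is inherited directly from the choice of cutoff threshold $N_0$ used to split the initial data in Section \ref{sec:fra-initial}. The only point to notice is that $\jb{\nabla}^s$ commutes with $S(t)$ (a trivial Fourier-side computation), so no commutator terms arise, and the constant in the final inequality is independent of $s$ and of $N_0$, depending only on the Strichartz constant and on the fixed quantity $\norm{\jb{x}^s u_0}_{L_x^2}$ through the a priori smallness encoded in $\de_0$.
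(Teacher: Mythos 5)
Your proof is correct and is essentially the paper's own argument: the paper simply states that Lemma \ref{lem:fra-linear} follows from \eqref{eq:fra-bound-v0} and the Strichartz estimate, which is exactly your two-line reduction (commute $\jb{\nabla}^s$ with $S(t)$, apply Lemma \ref{lem:strichartz}, invoke $\norm{\V_+}_{H_x^s}\lsm\de_0$ from Lemma \ref{lem:fra-initial-data}).
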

\begin{remark}
Note that $\V$ is a high-frequency cut-off function. In fact, by the decomposition of initial data, we can observe that for any $t>0$, $\wh \V(t,\cdot)$ is supported on $\fbrk{\xi\in\R:|\xi|\gsm N_0}$. Therefore, we can express $V$ as the sum:
\EQ{
\V=\sum_{N:N\gsm N_0} \V_N.
}
In the following, we will keep this point in mind, but for brevity, we will usually omit the lower bound $N_0$ when summing over $N$.
\end{remark}

\subsection{Local estimate for fractional weight}\label{sec:fra-local}
In this section, we start with the following local result on $|J|^su$ with general dimensions $d\goe 1$.
\begin{lem}\label{lem:Ju-local}
Let $0\loe s\loe 1$, $\langle x\rangle^su_0\in L_x^2(\R^d)$, and $u$ be the solution to \eqref{eq:nls}. Then, there exists $\de=\de(d,p,\norm{u_0}_{L_x^2(\R^d)})>0$ such that $|J|^su\in C([0,\de]; L_x^2(\R^d))$ with
\EQn{\label{eq:bound-Ju-local}
\norm{|J|^su}_{S^0([0,\de])} \lsm 1.
}
\end{lem}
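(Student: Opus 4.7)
The plan is to apply $|J|^s$ to the Duhamel formula for $u$ and close the resulting estimate by Strichartz plus a H\"older-in-time gain inherited from the mass-subcritical scaling. First I would invoke Lemma \ref{lem:local-l2} to obtain some $\delta_1=\delta_1(d,p,\|u_0\|_{L_x^2})>0$ and a solution $u\in S^0([0,\delta_1])$ with $\|u\|_{S^0([0,\delta_1])}\lesssim \|u_0\|_{L_x^2}$. Since by Lemma \ref{lem:J-T}(2) the operator $|J|^s$ commutes with $i\partial_t+\tfrac12\Delta$, the equation for $|J|^s u$ formally reads
\begin{equation*}
|J|^s u(t)=S(t)\,|x|^s u_0 - i\mu\int_0^t S(t-\tau)\,|J|^s(|u|^p u)(\tau)\,d\tau,
\end{equation*}
and the linear contribution is immediately under control since $|x|^s u_0\in L_x^2$ by hypothesis.

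The heart of the argument is the nonlinear estimate. Using the representation $|J|^s=M(t)|t\nabla|^s M(-t)$ from the formulas before Lemma \ref{lem:J-T}, together with the pointwise identity $M(-t)(|u|^p u)=|M(-t)u|^p\,M(-t)u$ (valid since $|M(-t)|=1$ and the nonlinearity is gauge invariant), one obtains, for $t>0$,
\begin{equation*}
|J|^s(|u|^p u)=t^{s}\,M(t)\,|\nabla|^s\bigl(|M(-t)u|^p\,M(-t)u\bigr).
\end{equation*}
Combining the fractional chain rule of Lemma \ref{lem:frac-chain} with the identity $|\nabla|^s M(-t)u=t^{-s}\,M(-t)|J|^s u$ read off from the same representation, one obtains, for the mass-subcritical $L_x^2$-admissible pair $(q_0,r_0)=\bigl(\tfrac{4(p+2)}{dp},\,p+2\bigr)$,
\begin{equation*}
\bigl\||J|^s(|u|^p u)\bigr\|_{L_x^{r_0'}}\lesssim \|u\|_{L_x^{r_0}}^p\,\||J|^s u\|_{L_x^{r_0}},
\end{equation*}
the factors $t^{\pm s}$ cancelling exactly. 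Feeding this into Strichartz (Lemma \ref{lem:strichartz}) and applying H\"older in time on $[0,\delta]$ for $\delta\le\delta_1$ produces
\begin{equation*}
\||J|^s u\|_{S^0([0,\delta])}\lesssim \||x|^s u_0\|_{L_x^2}+\delta^{\theta}\,\|u\|_{S^0([0,\delta])}^{p}\,\||J|^s u\|_{S^0([0,\delta])},
\end{equation*}
with positive gain $\theta=1/q_0'-(p+1)/q_0>0$ guaranteed by the subcritical condition $p<\tfrac{4}{d}$. Choosing $\delta$ small enough (depending only on $d$, $p$, and $\|u_0\|_{L_x^2}$) absorbs the nonlinear term into the left-hand side and delivers \eqref{eq:bound-Ju-local}; continuity in time is then standard from the inhomogeneous Strichartz bound.

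The main technical obstacle is that the manipulations above are only formal unless $|J|^s u$ is a priori finite in the relevant norms (the identity $|\nabla|^s M(-t)u=t^{-s}M(-t)|J|^s u$ has to be read in a class where both sides are defined). I plan to bypass this by approximation: choose a sequence $u_0^{(n)}\in\mathcal{S}(\R^d)$ with $u_0^{(n)}\to u_0$ and $|x|^s u_0^{(n)}\to |x|^s u_0$ in $L_x^2$ (possible since $\langle x\rangle^s u_0\in L_x^2$), run the contraction argument above to obtain smooth solutions $u^{(n)}$ on a common interval $[0,\delta]$ with uniform $S^0$-bounds for $|J|^s u^{(n)}$, and pass to the limit using the $L_x^2$ stability from Lemma \ref{lem:local-l2} together with a weak-compactness argument in $L_t^{\infty}L_x^2$.
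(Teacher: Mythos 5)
Your proposal is correct and follows essentially the same route as the paper: apply $|J|^s$ to the Duhamel formula, use $|J|^s=M(t)|t\nabla|^sM(-t)$ together with gauge invariance and the fractional chain rule to get $\||J|^s(|u|^pu)\|_{L_x^{r_0'}}\lsm\|u\|_{L_x^{r_0}}^p\||J|^su\|_{L_x^{r_0}}$, then Strichartz plus H\"older in time to gain $\de^{(4-dp)/4}$ and absorb via the $S^0$ bound from Lemma \ref{lem:local-l2}. The only differences are cosmetic: you work with the single admissible pair $(\tfrac{4(p+2)}{dp},p+2)$ while the paper uses the symmetric pair with a mismatched time exponent (both yield the same gain), and your closing approximation argument supplies rigor that the paper leaves implicit.
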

\begin{proof}
Let $\de>0$ be some small parameter that will be defined later. In this lemma, we will restrict the variable $(t,x)$ on $[0,\de]\times\R^d$. First, applying the operator $|J|^s$ to the equation \eqref{eq:nls}, and by Lemma \ref{lem:J-T},
\EQ{
(i\pd_t + \frac12\De)|J|^su = |J|^s(|u|^pu).
}
Then, we have the Duhamel's formula:
\EQ{
|J|^su
=S(t)(|x|^su_0)-i\int_0^t   S(t-\ta)|J|^s(|u(\ta)|^pu(\ta))\dd \ta.
}
Denote that
\EQ{
u_1(t):=M(-t)u(t).	
}
By $|J|^s=M(t)|t\nabla|^sM(-t)$,
\EQn{\label{eq:Js-act-on-nonlinearterm}
|J|^s(|u(t)|^pu(t)) = & M(t)|t\nabla|^sM(-t) (|u(t)|^pu(t)) \\
= & M(t)|t\nabla|^s (|M(-t)u(t)|^pM(-t)u(t)) \\
= & M(t)|t\nabla|^s (|u_1|^pu_1).
}
%Denote 
%\begin{align*}
%X^0=
%\Big\{L^q_tL^r_x([0,\delta]\times \R^d): q=r=\frac{2(d+2)}{d}\Big\}.
%\end{align*}
Note that
\EQ{
S^0\subset L_t^\I L_x^2 \cap L_{t,x}^{\frac{2(d+2)}{d}}.
}
By Lemma \ref{lem:strichartz}, \eqref{eq:Js-act-on-nonlinearterm}, fractional chain rule in Lemma \ref{lem:frac-chain}, and H\"older's inequality in $t$,
\EQ{
\norm{|J|^su}_{S^0([0,\de])} \lsm & \||x|^su_0\|_{L^2} +\norm{M(t)|t\nabla|^s (|u_1|^pu_1)}_{L_t^{q_0'} L_x^{r_0'}}\\
\sim & \||x|^su_0\|_{L^2} +\norm{|t\nabla|^s (|u_1|^pu_1)}_{L_t^{q_0'} L_x^{r_0'}} \\
\lsm & \||x|^su_0\|_{L^2} +\normb{ \norm{u_1}_{L_x^r}^{p}\norm{|t\nabla|^su_1}_{L_x^{r}}}_{L_t^{q_0'}} \\
\lsm & \||x|^su_0\|_{L^2} + \norm{u_1}_{S^0}^p \norm{|t\nabla|^su_1}_{L_t^{q_1} L_x^{r}} \\
\sim & \||x|^su_0\|_{L^2} + \norm{u}_{S^0}^p \norm{|J|^su}_{L_t^{q_1} L_x^{r}},
}
where the parameters involved satisfy that 
\begin{align*}
\frac1{r_0}=\frac{d+4-dp}{2(d+2)};\quad 
\frac1{q_0}=\frac{d(dp-2)}{4(d+2)};\quad 
\frac1{q_1}=\frac{6d+8-(d+2)dp}{4(d+2)};\quad 
\frac1q=\frac1r=\frac{d}{2(d+2)}.
\end{align*}
Moreover, when $\frac2d<p<\frac4d$, we find that $q_1<r$. Using H\"older's inequality in $t$ and choosing $\delta<1$, we further have that 
\EQn{\label{421-2}
\norm{|J|^su}_{S^0}
\lsm & \||x|^su_0\|_{L^2} + \de^{\frac{1}{q_1}-\frac{1}{r}} \norm{u}_{S^0}^p \norm{|J|^su}_{S^0}.
}
By the local results in Lemma \ref{lem:local-l2}, we can set $\delta=\de(d,p,\norm{u_0}_{L_x^2})>0$ small enough such that 
$$
C\de^{\frac{1}{q_1}-\frac{1}{r}}\norm{u}_{S^0}^p \loe C\de^{\frac{1}{q_1}-\frac{1}{r}}\norm{u_0}_{L_x^2}^p\loe \frac12,
$$ 
then from \eqref{421-2}, we get that
\EQ{
\norm{|J|^su}_{S^0} 
\lsm & \|\jb{x}^su_0\|_{L^2}.
}
This finishes the proof of the lemma.
\end{proof}
We also need Lemma \ref{lem:Ju-local} with the pseudo-conformal variable:
\begin{cor}\label{cor:fra-local-u}
Suppose that $0\loe s\loe 1$, $\U_+\in H_x^s(\R^d)$, and $\U(t,x)$ is the solution to \eqref{eq:nls-pc}. Then, there exists some constant $T=T(d,p,\norm{u_0}_{L_x^2(\R^d)})>0$ such that
\EQ{
\norm{\jb{\nabla}^s\U}_{S^0([T,+\I))} \lsm 1.
}
\end{cor}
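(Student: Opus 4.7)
The plan is to deduce Corollary \ref{cor:fra-local-u} directly from Lemma \ref{lem:Ju-local} via the pseudo-conformal transform, using the dictionary between Cauchy data at $t=0$ for $u$ and final data at $s=+\infty$ for $\U$.

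First I would match up the hypotheses. Since $\U_+=\F^{-1}\wb u_0$, Plancherel's identity gives
\EQ{
\norm{\jb{\nabla}^s\U_+}_{L_x^2}=\norm{\jb{\xi}^s\wb{u_0}(\xi)}_{L_\xi^2}=\norm{\jb{x}^s u_0}_{L_x^2},
}
so the assumption $\U_+\in H_x^s(\R^d)$ is equivalent to $\jb{x}^s u_0\in L_x^2(\R^d)$, and in particular $\norm{u_0}_{L_x^2}=\norm{\U_+}_{L_x^2}$. This is precisely the hypothesis required by Lemma \ref{lem:Ju-local}.

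Next, I would let $u$ denote the $L_x^2$-solution of \eqref{eq:nls} with initial datum $u_0$, which exists on a time interval $[0,\de]$ with $\de=\de(d,p,\norm{u_0}_{L_x^2})>0$ by Lemma \ref{lem:local-l2}, together with the bound $\norm{u}_{S^0([0,\de])}\lsm\norm{u_0}_{L_x^2}\lsm 1$. Applying Lemma \ref{lem:Ju-local}, possibly after shrinking $\de$, also yields $\norm{|J|^s u}_{S^0([0,\de])}\lsm 1$.

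Finally, I would invoke Remark \ref{rem:spacetime-exponent-transform}, which under $\U=\TT u$ translates norms on $(0,\de]$ into norms on $[1/\de,+\infty)$:
\EQ{
\norm{\U}_{S^0([1/\de,+\I))}=\norm{u}_{S^0((0,\de])},\qquad \norm{\abs{\nabla}^s\U}_{S^0([1/\de,+\I))}\sim\norm{|J|^s u}_{S^0((0,\de])}.
}
Setting $T:=1/\de$ and combining the two estimates produces
\EQ{
\norm{\jb{\nabla}^s\U}_{S^0([T,+\I))}\lsm\norm{\U}_{S^0([T,+\I))}+\norm{\abs{\nabla}^s\U}_{S^0([T,+\I))}\lsm 1,
}
which is the desired conclusion. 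Since every step is a direct application of a previously established lemma or of the elementary change-of-variable formula of Remark \ref{rem:spacetime-exponent-transform}, there is no real obstacle here; the only point that deserves verification is the correspondence between $\U_+\in H^s$ and $\jb{x}^s u_0\in L_x^2$, which is immediate from Plancherel.
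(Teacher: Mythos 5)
Your proposal is correct and follows essentially the same route as the paper: identify $\U_+\in H^s$ with $\jb{x}^su_0\in L^2$ via Plancherel, invoke Lemma \ref{lem:Ju-local} (together with the $S^0$ bound from Lemma \ref{lem:local-l2} for the low-order part), and transfer the bounds through Remark \ref{rem:spacetime-exponent-transform} with $T=1/\de$. Your version is slightly more explicit than the paper's in spelling out the $\jb{\nabla}^s=$ (mass $+$ $|\nabla|^s$) combination, but there is no substantive difference.
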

\begin{proof}
Note that $\U_+\in H_x^s(\R^d)$ implies $\jb{x}^su_0\in L^2$, thereby Lemma \ref{lem:Ju-local} holds.  Taking $T=1/\de$, where $\de$ is denoted in Lemma \ref{lem:Ju-local}, then this Corollary follows by
\EQ{
\norm{|J|^su}_{S^0((0,\de])} \sim \norm{\abs{\nabla}^s\U}_{S^0([T,+\I))},
}
see Remark \ref{rem:spacetime-exponent-transform}.
\end{proof}

The above preparation holds for general dimensions. Now, in the rest of Section \ref{sec:fractional}, we will give the proof of scattering in 3D case, namely Theorem \ref{thm:frac-weighted-3d}. We only consider the forward-in-time case, since the backward case can be derived similarly by taking the transform
\EQ{
u(t,x) \quad\ra\quad \wb u(-t,x),
}
and then consider the related forward scattering problem. 

Next, we apply the scattering criterion in Proposition \ref{prop:scattering-criterion} to prove Theorem \ref{thm:frac-weighted-3d}. Note that by Hausdorff-Young's, H\"older's inequalities, and $s>\frac{p}{2(p+2)}$ for $\frac23<p<\frac43$,
\EQ{
\sup_{t\in\R}\norm{\V(t)}_{L_x^{p+2}} = \sup_{t\in\R} \normb{\F(e^{-\frac12it|\cdot|^2}\wb v_0)}_{L_x^{p+2}} \lsm  \norm{v_0}_{L_x^{\frac{p+2}{p+1}}} \lsm \norm{\jb{x}^sv_0}_{L_x^2}.
}
Therefore, it suffices to prove that
\EQ{
\sup_{0<t\le T_0}\E(t)<C,
}
where $\E$ is defined in \eqref{defn:pseudo-conformal-energy}, and for some $T_0>0$. This will be finished in Proposition \ref{prop:fra-energy-3d} below.
\subsection{Energy estimates near the infinity time}
We have proved that $|J|^su(t)\in L_x^2(\R^3)$ locally, and in this subsection, we give an improved estimate for $w=u-v$, that is $Jw(t)\in L_x^2(\R^3)$ locally. Note that $Jv\notin L_x^2(\R^3)$, then in order to transfer the vector field $J$, it is convenient to work with the pseudo-conformal variable. After applying the transform $\TT$, we observe that $J$ becomes $\nabla$, then it suffices to use bilinear Strichartz estimate to reduce the derivative. The main result in this section is as follows:

\begin{prop}\label{prop:fra-local-w-3d}
Suppose that the assumptions in Theorem \ref{thm:frac-weighted-3d} hold. Then, there exists some constant $T_0=T_0(p,\norm{\jb{x}^su_0}_{L_x^2(\R^3)})>0$ such that
\EQ{
\sup_{T_0\loe t <+\I}\norm{\W(t)}_{H_x^1(\R^3)} \loe C N_0^{1-s}.
}
\end{prop}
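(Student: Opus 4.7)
Since Lemma \ref{lem:fra-initial-data} already gives $\norm{\nabla \W_+}_{L_x^2}\lsm N_0^{1-s}$, and $S(t)\W_+$ inherits this bound in $L_t^\I\dot H_x^1$, the task reduces to controlling the Duhamel term in \eqref{eq:nls-w-pc-duhamel}. My plan is to bootstrap the quantity
\EQ{
K(T_0):=\norm{\nabla \W}_{S^0([T_0,+\I))}
}
via a Strichartz estimate applied backward from infinity on $[T_0,+\I)$. Concretely, for suitable $L^2$-admissible pairs $(q,r)$ and $(\wt q,\wt r)$ we have
\EQ{
K(T_0)\lsm \norm{\nabla \W_+}_{L_x^2}+\normb{\tau^{\frac{3p}{2}-2}\nabla\brk{|\U|^p\U}}_{L_\tau^{\wt q'} L_x^{\wt r'}([T_0,+\I)\times\R^3)},
}
with the first term already of the desired size. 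On the same interval Corollary \ref{cor:fra-local-u} supplies $\norm{\jb{\nabla}^s\U}_{S^0([T_0,+\I))}\lsm 1$, provided $T_0$ is at least as large as the constant produced there.

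Expanding schematically $\nabla(|\U|^p\U)=O(|\U|^p\nabla\W)+O(|\U|^p\nabla\V)$ splits the nonlinearity into two pieces. For the $\nabla\W$-piece, I distribute $|\U|^p$ through H\"older among Strichartz norms controlled by $\jb{\nabla}^s\U$ and pair the remaining $\nabla\W$ against the time weight $\tau^{\frac{3p}{2}-2}$. Since $\frac{3p}{2}-2<0$ for $p<\frac{4}{3}$, this weight is integrable at infinity, and taking $T_0$ large produces a small prefactor multiplying $K(T_0)$ that can be absorbed into the left-hand side of the bootstrap inequality.

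The heart of the argument is the $\nabla\V$-piece, where only the fractional control $\norm{\jb{\nabla}^s\V}_{S^0}\lsm \de_0$ from Lemma \ref{lem:fra-linear} is available, not $\nabla\V\in L_x^2$. To recover the missing $1-s$ derivatives I perform a Littlewood-Paley decomposition $\V=\sum_{N\gsm N_0}\V_N$ and apply the bilinear Strichartz estimate (Lemma \ref{lem:bilinearstrichartz}), pairing each high-frequency slice $\V_N$ with the lower-frequency companion arising inside $|\U|^p$. Writing $\norm{\nabla\V_N}\sim N^{1-s}\norm{\jb{\nabla}^s\V_N}$ and using a bilinear gain of the form $(M/N)^{\th}$ with $\th>0$, the frequency sum over $N\gsm N_0$ is closed by Schur's test (Lemma \ref{lem:schurtest}), producing exactly the bound $\lsm N_0^{1-s}$ with an extra small factor $\de_0^{\ldots}$ from the linear data.

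Main obstacle. The delicate point is the joint choice of Strichartz exponents: they must (i) render $\tau^{\frac{3p}{2}-2}$ integrable in time on $[T_0,+\I)$ uniformly in $p\in(\frac{2}{3},\frac{4}{3})$, (ii) satisfy the bilinear Strichartz constraint $\frac{1}{q}+\frac{d-1}{r}<d-1$ in three dimensions, and (iii) be compatible, via H\"older, with $\jb{\nabla}^s\U$ Strichartz bounds. Because the tail decay of $\tau^{\frac{3p}{2}-2}$ degenerates as $p\uparrow\frac{4}{3}$, the argument most likely requires a dyadic-in-time decomposition $[T_0,+\I)=\bigcup_{k\ge0}[2^kT_0,2^{k+1}T_0]$, extracting a small factor from each slab and summing in $k$; this smallness also drives the absorption in the $\nabla\W$-piece and thus permits the bootstrap to close for $T_0$ sufficiently large.
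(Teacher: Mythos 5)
Your outline follows the paper's route in all essentials: Duhamel backward from infinity, Strichartz on $[T_0,+\I)$ combined with the bound $\norm{\jb{\nabla}^s\U}_{S^0([T_0,+\I))}\lsm 1$ from Corollary \ref{cor:fra-local-u}, a splitting of $\nabla(|\U|^p\U)$ into an $|\U|^p\nabla\W$ piece absorbed through a small $T_0^{-\th}$ prefactor and an $|\U|^p\nabla\V$ piece treated by Littlewood--Paley plus bilinear Strichartz. However, there is a concrete gap in the $\nabla\V$ piece: after decomposing $\V=\sum_{N\gsm N_0}\V_N$, the factors of $\U$ multiplying $\nabla\V_N$ need not contain any frequency $\ll N$. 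In the regime where all $\U$-factors sit at frequencies $\gsm N$, the bilinear gain $(M/N)^{\th}$ you rely on is simply unavailable, and your plan as stated says nothing about this term. The paper handles it separately (the term \eqref{esti:frac-local-nonlinear-term-3d-2}) with no bilinear estimate at all: the loss $N^{1-s}$ on $\nabla\V_N$ is paid by the $s$ derivatives carried by the comparable-or-higher-frequency factors $\U_{\gsm N}$, and the sum over $N$ closes because $1-(1+p)s<0$, i.e. $s>\frac{1}{1+p}$, which holds since $s>\frac{14}{15}$. You need to add this case explicitly; it is easy, but the bilinear pairing cannot cover it.

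A second, smaller point: your justification for the smallness in $T_0$ is wrong as written. For $\frac23<p<\frac43$ one has $-1<\frac{3p}{2}-2<0$, so $\tau^{\frac{3p}{2}-2}$ is \emph{not} integrable on $[T_0,+\I)$. The mechanism that actually works (and that the paper uses) is to place the weight, via H\"older in time against the Strichartz norms of $\U$ and $\nabla\W$, into $L_t^{\frac{4}{4-3p}}([T_0,+\I))$, where the resulting power is $t^{-2}$; this yields the factor $T_0^{-\frac{4-3p}{4}}$, which suffices to absorb the $\nabla\W$-contribution once $T_0$ is chosen large depending on $p$ and $\norm{\jb{x}^su_0}_{L_x^2}$. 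In particular no dyadic-in-time decomposition is needed even as $p\uparrow\frac43$, since the exponent $\frac{4-3p}{4}$, though degenerating, stays positive for each fixed $p$.
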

We reduce the proof of Proposition \ref{prop:fra-local-w-3d} to the following lemma:
\begin{lem}\label{lem:fra-local-nonlinear-3d}
Suppose that the assumptions in Theorem \ref{thm:frac-weighted-3d} hold, and fix any $T_0>T$, where $T$ is defined in Corollary \ref{cor:fra-local-u}. Then, there exists some constant $\th=\th(p)>0$ such that
\EQ{
\normb{\int_t^{+\I} S(t-\ta)(\ta^{\frac{3p}{2}-2}|\U|^p\U) \dd \ta}_{L_t^\I\dot H_x^1([T_0,+\I)\times\R^3)} \lsm T_0^{-\th}(1+\norm{\W}_{L_t^\I\dot H_x^1([T_0,+\I)\times\R^3)}) .
}
\end{lem}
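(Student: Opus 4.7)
The plan is to reduce the Duhamel representation \eqref{eq:nls-w-pc-duhamel} to a dual Strichartz estimate on the nonlinearity. Pulling $\nabla$ under the integral and applying Lemma \ref{lem:strichartz} in dual form gives
\begin{align*}
\left\|\nabla\int_t^{+\infty} S(t-\tau)\bigl(\tau^{\frac{3p}{2}-2}|\U|^p\U\bigr)d\tau\right\|_{L_t^\infty L_x^2} \lesssim \bigl\|\tau^{\frac{3p}{2}-2}\nabla(|\U|^p\U)\bigr\|_{L_\tau^{\tilde q'}L_x^{\tilde r'}([T_0,+\infty)\times\R^3)}
\end{align*}
for any $L^2$-admissible $(\tilde q,\tilde r)$. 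The product rule together with $\U=\V+\W$ splits $\nabla(|\U|^p\U) = O(|\U|^p\nabla\W) + O(|\U|^p\nabla\V)$, and the proof is organized around these two pieces.

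For the term $|\U|^p\nabla\W$, the plan is to place $\nabla\W$ in $L_t^\infty L_x^2$ (the norm that appears on the right-hand side of the target estimate) and control the remaining $|\U|^p$ in Strichartz norms using the global-in-$t$ regularity bound $\|\langle\nabla\rangle^s\U\|_{S^0([T,\infty))}\lesssim 1$ from Corollary \ref{cor:fra-local-u}, together with Sobolev embedding into suitable Lebesgue spaces. H\"older in $\tau$ against $\tau^{\frac{3p}{2}-2}$ with an exponent $\sigma>\frac{2}{4-3p}$ (which exists since $p<\frac{4}{3}$) then produces a factor $T_0^{-\theta_1}$ for some $\theta_1(p)>0$.

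The genuinely hard piece is $|\U|^p\nabla\V$, because $\V$ only has regularity $H_x^s$ with $s<1$, so $\nabla\V\notin L_x^2$ globally. The plan is to decompose $\V=\sum_{N\gtrsim N_0}\V_N$ and $\U=\sum_{N_1}\U_{N_1}$ into Littlewood-Paley pieces and split the double sum into a low-to-high regime $N_1\ll N$ and a diagonal/high-to-low regime $N_1\gtrsim N$. In the first regime I would apply the bilinear Strichartz estimate (Lemma \ref{lem:bilinearstrichartz}) to the pair $(\nabla\V_N)\U_{N_1}$, producing a ratio $N_1^\alpha/N^\beta$ with $\beta>0$ that must be large enough to absorb the derivative loss $N^{1-s}$ from replacing $\nabla$ by $\langle\nabla\rangle^s$ on $\V$; the dyadic sums are then closed by Schur's test (Lemma \ref{lem:schurtest}). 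In the diagonal/high-to-low regime direct dual Strichartz suffices, still paying $N^{1-s}$ but now summable because $\U$ itself is controlled in $\langle\nabla\rangle^s$-Strichartz norms on $[T,\infty)$. Time integrability is handled as in the easy case, giving $T_0^{-\theta_2}$.

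The main obstacle is the simultaneous balancing of (i) integrability of the weight $\tau^{\frac{3p}{2}-2}$ on $[T_0,+\infty)$, marginal near $p=\frac{2}{3}$; (ii) the bilinear gain $N^{-\beta}$ dominating the derivative loss $N^{1-s}$, which forces $s$ to be close to $1$; and (iii) the Strichartz exponents being compatible with the available global bounds on $\U$. Because the optimal dual exponents depend on whether $p$ is near $\frac{2}{3}$ or near $\frac{4}{3}$, the argument is expected to bifurcate into two subcases, using dual $L_\tau^1 L_x^2$ in the former range and dual $L_\tau^2 L_x^{6/5}$ in the latter (as foreshadowed in the introduction); this three-way balance is what ultimately dictates the threshold $s>\frac{14}{15}$ in Theorem \ref{thm:frac-weighted-3d}.
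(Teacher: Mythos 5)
Your proposal follows essentially the same route as the paper: reduce via the dual Strichartz estimate, split $\nabla(|\U|^p\U)$ into $|\U|^p\nabla\W$ (handled by putting $\nabla\W$ in $L_t^\I L_x^2$, the rest in Strichartz norms via Corollary \ref{cor:fra-local-u}, and H\"older in $t$ for the $T_0^{-\th}$ factor) and $|\U|^p\nabla\V$, then decompose the latter by frequency and use the bilinear Strichartz estimate on the low-frequency-of-$\U$ against high-frequency-of-$\nabla\V_N$ interactions to beat the loss $N^{1-s}$, with the remaining regime handled by direct dual Strichartz since the high-frequency restriction of $\U$ supplies $N^{-ps}$. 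Two points where your sketch departs from (or misdescribes) the actual argument: first, the paper does not bifurcate in $p$ here; it estimates in the sum space $L_t^2L_x^{6/5}+L_t^1L_x^2$ for all $\frac23<p<\frac43$ at once, assigning the $\U_{\gsm N}$-terms and the $\nabla\W$-term to the $L_t^2L_x^{6/5}$ component and the $\U_{\ll N}$-terms to the $L_t^1L_x^2$ component (the genuine $p$-dependent case split with the two dual norms occurs later, in the energy estimate toward the origin), and relatedly the threshold $s>\frac{14}{15}$ is not decided by this lemma — here only mild conditions such as $s>\frac{1}{1+p}$ and $s>\frac23$ are used. Second, since $p$ may be less than $1$ you cannot literally expand $|\U|^p$ over Littlewood--Paley pieces of $\U$; the paper instead uses the pointwise bound $|\U|^p\lsm|\U_{\gsm N}|^p+|\U_{\ll N}|^p$ and then a fractional H\"older splitting, placing $\norm{\U_{N_1}\nabla\V_N}_{L_t^{1+}L_x^2}^{2/3}$ in the bilinear norm and the leftover $|\U_{\ll N}|^{p-\frac23}|\nabla\V_N|^{\frac13}$ elsewhere — a detail you would need to supply to make your low-to-high case rigorous, though it does not change the overall strategy.
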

Now, we prove that Lemma \ref{lem:fra-local-nonlinear-3d} implies Proposition \ref{prop:fra-local-w-3d}:
\begin{proof}[Proof of Proposition \ref{prop:fra-local-w-3d}]
Let $T$ be as in Corollary \ref{cor:fra-local-u}, and take some $T_0>T$ that will be defined later. We restrict the spacetime variable $(t,x)$ on $[T_0,+\I)\times\R^3$. The $L_x^2$-bound follows from $H^s$-bound of $\V(t)$ in Lemma \ref{lem:fra-linear} and the conservation of mass
\EQ{
\norm{\U(t)}_{L_x^2} \equiv \norm{\U_+}_{L_x^2}.
}
Now, it suffices to prove the $\dot H_x^1$-bound of $\W(t)$. By the Duhamel formula \eqref{eq:nls-w-pc-duhamel} of $\W$ and Lemma \ref{lem:fra-local-nonlinear-3d},
\EQ{
\norm{\W}_{L_t^\I \dot H_x^1} \lsm & \norm{\W_+}_{\dot H_x^1} + \normb{\int_t^{+\I} S(t-\ta)(\ta^{\frac{3p}{2}-2}|\U|^p\U) \dd \ta}_{L_t^\I\dot H_x^1} \\
\lsm & N_0^{1-s} + T_0^{-\th} + T_0^{-\th} \norm{\W}_{L_t^\I\dot H_x^1([T_0,+\I)\times\R^3)}.
}
Then, noting that the implicit constant in the above inequality may depend on $p$ and $\norm{\jb{x}^su_0}_{L_x^2}$, the proposition follows by taking $T_0=T_0(p,\norm{\jb{x}^su_0}_{L_x^2})>0$ be some large constant such that
\EQ{
C(p,\norm{\jb{x}^su_0}_{L_x^2})T_0^{-\th} =\frac12.
}
\end{proof}
Now, it suffices to prove Lemma \ref{lem:fra-local-nonlinear-3d}:
\begin{proof}[Proof of Lemma \ref{lem:fra-local-nonlinear-3d}]
Let $T_0>T$ be some large constant, then Corollary \ref{cor:fra-local-u} holds on $[T_0,+\I)$. In the proof of this lemma, we restrict the variable $(t,x)$ on $[T_0,+\I)\times\R^3$. By the Strichartz estimate,
\EQ{
\normb{\int_t^{+\I} S(t-\ta)(\ta^{\frac{3p}{2}-2}|\U|^p\U) \dd \ta}_{L_t^\I\dot H_x^1} \lsm & \normb{t^{\frac{3p}{2}-2}|\U|^p\nabla \U }_{L_t^{2}L_x^{\frac65}+L_t^1L_x^2}.
}
Then, we decompose 
\EQ{
|\U|^p|\nabla \U| \lsm & |\U|^p|\nabla\W| + \sum_{N\in2^\N} |\U_{\gsm N} + \U_{\ll N}|^{p}|\nabla \V_N| \\
\lsm & |\U|^p|\nabla\W| + \sum_{N\in2^\N} |\U_{\gsm N}|^{p}|\nabla \V_N| + \sum_{N\in2^\N} | \U_{\ll N}|^{p}|\nabla \V_N|.
}
It suffices to consider
\begin{subequations}
\EQnn{
\normb{t^{\frac{3p}{2}-2}|\U|^p\nabla \U }_{L_t^{2}L_x^{\frac65}+L_t^1L_x^2} \lsm & \normb{t^{\frac{3p}{2}-2}|\U|^p\nabla\W }_{L_t^{2}L_x^{\frac65}} \label{esti:frac-local-nonlinear-term-3d-1}\\
& + \normb{t^{\frac{3p}{2}-2}\sum_{N\in2^\N} |\U_{\gsm N}|^{p}|\nabla \V_N| }_{L_t^{2}L_x^{\frac65}}\label{esti:frac-local-nonlinear-term-3d-2}\\
& + \normb{t^{\frac{3p}{2}-2}\sum_{N\in2^\N} | \U_{\ll N}|^{p}|\nabla \V_N| }_{L_t^1L_x^2}.\label{esti:frac-local-nonlinear-term-3d-3}
}
\end{subequations}

We first estimate \eqref{esti:frac-local-nonlinear-term-3d-1}. Noting that $(\frac{4p}{3p-2},3p)$ is $L^2$-admissible, by H\"older's inequality,
\EQ{
\eqref{esti:frac-local-nonlinear-term-3d-1} 
\lsm &  \normb{t^{\frac{3p}{2}-2}}_{L_t^{\frac{4}{4-3p}}} \norm{\U}_{L_t^{\frac{4p}{3p-2}}L_x^{3p}}^p \norm{\nabla\W}_{L_t^\I L_x^2}.
}
Note that
\EQ{
\normb{t^{\frac{3p}{2}-2}}_{L_t^{\frac{4}{4-3p}}} \lsm  \brkb{\int_{T_0}^{+\I} t^{-2} \dd t}^{\frac{4-3p}{4}} \lsm T_0^{-\frac{4-3p}{4}},
}
then combining Corollary \ref{cor:fra-local-u},
\EQn{\label{eq:frac-local-nonlinear-term-3d-1}
\eqref{esti:frac-local-nonlinear-term-3d-1} 
\lsm T_0^{-\frac{4-3p}{4}} \norm{\nabla\W}_{L_t^\I L_x^2}.
}

Then, we estimate \eqref{esti:frac-local-nonlinear-term-3d-2}. Noting that $s>\frac{1}{1+p}$, by H\"older's inequality and Corollary \ref{cor:fra-local-u},
\EQn{\label{eq:frac-local-nonlinear-term-3d-2}
\eqref{esti:frac-local-nonlinear-term-3d-2} 
\lsm & \sum_{N\in2^\N} \normb{t^{\frac{3p}{2}-2}}_{L_t^{\frac{4}{4-3p}}} \norm{\U_{\gsm N}}_{L_t^{\frac{4p}{3p-2}}L_x^{3p}}^p \norm{\nabla\V_N}_{L_t^\I L_x^2} \\
\lsm & T_0^{-\frac{4-3p}{4}} \sum_{N\in2^\N} N^{1-(1+p)s} \normb{N^s\U_{\gsm N}}_{L_t^{\frac{4p}{3p-2}}L_x^{3p}}^p \norm{\jb{\nabla}^s\V}_{L_t^\I L_x^2} \\
\lsm & T_0^{-\frac{4-3p}{4}}  \normb{\jb{\nabla}^{s}\U}_{L_t^{\frac{4p}{3p-2}}L_x^{3p}}^p \norm{\jb{\nabla}^s\V}_{L_t^\I L_x^2} \\
\lsm & T_0^{-\frac{4-3p}{4}}.
}

Finally we estimate \eqref{esti:frac-local-nonlinear-term-3d-3}. By H\"older's inequality and Corollary \ref{cor:fra-local-u},
\EQn{\label{eq:frac-local-nonlinear-term-3d-3-1}
\eqref{esti:frac-local-nonlinear-term-3d-3} 
\lsm & \sum_{N\in2^\N} \norm{\U_{\ll N}\nabla \V_N}^{\frac23}_{L_t^{1+}L_x^2} \normb{t^{\frac{3p}{2}-2}| \U_{\ll N}|^{p-\frac23}|\nabla \V_N|^{\frac13}}_{L_t^{3-}L_x^6} \\
\lsm & \sum_{N_1\ll N} \norm{\U_{N_1}\nabla \V_N}^{\frac23}_{L_t^{1+}L_x^2} \norm{\U_{\ll N}}_{L_t^2 L_x^\I}^{p-\frac23} \normb{t^{\frac{3p}{2}-2}}_{L_t^{\frac{6}{4-3p}-}} \normb{\nabla \V_N}_{L_t^\I L_x^2}^{\frac13} \\
\lsm & T_0^{-\frac{4-3p}{3}+} \sum_{N_1\ll N} \norm{\U_{N_1}\nabla \V_N}^{\frac23}_{L_t^{1+}L_x^2} \normb{\jb{\nabla}^{\frac12+\ep}\U_{\ll N}}_{L_t^2 L_x^6}^{p-\frac23}  N^{\frac13(1-s)} \\
\lsm & T_0^{-\frac{4-3p}{4}} \sum_{N_1\ll N} N^{\frac13(1-s)} \norm{\U_{N_1}\nabla \V_N}^{\frac23}_{L_t^{1+}L_x^2}. 
}
Note that $(\frac{4p}{3p-2},3p)$ and $(\frac{2}{\ep},\frac{6}{3-2\ep})$ are $L^2$-admissible for some $0<\ep\ll 1$. By the bilinear Strichartz estimate in Lemma \ref{lem:bilinearstrichartz}, for $N_1\ll N$,
\EQn{\label{eq:frac-local-nonlinear-term-3d-3-bi}
\norm{\U_{N_1}\nabla \V_N}_{L_t^{1+}L_x^2} \lsm &\frac{N_1^{0+}}{N^{1/2}} \brkb{\norm{P_{N_1}\U_+}_{L_x^2} + \normb{P_{N_1}\brkb{t^{\frac{3p}{2}-2}|\U|^p\U}}_{L_t^{\frac{2}{1+\ep}}L_x^{\frac{6}{5-2\ep}}}}\norm{\nabla P_N\V_+}_{L_x^2} \\
\lsm & N_1^{-s+}N^{\frac12-s} \brkb{\norm{\U_+}_{H_x^s} + \normb{t^{\frac{3p}{2}-2}\jb{\nabla}^s\brko{|\U|^p\U}}_{L_t^{\frac{2}{1+\ep}}L_x^{\frac{6}{5-2\ep}}}}\norm{\V_+}_{H_x^s} \\
\lsm & N_1^{-s+}N^{\frac12-s} \brkb{1 + \normb{t^{\frac{3p}{2}-2}}_{L_t^{\frac{4}{4-3p}}} \norm{\U}_{L_t^{\frac{4p}{3p-2}}L_x^{3p}}^p \norm{\jb{\nabla}^s\U}_{L_t^{\frac{2}{\ep}} L_x^{\frac{6}{3-2\ep}}}} \\
\lsm & N_1^{-s+}N^{\frac12-s}\brkb{1 + T_0^{-\frac{4-3p}{4}}} \\
\lsm & N_1^{-s+}N^{\frac12-s}.
}
Then, by \eqref{eq:frac-local-nonlinear-term-3d-3-1}, \eqref{eq:frac-local-nonlinear-term-3d-3-bi}, and $s>\frac23$,
\EQn{\label{eq:frac-local-nonlinear-term-3d-3}
\eqref{esti:frac-local-nonlinear-term-3d-3} 
\lsm & T_0^{-\frac{4-3p}{4}} \sum_{N_1\ll N} N^{\frac13(1-s)} \norm{\U_{N_1}\nabla \V_N}^{\frac23}_{L_t^{1+}L_x^2} \\
\lsm & T_0^{-\frac{4-3p}{4}} \sum_{N_1\ll N} N_1^{-\frac23s+} N^{\frac23-s} \\
\lsm & T_0^{-\frac{4-3p}{4}}.
}
By \eqref{eq:frac-local-nonlinear-term-3d-1}, \eqref{eq:frac-local-nonlinear-term-3d-2}, and \eqref{eq:frac-local-nonlinear-term-3d-3}, we have
\EQ{
\normb{t^{\frac{3p}{2}-2}|\U|^p\nabla \U }_{L_t^{2}L_x^{\frac65}+L_t^1L_x^2} \lsm T_0^{-\frac{4-3p}{4}} (1+\norm{\nabla\W}_{L_t^\I L_x^2}).
}
This finishes the proof.
\end{proof}
\begin{cor}\label{cor:fra-local-w-3d}
Suppose that the assumptions in Theorem \ref{thm:frac-weighted-3d} hold. Then, there exists a constant $A=A(p,\norm{\jb{x}^su_0}_{L_x^2(\R^3)},T_0)>0$ such that
\EQ{
\E(T_0) \loe AN_0^{2(1-s)}.
}
\end{cor}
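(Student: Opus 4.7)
\bigskip

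The plan is to estimate the two pieces of $\E(T_0)$ separately. Since $T_0$ is fixed (depending only on $p$ and $\|\jb{x}^su_0\|_{L_x^2}$) by the time Corollary \ref{cor:fra-local-w-3d} is invoked, the factor $T_0^{2-3p/2}$ and any other $T_0$-dependence may be absorbed into the constant $A$. So everything reduces to controlling $\norm{\nabla\W(T_0)}_{L_x^2}^2$ and $\norm{\U(T_0)}_{L_x^{p+2}}^{p+2}$ in terms of $N_0^{2(1-s)}$.

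For the kinetic term, Proposition \ref{prop:fra-local-w-3d} directly yields $\norm{\nabla\W(T_0)}_{L_x^2} \lsm N_0^{1-s}$, so $\tfrac14 T_0^{2-3p/2}\norm{\nabla\W(T_0)}_{L_x^2}^2 \lsm_{p,T_0} N_0^{2(1-s)}$. This part is essentially immediate.

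For the potential term, I would decompose $\U(T_0) = \V(T_0) + \W(T_0)$ and handle the two pieces separately. For the linear part, I apply the Sobolev embedding $H^s(\R^3) \hra L^{p+2}(\R^3)$; this requires $s \goe \frac{3}{2}-\frac{3}{p+2} = \frac{3p}{2(p+2)}$, which for $\frac23<p<\frac43$ stays below $\frac35$, comfortably less than $\frac{14}{15}<s$. Combined with Lemma \ref{lem:fra-linear} (which gives $\norm{\jb{\nabla}^s\V(T_0)}_{L_x^2}\lsm \de_0$), this yields $\norm{\V(T_0)}_{L_x^{p+2}}^{p+2}\lsm 1 \loe N_0^{2(1-s)}$. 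For the nonlinear part, I invoke the Gagliardo--Nirenberg inequality (Lemma \ref{lem:GN}) in dimension $3$:
\[
\norm{\W(T_0)}_{L_x^{p+2}}^{p+2}\lsm \norm{\W(T_0)}_{L_x^2}^{p+2-\frac{3p}{2}}\norm{\nabla\W(T_0)}_{L_x^2}^{\frac{3p}{2}}.
\]
The $L_x^2$-norm of $\W(T_0)$ is bounded by a constant (mass conservation for $\U$ plus the $L_x^2$ bound on $\V$), while the gradient contributes $N_0^{(1-s)\cdot 3p/2}$ via Proposition \ref{prop:fra-local-w-3d}. Because $p<\tfrac43$ forces $\tfrac{3p}{2}<2$, and $N_0\goe 1$, this is dominated by $N_0^{2(1-s)}$.

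I do not expect a serious obstacle here: the corollary is a direct packaging of the $H^1$-bound of Proposition \ref{prop:fra-local-w-3d} together with the linear bound of Lemma \ref{lem:fra-linear} and two standard embedding / interpolation inequalities. The only checks required are that the Sobolev threshold $\frac{3p}{2(p+2)}$ lies below the assumed lower bound $\frac{14}{15}$ for $s$ in the whole range $\frac23<p<\frac43$, and that the Gagliardo--Nirenberg exponent $\frac{3p}{2}$ does not exceed $2$, both of which are verified above.
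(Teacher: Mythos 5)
Your proposal is correct and follows essentially the same route as the paper: the kinetic term is read off from Proposition \ref{prop:fra-local-w-3d}, the linear part $\V(T_0)$ is handled by the Sobolev embedding $H^s\hra L^{p+2}$ together with Lemma \ref{lem:fra-linear}, and the nonlinear part $\W(T_0)$ by Gagliardo--Nirenberg with the mass bound plus the $\dot H^1$ bound, using $\tfrac{3p}{2}<2$ and $N_0\goe 1$ to conclude. The only cosmetic difference is the numerical threshold you quote for the embedding ($3/5$ versus the paper's stated bound), which is immaterial since both lie below $\tfrac{14}{15}<s$.
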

\begin{proof}
We restrict the variable $(t,x)$ on $[T_0,+\I)\times\R^3$. By Gagliardo-Nirenberg's inequality in Lemma \ref{lem:GN},
\EQ{
\norm{f}_{L_x^{p+2}}^{p+2} \lsm \norm{f}_{L_x^2}^{p+2-\frac{3p}{2}} \norm{f}_{\dot H_x^1}^{\frac{3p}{2}}.
}
Since $s>\frac{14}{15}$ and $\frac{3p}{2(p+2)}<\frac{9}{22}$ for $\frac23<p<\frac43$, we also have
\EQ{
\norm{f}_{L_x^{p+2}} \lsm \norm{f}_{H_x^{\frac{3p}{2(p+2)}}} \lsm \norm{f}_{H_x^{s}}.
}
Then, combining the above two inequalities, and by Proposition \ref{prop:fra-local-w-3d}, $p<\frac43$, Corollary \ref{cor:fra-local-u}, and Lemma \ref{lem:fra-linear},
\EQ{
\E(T_0) \lsm & T_0^{2-\frac32p} \norm{\W(T_0)}_{\dot H_x^1}^2 + \norm{\W(T_0)}_{ L_x^{p+2}}^{p+2} + \norm{\V(T_0)}_{ L_x^{p+2}}^{p+2} \\
\lsm & T_0^{2-\frac32p} \norm{\W}_{L_t^\I \dot H_x^1([T_0,\I)\times\R^3)}^2 + \norm{\W}_{L_t^\I \dot H_x^1([T_0,\I)\times\R^3)}^{\frac{3p}{2}} + \norm{\V}_{L_t^\I H_x^s([T_0,\I)\times\R^3)}^{p+2} \\
\lsm & T_0^{2-\frac32p}N_0^{2(1-s)} + N_0^{\frac{3p}{2}(1-s)} + \de_0^{p+2} \\
\lsm &N_0^{2(1-s)},
}
where the implicit constant depends on $p,\norm{\jb{x}^su_0}_{L_x^2(\R^3)}$, and $T_0$. Denote the constant of the right hand side of the above inequality as $A=A(p,\norm{\jb{x}^su_0}_{L_x^2(\R^3)},T_0)$, then the corollary follows.  
\end{proof}

\subsection{Energy estimate towards the origin}
Now, to complete the proof of Theorem \ref{thm:frac-weighted-3d}, it suffices to prove:
\begin{prop}\label{prop:fra-energy-3d}
	Suppose that the assumptions in Theorem \ref{thm:frac-weighted-3d} hold. Let $A$ be the constant in Corollary \ref{cor:fra-local-w-3d}. Then, we have
	\EQ{
		\sup_{t\in(0,T_0]} \E(t) \loe 2A N_0^{2(1-s)}. 
	}
\end{prop}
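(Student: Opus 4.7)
The plan is to run a backward-in-time bootstrap on the energy $\E(t)$, starting from $t = T_0$ where Corollary~\ref{cor:fra-local-w-3d} gives $\E(T_0) \le A N_0^{2(1-s)}$. Define
\EQ{
I := \fbrkb{t_* \in (0, T_0] : \sup_{\tau \in [t_*, T_0]} \E(\tau) \le 2A N_0^{2(1-s)}}.
}
The continuity of $\E$ together with Corollary~\ref{cor:fra-local-w-3d} shows that $I$ contains a neighborhood of $T_0$ and is closed. To conclude $I = (0,T_0]$ it suffices to show that under the bootstrap hypothesis on $[t_*, T_0]$, one in fact recovers the strictly better bound $\E(t_*) \le \frac{3}{2}A N_0^{2(1-s)}$, provided $\de_0$ is chosen small enough (i.e.\ $N_0$ large enough).

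To implement this, I would differentiate $\E$ in time using the equation \eqref{eq:nls-pc} for $\U$ and the forced equation for $\W = \U - \V$. The $\W$-self-interaction terms cancel (they are exactly the pseudo-conformal conservation piece), leaving after integration by parts a main contribution of the schematic form
\EQ{
\E(t_*) - \E(T_0) \sim \int_{t_*}^{T_0} t^{1-\frac{3p}{2}}\brko{1-\tfrac{3p}{4}}\norm{\nabla \W}_{L_x^2}^2 \dt + \int_{t_*}^{T_0}\!\! \int_{\R^3} t^{\frac{3p}{2}-2}\, \nabla \V \cdot \nabla \W\, O(\U^p) \dx \dt,
}
plus easier terms. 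The first summand is non-singular near the origin for $p < 4/3$ and is readily absorbed using the bootstrap hypothesis. The entire difficulty lies in the second, cross-type term.

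For the cross term I would perform a ``high-low'' decomposition $\V = \sum_{N \gsm N_0}\V_N$ and split $\U = \V + \W$ inside $O(\U^p)$. On the genuinely high-frequency piece $\nabla \V_N$ paired with low-frequency factors, apply the bilinear Strichartz estimate of Lemma~\ref{lem:bilinearstrichartz} to convert one factor of the frequency into $N^{-s}$ decay, exactly as in the estimate \eqref{eq:frac-local-nonlinear-term-3d-3-bi}. This transfers the lost derivative from $\nabla \V$ onto the weight $N^{1-s}$, which is summable thanks to the condition $s > 14/15 > 2/3$. For the resonant/low-frequency piece of $\V$ paired with $\nabla \W$, one uses the almost pseudo-conformal conservation via the localized interaction Morawetz estimate (Proposition~\ref{prop:interaction-morawetz}, referenced in the introduction), which under the bootstrap hypothesis controls
\EQ{
\normb{t^{\frac{3p}{16}}\U}_{L_{t,x}^4([2^k,2^{k+1}]\times\R^3)} \lsm \E^{1/8} \lsm A^{1/8} N_0^{(1-s)/4},
}
on each dyadic time window. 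Summing in $k$ produces a geometric series and gives the needed control with the crucial feature of a very small power of $N_0$. Depending on whether $p$ is close to $2/3$ or $4/3$, I would alternate between the dual $L_t^1 L_x^2$ and dual $L_t^2 L_x^{6/5}$ norms, exactly as in the breakdown of \eqref{esti:frac-local-nonlinear-term-3d-1}--\eqref{esti:frac-local-nonlinear-term-3d-3}: the former has better integrability near $t=0$, the latter has better $N_0$ behavior.

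The main obstacle will be balancing the twin singularities near $t = 0$: the weight $t^{\frac{3p}{2}-2}$ in the equation and the bootstrap-forced blow-up $\norm{\nabla \W(t)}_{L_x^2} \lsm t^{\frac{3p}{4}-1}\E^{1/2}$. A naive H\"older bound in time diverges; the key is that the localized Morawetz estimate extracts a $t^{\frac{3p}{16}}$ gain distributed over the $L^4_{t,x}$ norm on each dyadic window, while the bilinear Strichartz gains a power of $N_0^{-}$ for the price of only a mild loss in the time weight. Putting everything together, I expect an estimate of the form
\EQ{
\E(t_*) \loe \E(T_0) + C\brko{p,\norm{\jb{x}^s u_0}_{L_x^2}}\de_0^{\th_1} A N_0^{2(1-s)},
}
for some $\th_1 = \th_1(p,s) > 0$. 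Choosing $\de_0$ sufficiently small (which fixes $N_0$ large through the construction in Section~\ref{sec:fra-initial}) renders the perturbation term at most $\frac{1}{2}A N_0^{2(1-s)}$, closing the bootstrap and hence proving the proposition.
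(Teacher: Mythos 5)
Your overall route is the same as the paper's: a bootstrap on $\E$ anchored at $t=T_0$ by Corollary \ref{cor:fra-local-w-3d}, reduction of the energy increment to the cross term involving $\V_t$, a high-low decomposition of $\V$, bilinear Strichartz estimates fed by dual $L_t^1L_x^2$ and $L_t^{2-}L_x^{6/5+}$ bounds for the nonlinearity, the localized interaction Morawetz estimate on dyadic time windows, a case analysis in $p$, and smallness of $\de_0$ to close. However, two steps in your setup of the energy identity are wrong as written, and the first one would break the argument if carried out literally.

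First, the quadratic term $\frac14(2-\frac{3p}{2})\,t^{1-\frac{3p}{2}}\norm{\nabla\W(t)}_{L_x^2}^2$ cannot be ``readily absorbed using the bootstrap hypothesis.'' Under the bootstrap one only knows $t^{1-\frac{3p}{2}}\norm{\nabla\W(t)}_{L_x^2}^2 \lsm t^{-1}\E(t) \lsm A N_0^{2(1-s)}t^{-1}$, so its time integral over $(t_*,T_0]$ is of size $A N_0^{2(1-s)}\log(T_0/t_*)$ and diverges as $t_*\to 0$; a Gr\"onwall-type absorption fares no better, producing a factor $(T_0/t_*)^{C}$ that destroys the uniform bound. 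The point, and the paper's argument, is that this term has a favorable sign: since $2-\frac{3p}{2}>0$ for $p<\frac43$, one has $\pd_t\E(t)\goe \re\int_{\R^3}|\U|^p\U\,\wb{\V_t}\,\dd x$, and integrating from $t$ up to $T_0$ the quadratic term is simply discarded, leaving $\E(t)\loe \E(T_0)+\absb{\int_0^{T_0}\int_{\R^3}|\U|^p\U\,\wb{\V_t}\,\dd x\dd t}$; note your schematic identity $\E(t_*)-\E(T_0)\sim +\int_{t_*}^{T_0}(\cdots)$ has the sign reversed, which is exactly what hides this. Second, your cross term carries a spurious weight $t^{\frac{3p}{2}-2}$: the factor $t^{2-\frac{3p}{2}}$ built into $\E$ cancels the weight in the equation, so the cross term is $\re\int|\U|^p\U\,\wb{\V_t}\,\dd x$ with no singular time factor, and after using $i\pd_t\V=-\frac12\De\V$ and integrating by parts one estimates $\int_0^{T_0}\int_{\R^3}|\U|^p|\nabla\U||\nabla\V|\,\dd x\dd t$. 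If the extra $t^{\frac{3p}{2}-2}$ were genuinely present, then together with $\norm{\nabla\W(t)}_{L_x^2}\lsm t^{\frac{3p}{4}-1}N_0^{1-s}$ the time singularity near the origin would be non-integrable for $p$ close to $\frac23$, and the Morawetz/bilinear machinery you invoke would not suffice. With these two corrections (sign observation and weight cancellation), your outline coincides with the paper's proof, which then splits the cross term into the main high-low piece (Lemma \ref{lem:fra-energy-3d-main}) and the remainder terms including the $\nabla\V\cdot\nabla\V$ contributions (Lemma \ref{lem:fra-energy-3d-other}).
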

First, we establish the bootstrap framework. Denote that $I=(0,T_0]$. Then it suffices to prove: under the bootstrap hypothesis,
\EQn{\label{eq:fra-energy-bootstrap-hypothesis-3d}
	\sup_{t\in I} \E(t) \loe 2A N_0^{2(1-s)},
}
then
\EQn{\label{eq:fra-energy-bootstrap-conclusion-3d}
	\sup_{t\in I} \E(t) \loe \frac32A N_0^{2(1-s)}.
}
In the following, note that $A,T_0$ are constant, then we omit its dependence.

Before the start of proof, we first gather some useful estimates. 
\begin{lem} Suppose that the assumptions in Theorem \ref{thm:frac-weighted-3d} and the bootstrap hypothesis \eqref{eq:fra-energy-bootstrap-hypothesis-3d} holds. Then, we have that:
\begin{enumerate}
\item (Energy bound) For any $t\in I$,
\EQn{\label{eq:fra-energy-bound-w-3d} 
&\norm{\nabla \W(t)}_{L_x^2} \lsm t^{\frac {3p}4-1} N_0^{1-s}\text{, and }\norm{\W(t)}_{L_x^{p+2}} \lsm N_0^{\frac{2}{p+2}(1-s)} .
}
\item (Mass bound) For any $t\in I$,
\EQn{\label{eq:fra-mass-bound-w-3d}
\norm{\W(t)}_{L_x^2} \lsm 1. 
}
\end{enumerate}
\end{lem}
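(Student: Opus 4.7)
Both bounds are meant to be immediate unpacking of the definition of the modified pseudo-conformal energy
\[
\E(t) = \tfrac{1}{4}t^{2-\frac{3p}{2}}\|\nabla\W(t)\|_{L_x^2}^2 + \tfrac{1}{p+2}\|\U(t)\|_{L_x^{p+2}}^{p+2}
\]
together with the bootstrap hypothesis \eqref{eq:fra-energy-bootstrap-hypothesis-3d} and the elementary linear/conservation bounds already at our disposal, so I expect no genuine obstacle here.

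For the $\dot H^1$-part of \eqref{eq:fra-energy-bound-w-3d}, I would simply drop the (nonnegative) $L^{p+2}$-term in $\E(t)$ and use \eqref{eq:fra-energy-bootstrap-hypothesis-3d}:
\[
\tfrac{1}{4}t^{2-\frac{3p}{2}}\|\nabla\W(t)\|_{L_x^2}^2 \le \E(t) \le 2AN_0^{2(1-s)},
\]
which, after multiplying by $t^{\frac{3p}{2}-2}$ and taking square roots, yields $\|\nabla\W(t)\|_{L_x^2}\lesssim t^{\frac{3p}{4}-1}N_0^{1-s}$. For the $L^{p+2}$-part, the same argument applied to the second term of $\E$ gives $\|\U(t)\|_{L_x^{p+2}} \lesssim N_0^{\frac{2(1-s)}{p+2}}$. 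To pass from $\U$ to $\W=\U-\V$, I would use the triangle inequality and the linear bound on $\V$: by Hausdorff--Young and the fact that $\sup_t\|\V(t)\|_{L_x^{p+2}}\lesssim \|v_0\|_{L_x^{(p+2)/(p+1)}}\lesssim \|\langle x\rangle^s v_0\|_{L_x^2}\lesssim \delta_0$ (valid because $s>\frac{p}{2(p+2)}$, already observed right after the statement of Proposition \ref{prop:fra-energy-3d}), the linear piece is absorbed into $N_0^{\frac{2(1-s)}{p+2}}$.

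For the mass bound \eqref{eq:fra-mass-bound-w-3d}, I would just combine the $L^2$-conservation for \eqref{eq:nls-pc},
\[
\|\U(t)\|_{L_x^2}=\|\U_+\|_{L_x^2}=\|u_0\|_{L_x^2}\lesssim 1,
\]
(which follows from $\U_+=\mathcal F^{-1}\overline{u_0}$ and Plancherel), with the linear mass conservation $\|\V(t)\|_{L_x^2}=\|\V_+\|_{L_x^2}=\|v_0\|_{L_x^2}\lesssim 1$ from Lemma \ref{lem:fra-initial-data}, and use $\W=\U-\V$ with the triangle inequality.

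There is no real hard step; the only point to be careful about is keeping track of which quantities are truly bounded by a pure constant (mass, $\delta_0$, linear $L^{p+2}$-bound on $\V$) versus those that grow like a power of $N_0^{1-s}$ (coming from $\E$), so that the stated dependence on $N_0$ in each estimate is the correct one.
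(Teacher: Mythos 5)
Your argument is correct and is exactly the (omitted) proof the paper has in mind: the paper justifies this lemma in one line by the bootstrap hypothesis \eqref{eq:fra-energy-bootstrap-hypothesis-3d}, the linear bounds (Lemma \ref{lem:fra-linear} / the Hausdorff--Young estimate for $\V$ in $L_x^{p+2}$), and mass conservation, which is precisely what you unpack. The bookkeeping you flag (constants versus powers of $N_0^{1-s}$, absorbing the $\de_0$-sized linear piece since $N_0\ge 1$) is the only content, and you handle it correctly.
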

This lemma follows from \eqref{eq:fra-energy-bootstrap-hypothesis-3d}, Lemma \ref{lem:fra-linear}, and the mass conservation low directly, and we omit the details of proof. 

Furthermore, we also need the estimates in the following subsections.

\subsection{Localized interaction Morawetz estimate}
In this subsection, we derive the interaction Morawetz estimate for the nonlinear Schr\"odinger equation after the pseudo-conformal transform in \eqref{eq:nls-pc}, namely
\EQ{
i\pd_t \U + \frac12\De\U = t^{\frac{3p}{2}-2}|\U|^p\U,
}
under the bootstrap hypothesis on the almost conservation law in \eqref{eq:fra-energy-bootstrap-hypothesis-3d}. The main feature is that the estimate for $\norm{\nabla\W(t)}_{L_x^2}$ has singularity near the temporal origin, thereby we consider the time-localized interaction Morawetz estimate, namely the estimate on each dyadic time interval.

We denote that $I_k:=[2^{k-1},2^k]$, for any $k\in\Z$ with $2^k\loe T_0$. 

\begin{prop}[Localized interaction Morawetz estimate]\label{prop:interaction-morawetz}
Suppose that the assumptions in Theorem \ref{thm:frac-weighted-3d} and the bootstrap hypothesis \eqref{eq:fra-energy-bootstrap-hypothesis-3d} holds. Then, for any $k\in\Z$ with $2^k\loe T_0$,
\EQn{\label{eq:fra-energy-bound-w-l4l4-3d}
\norm{\U}_{L_{t,x}^4(I_k\times\R^3)} + \norm{\W}_{L_{t,x}^4(I_k\times\R^3)} \lsm 2^{\frac{1}{4}\brko{\frac {3p}4-1}k} N_0^{\frac{1}{4}(1-s)},
}
where the implicit constant may depend on $A$ and $T_0$.
\end{prop}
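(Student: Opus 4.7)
The plan is to split $\U=\V+\W$ and bound the two pieces independently. For the linear piece $\V=S(t)\V_+$, I combine the $L^2$-admissible Strichartz estimate on the pair $(4,3)$ with the Sobolev embedding $\dot W^{1/4,3}(\R^3)\hra L^4(\R^3)$ to obtain
\EQ{
\|\V\|_{L^4_{t,x}(\R\times\R^3)}\lsm \||\nabla|^{1/4}\V\|_{L^4_tL^3_x}\lsm \||\nabla|^{1/4}\V_+\|_{L^2_x}\lsm \|\V_+\|_{H^s}\lsm \de_0,
}
since $s>\tfrac{14}{15}>\tfrac14$. This global bound is already dominated by the right-hand side of \eqref{eq:fra-energy-bound-w-l4l4-3d} for every $k$, so $\V$ essentially plays no role, and the task reduces to bounding $\W$.

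For $\W$, the plan is to adapt the standard three-dimensional interaction Morawetz argument of Colliander-Keel-Staffilani-Takaoka-Tao to the time-weighted, mass-subcritical equation
\EQ{
i\pd_t\W+\tfrac12\De\W=t^{\frac{3p}{2}-2}|\U|^p\U.
}
Introduce the three-dimensional interaction Morawetz potential
\EQ{
M(t):=2\int_{\R^3}\!\!\!\int_{\R^3}|\W(t,y)|^2\,\frac{x-y}{|x-y|}\cdot\im\brko{\wb\W\nabla\W}(t,x)\,\dx\,\dy,
}
and compute $\frac{d}{dt}M(t)$ using the equation for $\W$. The usual Morawetz identity produces a pointwise-nonnegative quartic term bounded below by $c\|\W(t)\|_{L^4_x}^4$ together with correction terms that carry the time weight $t^{\frac{3p}{2}-2}$ and the difference $|\U|^p\U-|\W|^p\W$, the latter containing at least one factor of $\V$.

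Integrating $\frac{d}{dt}M$ over the dyadic window $I_k$ and bounding the boundary contributions through Hardy-Sobolev and interpolation,
\EQ{
\sup_{t\in I_k}|M(t)|\lsm \|\W\|_{L^\I_tL^2_x(I_k)}^2\|\W\|_{L^\I_t\dot H^{1/2}_x(I_k)}^2\lsm \|\W\|_{L^\I_tL^2_x(I_k)}^3\|\nabla\W\|_{L^\I_tL^2_x(I_k)}.
}
The mass bound \eqref{eq:fra-mass-bound-w-3d} and the energy bound \eqref{eq:fra-energy-bound-w-3d} then yield $\sup_{t\in I_k}|M(t)|\lsm 2^{k\left(\frac{3p}{4}-1\right)}N_0^{1-s}$, and taking the fourth root gives exactly the claimed $L^4_{t,x}$ bound for $\W$, which combined with the estimate on $\V$ establishes \eqref{eq:fra-energy-bound-w-l4l4-3d}.

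The main obstacle will be controlling the correction terms from $t^{\frac{3p}{2}-2}\brk{|\U|^p\U-|\W|^p\W}$. Each such term carries at least one factor of $\V$, and the plan is to peel it off by H\"older's inequality and absorb it into the linear quantities governed by Lemma \ref{lem:fra-linear}, using the smallness of $\de_0$. The singular time weight $t^{\frac{3p}{2}-2}$ contributes only $\lsm 2^{k\left(\frac{3p}{2}-2\right)}$ on $I_k$; combined with the one-dimensional time integration of size $2^k$ and the range $\tfrac23<p<\tfrac43$, the resulting exponents in both $2^k$ and $N_0$ are compatible with the target right-hand side, so the corrections can be absorbed either into the left-hand side or into the already-controlled $\V$ contribution, completing the proof.
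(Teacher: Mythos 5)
Your treatment of $\V$ (Strichartz on the admissible pair $(4,3)$ plus $\dot W^{1/4,3}\hookrightarrow L^4$) and your bound $\sup_{t\in I_k}|M(t)|\lsm \|\W\|_{L_t^\I L_x^2}^3\|\nabla\W\|_{L_t^\I L_x^2}\lsm 2^{(\frac{3p}{4}-1)k}N_0^{1-s}$ are fine, but the core of your argument has a genuine gap: you build the interaction Morawetz potential out of $\W$ alone, while $\W$ solves $i\pd_t\W+\frac12\De\W=t^{\frac{3p}{2}-2}|\U|^p\U$, whose forcing is \emph{not} a pure power of $\W$. Consequently neither the local mass identity nor the local momentum identity for $\W$ closes: you pick up a source term $t^{\frac{3p}{2}-2}\im\brko{\wb\W\,|\U|^p\U}$ in the mass flux and a non-sign-definite, non-divergence bracket involving $\nabla\W$ and $|\U|^p\U$ in the momentum flux. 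Only after adding and subtracting $|\W|^p\W$ do you get the defocusing-signed piece plus "difference" terms, and those difference terms contain $\nabla\W$, whose only available bound is the singular one \eqref{eq:fra-energy-bound-w-3d}, $\norm{\nabla\W(t)}_{L_x^2}\lsm t^{\frac{3p}{4}-1}N_0^{1-s}$, multiplied by the weight $t^{\frac{3p}{2}-2}$. Whether the resulting powers of $2^k$ (near $t=0$, i.e. $k\to-\I$) and of $N_0$ stay below the level $2^{(\frac{3p}{4}-1)k}N_0^{1-s}$ of $M(t)$ uniformly in $\frac23<p<\frac43$ is exactly the delicate bookkeeping that occupies the rest of the paper's Section 4, and your proposal defers it entirely ("the exponents are compatible") without a computation; note also that $\de_0$-smallness does not automatically absorb such terms, since $N_0=N_0(\de_0)$ can be arbitrarily large, so any correction of size $\de_0N_0^{\al(1-s)}$ with $\al>1$ would already ruin the stated bound.

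The paper avoids this entirely by running the interaction Morawetz identity on the \emph{full} solution $\U$ of \eqref{eq:nls-pc}: since the nonlinearity there is $t^{\frac{3p}{2}-2}|\U|^p\U$ with $t^{\frac{3p}{2}-2}>0$ and $\mu=+1$, the nonlinear contribution to $\pd_tM$ is a gradient of $|\U|^{p+2}$ with a favorable (defocusing) sign, so there are no error terms at all; the decomposition $\U=\V+\W$ enters only when bounding $\sup_{t\in I_k}|M(t)|\lsm\norm{\U}_{L_x^2}^2\norm{\U}_{\dot H_x^{1/2}}^2$ via Lemma \ref{lem:fra-linear}, \eqref{eq:fra-energy-bound-w-3d}, and \eqref{eq:fra-mass-bound-w-3d}, and the bound for $\W$ alone then follows from the triangle inequality together with $\norm{\V}_{L_{t,x}^4}\lsm\de_0\lsm 2^{\frac14(\frac{3p}{4}-1)k}N_0^{\frac14(1-s)}$. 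If you reorganize your argument this way — Morawetz on $\U$, splitting only in the boundary term — your remaining estimates go through verbatim and the proof closes; as written, the uncontrolled correction terms are a real gap.
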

\begin{remark}\label{rem:morawetz}
\begin{enumerate}
\item 
We remark that this estimate works for the scattering in the mass subcritical case, since it provides a mass subcritical spacetime estimate for the original solution $u$. Roughly speaking, for $k<0$,
\EQ{
	\normb{t^{-\frac{1}{4}(\frac34p-1)}\U}_{L_{t\sim2^k}^4L_x^4} \lsm \E^{1/8}.
}
Then, by Remark \ref{rem:spacetime-exponent-transform},
\EQ{
\normb{t^{\frac3{16}p}u}_{L_{t\sim2^{-k}}^4L_x^4} \lsm \E^{1/8}.
}
We can see that $\normb{t^{\frac3{16}p}u_\la}_{L_{t,x}^4}$ coincides with the same scaling of $\norm{u_\la}_{L_t^\I \dot H_x^{\frac{2-3p}{8}}}$, where $u_\la=\la^{\frac2p}u(\la^2 t,\la x)$.
\item The above interaction Morawetz estimate for \eqref{eq:nls-pc} only holds for positive time. When considering the backward scattering, we can transform the original solution $u$ to $\wb u(-t)$, thus change it into a forward scattering problem, and then consider the related pseudo-conformal transform and interaction Morawetz estimate.
\item We compare the Morawetz estimate \eqref{eq:fra-energy-bound-w-l4l4-3d} with the energy estimate. On each dyadic interval, heuristically, \eqref{eq:fra-energy-bound-w-l4l4-3d} can be viewed as
\EQ{
\norm{\W}_{L_x^4} \lsm 2^{\brkb{-\frac{1}{4}+\frac{1}{4}\brko{\frac {3p}4-1}}k} N_0^{\frac{1}{4}(1-s)}.
}
On the other hand, by Sobolev's inequality, interpolation,  \eqref{eq:fra-energy-bound-w-3d}, and \eqref{eq:fra-mass-bound-w-3d},
\EQ{
\norm{\W}_{L_x^4} \lsm \norm{\W}_{\dot H_x^{\frac34}} \lsm 2^{\frac34(\frac34p-1)k} N_0^{\frac{3}{4}(1-s)}.
}
We can see that the Morawetz estimate has much less energy increase in terms of $N_0$, but it creates more temporal singularity for all $\frac23<p<\frac43$.
\end{enumerate}
\end{remark}
\begin{proof}[Proof of Proposition \ref{prop:interaction-morawetz}]
The proof is essentially equivalent to the classical case. Denote that
\EQ{
m(t,x)=\frac12|\U(t,x)|^2\text{, and }p(t,x)=\frac12\im \brko{\wb \U(t,x)\nabla \U(t,x)}.
}
Then, we have
\EQn{\label{eq:local-mass-flow}
\pd_tm=-\nabla\cdot p,
}
and
\EQn{\label{eq:local-momentum-flow}
\pd_tp = & -\frac12\re \nabla\cdot \brko{\nabla \wb \U \nabla \U}  -\frac{p}{2(p+2)}t^{\frac{3p}{2}-2} \nabla \brkb{|\U|^{p+2}} + \frac14 \nabla \De m.
}
Moreover, we note that 
\EQ{
\pd_j\brkb{\frac{x_k}{|x|}}=\frac{\de_{jk}}{|x|}-\frac{x_jx_k}{|x|^3}\text{, }
\nabla\cdot \frac{x}{|x|}=\frac2{|x|}\text{, and }\De\nabla\cdot \frac{x}{|x|}=-8\pi\delta(x).
}
Let 
\EQ{
	M(t):= \int\!\!\int_{\R^{3+3}} \frac{x-y}{|x-y|}\cdot p(t,x)\> m(t,y)\dx\dy.
}
Then by \eqref{eq:local-mass-flow} and \eqref{eq:local-momentum-flow}, we have the interaction Morawetz identity
\begin{subequations}
\EQnn{
\pd_t M(t)
= &\iint_{\R^{3+3}} \frac{x-y}{|x-y|}\cdot \partial_t p(t,x) m(t,y)\dd x\dd y\nonumber\\
& \quad + \iint_{\R^{3+3}} \frac{x-y}{|x-y|}\cdot p(t,x) \partial_t m(t,y)\dd x\dd y\nonumber\\
= &\iint_{\R^{3+3}} \frac{x-y}{|x-y|}\cdot \brkb{ -\frac12\re \nabla\cdot \brko{\nabla \wb \U \nabla \U}  -\frac{p}{2(p+2)}t^{\frac{3p}{2}-2} \nabla \brkb{|\U|^{p+2}}}(t,x) m(t,y)\dd x\dd y\label{esti:inter-mora-angular-1}\\
& \quad -\iint_{\R^{3+3}} \frac{x-y}{|x-y|}\cdot p(t,x) \nabla\cdot p(t,y)\dd x\dd y\label{esti:inter-mora-angular-2}\\
&\quad +\frac14\iint_{\R^{3+3}} \frac{x-y}{|x-y|}\cdot \nabla \De m(t,x) m(t,y)\dd x\dd y. \label{esti:inter-mora-angular-3}
}
\end{subequations}
Integrating by parts,
\EQ{
\eqref{esti:inter-mora-angular-1} = & 
\iint_{\R^{3+3}} \frac{1}{|x-y|}|\nabla \U(t,x)|^2 m(t,y)\dd x\dd y \\
& + 
\frac{p}{p+2}\iint_{\R^{3+3}} \frac{1}{|x-y|} t^{\frac{3p}{2}-2} |\U(t,x)|^{p+2}m(t,y)\dd x\dd y.
}
Using the classical argument in \cite{CKSTT08Annals},
\EQ{
\eqref{esti:inter-mora-angular-2} \goe - \iint_{\R^{3+3}} \frac{1}{|x-y|}|\nabla \U(t,x)|^2 m(t,y)\dd x\dd y.
}
Then, we have
\EQ{
\eqref{esti:inter-mora-angular-1} + \eqref{esti:inter-mora-angular-2} \ge \frac{p}{p+2}\iint_{\R^{3+3}} \frac{1}{|x-y|} t^{\frac{3p}{2}-2} |\U(t,x)|^{p+2}m(t,y)\dd x\dd y.
}
We also have
\EQ{
\eqref{esti:inter-mora-angular-3} = &  \frac14\iint_{\R^{3+3}} \frac{x-y}{|x-y|}\cdot \nabla \De m(t,x) m(t,y)\dd x\dd y \\
= & 2\pi \int_{\R^3} |\U(t,x)|^4 \dd x.
}
Therefore, 
\EQ{
\pd_tM(t) \goe & 2\pi \int_{\R^3} |\U(t,x)|^4 \dd x + \frac{p}{p+2}\iint_{\R^{3+3}} \frac{1}{|x-y|} t^{\frac{3p}{2}-2} |\U(t,x)|^{p+2}m(t,y)\dd x\dd y.
}
Integrating in $t$ on $I_k$,
\EQn{\label{est:U-4}
&\int_{I_k}\int_{\R^3} |\U(\ta,x)|^4 \dd x \dd \ta \\
&+ \int_{I_k}\iint_{\R^{3+3}} \frac{1}{|x-y|} \ta^{\frac{3p}{2}-2} |\U(\ta,x)|^{p+2}|\U(\ta,y)|^{2}\dd x\dd y\dd \ta\\
\lsm & \sup\limits_{t\in [t_0,T]} \big| M(t)\big|.
}
Note that by the conservation of mass, Lemma \ref{lem:fra-linear}, \eqref{eq:fra-energy-bound-w-3d}, and \eqref{eq:fra-mass-bound-w-3d}, we get that for any $t\in I_k$,
\begin{align*}
\big| M(t)\big|
\lesssim  &
 \norm{\U(t)}_{L_x^2}^2 \norm{\U(t)}_{\dot H_x^{1/2}}^2\\
\lesssim   &
\norm{\V(t)}_{\dot H_x^{1/2}}^2+\norm{\W(t)}_{L_x^2}\norm{\W(t)}_{\dot H_x^1}\\
\lesssim &
2^{\brko{\frac {3p}4-1}k} N_0^{1-s}.
\end{align*}
Then, combining with this inequality and \eqref{est:U-4}, we have that
\EQ{ 
\norm{\U}_{L_{t,x}^4(I_k\times\R^3)} \lsm 2^{\frac{1}{4}\brko{\frac {3p}4-1}k} N_0^{\frac{1}{4}(1-s)}.
}
By Sobolev's inequality and Lemma \ref{lem:fra-linear}, \EQ{
\norm{\V}_{L_{t,x}^4(I_k\times\R^3)} \lsm \de_0,
}
then the estimate for $\W$ follows by noting that $2^{\frac{1}{4}\brko{\frac {3p}4-1}k} N_0^{\frac{1}{4}(1-s)}\gsm 1$.
\end{proof}

\subsection{Bilinear Strichartz estimate}
In this subsection, we establish a type of the bilinear Strichartz estimate on dyadic time interval. We first derive the dual Strichartz estimates for the nonlinear term $t^{\frac {3p}2 -2}(|\U|^p\U)$ in equation \eqref{eq:nls-pc}. 

We need to derive different kinds of spacetime estimates, since the difficulty of covering the energy increment varies for different nonlinear exponent $p$. Note that by \eqref{eq:fra-energy-bound-w-3d}, the energy gives
\EQ{
\norm{\nabla \W(t)}_{L_x^2} \lsm t^{\frac {3p}4-1} N_0^{1-s}.
}
If $p$ is close to $2/3$, the main issue lies in covering the estimate in $t$ near the origin, and if $p$ is close to $4/3$, this temporal singularity can be neglected, but the increase of $N_0$ will become more significant.

Therefore, we intend to obtain two kinds of spacetime estimate: the first one focuses on better time singularity near the origin, without considering the derivative loss and energy increase. The second type emphasizes better energy increase in terms of $N_0$, regardless of the temporal singularity. The former will be used to deal with the case when $p$ is near $2/3$, and the latter works for larger $p$. Moreover, the Morawetz estimate in Proposition \ref{prop:interaction-morawetz} exhibits better energy increase than the energy estimate in \eqref{eq:fra-energy-bound-w-3d}, which is helpful to the large $p$ case. 

Based on the above observations, we first derive the $L_t^1L_x^2$-estimate, using $L_t^\I L_x^{p+2}$-estimate for $U$, which does not possess any time singularity. For the second purpose, we consider the $L_t^{2-} L_x^{6/5+}$-estimate (we make an $\ep$-perturbation, since the bilinear Strichartz estimate is not available for endpoint $L_t^{2} L_x^{6/5}$ in Lemma \ref{lem:bilinearstrichartz}). Here, we also utilize the aforementioned interaction Morawetz estimate rather than energy estimate to minimize the increase of $N_0$. The estimates are summarized as follows:
\begin{lem}\label{lem:est-FU}
Suppose that the assumptions in Theorem \ref{thm:frac-weighted-3d} and the bootstrap hypothesis \eqref{eq:fra-energy-bootstrap-hypothesis-3d} holds. Let $0<\ep\ll 1$ and $N\in2^\N$,  then we have
\EQn{\label{eq:fra-energy-bound-bilinear-nln-l1l2}
\normb{P_Nt^{\frac {3p}2 -2}(|\U|^p\U)}_{L_{t\in I_k}^1 L_x^2} 
\lsm 
2^{(\frac32p-1)k} N^{\frac{3p}{2(p+2)}} N_0^{\frac{2(p+1)}{p+2}(1-s)}.
}
and
\EQn{\label{eq:fra-energy-bound-bilinear-nln-l2l6/5}
\normb{P_Nt^{\frac {3p}2 -2}(|\U|^p\U)}_{L_{t\in I_k}^{\frac{2}{1+\ep}} L_x^{\frac{6}{5-2\ep}}} 
\lsm 
2^{\brkb{(\frac12p+\frac53)(\frac34p-1)-\frac12p+\frac56}k}  N_0^{\frac{3p-2+2\ep}{6}(1-s)}.
}
\end{lem}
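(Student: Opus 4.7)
The two estimates are of different character, and my plan is to treat them in parallel.

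For \eqref{eq:fra-energy-bound-bilinear-nln-l1l2}, I would argue pointwise in time. On $I_k$ the weight satisfies $t^{\frac{3p}{2}-2}\sim 2^{(\frac{3p}{2}-2)k}$, and $\int_{I_k}t^{\frac{3p}{2}-2}\dd t\sim 2^{(\frac{3p}{2}-1)k}$ thanks to $p>\frac{2}{3}$. Then Bernstein's inequality with exponent $\frac{p+2}{p+1}\le 2$ yields the frequency factor
\EQ{
\|P_N(|\U|^p\U)\|_{L^2_x}\lsm N^{\frac{3p}{2(p+2)}}\||\U|^p\U\|_{L_x^{(p+2)/(p+1)}}=N^{\frac{3p}{2(p+2)}}\|\U\|_{L^{p+2}_x}^{p+1}.
}
Splitting $\U=\V+\W$, the bootstrap bound \eqref{eq:fra-energy-bound-w-3d} controls $\|\W(t)\|_{L^{p+2}_x}\lsm N_0^{\frac{2}{p+2}(1-s)}$, while $\|\V(t)\|_{L^{p+2}_x}\lsm\|\V(t)\|_{H^s_x}\lsm\delta_0$ by Sobolev embedding (valid throughout our range since $s>\frac{14}{15}>\frac{3p}{2(p+2)}$) combined with Lemma \ref{lem:fra-linear}. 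Collecting the three factors produces the claimed bound.

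For \eqref{eq:fra-energy-bound-bilinear-nln-l2l6/5}, the essential new input is the localized interaction Morawetz estimate of Proposition \ref{prop:interaction-morawetz}. The plan is to drop the bounded operator $P_N$, pull $2^{(\frac{3p}{2}-2)k}$ out of the time integral, and then use H\"older in $(t,x)$ to split $|\U|^{p+1}=|\U|^a\cdot|\U|^b$ with $a+b=p+1$. The first group is placed in $L^4_{t,x}(I_k)$, where Proposition \ref{prop:interaction-morawetz} gives $\|\U\|_{L^4_{t,x}(I_k)}^a\lsm 2^{\frac{a}{4}(\frac{3p}{4}-1)k}N_0^{\frac{a}{4}(1-s)}$; the second group is placed in $L^{q_1}_tL^2_x(I_k)$, where mass conservation $\|\U(t)\|_{L^2_x}\equiv\|u_0\|_{L^2_x}$ yields $\lsm 2^{bk/q_1}$ with no $N_0$ loss. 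The H\"older compatibility
\EQ{
\frac{a}{4}+\frac{b}{2}=\frac{5-2\ep}{6}\quad\text{and}\quad\frac{a}{4}+\frac{b}{q_1}=\frac{1+\ep}{2}
}
uniquely determines $a=\frac{6p-4+4\ep}{3}$, $b=\frac{7-3p-4\ep}{3}$, and $\frac{b}{q_1}=\frac{5+\ep-3p}{6}$; one checks that $0<a<p+1$ and $q_1\ge 1$ uniformly on $\frac{2}{3}<p<\frac{4}{3}$ for $\ep$ small. The crucial numerical coincidence is that $\frac{a}{4}=\frac{3p-2+2\ep}{6}$, so the Morawetz factors produce precisely the $N_0$-exponent required in the target.

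The remaining task, which I expect to be the main source of bookkeeping difficulty, is the identification of the $2^k$-exponent. Adding the three contributions $\frac{3p}{2}-2$ (weight), $\frac{a}{4}(\frac{3p}{4}-1)$ (Morawetz), and $\frac{5+\ep-3p}{6}$ (mass) and simplifying should reproduce $(\frac{p}{2}+\frac{5}{3})(\frac{3p}{4}-1)-\frac{p}{2}+\frac{5}{6}$; I anticipate that the two expressions agree up to an $O(\ep)$ correction, which is harmlessly absorbed by an arbitrarily small redefinition of the free parameter $\ep$. The obstacles I anticipate are thus purely algebraic (the exponent identity and the uniform H\"older admissibility); no new analytic input beyond Proposition \ref{prop:interaction-morawetz} and mass conservation is needed.
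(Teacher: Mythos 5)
Your proposal is correct and follows essentially the same route as the paper: the $L_t^1L_x^2$ bound via pulling out the dyadic weight, Bernstein from $L_x^{\frac{p+2}{p+1}}$ to $L_x^2$, and the $L_x^{p+2}$ control of $\U$, and the $L_t^{\frac{2}{1+\ep}}L_x^{\frac{6}{5-2\ep}}$ bound via splitting $|\U|^{p+1}$ between $L_{t,x}^4$ (localized interaction Morawetz) and the conserved mass, with exactly the paper's exponents $a=2p-\frac43+\frac43\ep$, $b=\frac73-p-\frac43\ep$. The one step you left "anticipated" does work out: the $2^k$-exponents agree exactly at $\ep=0$, and the leftover $O(\ep)$ term has coefficient $\frac13\brk{\frac34p-1}+\frac16=\frac{3p-2}{12}>0$ since $p>\frac23$, so $2^{O(\ep)k}\loe C(T_0)$ on the range $2^k\loe T_0$; this positivity-plus-$T_0$ argument (rather than a redefinition of $\ep$, which would also change the left-hand norm) is how the paper absorbs it.
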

\begin{proof}
For the first inequality, by Bernstein's inequality and \eqref{eq:fra-energy-bound-w-3d},
\EQ{
\normb{P_Nt^{\frac {3p}2 -2}(|\U|^p\U)}_{L_{t\in I_k}^1 L_x^2} 
\lsm & \normb{t^{\frac32p-2}P_N(|\U|^p\U)}_{L_t^1 L_x^2} \\
\lsm & 2^{(\frac32p-1)k} \norm{P_N(|\U|^p\U)}_{L_t^\I L_x^2} \\
\lsm & 2^{(\frac32p-1)k} N^{\frac{3p}{2(p+2)}} \norm{P_N(|\U|^p\U)}_{L_t^\I L_x^{\frac{p+2}{p+1}}} \\
\lsm & 2^{(\frac32p-1)k} N^{\frac{3p}{2(p+2)}} \norm{\U}_{L_t^\I L_x^{p+2}}^{p+1} \\
\lsm & 2^{(\frac32p-1)k} N^{\frac{3p}{2(p+2)}} N_0^{\frac{2(p+1)}{p+2}(1-s)}.
}
This gives \eqref{eq:fra-energy-bound-bilinear-nln-l1l2}.

For the second inequality, by interpolation and Proposition \ref{prop:interaction-morawetz},
\EQ{
\normb{P_Nt^{\frac {3p}2 -2}(|\U|^p\U)}_{L_{t\in I_k}^{\frac{2}{1+\ep}} L_x^{\frac{6}{5-2\ep}}} 
\lsm & \normb{t^{\frac32p-2}|\U|^p\U}_{L_{t\in I_k}^{\frac{2}{1+\ep}} L_x^{\frac{6}{5-2\ep}}} \\
\lsm & \normb{t^{\frac32p-2}\normo{\U}_{L_x^{\frac{6}{5-2\ep}(p+1)}}^{p+1}}_{L_t^{\frac{2}{1+\ep}}(I_k)} \\
\lsm & \normb{t^{\frac32p-2}\normo{\U}_{L_x^{4}}^{2p-\frac{4}{3}+\frac43\ep} \normo{\U}_{L_x^{2}}^{\frac73-p-\frac43\ep}}_{L_t^{\frac{2}{1+\ep}}(I_k)} \\
\lsm & \normb{t^{\frac32p-2} }_{L_t^{\frac{6}{5-3p+\ep}}(I_k)} \normo{\U}_{L_{t\in I_k}^{4}L_x^4}^{2p-\frac{4}{3}+\frac43\ep} \normo{\U}_{L_t^\I L_x^{2}}^{\frac73-p-\frac43\ep} \\
\lsm & 2^{\brkb{\frac32p-2 -\frac12p+\frac56+\frac16\ep}k}\normo{\U}_{L_{t\in I_k}^{4}L_x^4}^{2p-\frac{4}{3}+\frac43\ep} \\
\lsm & 2^{\brkb{(\frac12p+\frac53+\frac13\ep)(\frac34p-1)-\frac12p+\frac56+\frac16\ep}k}  N_0^{\frac{3p-2+2\ep}{6}(1-s)}.
}
Then, since $p>\frac23$, we have $(\frac34p-1)\frac13\ep+\frac16\ep>0$, which gives
\EQ{
2^{(\frac34p-1)\frac13\ep+\frac16\ep} \loe C(T_0).
}
This finishes the proof of \eqref{eq:fra-energy-bound-bilinear-nln-l2l6/5}.
\end{proof}

Based on the above two spacetime estimates, our main result in this subsection is the following bilinear Strichartz estimate on dyadic intervals:
\begin{lem}[Localized bilinear Strichartz estimate]\label{lem:fra-energy-bound-bilinear-3d}
Suppose that the assumptions in Theorem \ref{thm:frac-weighted-3d} and the bootstrap hypothesis \eqref{eq:fra-energy-bootstrap-hypothesis-3d} hold. Then,  for any $0<\ep\ll1$, $k\in\Z$ with $2^k\loe T_0$, and $N_1,N\in2^\N$ such that $N_1\ll N$,
\EQn{\label{eq:fra-energy-bound-bilinear-3d}
\norm{\nabla \V_{N} \W_{N_1}}_{L_{t\in I_k}^2L_x^2} 
\lsm \min\big\{
&\de_0 N^{\frac12-s} 2^{(\frac32p-1)k} N_1^{\frac{5p+4}{2(p+2)}} N_0^{\frac{2(p+1)}{p+2}(1-s)}, \\
&\de_0 N_1 N^{\frac12-s} 2^{\brkb{(\frac12p+\frac53)(\frac34p-1)-\frac12p+\frac56}k}  N_0^{\frac{3p-2+2\ep}{6}(1-s)}\big\}.
}
\end{lem}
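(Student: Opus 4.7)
The natural route is to apply the bilinear Strichartz estimate of Lemma~\ref{lem:bilinearstrichartz} to the frequency-localized pair $(u,v)=(\nabla\V_N,\W_{N_1})$ on the dyadic slab $I_k\times\R^3$, taking output exponents $(q,r)=(2,2)$. The admissibility requirement $\frac1q+\frac{d-1}{r}<d-1$ is fulfilled since $\frac32<2$ in dimension $3$, and the prefactor $M^{d+1-(d+1)/r-2/q}/N^{1-1/r}$ collapses to $N_1/N^{1/2}$. It then suffices to bound the two $S^*(I_k)$ norms and to invoke Lemma~\ref{lem:est-FU} for the forcing of $\W_{N_1}$ in two different dual Strichartz spaces.

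For the linear factor $\nabla\V_N$, the identity $(i\pd_t+\tfrac12\De)\nabla\V_N=0$ reduces $\norm{\nabla\V_N}_{S^*(I_k)}$ to the $L^2_x$ datum at any $a\in I_k$, which equals $\norm{\nabla P_N\V_+}_{L^2_x}\lsm N\cdot N^{-s}\norm{\V_+}_{H^s}\lsm\de_0N^{1-s}$ by Lemma~\ref{lem:fra-initial-data} and the $L^2$-isometry of $S(t)$. Multiplying by the bilinear prefactor produces the common factor $\de_0N_1N^{1/2-s}$ of both summands in the minimum. For $\W_{N_1}$, using $(i\pd_t+\tfrac12\De)\W_{N_1}=P_{N_1}(t^{\frac{3p}{2}-2}|\U|^p\U)$, I would apply Lemma~\ref{lem:bilinearstrichartz} twice with two different $L^2$-admissible auxiliary pairs. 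The first, $(\tilde q,\tilde r)=(\infty,2)$ (which is $L^2$-admissible in $d=3$ with $\tilde q\ne 2$), sends the forcing into $L^1_tL^2_x$ and plugs into \eqref{eq:fra-energy-bound-bilinear-nln-l1l2}; the extra prefactor $N_1$ combines with $N_1^{\frac{3p}{2(p+2)}}$ via $1+\tfrac{3p}{2(p+2)}=\tfrac{5p+4}{2(p+2)}$ to reproduce exactly the first summand of the minimum. The second choice $(\tilde q,\tilde r)=(\tfrac{2}{1-\ep},\tfrac{6}{1+2\ep})$, which is readily checked to be $L^2$-admissible in $d=3$ with $\tilde q\ne 2$ for $0<\ep\ll 1$, sends the forcing into $L^{2/(1+\ep)}_tL^{6/(5-2\ep)}_x$ and plugs into \eqref{eq:fra-energy-bound-bilinear-nln-l2l6/5}, reproducing the second summand. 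Taking the minimum closes the argument.

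The main obstacle is the data contribution $\norm{\W_{N_1}(a)}_{L^2_x}$ that also lives in $\norm{\W_{N_1}}_{S^*(I_k)}$, since $\W$ carries a final rather than initial datum and so $\W_{N_1}(a)$ cannot be freely read off from a single reference time. The plan is to write $\W(a)=S(a)\W_++i\int_a^{+\infty}S(a-\ta)(\ta^{\frac{3p}{2}-2}|\U|^p\U)\dd\ta$ and exploit two facts: first, the spatial compact support of $w_0=\chi_{\loe N_0}u_0$ forces $\W_+$ to be essentially frequency-localized at scale $\lsm N_0$, so Lemma~\ref{lem:fra-initial-data} yields $\norm{P_{N_1}\W_+}_{L^2}\lsm\min(N_1^{-s},N_1^{-1}N_0^{1-s})$; second, the Duhamel tail over $[a,+\infty)$ can be split into dyadic pieces $I_j$ with $j\goe k$ and controlled by the same dual-Strichartz estimates underlying Lemma~\ref{lem:est-FU}. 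The delicate bookkeeping will be to verify that these data contributions are dominated by (or absorbed into) the two forcing bounds above uniformly in $N,N_1,k$, using the restriction $2^k\loe T_0$ and the freedom to let the implicit constants depend on $A$ and $T_0$.
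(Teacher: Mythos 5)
Your main line coincides with the paper's proof, which is a one-step application of Lemma \ref{lem:bilinearstrichartz} with output pair $(2,2)$ (prefactor $N_1/N^{1/2}$), the bound $\norm{\nabla P_N\V_+}_{L_x^2}\lsm \de_0 N^{1-s}$ for the free factor, and Lemma \ref{lem:est-FU} for the inhomogeneity of $\W_{N_1}$ measured in $L_t^1L_x^2$ and $L_t^{\frac{2}{1+\ep}}L_x^{\frac{6}{5-2\ep}}$ on $I_k$; your two auxiliary admissible pairs $(\infty,2)$ and $(\tfrac{2}{1-\ep},\tfrac{6}{1+2\ep})$ and the arithmetic $1+\tfrac{3p}{2(p+2)}=\tfrac{5p+4}{2(p+2)}$ are exactly what is needed. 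The paper's displayed proof simply takes the $S^*(I_k)$ datum of $\W_{N_1}$ to be $\norm{P_{N_1}\W_+}_{L_x^2}$ together with the forcing restricted to $I_k$, and performs no tail decomposition at all.

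The genuine gap is the part you flag and defer. As sketched, controlling $\norm{\W_{N_1}(a)}_{L_x^2}$, $a\in I_k$, by the Duhamel integral over $[a,+\infty)$ and summing the bounds of Lemma \ref{lem:est-FU} over the dyadic intervals $I_j$, $j\goe k$, cannot reproduce the stated estimate: since $\tfrac32p-1>0$, one gets $\sum_{j\goe k,\,2^j\loe T_0}2^{(\frac32p-1)j}\sim T_0^{\frac32p-1}$, a $k$-independent constant, so this route forfeits precisely the factor $2^{(\frac32p-1)k}$ in the first bound (and likewise the factor in the second bound whenever its exponent $(\frac12p+\frac53)(\frac34p-1)-\frac12p+\frac56$ is positive, e.g.\ for $p$ near $\tfrac43$); that $k$-decay is exactly what makes the subsequent summation over $k\in\Z$, $2^k\loe T_0$, in Lemma \ref{lem:fra-energy-3d-main} converge, so it cannot be given away. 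In addition, the portion of the tail on $[T_0,+\infty)$ lies outside the scope of Lemma \ref{lem:est-FU} and would require the estimates behind Lemma \ref{lem:fra-local-nonlinear-3d}. Note also that the same difficulty is already visible for the bare datum term: $\norm{P_{N_1}\W_+}_{L_x^2}\lsm\min\{N_1^{-s},N_1^{-1}N_0^{1-s}\}$ carries no decay in $k$, hence is not dominated by a right-hand side that decays like $2^{(\frac32p-1)k}$ as $k\to-\infty$; the paper's one-line proof records this term but does not discuss its absorption, so your proposal is faithful to the paper's mechanism, but the completion you outline for the datum/tail contribution would not deliver the claimed $2^{ck}$ factors and needs a genuinely different justification.
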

\begin{remark}\label{rem:bilinear-strichartz}
As pointed above, the advantage of the first upper bound is that it has better time estimate near the origin $t=0$, while the second bound has better energy increase in terms of $N_0$.
\end{remark}
\begin{proof}
This lemma follows directly from Lemma \ref{lem:est-FU} and the bilinear Strichartz estimate in Lemma \ref{lem:bilinearstrichartz}, namely
\EQ{
\norm{\nabla \V_{N} \W_{N_1}}_{L_{t\in I_k}^2 L_x^2}  \lsm & \frac{N_1}{N^{1/2}}\norm{\nabla \V_+}_{L_x^2}\\
& \cdot \brkb{\norm{P_{N_1}\W_+}_{L_x^2} + \normb{P_{N_1}t^{\frac {3p}2 -2}(|\U|^p\U)}_{L_t^1 L_x^2+L_{t}^{\frac{2}{1+\ep}} L_x^{\frac{6}{5-2\ep}}(I_k\times\R^3)}}.
}
\end{proof}

\subsection{Proof of Proposition \ref{prop:fra-energy-3d}}

Now, we reduce the proof of Proposition \ref{prop:fra-energy-3d} to Lemma \ref{lem:fra-energy-3d-main} and \ref{lem:fra-energy-3d-other} below.
 Recall the equation for $\W$:
\EQ{
i\pd_t \W + \frac12\De \W = t^{\frac{3p}{2}-2} |\U|^p\U.
}
Multiply the equation with $t^{2-\frac{3p}{2}}\wb{\W_t}$, integrate in $x$, and take the real part, then we obtain 
\EQ{
\frac12t^{2-\frac {3p}2}\re\int_{\R^3} \De \W\wb\W_t \dd x = \re\int_{\R^3} |\U|^p\U \wb\W_t \dd x.
}
Then, integrate by parts,
\EQ{
- \frac12t^{2-\frac {3p}2}\re\int_{\R^3} \nabla \W\cdot\nabla\wb\W_t \dd x = \re\int_{\R^3} |\U|^p\U \wb\U_t \dd x - \re\int_{\R^3} |\U|^p\U \wb\V_t \dd x,
}
which gives
\EQ{
- \frac14t^{2-\frac {3p}2}\pd_t\brkb{\int_{\R^3} |\nabla \W|^2 \dd x} = \frac{1}{p+2} \pd_t\brkb{\int_{\R^3} |\U|^{p+2} \dd x} - \re\int_{\R^3} |\U|^p\U \wb\V_t \dd x.
}
Thus, by $p<\frac43$,
\EQ{
	\pd_t\E(t) = & \re\int_{\R^3}|\U|^p\U\wb{\V_t}\dx + \frac14(2-\frac {3p}2)t^{1-\frac {3p}2} \int_{\R^3} |\nabla \W|^2 \dd x \\
	\goe & \re\int_{\R^3}|\U|^p\U\wb{\V_t}\dx.
}
Now, we integrate from $t$ to $T_0$,
\EQ{
\E(t) \loe \E(T_0) - \re\int_t^{T_0}\int_{\R^3}|\U|^p\U\wb{\V_\ta}\dd x\dd \ta.
}
Taking supremum in $t$,
\EQn{\label{eq:fra-energy-3d-identity-1}
	\sup_{t\in(0,T_0]}\E(t)\loe \E(T_0) + |\int_{0}^{T_0}\int_{\R^3}|\U|^p\U\wb{\V_t}\dd x\dd t|.
}
Using $i\pd_t\V=-\De \V$ and integration by parts,
\EQn{\label{eq:fra-energy-3d-identity-2}
	|\int_{0}^{T_0}\int_{\R^3}|\U|^p\U\wb{\V_t}\dd x\dd t| \lsm & \int_{0}^{T_0}\int_{\R^3}|\U|^p |\nabla\U\cdot\nabla\V|\dd x\dd t \\
	\lsm & \int_{0}^{T_0}\int_{\R^3}|\U|^p |\nabla\W\cdot\nabla\V|\dd x\dd t \\
	& + \int_{0}^{T_0}\int_{\R^3}|\U|^p|\nabla\V\cdot\nabla\V|\dd x\dd t.
}
Now, note that for $\frac23<p<\frac43$, we claim that
\EQn{\label{eq:bound-for-up}
	|\U|^p \lsm \sum_{N\in2^\N}N^{0+}|\U_N|^p.
}
In fact, we first make a dyadic frequency decomposition $|\U|^p = |\sum_{N\in2^\N}\U_N|^p$. Next, if $\frac23<p\le1$, then by $l_N^p\subset l_N^1$,
\EQ{
|\sum_{N\in2^\N}\U_N|^p \lsm \sum_{N\in2^\N}|\U_N|^p.
}
If $1<p<\frac43$, then by H\"older's inequality,
\EQ{
|\sum_{N\in2^\N}\U_N|^p = |\sum_{N\in2^\N}N^{0-}N^{0+}\U_N|^p \lsm \sum_{N\in2^\N}N^{0+}|\U_N|^p.
}
This proves \eqref{eq:bound-for-up}. Now, using \eqref{eq:bound-for-up}, we can make the frequency decomposition
\EQn{\label{eq:fra-energy-3d-identity-decomposition-1}
|\nabla\W||\nabla\V||\U|^p \lsm & \sum_{N\lsm N_1} N_1^{0+} |\nabla \W| |\nabla \V_N||\U_{N_1}|^{p} + \sum_{N\gg N_1} N_1^{0+} |\nabla \W| |\nabla \V_N||\U_{N_1}|^{p}\\
\lsm & \sum_{N\lsm N_1} N_1^{0+} |\nabla \W| |\nabla \V_N||\V_{N_1}|^{p} + \sum_{N\lsm N_1} N_1^{0+} |\nabla \W| |\nabla \V_N||\W_{N_1}|^{p} \\
& + \sum_{N\gg N_1} N_1^{0+} |\nabla \W| |\nabla \V_N||\V_{N_1}|^{p} + \sum_{N\gg N_1} N_1^{0+} |\nabla \W| |\nabla \V_N||\W_{N_1}|^{p}.
}
Moreover, we also have
\EQn{\label{eq:fra-energy-3d-identity-decomposition-2}
|\nabla\V|^2 |\U|^p\le & 2\sum_{N_1\loe N_2} |\nabla\V_{N_1}||\nabla\V_{N_2}||\U|^{p} \\
\lsm & \sum_{N_1\loe N_2} |\nabla\V_{N_1}||\nabla\V_{N_2}||\U_{\gsm N_2}|^{p} \\
& + \sum_{N_1\loe N_2} |\nabla\V_{N_1}||\nabla\V_{N_2}||\V_{\ll N_2}|^{p} \\
& + \sum_{N_1\loe N_2} |\nabla\V_{N_1}||\nabla\V_{N_2}||\W_{\ll N_2}|^{p}.
}
Therefore, applying \eqref{eq:fra-energy-3d-identity-decomposition-1} and \eqref{eq:fra-energy-3d-identity-decomposition-2} to \eqref{eq:fra-energy-3d-identity-2},
\EQnnsub{
|\int_{0}^{T_0}\int_{\R^3}|\U|^p\U\wb{\V_t}\dd x\dd t| \lsm & \sum_{N\lsm N_1} N_1^{0+} \int_{0}^{T_0}\int_{\R^3} |\nabla \W| |\nabla \V_N||\V_{ N_1}|^p \dd x \dd t \label{eq:fra-energy-bound-wv-high-high-v-3d}\\
& + \sum_{N\lsm N_1} N_1^{0+} \int_{0}^{T_0}\int_{\R^3} |\nabla \W| |\nabla \V_N||\W_{ N_1}|^p \dd x \dd t \label{eq:fra-energy-bound-wv-high-high-w-3d}\\
& + \sum_{N_1\ll N} N_1^{0+} \int_{0}^{T_0}\int_{\R^3} |\nabla \W| |\nabla \V_N||\V_{ N_1}|^p \dd x \dd t \label{eq:fra-energy-bound-wv-high-low-v-3d}\\
& + \sum_{N_1\ll N} N_1^{0+} \int_{0}^{T_0}\int_{\R^3} |\nabla \W| |\nabla \V_N||\W_{ N_1}|^p \dd x \dd t \label{eq:fra-energy-bound-wv-high-low-w-3d}\\
& + \sum_{N_1\loe N_2}\int_{0}^{T_0} \int_{\R^3} |\nabla\V_{N_1}||\nabla\V_{N_2}||\U_{\gsm N_2}|^p\dd x\dd t \label{eq:fra-energy-bound-vv-high-high-3d}\\
& + \sum_{N_1\loe N_2}\int_{0}^{T_0} \int_{\R^3} |\nabla\V_{N_1}||\nabla\V_{N_2}||\V_{\ll N_2}|^p\dd x\dd t \label{eq:fra-energy-bound-vv-high-low-v-3d} \\
& + \sum_{N_1\loe N_2}\int_{0}^{T_0} \int_{\R^3} |\nabla\V_{N_1}||\nabla\V_{N_2}||\W_{\ll N_2}|^p\dd x\dd t. \label{eq:fra-energy-bound-vv-high-low-w-3d}
}
Next, we estimate \eqref{eq:fra-energy-bound-wv-high-high-v-3d}-\eqref{eq:fra-energy-bound-vv-high-low-w-3d} one by one. 
Throughout the proof of Proposition \ref{prop:fra-energy-3d}, we always restrict the temporal integral region on $[0,T_0]$. Note that \eqref{eq:fra-energy-bound-wv-high-low-w-3d} is the main term, and requires more sophisticated analysis, thus we reduce the problem to the following two lemmas. 

\begin{lem}[Main term]\label{lem:fra-energy-3d-main}
Suppose that the assumptions in Theorem \ref{thm:frac-weighted-3d} and the bootstrap hypothesis \eqref{eq:fra-energy-bootstrap-hypothesis-3d} hold. Then,
\EQ{
\eqref{eq:fra-energy-bound-wv-high-low-w-3d} \lsm \de_0 N_0^{2(1-s)}.
}
\end{lem}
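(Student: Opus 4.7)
The plan is to run a H\"older decomposition on each dyadic time interval $I_k=[2^{k-1},2^k]$ with $2^k\le T_0$, extract the high-low bilinear structure between $\nabla\V_N$ and $\W_{N_1}$ via Lemma \ref{lem:fra-energy-bound-bilinear-3d}, and handle the remaining $|\W_{N_1}|^{p-\theta}$ tail by the localized Morawetz bound of Proposition \ref{prop:interaction-morawetz}, using $\U_{N_1}=\V_{N_1}+\W_{N_1}$ together with Lemma \ref{lem:fra-linear} to dominate $\W_{N_1}$ in $L^4_{t,x}(I_k)$. The factor $\nabla\W$ will always be placed in the energy space $L^\infty_t L^2_x(I_k)$, where the bootstrap bound \eqref{eq:fra-energy-bound-w-3d} supplies $2^{(3p/4-1)k}N_0^{1-s}$.

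Concretely, for each $N_1\ll N$ and each $k$, I would write
\[
\int_{I_k}\!\!\int_{\R^3}|\nabla\W|\,|\nabla\V_N|\,|\W_{N_1}|^p\,\dd x\,\dd t
\lsm \|\nabla\W\|_{L^\infty_t L^2_x(I_k)}\,\bigl\|\,|\nabla\V_N\W_{N_1}|^{\theta}\,|\nabla\V_N|^{1-\theta}\,|\W_{N_1}|^{p-\theta}\,\bigr\|_{L^1_tL^2_x(I_k)},
\]
for a free parameter $\theta\in(0,1]$ to be chosen. A second H\"older step in space-time splits the last norm into a product of $\|\nabla\V_N\W_{N_1}\|_{L^2_{t,x}(I_k)}^{\theta}$ (which is furnished by Lemma \ref{lem:fra-energy-bound-bilinear-3d}), a Strichartz-type norm of $\nabla\V_N$ (which yields a factor of $\de_0$ from Lemma \ref{lem:fra-linear} and $N^{1/2-s}$ after $N_0^{s-1}$-normalization), and $\|\W_{N_1}\|_{L^{q}_tL^{r}_x(I_k)}^{p-\theta}$ bounded through Proposition \ref{prop:interaction-morawetz}. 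According to whether $p$ is near $\tfrac23$ or near $\tfrac43$, I would use respectively the $L^1_tL^2_x$-driven version of Lemma \ref{lem:fra-energy-bound-bilinear-3d} (less energy increase in $N_0$ but more singular in $t$) or the $L^{2-}_tL^{6/5+}_x$-driven version (integrable in $t$, better exponent in $N_0$); see Remark \ref{rem:bilinear-strichartz}.

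Finally, I would sum. In the $N_1$ variable, the ratio $(N_1/N)^{\gamma}$ produced by the bilinear estimate against the high-frequency loss $N^{1/2-s}$ on $\nabla\V_N$ allows an application of Schur's test (Lemma \ref{lem:schurtest}), which is why the threshold $s>\tfrac{14}{15}$ enters. Summation in $N$ against $\de_0 N^{0-}$ produces the prefactor $\de_0$, and summation in $k$ requires the aggregated temporal power of $2^k$ to be strictly positive so that $\sum_{2^k\le T_0}$ converges at $k\to -\infty$. The main obstacle is the parameter bookkeeping: a single $\theta$ (or, at worst, two matched regimes in $p$) must yield simultaneously (i) a net $N_0$ exponent equal to $2(1-s)$ after combining the contributions from the energy bound, the bilinear Strichartz bound, and the Morawetz bound, (ii) a positive time exponent on each $I_k$ so the dyadic sum over $k\le \log_2 T_0$ is finite, and (iii) enough room in the $N$-sum to absorb the $N_1^{0+}$ loss coming from \eqref{eq:bound-for-up}. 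The hypothesis $\tfrac{14}{15}<s<1$ together with $\tfrac23<p<\tfrac43$ should leave exactly enough margin; verifying these three arithmetic inequalities across the whole range of $p$, by interpolating between the two bilinear estimates of Lemma \ref{lem:fra-energy-bound-bilinear-3d} if necessary, is the delicate quantitative step.
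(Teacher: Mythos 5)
Your skeleton matches the paper's: dyadic decomposition in time, H\"older with $\nabla\W$ in $L_t^\I L_x^2(I_k)$ via \eqref{eq:fra-energy-bound-w-3d}, extraction of $\norm{\nabla\V_N\W_{N_1}}_{L_{t,x}^2}^{\th}$ from Lemma \ref{lem:fra-energy-bound-bilinear-3d} (indeed with $\th=\frac{2}{15}$ in the paper), a Strichartz norm of $\nabla\V_N$ yielding $\de_0 N^{(1-\th)(1-s)}$, and a split into a small-$p$ and a large-$p$ regime keyed to the two bounds of Lemma \ref{lem:fra-energy-bound-bilinear-3d}. But there is a genuine gap in your treatment of the tail $|\W_{N_1}|^{p-\th}$: you propose to control it by the localized Morawetz bound of Proposition \ref{prop:interaction-morawetz} uniformly in $p$, whereas this cannot work near $p=\frac23$. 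The Morawetz bound carries the temporal factor $2^{\frac14(\frac34p-1)k}$ per power of $\W$ (Remark \ref{rem:morawetz}(3): less $N_0$ growth but \emph{more} temporal singularity than the energy bound), and near $p=\frac23$ the aggregate exponent of $2^k$ in the paper's Case I is already only barely positive ($\approx 0.007$); inserting $p-\th$ Morawetz factors turns it negative and the sum over $k\le\log_2 T_0$ diverges. Moreover, the positive power $N_1^{\th\frac{5p+4}{2(p+2)}}$ produced by the first bilinear bound must be absorbed to sum over $N_1\ll N$, and in the paper this comes from trading $\norm{\W_{N_1}}_{L_x^2}$ for $N_1^{-1}\norm{\nabla\W_{N_1}}_{L_x^2}$ (mass/energy, \eqref{eq:fra-energy-bound-w-3d}--\eqref{eq:fra-mass-bound-w-3d}), not from the Morawetz norm, which sees $\W$ without derivatives; your scheme supplies no negative powers of $N_1$, so even the frequency sum fails. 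The paper's actual proof handles the tail with mass and $L_x^{p+2}$ bounds in Case I, and in Case II it needs \emph{three} further subcases ($0.86\le p<\frac{32}{35}$, $\frac{32}{35}\le p\le\frac{52}{45}$, $\frac{52}{45}<p<\frac43$) with different interpolations ($L^2$--$L^{p+2}$, $L^{p+2}$--$L^6$ via Bernstein, and Morawetz--$\dot H^1$ respectively); Morawetz enters the tail only in the last regime.

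Two further points. First, your parenthetical characterizations of the two bounds in Lemma \ref{lem:fra-energy-bound-bilinear-3d} are swapped: the $L_t^1L_x^2$-driven bound has the \emph{better} behavior in $t$ near the origin but the \emph{worse} $N_0$ growth, and conversely for the $L_t^{2-}L_x^{6/5+}$-driven bound (Remark \ref{rem:bilinear-strichartz}); your pairing with the $p$-regimes happens to be correct, but the stated reasons would have led you to the opposite assignment. Second, you defer "verifying these three arithmetic inequalities across the whole range of $p$" as the delicate quantitative step, but that numerology---the explicit choice of the admissible pairs $(q_1,r_1)$, the exponents $q_2=2(p-\frac4{15})$, $r_2=\frac92(p-\frac4{15})$, and the verification that the resulting powers of $2^k$ are positive and the powers of $N_0$ stay below $2(1-s)$ on each subinterval of $(\frac23,\frac43)$---is precisely the content of the lemma, so as written the proposal does not constitute a proof.
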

\begin{lem}[Remainders]\label{lem:fra-energy-3d-other}
Suppose that the assumptions in Theorem \ref{thm:frac-weighted-3d} and the bootstrap hypothesis \eqref{eq:fra-energy-bootstrap-hypothesis-3d} hold. Then,
\EQ{
\eqref{eq:fra-energy-bound-wv-high-high-v-3d}+ \eqref{eq:fra-energy-bound-wv-high-high-w-3d} + \eqref{eq:fra-energy-bound-wv-high-low-v-3d} +  \eqref{eq:fra-energy-bound-vv-high-high-3d} + \eqref{eq:fra-energy-bound-vv-high-low-v-3d} + \eqref{eq:fra-energy-bound-vv-high-low-w-3d} \lsm \de_0 N_0^{2(1-s)}.
}
\end{lem}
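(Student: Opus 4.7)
The unifying observation is that each of the six remainder terms in Lemma \ref{lem:fra-energy-3d-other} contains either one more copy of $\V$ than the main term \eqref{eq:fra-energy-bound-wv-high-low-w-3d}, or else is a high-high frequency interaction ($N\lsm N_1$). Extra copies of $\V$ supply $\de_0$-smallness through Lemma \ref{lem:fra-linear}, while a high-high pairing produces additional $N^{-s}$ or $N_1^{-s}$ decay. These gains are exactly what compensate the temporal singularity $\norm{\nabla\W(t)}_{L_x^2}\lsm t^{\frac{3p}{4}-1}N_0^{1-s}$ coming from \eqref{eq:fra-energy-bound-w-3d}, and explain why the remainders should be technically easier than the main term even though several of them look superficially more complicated.

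For the three high-high terms \eqref{eq:fra-energy-bound-wv-high-high-v-3d}, \eqref{eq:fra-energy-bound-wv-high-high-w-3d}, and \eqref{eq:fra-energy-bound-vv-high-high-3d}, I would dyadically decompose time on the intervals $I_k$, apply H\"older in $x$ with a suitable $L_x^2$-admissible triple, and sum both over $k$ and over the dyadic frequencies via Schur's test (Lemma \ref{lem:schurtest}); the derivative on $\V_N$ is paid by $N^{1-s}$, while the Strichartz bound $\norm{\jb{\nabla}^s\V}_{S^0}\lsm\de_0$ supplies the $\de_0$ factor and enough decay on the remaining $\V_{N_1}$ or $\V_{N_1}^p$ factor to close the double sum geometrically when $s>\tfrac{14}{15}$. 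Whenever a $\W^p$ factor appears, the localized Morawetz estimate of Proposition \ref{prop:interaction-morawetz} is the correct tool, since its $N_0$-growth is far milder than that of a direct Sobolev–energy bound. The high-low term \eqref{eq:fra-energy-bound-wv-high-low-v-3d} has exactly the structure of the main term with $\V_{N_1}^p$ replacing $\W_{N_1}^p$, so I would reuse the bilinear Strichartz strategy of Lemma \ref{lem:fra-energy-3d-main} together with Lemma \ref{lem:fra-energy-bound-bilinear-3d}, now collecting an additional $\de_0^p$ from the $\V_{N_1}^p$ factor. The two purely $\V^2$ terms \eqref{eq:fra-energy-bound-vv-high-low-v-3d} and \eqref{eq:fra-energy-bound-vv-high-low-w-3d} carry $\de_0^2$ smallness from the two $\nabla\V$ factors and reduce to a clean H\"older estimate once $|\V_{\ll N_2}|^p$ is controlled by the $H^s$ Sobolev embedding (as already used for the $L^{p+2}$-bound in the proof of Corollary \ref{cor:fra-local-w-3d}) and $|\W_{\ll N_2}|^p$ is controlled by the $L_{t,x}^4$-Morawetz bound interpolated with the mass bound \eqref{eq:fra-mass-bound-w-3d}.

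\textbf{Main obstacle.} The central technical point is time integrability near $t=0$: once one pays the singularity $t^{\frac{3p}{4}-1}$ for $\nabla\W$ and the additional $t$-weight from the dual-Strichartz side of the nonlinearity, the integrand is borderline integrable in the worst range of $p$. As flagged in Remark \ref{rem:bilinear-strichartz}, the $L_t^1L_x^2$ dual bound \eqref{eq:fra-energy-bound-bilinear-nln-l1l2} is the one to use when $p$ is close to $\tfrac{2}{3}$, while the Morawetz-improved $L_t^{2-}L_x^{6/5+}$ bound \eqref{eq:fra-energy-bound-bilinear-nln-l2l6/5} is the one for $p$ close to $\tfrac{4}{3}$; a case split between these two regimes will be required here just as it was for the main term. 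The payoff of the extra $\de_0$ or $N^{-s}$ available in every remainder term is precisely that no new sharp estimate is needed beyond those already developed in Sections 4.4--4.5 — the budget closes as long as one carefully tracks powers of $N_0$, $N$, $N_1$, and $t$ on each dyadic piece and sums back up.
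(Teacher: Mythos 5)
Your overall plan (pay the derivative on $\V_N$ with $N^{1-s}$, harvest $\de_0$ from every extra $\V$ via Lemma \ref{lem:fra-linear}, split the dual estimates according to whether $p$ is near $\tfrac23$ or $\tfrac43$) matches the spirit of the paper's proof for the $\nabla\W\,\nabla\V$ terms, and your treatment of \eqref{eq:fra-energy-bound-wv-high-high-v-3d}, \eqref{eq:fra-energy-bound-wv-high-high-w-3d}, \eqref{eq:fra-energy-bound-vv-high-high-3d} is workable even though the paper closes those without the Morawetz bound (it uses interpolation between the mass bound \eqref{eq:fra-mass-bound-w-3d} and the energy bound \eqref{eq:fra-energy-bound-w-3d}, plus direct time integrability of $t^{\frac{3p}{4}-1}$, rather than Proposition \ref{prop:interaction-morawetz}).

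However, there is a genuine gap in your treatment of the two $\nabla\V\,\nabla\V$ high-low terms \eqref{eq:fra-energy-bound-vv-high-low-v-3d} and \eqref{eq:fra-energy-bound-vv-high-low-w-3d}. A ``clean H\"older estimate'' with $\de_0^2$ smallness cannot work there: after paying the derivatives you are left with a double sum of the form $\sum_{N_1\loe N_2} N_1^{1-s}N_2^{1-s}$ (times bounded factors), and since $1-s>0$ and the low-frequency factor $\V_{\ll N_2}$ or $\W_{\ll N_2}$ carries no decay in $N_2$, this series diverges — Schur's test does not apply because the growing factor $N_2^{2(1-s)}$ is not compensated. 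The paper's mechanism is precisely to extract a negative power of $N_2$ by applying a bilinear Strichartz estimate to the pairing $\nabla\V_{N_2}\V_{N}$ (linear–linear, via Lemma \ref{lem:bilinearstrichartz}) or $\nabla\V_{N_2}\W_{N_3}$ (via the first bound in Lemma \ref{lem:fra-energy-bound-bilinear-3d}, after a dyadic-in-time decomposition), raised to the fractional power $\tfrac{4}{15}$; this yields a factor $N_2^{-\frac{2}{15}}$, which beats $2(1-s)$ for $s>\tfrac{14}{15}$ and makes the $N_2$-sum converge to $N_0^{2(1-s)}$ up to $\de_0$. Your sketch omits this gain entirely for these two terms, so the frequency summation does not close. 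A secondary point: for \eqref{eq:fra-energy-bound-wv-high-low-v-3d} you propose to reuse Lemma \ref{lem:fra-energy-bound-bilinear-3d}, but that lemma bounds $\nabla\V_N\W_{N_1}$ and carries $N_0$-growth; for the pairing $\nabla\V_N\V_{N_1}$ you should instead use the linear–linear bilinear Strichartz bound $\norm{\nabla\V_N\V_{N_1}}_{L_{t,x}^2}\lsm \de_0^2 N_1^{1-s}N^{\frac12-s}$, which is both applicable and strictly better; with that substitution your strategy for this term does align with the paper's.
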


Now, we prove that Lemmas \ref{lem:fra-energy-3d-main} and \ref{lem:fra-energy-3d-other} imply Proposition \ref{prop:fra-energy-3d}. Recall that to close the bootstrap argument, we need to prove that \eqref{eq:fra-energy-bootstrap-hypothesis-3d} implies \eqref{eq:fra-energy-bootstrap-conclusion-3d}. Note that only $N_0$ depends on $\de_0$, and $A$ is constant depending on $p,\norm{\jb{x}^su_0}_{L_x^2}$, and $T_0$. This allows us to choose sufficiently small $\de_0=\de_0(A)$ such that 
\EQ{
\de_0 C(A)\le \frac12 A.
}
Then, applying Lemmas \ref{lem:fra-energy-3d-main} and  \ref{lem:fra-energy-3d-other} to the inequality \eqref{eq:fra-energy-3d-identity-1}, we have
\EQ{
	\sup_{t\in(0,T_0]}\E(t) \loe & \E(T_0) + \eqref{eq:fra-energy-bound-wv-high-high-v-3d} + ... +  \eqref{eq:fra-energy-bound-vv-high-low-w-3d} \\
	\loe & AN_0^{2(1-s)} + \de_0 C(A) N_0^{2(1-s)} \\
	\loe & \frac32A N_0^{2(1-s)}.
}
This gives \eqref{eq:fra-energy-bootstrap-conclusion-3d}, and then Proposition \ref{prop:fra-energy-3d} follows. 

Consequently, we have reduce the proof of Theorem \ref{thm:frac-weighted-3d} to Lemmas \ref{lem:fra-energy-3d-main} and \ref{lem:fra-energy-3d-other}.

\subsection{Proof of Lemma \ref{lem:fra-energy-3d-main}}
The main technique is applying the bilinear Strichartz estimates in Lemma \ref{lem:fra-energy-bound-bilinear-3d} to reduce the derivative on $\V$. The proof is divided into two cases: $\frac23<p<0.86$ and $0.86<p<\frac43$. As pointed out in Remark \ref{rem:bilinear-strichartz}, we will apply the first upper bound in Lemma \ref{lem:fra-energy-bound-bilinear-3d} when $\frac23<p<0.86$, and the second upper bound when $0.86<p<\frac43$.

$\bullet$ \textbf{Case I: $\frac23<p<0.86$.} First, we define that
\EQ{
q_1:=\frac{52(p+2)^2}{3(35p^2+60p-8)}\text{, and }r_1:=\frac{13(p+2)^2}{30-4p-11p^2},
}
which will be used in the following. We can check that $2<r_1<6$ when $\frac23<p<0.86$, and $\frac{2}{q_1} + \frac{3}{r_1}=\frac32$, thus $(q_1,r_1)$ is $L^2$-admissible. Now, we turn to the proof for \eqref{eq:fra-energy-bound-wv-high-low-w-3d}. By H\"older's inequality,
\EQ{
\eqref{eq:fra-energy-bound-wv-high-low-w-3d} \lsm & \sum_{N_1\ll N} \sum_{k\in\Z:2^k\loe T_0} N_1^{0+}  \int_{I_k}  \norm{\nabla\W}_{L_x^2} \norm{\nabla\V_{N}\W_{N_1}}_{L_x^2}^{\frac{2}{15}} \norm{\W_{N_1}}_{L_x^2}^{\frac{5p+4}{15(p+2)}++} \\
& \qquad\qquad \qquad \cdot\norm{\nabla\V_{N}}_{L_x^{r_1}}^{\frac{13}{15}} \norm{\W_{N_1}}_{L_x^{p+2}}^{p-\frac{2}{15}-\frac{5p+4}{15(p+2)}--} \dd t \\
\lsm & \sum_{N_1\ll N} \sum_{k\in\Z:2^k\loe T_0} N_1^{0+} 2^{(\frac{14}{15}-\frac{13}{15q_1})k}   \norm{\nabla\W}_{L_t^\I L_x^2} \norm{\nabla\V_{N}\W_{N_1}}_{L_{t,x}^2}^{\frac{2}{15}} \norm{\W_{N_1}}_{L_t^\I L_x^2}^{\frac{5p+4}{15(p+2)}++} \\
& \qquad\qquad \qquad \cdot\norm{\nabla\V_{N}}_{L_t^{q_1}L_x^{r_1}}^{\frac{13}{15}} \norm{\W_{N_1}}_{L_t^\I L_x^{p+2}}^{p-\frac{2}{15}-\frac{5p+4}{15(p+2)}--},
}
where $t$ is taken on $I_k$. Here we use the notation $a++:=a+\ep_1+\ep_2$ for some sufficiently small constant $\ep_1>0$ and $\ep_2>0$. 
By the first upper bound in Lemma \ref{lem:fra-energy-bound-bilinear-3d} and \eqref{eq:fra-energy-bound-w-3d}, for $N_1\ll N$,
\EQ{
&\norm{\nabla\V_{N}\W_{N_1}}_{L_{t,x}^2}^{\frac{2}{15}} \norm{\W_{N_1}}_{L_t^\I L_x^2}^{\frac{5p+4}{15(p+2)}++} \\
\lsm & \brkb{\de_0 N^{\frac12-s} 2^{(\frac32p-1)k} N_1^{\frac{5p+4}{2(p+2)}} N_0^{\frac{2(p+1)}{p+2}(1-s)} }^{\frac{2}{15}} \norm{\W_{N_1}}_{L_t^\I L_x^2}^{\frac{5p+4}{15(p+2)}++} \\
\lsm & \de_0^{\frac{2}{15}} N_1^{0--} N^{-\frac{1}{15}+\frac{2}{15}(1-s)} 2^{\frac{2}{15}(\frac32p-1)k} N_0^{\frac{4(p+1)}{15(p+2)}(1-s)}\norm{\nabla\W_{N_1}}_{L_t^\I L_x^2}^{\frac{5p+4}{15(p+2)}++} \\
\lsm & \de_0^{\frac{2}{15}} N_1^{0--} N^{-\frac{1}{15}+\frac{2}{15}(1-s)} 2^{\frac{2}{15}(\frac32p-1)k} N_0^{\frac{4(p+1)}{15(p+2)}(1-s)} \cdot 2^{(\frac{5p+4}{15(p+2)}(\frac34p-1)++)k} N_0^{\frac{5p+4}{15(p+2)}(1-s)++}.
}
Combining this inequality, \eqref{eq:fra-energy-bound-w-3d}, \eqref{eq:fra-energy-bound-bilinear-3d}, and Lemma \ref{lem:fra-linear},
\EQ{
\eqref{eq:fra-energy-bound-wv-high-low-w-3d} \lsm & \de_0 \sum_{N_1\ll N} \sum_{k\in\Z:2^k\loe T_0} N_1^{0-} 2^{(\frac{14}{15}-\frac{13}{15q_1})k} \cdot 2^{(\frac34p-1)k}N_0^{1-s}\\
& \qquad\qquad\qquad\quad \cdot N^{-\frac{1}{15} + \frac{2}{15}(1-s)} 2^{\frac{2}{15}(\frac32p-1)k} N_0^{\frac{4(p+1)}{15(p+2)}(1-s)} \\
& \qquad\qquad\qquad\quad \cdot 2^{(\frac{5p+4}{15(p+2)}(\frac34p-1)++)k} N_0^{\frac{5p+4}{15(p+2)}(1-s)++} \\
& \qquad\qquad\qquad\quad \cdot N^{\frac{13}{15}(1-s)} \cdot N_0^{(p-\frac{2}{15}-\frac{5p+4}{15(p+2)})\frac{2}{p+2}(1-s)--} \\
\lsm & \de_0 \sum_{N_1\ll N} \sum_{k\in\Z:2^k\loe T_0} N_1^{0-} N^{\frac{14}{15}-s} 2^{(\frac{72p^3+133p^2-32p-56}{60(p+2)^2}++)k} N_0^{\frac{2(9p^2+22p+10)}{5(p+2)^2}(1-s)++}.
}
We can check that for $\frac23<p<0.86$,
\EQ{
0.007<\frac{72p^3+133p^2-32p-56}{60(p+2)^2}<0.124\text{, and }1.61<\frac{2(9p^2+22p+10)}{5(p+2)^2}<1.74.
}
Therefore, we have that for $\frac23<p<0.86$,
\EQn{\label{esti:fra-energy-bound-wv-high-low-w-3d-1}
\eqref{eq:fra-energy-bound-wv-high-low-w-3d} \lsm & \de_0 \sum_{N_1\ll N} \sum_{k\in\Z:2^k\loe T_0} N_1^{0-} N^{\frac{14}{15}-s} 2^{0.007\times k} N_0^{1.74\times(1-s)} \\
\lsm & \de_0  N_0^{2(1-s)}.
}

$\bullet$ \textbf{Case II: $0.86\le p\le \frac{52}{45}$.}
Now, we define that
\EQ{
q_2:=2(p-\frac{4}{15})\text{, and }r_2:=\frac{9}{2}(p-\frac{4}{15}),
}
Then, we can verify that $4\le q_2<\I$ and $2<r_2\le 4$ for $0.86\le p\le \frac{52}{45}$. By H\"older's inequality,
\EQ{
\eqref{eq:fra-energy-bound-wv-high-low-w-3d} \lsm & \sum_{N_1\ll N} \sum_{k\in\Z:2^k\loe T_0} N_1^{0+}   \norm{\nabla\W}_{L_t^\I L_x^2} \norm{\nabla\V_{N}\W_{N_1}}_{L_{t,x}^2}^{\frac{2}{15}} \norm{\W_{N_1}}_{L_t^\I L_x^2}^{\frac{2}{15}} \\
& \qquad\qquad \qquad \cdot\norm{\nabla\V_{N}}_{L_t^2L_x^{6}}^{\frac{13}{15}} \norm{\W_{N_1}}_{L_t^{q_2}L_x^{r_2}}^{p-\frac{4}{15}},
}
where $t$ is taken on $I_k$. By \eqref{eq:fra-energy-bound-w-3d}, the second upper bound in Lemma \ref{lem:fra-energy-bound-bilinear-3d}, and Lemma \ref{lem:fra-linear},
\EQ{
& \norm{\nabla\W}_{L_t^\I L_x^2} \norm{\nabla\V_{N}\W_{N_1}}_{L_{t,x}^2}^{\frac{2}{15}} \norm{\W_{N_1}}_{L_t^\I L_x^2}^{\frac{2}{15}} \norm{\nabla\V_{N}}_{L_t^2L_x^{6}}^{\frac{13}{15}} \\
\lsm & 2^{(\frac34p-1)k} N_0^{1-s} \cdot \brkb{\de_0 N_1 N^{\frac12-s} 2^{\brkb{(\frac12p+\frac53)(\frac34p-1)-\frac12p+\frac56}k}  N_0^{\frac{3p-2+2\ep}{6}(1-s)}}^{\frac{2}{15}} \\
& \cdot N_1^{-\frac2{15}}\brkb{2^{(\frac34p-1)k} N_0^{1-s}}^{\frac2{15}} \cdot \de_0^{\frac{13}{15}} N^{\frac{13}{15}(1-s)} \\
\lsm & \de_0 N^{-\frac{1}{15} + (1-s)} 2^{\frac{9p^2+159p-224}{180}k} N_0^{\frac{3p+49+2\ep}{45}(1-s)}.
}
Using this inequality,
\EQ{
\eqref{eq:fra-energy-bound-wv-high-low-w-3d} \lsm & \de_0 \sum_{N_1\ll N} \sum_{k\in\Z:2^k\loe T_0}  N^{-\frac{1}{15} + (1-s) +} 2^{\frac{9p^2+159p-224}{180}k} N_0^{\frac{3p+49+2\ep}{45}(1-s)} \norm{\W_{N_2}}_{L_t^{q_2}L_x^{r_2}}^{p-\frac{4}{15}}.
}
Note that by $s>\frac{14}{15}$,
\EQ{
\sum_{N_1\ll N} N^{-\frac{1}{15} + (1-s) +} \lsm \sum_{N_1\in2^\N} N_1^{-\frac{1}{15} + (1-s) +} \lsm 1,
}
then we have
\EQn{\label{esti:fra-energy-bound-wv-high-low-w-3d-2-1}
\eqref{eq:fra-energy-bound-wv-high-low-w-3d} \lsm & \de_0  \sum_{k\in\Z:2^k\loe T_0} 2^{\frac{9p^2+159p-224}{180}k} N_0^{\frac{3p+49+2\ep}{45}(1-s)} \sup_{N_1\in2^\N}\norm{\W_{N_1}}_{L_t^{q_2}L_x^{r_2}}^{p-\frac{4}{15}}.
}

Next, we will give the bound for $\norm{\W_{N_2}}_{L_t^{q_2}L_x^{r_2}}$ and \eqref{eq:fra-energy-bound-wv-high-low-w-3d} in three subcases: $0.86\le p<\frac{32}{35}$,  $\frac{32}{35}\le p \le \frac{52}{45}$, and $\frac{52}{45}<p<\frac43$. 

Before starting the proof, we give a rough description of the difference among the above subcases. This is due to the value of $r_2$, and we need to use different norms to control $L_x^{r_2}$-norm by interpolation. 

In the first two cases when $0.86\le p \le \frac{52}{45}$, we observe that $2<r_2\le 4$, and that the exponent $p$ is comparably small, then the estimate for time integral is still the main issue in the energy increment. Therefore, for this purpose, we use the $L_x^{p+2}$ estimate from \eqref{eq:fra-energy-bound-w-3d} in the interpolation inequality, which does not create any temporal singularity.

If $\frac{52}{45}<p<\frac43$, the main difficulty is to control the increase of $N_0$, irrespective of the time singularity. Therefore, we employ the interaction Morawetz estimate in the interpolation inequality, which has better bound on $N_0$ than the energy estimate.

Now, we give the concrete estimates for $\norm{\W_{N_2}}_{L_t^{q_2}L_x^{r_2}}$ and \eqref{eq:fra-energy-bound-wv-high-low-w-3d} as follows:

$\bullet$ \textit{Case II A:} $0.86\le p<\frac{32}{35}$. In this case, we have that
\EQ{
2<r_2<p+2.
}
By H\"older's inequality in $t$ and interpolation,
\EQ{
\norm{\W_{N_2}}_{L_t^{q_2}L_x^{r_2}}^{p-\frac{4}{15}} \lsm & 2^{\frac12 k} \norm{\W_{N_2}}_{L_t^{\I}L_x^{r_2}}^{p-\frac{4}{15}} \\
\lsm & 2^{\frac12 k} \brkb{ \norm{\W_{N_2}}_{L_t^{\I}L_x^{2}}^{\frac{2(32-35p)}{3p(15p-4)}} \norm{\W_{N_2}}_{L_t^{\I}L_x^{p+2}}^{\frac{(p+2)(45p-32)}{3p(15p-4)}} }^{p-\frac{4}{15}},
}
where the parameter satisfies that for $0.86\le p<\frac{32}{35}$,
\EQ{
	0.83<\frac{(p+2)(45p-32)}{3p(15p-4)}<1.
}
Then, by \eqref{eq:fra-energy-bound-w-3d} and \eqref{eq:fra-mass-bound-w-3d},
\EQn{\label{esti:fra-energy-bound-wv-high-low-w-3d-2-q2r2-1}
\norm{\W_{N_2}}_{L_t^{q_2}L_x^{r_2}}^{p-\frac{4}{15}} \lsm 2^{\frac12 k} N_0^{\frac{2(45p-32)}{45p}(1-s)}.
}
Now, we combine \eqref{esti:fra-energy-bound-wv-high-low-w-3d-2-1} and \eqref{esti:fra-energy-bound-wv-high-low-w-3d-2-q2r2-1},
\EQ{
\eqref{eq:fra-energy-bound-wv-high-low-w-3d} \lsm & \de_0  \sum_{k\in\Z:2^k\loe T_0} 2^{\frac{9p^2+159p-224}{180}k} N_0^{\frac{3p+49+2\ep}{45}(1-s)} \cdot 2^{\frac12 k} N_0^{\frac{2(45p-32)}{45p}(1-s)} \\
\lsm & \de_0  \sum_{k\in\Z:2^k\loe T_0} 2^{\frac{9p^2+159p-134}{180}k} N_0^{\frac{3p^2+139p-64+2\ep}{45p}(1-s)}.
}
For $0.86\le p<\frac{32}{35}$, we can verify that
\EQ{
0.052<\frac{9p^2+159p-134}{180}<0.1\text{, and }1.49<\frac{3p^2+139p-64}{45p}<1.6.
}
Then, for $0.86\le p<\frac{32}{35}$
\EQn{\label{esti:fra-energy-bound-wv-high-low-w-3d-2-esti-1}\eqref{eq:fra-energy-bound-wv-high-low-w-3d} 
\lsm & \de_0  \sum_{k\in\Z:2^k\loe T_0} 2^{0.52\times k} N_0^{1.6\times(1-s)} \lsm \de_0 N_0^{2(1-s)}.
}

$\bullet$ \textit{Case II B:} $\frac{32}{35}\le p \le \frac{52}{45}$. In this case, we have that
\EQ{
p+2\le r_2\le 4.
}
By H\"older's inequality in $t$ and interpolation,
\EQ{
\norm{\W_{N_2}}_{L_t^{q_2}L_x^{r_2}}^{p-\frac{4}{15}} \lsm & 2^{\frac12 k} \norm{\W_{N_2}}_{L_t^{\I}L_x^{r_2}}^{p-\frac{4}{15}} \\
\lsm & 2^{\frac12 k} \brkb{ \norm{\W_{N_2}}_{L_t^{\I}L_x^{p+2}}^{\frac{3(p+2)(8-5p)}{(15p-4)(4-p)}} \norm{\W_{N_2}}_{L_t^{\I}L_x^{6}}^{\frac{2(35p-32)}{(15p-4)(4-p)}} }^{p-\frac{4}{15}},
}
where the parameter satisfies that for $\frac{32}{35}\le p \le \frac{52}{45}$,
\EQ{
0.55<\frac{3(p+2)(8-5p)}{(15p-4)(4-p)}\le1.
}
Then, by Bernstein's inequality and \eqref{eq:fra-energy-bound-w-3d},
\EQn{\label{esti:fra-energy-bound-wv-high-low-w-3d-2-q2r2-2}
\norm{\W_{N_2}}_{L_t^{q_2}L_x^{r_2}}^{p-\frac{4}{15}} \lsm & 2^{\frac12 k}  \norm{\W_{N_2}}_{L_t^{\I}L_x^{p+2}}^{\frac{(p+2)(8-5p)}{5(4-p)}} \norm{\nabla\W_{N_2}}_{L_t^{\I}L_x^{2}}^{\frac{2(35p-32)}{15(4-p)}} \\
\lsm & 2^{\frac12k} N_0^{\frac{2(8-5p)}{5(4-p)}(1-s)} \cdot  2^{\frac{2(35p-32)}{15(4-p)}(\frac34p-1)k} N_0^{\frac{2(35p-32)}{15(4-p)}(1-s)} \\
\lsm & 2^{\frac{105p^2-251p+188}{30(4-p)}k} N_0^{\frac{8(5p-2)}{15(4-p)}(1-s)}.
}
Now, we combine \eqref{esti:fra-energy-bound-wv-high-low-w-3d-2-1} and \eqref{esti:fra-energy-bound-wv-high-low-w-3d-2-q2r2-2},
\EQ{
\eqref{eq:fra-energy-bound-wv-high-low-w-3d} \lsm & \de_0  \sum_{k\in\Z:2^k\loe T_0} 2^{\frac{9p^2+159p-224}{180}k} N_0^{\frac{3p+49+2\ep}{45}(1-s)} \cdot 2^{\frac{105p^2-251p+188}{30(4-p)}k} N_0^{\frac{8(5p-2)}{15(4-p)}(1-s)} \\
	\lsm & \de_0  \sum_{k\in\Z:2^k\loe T_0} 2^{\frac{9p^3-507p^2+646p-232}{180(p-4)}k} N_0^{\brkb{\frac{3p^2-83p-148}{45(p-4)}+\frac{2}{45}\ep}(1-s)}.
}
For $\frac{32}{35}\le p \le \frac{52}{45}$, we can verify that
\EQ{
0.1<\frac{9p^3-507p^2+646p-232}{180(p-4)}<0.3\text{, and }1.59<\frac{3p^2-83p-148}{45(p-4)}<1.88.
}
Then, for $\frac{32}{35}\le p \le \frac{52}{45}$,
\EQn{\label{esti:fra-energy-bound-wv-high-low-w-3d-2-esti-2}
\eqref{eq:fra-energy-bound-wv-high-low-w-3d} 
\lsm & \de_0  \sum_{k\in\Z:2^k\loe T_0} 2^{0.1\times k} N_0^{1.88\times(1-s)} \lsm \de_0 N_0^{2(1-s)}.
}

$\bullet$ \textit{Case II C:} $\frac{52}{45}<p<\frac43$. In this case, we have that
\EQ{
4<r_2<\frac{24}{5}.
}
By interpolation, H\"older's inequality in $t$, and Bernstein's inequality,
\EQ{
\norm{\W_{N_2}}_{L_t^{q_2}L_x^{r_2}} \lsm & \normb{ \norm{\W_{N_2}}_{L_x^{4}}^{\frac{6(8-5p)}{15p-4}} \norm{\W_{N_2}}_{ L_x^6}^{\frac{45p-52}{15p-4}}}_{L_t^{q_2}} \\
\lsm & 2^{\frac{3(5p-3)}{2(15p-4)}k} \norm{\W_{N_2}}_{L_{t,x}^{4}}^{\frac{6(8-5p)}{15p-4}} \norm{\nabla\W_{N_2}}_{L_t^\I L_x^2}^{\frac{45p-52}{15p-4}},,
}
where the parameter satisfies that for $\frac{52}{45}<p<\frac43$,
\EQ{
\frac12<\frac{6(8-5p)}{15p-4}<1.
}
Then, by \eqref{eq:fra-energy-bound-w-3d} and Proposition \ref{prop:interaction-morawetz},
\EQn{\label{esti:fra-energy-bound-wv-high-low-w-3d-2-q2r2-3}
\norm{\W_{N_2}}_{L_t^{q_2}L_x^{r_2}}^{p-\frac{4}{15}} \lsm & 2^{\frac{5p-3}{10}k} \cdot \brkb{2^{\frac14(\frac34p-1)k}N_0^{\frac14(1-s)}}^{\frac{2(8-5p)}{5}} \cdot \brkb{2^{(\frac34p-1)k}N_0^{1-s}}^{\frac{45p-52}{15}} \\
\lsm & 2^{\frac{225p^2-480p+284}{120}k} N_0^{\frac{15p-16}{6}(1-s)}.
}
Now, we combine \eqref{esti:fra-energy-bound-wv-high-low-w-3d-2-1} and \eqref{esti:fra-energy-bound-wv-high-low-w-3d-2-q2r2-3},
\EQ{
\eqref{eq:fra-energy-bound-wv-high-low-w-3d} \lsm & \de_0  \sum_{k\in\Z:2^k\loe T_0} 2^{\frac{9p^2+159p-224}{180}k} N_0^{\frac{3p+49+2\ep}{45}(1-s)} \cdot 2^{\frac{225p^2-480p+284}{120}k} N_0^{\frac{15p-16}{6}(1-s)} \\
\lsm & \de_0  \sum_{k\in\Z:2^k\loe T_0} 2^{\frac{693p^2-1122p+404}{360}k} N_0^{\brkb{\frac{231p-142}{90}+\frac{2}{45}\ep}(1-s)}.
}
For $\frac{52}{45}<p<\frac43$, we can verify that
\EQ{
0.09<\frac{693p^2-1122p+404}{360}<0.39\text{, and }1.38<\frac{231p-142}{90}<1.85.
}
Then, for $\frac{52}{45}<p<\frac43$,
\EQn{\label{esti:fra-energy-bound-wv-high-low-w-3d-2-esti-3}
\eqref{eq:fra-energy-bound-wv-high-low-w-3d} \lsm & \de_0  \sum_{k\in\Z:2^k\loe T_0} 2^{0.09\times k} N_0^{1.85\times(1-s)} \lsm \de_0 N_0^{2(1-s)}.
}

Finally, Lemma \ref{lem:fra-energy-3d-main} follows from \eqref{esti:fra-energy-bound-wv-high-low-w-3d-1}, \eqref{esti:fra-energy-bound-wv-high-low-w-3d-2-esti-1}, \eqref{esti:fra-energy-bound-wv-high-low-w-3d-2-esti-2}, and \eqref{esti:fra-energy-bound-wv-high-low-w-3d-2-esti-3}.

\subsection{Proof of Lemma \ref{lem:fra-energy-3d-other}}

Now, we consider the remaining terms.

$\bullet$ \textbf{Estimate of \eqref{eq:fra-energy-bound-wv-high-high-v-3d}.} Note that $(2,6)$ and $(\frac{4p}{3p-2-\ep},\frac{6p}{2+\ep})$ are $L^2$-admissible.
By H\"older's inequality and Lemma \ref{lem:fra-linear},
\EQ{
\eqref{eq:fra-energy-bound-wv-high-high-v-3d} 
\lsm & \sum_{N\lsm N_1} N_1^{0+} \norm{\nabla \W}_{L_t^{\frac{4}{4-3p+\ep}}L_x^2} \norm{\nabla \V_N}_{L_t^2 L_x^6} \norm{\V_{N_1}}_{L_t^{\frac{4p}{3p-2-\ep}}L_x^{3p}}^p \\
\lsm &  \sum_{N\lsm N_1} \norm{\nabla \W}_{L_t^{\frac{4}{4-3p+\ep}}L_x^2} N^{1-s}N_1^{-sp+\frac{\ep}{2}+}\norm{\jb{\nabla}^s \V_N}_{L_t^2 L_x^6} \norm{\jb{\nabla}^s\V_{N_1}}_{L_t^{\frac{4p}{3p-2-\ep}}L_x^{\frac{6p}{2+\ep}}}^p \\
\lsm & \de_0^{p+1}\norm{\nabla \W}_{L_t^{\frac{4}{4-3p+\ep}}L_x^2}\sum_{N\lsm N_1}  N^{1-s}N_1^{-sp+\frac{\ep}{2}+}.
}
By \eqref{eq:fra-energy-bound-w-3d},
\EQ{
\norm{\nabla \W}_{L_t^{\frac{4}{4-3p+\ep}}L_x^2} \lsm & N_0^{1-s} \normb{t^{\frac{3p}{4}-1}}_{L_t^{\frac{4}{4-3p+\ep}}} \\
\lsm & N_0^{1-s} \brkb{\int_{0}^{T_0} t^{-\frac{4-3p}{4-3p+\ep}}\dd t}^{\frac{4-3p+\ep}{4}} \\
\lsm & N_0^{1-s} T_0^{\ep/4} \\
\lsm & N_0^{1-s},
}
and by $s>\frac34>\frac1{2p}$,
\EQ{
\sum_{N\lsm N_1}  N^{1-s}N_1^{-sp+\frac{\ep}{2}+} \lsm \sum_{N_1\in 2^\N}  N_1^{1-2sp+\frac{\ep}{2}+} \lsm 1.
}
Combining the above two inequalities,
\EQn{\label{esti:fra-energy-bound-wv-high-high-v-3d}
\eqref{eq:fra-energy-bound-wv-high-high-v-3d} 
\lsm \de_0^{p+1} N_0^{1-s}.
}

$\bullet$ \textbf{Estimate of \eqref{eq:fra-energy-bound-wv-high-high-w-3d}.} 
By dyadic decomposition in time integral, we have that 
\begin{align*}
\eqref{eq:fra-energy-bound-wv-high-high-w-3d}
\lsm  \sum_{N\lsm N_1} \sum_{k\in\Z:2^k\loe T_0} N_1^{0+} \int_{I_k} \int_{\R^3} |\nabla \W| |\nabla \V_N||\W_{ N_1}|^p \dd x \dd t,
\end{align*}
where $I_k$ denotes $[2^{k-1},2^k]$. We first deal with the case when $\frac23<p\loe1$. By H\"older's inequality,
\EQ{
\eqref{eq:fra-energy-bound-wv-high-high-w-3d} 
\lsm & \sum_{N\lsm N_1} \sum_{k\in\Z:2^k\loe T_0} N_1^{0+} \norm{\nabla\W}_{L_t^\I L_x^2} \norm{\nabla\V_N}_{L_t^{1}L_x^{\frac{2(p+2)}{2-p}}} \norm{\W_{N_1}}_{L_t^\I L_x^{p+2}}^p,
}
where $t$ is taken over $I_k$. By \eqref{eq:fra-energy-bound-w-3d}, for any $k\in\Z$ with $2^k\loe T_0$,
\EQ{
\norm{\nabla\W}_{L_{t}^\I L_x^2} \lsm 2^{(\frac{3p}{4}-1)k} N_0^{1-s}.
}
Noting that for $\frac23<p\loe 1$, $(\frac{2(p+2)}{3p},\frac{2(p+2)}{2-p})$ is $L^2$ admissible, by H\"older's inequality and Lemma \ref{lem:fra-linear},
\EQ{
\norm{\nabla\V_N}_{L_t^{1}L_x^{\frac{2(p+2)}{2-p}}} \lsm & 2^{\frac{4-p}{2(p+2)}k} \norm{\nabla\V_N}_{L_t^{\frac{2(p+2)}{3p}}L_x^{\frac{2(p+2)}{2-p}}} \\
\lsm & 2^{\frac{4-p}{2(p+2)}k} N^{1-s}.
}
By Bernstein's inequality and \eqref{eq:fra-energy-bound-w-3d},
\EQ{
\norm{\W_{N_1}}_{L_t^\I L_x^{p+2}} \lsm N_1^{-\frac{4-p}{2(p+2)}} 2^{(\frac{3p}{4}-1)k} N_0^{1-s}.
}
Interpolation this with \eqref{eq:fra-energy-bound-w-3d},
\EQ{
\norm{\W_{N_1}}_{L_t^\I L_x^{p+2}(I_k)} \lsm N_1^{-\al_1\frac{4-p}{2(p+2)}} 2^{\al_1(\frac{3p}{4}-1)k} N_0^{\frac{2+\al_1 p}{p+2}(1-s)},
}
where $\al_1$ is defined by 
\EQ{
\al_1:=\frac{3p}{(p+2)(4-3p)}-.
}
Note that for $\frac23<p\loe1$, we have that $\frac38<\al_1<1$. The choice of $\al_1$ also gives
\EQ{
(1+\al_1 p)(\frac{3p}{4}-1) + \frac{4-p}{2(p+2)} = 0+.
}
Therefore, combining the above inequalities,
\EQ{
\eqref{eq:fra-energy-bound-wv-high-high-w-3d} 
\lsm & \sum_{N\lsm N_1} \sum_{k\in\Z:2^k\loe T_0} N_1^{0+} 2^{(\frac{3p}{4}-1)k} N_0^{1-s} \cdot  2^{\frac{4-p}{2(p+2)}k} N^{1-s} \\
& \cdot N_1^{-\al_1\frac{p(4-p)}{2(p+2)}} 2^{\al_1 p(\frac{3p}{4}-1)k} N_0^{\frac{p(2+\al_1 p)}{p+2}(1-s)} \\
\lsm & \sum_{N\lsm N_1}\sum_{k\in\Z:2^k\loe T_0}  2^{\brkb{(1+\al_1 p)(\frac{3p}{4}-1) + \frac{4-p}{2(p+2)}}k} N_1^{-\al_1\frac{p(4-p)}{2(p+2)}+} N^{1-s} N_0^{\frac{\al_1 p^2 + 3p + 2}{p+2}(1-s)} \\
\lsm & \sum_{N_1\in2^\N}  N_1^{-\frac{3p^2(4-p)}{2(4-3p)(p+2)^2}+1-s+} N_0^{-\frac{2(3p^3+6p^2-10p-8)}{(p+2)^2(4-3p)}(1-s)}.
}
Now, we can verify that for $\frac23<p\loe 1$,
\EQ{
-\frac12\loe-\frac{3p^2(4-p)}{2(4-3p)(p+2)^2}<-\frac{5}{32},\quad \text{and }\frac{25}{16}< -\frac{2(3p^3+6p^2-10p-8)}{(p+2)^2(4-3p)} \le 2.
}
Then, for any $s>\frac{27}{32}$,
\EQ{
\eqref{eq:fra-energy-bound-wv-high-high-w-3d} 
\lsm & \sum_{N_1\in2^\N}  N_1^{-\frac{5}{32}+1-s+} N_0^{2(1-s)}.
}

We then handle the case when $1 < p<\frac43$. First, we define the exponents $q_1$, $r_1$, and $r_2$ as follows: let 
\EQn{\label{defn:q1}
\frac{1}{q_1} := \frac{5p+1}{20}.
}
Then, we can check that $\frac{60}{23}<q_1 \loe \frac{10}{3}$ for $1<p<\frac43$. Particularly, we have that $q_1>2$, so it can be used to define an admissible pair.
Now, we take $r_1$ such that $(q_1,r_1)$ is $L^2$-admissible, thus by the definition of $q_1$ in \eqref{defn:q1},
\EQn{\label{defn:r1}
\frac{1}{r_1} = \frac{14-5p}{30}.
}
Define $r_2$ by 
\EQn{\label{eq:r1r2}
\frac{1}{2} = \frac{1}{r_1} + \frac{p}{r_2}.
}
By \eqref{defn:r1} and \eqref{eq:r1r2},
\EQn{\label{defn:r2}
\frac{1}{r_2} =\frac{5p+1}{30p}.
}
We can check that $\frac{10}{3}<r_2(p) < \frac{120}{23}$ for any $1<p<\frac43$. We also note that by \eqref{defn:q1},
\EQn{\label{defn:q1'}
\frac{1}{q_1'} = \frac{19-5p}{20}.
}
Next, we give the estimate of \eqref{eq:fra-energy-bound-wv-high-high-w-3d} under the condition $1<p<\frac43$. By the relation \eqref{eq:r1r2} and H\"older's inequality,
\EQ{
\eqref{eq:fra-energy-bound-wv-high-high-w-3d} \lsm & \sum_{N\lsm N_1} N_1^{0+} \int_{0}^{T_0}\int_{\R^3} |\nabla \W| |\nabla \V_N||\W_{ N_1}|^p \dd x \dd t  \\
\lsm & \sum_{N\lsm N_1} N_1^{0+} \frac{N^{\frac{1}{10}}}{N_1^{\frac{1}{10p}}} \normb{\jb{\nabla}^{\frac{9}{10}}\V_N}_{L_t^{q_1}L_x^{r_1}} \norm{\norm{\nabla \W}_{L_x^2} \normb{\jb{\nabla}^{\frac{1}{10p}}\W_{N_1}}_{L_x^{r_2}}^p}_{L_t^{q_1'}} \\
\lsm & \sum_{N:N\gsm N_0} N^{0+} \normb{\jb{\nabla}^{\frac{9}{10}}\V_N}_{L_t^{q_1}L_x^{r_1}} \norm{\norm{\nabla \W}_{L_x^2} \normb{\jb{\nabla}^{\frac{1}{10p}}\W}_{L_x^{r_2}}^p}_{L_t^{q_1'}}.
}
Noting that by  \eqref{defn:r2}, we have that
\EQ{
\frac{1}{10p}-\frac{3}{r_2} = -\frac12,
}
then we have the Sobolev inequality:
\EQ{
\normb{\jb{\nabla}^{\frac{1}{10p}}\W}_{L_x^{r_2}} \lsm \norm{\jb{\nabla} \W}_{L_x^2}.
}
Then, by Lemma \ref{lem:fra-linear},
\EQ{
\eqref{eq:fra-energy-bound-wv-high-high-w-3d}
\lsm & \sum_{N:N\gsm N_0} N^{0+} \normb{\jb{\nabla}^{\frac{9}{10}}\V_N}_{L_t^{q_1}L_x^{r_1}} \normb{\norm{\nabla \W}_{L_x^2}^{1+p}}_{L_t^{q_1'}} \\
\lsm & \sum_{N:N\gsm N_0} N^{0+} N^{\frac{9}{10}-s} \normb{\jb{\nabla}^{s}\V_N}_{L_t^{q_1}L_x^{r_1}} \normb{\brkb{t^{\frac{3p}{4}-1}}^{1+p}}_{L_t^{q_1'}} N_0^{(1+p)(1-s)} \\
\lsm & \de_0 \sum_{N:N\gsm N_0} N^{0+} N^{\frac{9}{10}-s} \normb{\brkb{t^{\frac{3p}{4}-1}}^{1+p}}_{L_t^{q_1'}} N_0^{(1+p)(1-s)}.
}
For $1<p<\frac43$, we can verify that
\EQ{
(\frac{3p}{4}-1)(1+p)q_1'=\frac{5(3p-4)(p+1)}{19-5p}>-\frac57.
}
Then,
\EQ{
\normb{\brkb{t^{\frac{3p}{4}-1}}^{1+p}}_{L_t^{q_1'}} \loe C(T_0)\lsm 1.
}
Noting that  $s>\frac{37}{40}>\frac{10p-1}{10p}$,
\EQ{
\sum_{N:N\gsm N_0} N^{0+} N^{\frac{9}{10}-s}  N_0^{(1+p)(1-s)} \lsm & N_0^{0+} N_0^{\frac{9}{10}-s}  N_0^{(1+p)(1-s)} \\
\lsm & N_0^{2(1-s)} N_0^{-\frac{1}{10} + p(1-s)+} \\
\lsm & N_0^{2(1-s)}.
}
Combining the above two inequalities, we have that
\EQn{\label{esti:fra-energy-bound-wv-high-high-w-3d}
\eqref{eq:fra-energy-bound-wv-high-high-w-3d} \lsm \de_0 N_0^{2(1-s)}.
}

$\bullet$ \textbf{Estimate of \eqref{eq:fra-energy-bound-wv-high-low-v-3d}.}
First, by H\"older's inequality,
\EQ{
\eqref{eq:fra-energy-bound-wv-high-low-v-3d} \lsm & \sum_{N_1\ll N} N_1^{0+} \int_{0}^{T_0}\int_{\R^3} |\nabla \W| |\nabla \V_N||\V_{N_1}|^p \dd x \dd t \\
\lsm & \sum_{N_1\ll N} N_1^{0+} \norm{\nabla \V_N \V_{N_1}}_{L_{t,x}^2}^{\frac15} \norm{\nabla \V_N}_{L_t^\I L_x^2}^{\frac45} \norm{\nabla \W}_{L_t^{\frac{4}{4-3p+\ep}}L_x^2} \norm{\V_{N_1}}_{L_t^{\frac{4(p-\frac15)}{3p-\frac25-\ep}}L_x^\I}^{p-\frac15}.
}
For $N,N_1\in2^\N$ with $N_1\ll N$, by the bilinear Strichartz estimate in Lemma \ref{lem:bilinearstrichartz},
\EQn{\label{esti:fra-energy-bound-wv-high-low-v-3d-bi}
\norm{\nabla \V_N \V_{N_1}}_{L_{t,x}^2} \lsm & \frac{N_1}{N^{1/2}} \norm{\nabla P_N\V_+}_{L_x^2} \norm{P_{N_1}\V_+}_{L_x^2} \\
\lsm & \frac{N_1^{1-s}}{N^{-1/2+s}} \norm{\jb{\nabla}^s P_N\V_+}_{L_x^2} \norm{\jb{\nabla}^sP_{N_1}\V_+}_{L_x^2} \\
\lsm & \de_0^2 N_1^{1-s} N^{1-s} N^{-1/2}.
}
Note that for any $\frac23<p<\frac43$,
\EQ{
\frac{4(p-\frac15)}{3p-\frac25-\ep}< 2,
}
then by H\"older's and Sobolev's inequalities,
\EQ{
\norm{\V_{N_1}}_{L_t^{\frac{4(p-\frac15)}{3p-\frac25-\ep}}L_x^\I} \lsm \norm{\V_{N_1}}_{L_t^{2}L_x^\I} \lsm \normb{\jb{\nabla}^{\frac12+\ep}\V_{N_1}}_{L_t^{2}L_x^6}.
}
Moreover, by \eqref{eq:fra-energy-bound-w-3d},
\EQ{
\norm{\nabla \W}_{L_t^{\frac{4}{4-3p+\ep}}L_x^2} \lsm & \norm{t^{\frac{3p}{4}-1}N_0^{1-s}}_{L_t^{\frac{4}{4-3p+\ep}}} \\
\lsm & \brkb{\int_{0}^{T_0} t^{-\frac{4-3p}{4-3p+\ep}}\dd t}^{\frac{4-3p+\ep}{4}} N_0^{1-s} \\
\lsm & N_0^{1-s}.
}
Combining the above inequalities,
\EQ{
\eqref{eq:fra-energy-bound-wv-high-low-v-3d} 
\lsm & \sum_{N_1\ll N} N_1^{0+} \brkb{\de_0^2 N_1^{1-s} N^{1-s} N^{-1/2}}^{\frac15} \de_0^{\frac45}N^{\frac45(1-s)} N_0^{1-s} \normb{\jb{\nabla}^{\frac12+\ep}\V_{N_1}}_{L_t^{2}L_x^6}^{p-\frac15} \\
\lsm & \de_0^{\frac65}\sum_{N_1\ll N}  N^{1-s}N^{-\frac{1}{10}+} N_0^{1-s} \normb{\jb{\nabla}^{\frac{1-s}{5p-1} + \frac12+\ep}\V_{N_1}}_{L_t^{2}L_x^6}^{p-\frac15} \\
\lsm & \de_0^{\frac65}\sum_{N:N\gsm N_0}  N^{1-s}N^{-\frac{1}{10}+} N_0^{1-s} \normb{\jb{\nabla}^{\frac{1-s}{5p-1} + \frac12+2\ep}\V}_{L_t^{2}L_x^6}^{p-\frac15} \\
\lsm & \de_0^{\frac65}  N_0^{1-s}N_0^{-\frac{1}{10}+} N_0^{1-s} \normb{\jb{\nabla}^{\frac{1-s}{5p-1} + \frac12+2\ep}\V}_{L_t^{2}L_x^6}^{p-\frac15}.
}
Note that by $s>\frac{13}{20}$,
\EQ{
	\frac{1-s}{5p-1} + \frac12+2\ep < \frac37(1-s) + \frac12+2\ep <s.
}
Therefore,
\EQn{\label{esti:fra-energy-bound-wv-high-low-v-3d} 
\eqref{eq:fra-energy-bound-wv-high-low-v-3d} 
\lsm & \de_0^{\frac65}  N_0^{-\frac{1}{10}+} N_0^{2(1-s)} \normb{\jb{\nabla}^{s}\V}_{L_t^{2}L_x^6}^{p-\frac15} \\
\lsm & \de_0^{1+p} N_0^{-\frac{1}{10}+} N_0^{2(1-s)}.
}

$\bullet$ \textbf{Estimate of \eqref{eq:fra-energy-bound-vv-high-high-3d}.} By H\"older's inequality,
\EQ{
\eqref{eq:fra-energy-bound-vv-high-high-3d} \lsm & \sum_{N_1\loe N_2}\int_{0}^{T_0} \int_{\R^3} |\nabla\V_{N_1}||\nabla\V_{N_2}||\U_{\gsm N_2}|^p\dd x\dd t \\
\lsm & \sum_{N_1\loe N_2\lsm N} \norm{\nabla\V_{N_1}}_{L_t^2 L_x^6} \norm{\nabla\V_{N_2}}_{L_t^{\frac{4}{3p-2}} L_x^{\frac{6}{5-3p}}} \norm{\U_{N}}_{L_t^{\frac{4p}{4-3p}}L_x^2}^p.
}
Noting that $(\frac{4}{3p-2},\frac{6}{5-3p})$ is $L^2$-admissible, by Lemma \ref{lem:fra-linear},
\EQ{
\eqref{eq:fra-energy-bound-vv-high-high-3d} \lsm & \sum_{N_1\loe N_2\lsm N} N_1^{1-s} N_2^{1-s} \norm{\jb{\nabla}^s\V_{N_1}}_{L_t^2 L_x^6} \norm{\jb{\nabla}^s\V_{N_2}}_{L_t^{\frac{4}{3p-2}} L_x^{\frac{6}{5-3p}}} \norm{\U_{N}}_{L_t^{\frac{4p}{4-3p}}L_x^2}^p \\
\lsm & \de_0^2\sum_{N\in2^\N} N^{2(1-s)}\normb{\U_{N}}_{L_t^{\frac{4p}{4-3p}}L_x^2}^p \\
\lsm & \de_0^2\normb{\jb{\nabla}^{\frac2p(1-s)+}\U}_{L_t^{\frac{4p}{4-3p}}L_x^2}^p.
}
Since $s>\frac{14}{15}>\frac{2}{2+p}$, we have that $\frac2p(1-s)<s$, then
\EQ{
\normb{\jb{\nabla}^{\frac2p(1-s)+}\U}_{L_t^{\frac{4p}{4-3p}}L_x^2} \lsm \normb{\jb{\nabla}^{s}\V}_{L_t^{\frac{4p}{4-3p}}L_x^2} + \normb{\jb{\nabla}^{\frac2p(1-s)+}\W}_{L_t^{\frac{4p}{4-3p}}L_x^2}.
}
First, by H\"older's inequality in $t$ and Lemma \ref{lem:fra-linear},
\EQ{
\normb{\jb{\nabla}^{s}\V}_{L_t^{\frac{4p}{4-3p}}L_x^2} \lsm \normb{\jb{\nabla}^{s}\V}_{L_t^{\I}L_x^2} \lsm \de_0.
}
Second, by \eqref{eq:fra-energy-bound-w-3d}, interpolation, and $\frac2p(1-s)<s$,
\EQ{
\normb{\jb{\nabla}^{\frac2p(1-s)+}\W}_{L_x^2} \lsm & \normb{\jb{\nabla}\W}_{L_x^2}^{\frac2p(1-s)+} \norm{\W}_{L_x^2}^{1-\frac2p(1-s)-} \\
\lsm & t^{\frac2p(\frac{3p}{4}-1)(1-s)-}N_0^{\frac2p(1-s)^2+} \\
\lsm & t^{\frac2p(\frac{3p}{4}-1)(1-s)-}N_0^{s(1-s)}.
}
Then, by $-\frac2{15}<-2(1-s)<0$,
\EQ{
\normb{\jb{\nabla}^{\frac2p(1-s)+}\W}_{L_t^{\frac{4p}{4-3p}}L_x^2} \lsm & \normb{t^{\frac2p(\frac{3p}{4}-1)(1-s)-}}_{L_t^{\frac{4p}{4-3p}}} N_0^{s(1-s)} \\
\lsm & \brkb{\int_{0}^{T_0} t^{-2(1-s)-}\dd t}^{\frac{4-3p}{4p}} N_0^{s(1-s)} \\
\lsm & N_0^{s(1-s)}.
}
Therefore, we have
\EQn{\label{esti:fra-energy-bound-vv-high-high-3d}
\eqref{eq:fra-energy-bound-vv-high-high-3d} \lsm & \de_0^2\brkb{\normb{\jb{\nabla}^{s}\V}_{L_t^{\frac{4p}{4-3p}}L_x^2} + \normb{\jb{\nabla}^{\frac2p(1-s)+}\W}_{L_t^{\frac{4p}{4-3p}}L_x^2}}^p \\
\lsm & \de_0^2\brkb{\de_0+N_0^{s(1-s)}}^p \\
\lsm & \de_0^2 N_0^{2(1-s)}.
}

$\bullet$ \textbf{Estimate of \eqref{eq:fra-energy-bound-vv-high-low-v-3d}.}
By H\"older's inequality,
\EQ{
\eqref{eq:fra-energy-bound-vv-high-low-v-3d} \lsm & \sum_{N_1\loe N_2,N\ll N_2}\int_{t_0}^{T_0} \int_{\R^3} |\nabla\V_{N_1}||\nabla\V_{N_2}||\V_{N}|^p\dd x\dd t \\
\lsm & \sum_{N_1\loe N_2,N\ll N_2} \norm{\nabla \V_{N_2} \V_{N}}_{L_{t,x}^2}^{\frac{4}{15}} \norm{\nabla \V_{N_2}}_{L_t^\I L_x^2}^{\frac{11}{15}} \norm{\nabla \V_{N_1}}_{L_t^{\I}L_x^2} \norm{\V_{N}}_{L_t^{\frac{15p-4}{13}}L_x^\I}^{p-\frac{4}{15}}.
}
By Lemma \ref{lem:fra-linear}, \eqref{esti:fra-energy-bound-wv-high-low-v-3d-bi}, $\frac{15p-4}{13}<\frac{16}{13}$, H\"older's, and Sobolev's inequality,
\EQn{\label{esti:fra-energy-bound-vv-high-low-v-3d}
\eqref{eq:fra-energy-bound-vv-high-low-v-3d} \lsm & \sum_{N_1\loe N_2,N\ll N_2} \norm{\nabla \V_{N_2} \V_{ N}}_{L_{t,x}^2}^{\frac{4}{15}} \norm{\nabla \V_{N_2}}_{L_t^\I L_x^2}^{\frac{11}{15}} N_1^{1-s}\norm{\jb{\nabla}^s \V_{N_1}}_{L_t^{\I}L_x^2} \norm{\V_{ N}}_{L_t^{2}L_x^\I}^{p-\frac{4}{15}} \\
\lsm & \de_0\sum_{N\ll N_2} \brkb{\de_0^2 N^{1-s} N_2^{1-s} N_2^{-1/2}}^{\frac{4}{15}} \de_0^{\frac{11}{15}} N_2^{\frac{11}{15}(1-s)} N_2^{1-s} \normb{\jb{\nabla}^{\frac12+\ep}\V_{ N}}_{L_t^{2}L_x^6}^{p-\frac{4}{15}} \\
\lsm & \de_0^{\frac{34}{15}}\sum_{N_2:N_2\gsm N_0}  N_2^{-\frac{2}{15} + 2(1-s)} \normb{\jb{\nabla}^{\frac{4(1-s)}{15(p-\frac{4}{15})} + \frac12+2\ep}\V}_{L_t^{2}L_x^6}^{p-\frac{4}{15}}.
}
Since $p>\frac23$ and $s>\frac{14}{15}$, we have that
\EQ{
\frac{4(1-s)}{15(p-\frac{4}{15})} + \frac12 < \frac23(1-s) + \frac12 <\frac{49}{90} <s,
}
then
\EQ{
\eqref{eq:fra-energy-bound-vv-high-low-v-3d} \lsm & \de_0^{\frac{34}{15}} N_0^{-\frac{2}{15} + 2(1-s)} \normb{\jb{\nabla}^{s}\V}_{L_t^{2}L_x^6}^{p-\frac{4}{15}} \\
\lsm & \de_0^{p+2} N_0^{-\frac{2}{15}} N_0^{2(1-s)}.
}

$\bullet$ \textbf{Estimate of \eqref{eq:fra-energy-bound-vv-high-low-w-3d}.} 
We first make the dyadic decomposition for the time integral:
\EQ{
\eqref{eq:fra-energy-bound-vv-high-low-w-3d} \lsm \sum_{k\in\Z:2^k\loe T_0} \sum_{N_1\loe N_2,N_3\ll N_2} N_3^{0+} \int_{I_k} \int_{\R^3} |\nabla\V_{N_1}||\nabla\V_{N_2}||\W_{N_3}|^p\dd x\dd t.
}
We can check that for $2/3<p<4/3$, $(\frac{104}{3(15p-4)},\frac{52}{15(2-p)})$ is $L_x^2$-admissible. Then, by H\"older's inequality, Lemmas \ref{lem:fra-linear} and \ref{lem:fra-energy-bound-bilinear-3d},
\EQ{
\eqref{eq:fra-energy-bound-vv-high-low-w-3d} \lsm & \sum_{k\in\Z:2^k\loe T_0} \sum_{N_1\loe N_2,N_3\ll N_2}  N_3^{0+} 2^{\frac{64-45p}{60}k}\norm{\nabla \V_{N_1}}_{L_t^{\frac{104}{3(15p-4)}}L_x^{\frac{52}{15(2-p)}}} \norm{\nabla \V_{N_2}\W_{N_3}}_{L_{t,x}^2}^{\frac{4}{15}} \\
&\qquad\qquad\qquad\cdot \norm{\nabla \V_{N_2}}_{L_t^{\frac{104}{3(15p-4)}}L_x^{\frac{52}{15(2-p)}}}^{\frac{11}{15}} \norm{\W_{N_3}}_{L_t^\I L_x^2}^{p-\frac4{15}} \\
\lsm & \de_0^2 \sum_{k\in\Z:2^k\loe T_0} \sum_{N_1\loe N_2,N_3\ll N_2} N_3^{0+} 2^{\frac{64-45p}{60}k} N_1^{1-s} (N_2^{\frac12-s} 2^{(\frac32p-1)k} N_3^{\frac{5p+4}{2(p+2)}} N_0^{\frac{2(p+1)}{p+2}(1-s)})^{\frac4{15}}   \\
& \qquad\qquad\qquad \cdot N_2^{\frac{11}{15}(1-s)}  \norm{\W_{N_3}}_{L_t^\I L_x^2}^{p-\frac4{15}} \\
\lsm & \de_0^2 \sum_{k\in\Z:2^k\loe T_0} \sum_{N_1\loe N_2,N_3\ll N_2}  2^{\frac{16-7p}{20}k} N_1^{1-s} N_2^{-\frac{2}{15}+1-s}   N_3^{\frac{2(5p+4)}{15(p+2)}+}\norm{\W_{N_3}}_{L_t^\I L_x^2}^{p-\frac4{15}} N_0^{\frac{8(p+1)}{15(p+2)}(1-s)},
}
where $t$ is taken over $I_k$. For any $\frac23<p<\frac43$, we can check that $\frac{2(5p+4)}{15(p+2)}<p-\frac{4}{15}$, then
\EQ{
N_3^{\frac{2(5p+4)}{15(p+2)}+}\norm{\W_{N_3}}_{L_t^\I L_x^2}^{p-\frac4{15}} \lsm N_3^{0-}    \norm{\nabla\W_{N_3}}_{L_t^\I L_x^2}^{p-\frac4{15}}.
}
Moreover, we also have that
\EQ{
\sum_{N_1\loe N_2}N_1^{1-s} N_2^{-\frac{2}{15}+1-s} \lsm \sum_{N_1}  N_1^{-\frac{2}{15}+2(1-s)} \lsm 1.
}
Consequently, combining the above two inequalities and \eqref{eq:fra-energy-bound-w-3d},
\EQ{
\eqref{eq:fra-energy-bound-vv-high-low-w-3d} \lsm & \de_0^2 \sum_{k\in\Z:2^k\loe T_0}2^{\frac{16-7p}{20}k} \sum_{N_1\loe N_2,N_3\ll N_2} N_1^{1-s} N_2^{-\frac{2}{15}+1-s} N_3^{0-}    \norm{\nabla\W_{N_3}}_{L_t^\I L_x^2}^{p-\frac4{15}} N_0^{\frac{8(p+1)}{15(p+2)}(1-s)} \\
\lsm & \de_0^2 \sum_{k\in\Z:2^k\loe T_0}2^{\frac{16-7p}{20}k}    \norm{\nabla\W}_{L_t^\I L_x^2}^{p-\frac4{15}} N_0^{\frac{8(p+1)}{15(p+2)}(1-s)} \\
\lsm & \de_0^2 \sum_{k\in\Z:2^k\loe T_0} 2^{\frac{45p^2-93p+64}{60}k}  N_0^{\frac{p(15p+34)}{15(p+2)}(1-s)}.
}
Note that for $2/3<p<4/3$,
\EQ{
\frac{45p^2-93p+64}{60}>0.26\text{, and }\frac{p(15p+34)}{15(p+2)}<1.45.
}
Then, we have
\EQn{\label{esti:fra-energy-bound-vv-high-low-w-3d}
\eqref{eq:fra-energy-bound-vv-high-low-w-3d} 
\lsm & \de_0^2 \sum_{k\in\Z:2^k\loe T_0} 2^{0.26\times k}  N_0^{1.45\times(1-s)} \\
\lsm & \de_0^2 N_0^{2(1-s)}.
}

Finally, Lemma \ref{lem:fra-energy-3d-other} follows from \eqref{esti:fra-energy-bound-wv-high-high-v-3d}, \eqref{esti:fra-energy-bound-wv-high-high-w-3d}, \eqref{esti:fra-energy-bound-wv-high-low-v-3d}, \eqref{esti:fra-energy-bound-vv-high-high-3d}, \eqref{esti:fra-energy-bound-vv-high-low-v-3d}, and  \eqref{esti:fra-energy-bound-vv-high-low-w-3d}.

\vskip 1.5cm
\section{Scattering in fractional weighted space:  general dimensions}\label{sec:fractional-dd}
\vskip .5cm

See Section \ref{sec:fra-initial} and \ref{sec:fra-local} for the preliminary settings. Specifically, we refer the readers to Section \ref{sec:fra-initial} for the linear-nonlinear decomposition and the estimates for the linear solution $\V$ (Lemma \ref{lem:fra-linear}). Moreover, please see Corollary \ref{cor:fra-local-u} for the local $H^s$-estimate of $\U$. 

Now, in Section \ref{sec:fractional-dd}, we will give the proof of the fractional weighted scattering in general dimensions case, namely Theorem \ref{thm:frac-weighted-dd}.  To this end, by the scattering criterion in Proposition \ref{prop:scattering-criterion}, it suffices to prove that
\EQ{
	\sup_{0<t\le T_0}\E(t)<C,
}
where $\E$ is defined in \eqref{defn:pseudo-conformal-energy}, namely
\EQ{
\E(t) = \frac{1}{4}t^{2-\frac{dp}{2}}\int_{\R^d}|\nabla\W(t,x)|^2 \dx + \frac{1}{p+2} \int_{\R^d}|\U(t,x)|^{p+2} \dx.
}
This will be finished in  Proposition \ref{prop:fra-energy-dd} below.

To begin with, we give the definition of $s_0=s_0(d,p)$. Throughout Section \ref{sec:fractional-dd}, we assume that $s_0<s<1$ .
\begin{defn}[Regularity condition]\label{defn:fra-scattering-s0}
\begin{enumerate}
\item (1D case)
When $d=1$ and $2<p<4$, then $s_0=\frac{18}{19}$.
\item ($p<\frac{2}{d-2}$ case)
When $d\goe 2$ and $\frac2d<p<\min\fbrk{\frac{2}{d-2},\frac{4}{d}}$, then
\EQ{
s_0:=1-\frac{1}{100d^2}\cdot\min\fbrk{dp-2,4-dp,2-(d-2)p}.
}
\item ($p\goe\frac{2}{d-2}$ case)
If it holds that
\EQ{
\frac{2}{d-2}\loe p<\min\fbrk{\frac4d,\frac{2}{d-4}}\text{, and }5\loe d\loe11,
}
then we take $s_0$ such that $1-s_0>0$ is a sufficiently small parameter that depends on $d$ and $p$, and $\th_1:=\frac12(1-s_0)$.
\end{enumerate}
\end{defn}
\begin{remark}
When $d\goe2$, it necessitates that $1-s_0$ should be sufficiently small to guarantee the various required summation in the proof. In the $p<\frac{2}{d-2}$ case, it is not difficult to write the exact definition of $s_0$, while in the $p\goe\frac{2}{d-2}$ case, the exact expression for $s_0$ is too complex, even though it is still possible to write it down. For example, in the proof of \eqref{eq:fra-energy-bound-wv-high-high-5d-main-1} below, we need to prove that 
\EQ{
-\frac{dp^2}{4}+\frac34(d+2)p-\frac52-1000\th_1>0
}
under the condition
\EQ{
\frac{2}{d-2}<p<\frac4d\text{, when }d=5,6;\quad \frac12<p<\frac47\text{, when }d=7.
}
We can check that under such condition, the polynomial $-\frac{dp^2}{4}+\frac34(d+2)p-\frac52>0$, then it suffices to take $\th_1\loe \frac{1}{2000}\brkb{-\frac{dp^2}{4}+\frac34(d+2)p-\frac52}$, and thus $s_0\goe1-\frac{1}{4000}\brkb{-\frac{dp^2}{4}+\frac34(d+2)p-\frac52}$.

\end{remark}

\subsection{Energy estimates near the infinity time}
In this section, we give an improved estimate for $w=u-v$, that is $Jw(t)\in L_x^2(\R^3)$ locally.
\begin{prop}\label{prop:fra-local-w-dd}
Suppose that $s_0$ is defined in Definition \ref{defn:fra-scattering-s0} and the assumptions in Theorem \ref{thm:frac-weighted-dd} hold. Then, there exists some constant $T_0=T_0(d,p,\norm{\jb{x}^su_0}_{L_x^2(\R^d)})>0$ such that
\EQ{
\sup_{T_0\loe t <+\I}\norm{\W(t)}_{H_x^1(\R^d)} \loe C N_0^{2(1-s)}.
}
\end{prop}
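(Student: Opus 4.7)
The plan is to mimic the strategy of Lemma \ref{lem:fra-local-nonlinear-3d}, upgrading the exponents to general dimension and splitting into the two sub-regimes $p<\frac{2}{d-2}$ and $p\ge \frac{2}{d-2}$. Starting from the Duhamel representation
\begin{equation*}
\W = S(t)\W_+ + i\int_t^{+\infty} S(t-\tau)\bigl(\tau^{dp/2-2}|\U|^p\U\bigr)\,d\tau,
\end{equation*}
I take the $L^\infty_t \dot H^1_x([T_0,\infty)\times\R^d)$ norm and apply Strichartz to reduce the problem to bounding $\|t^{dp/2-2}|\U|^p\nabla \U\|_{L^{q'}_t L^{r'}_x + L^1_t L^2_x}$ for a suitable $L^2$-admissible pair $(q,r)$. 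The $L^2$ component of $\|\W(t)\|_{H^1_x}$ is immediate from mass conservation $\|\U(t)\|_{L^2}=\|\U_+\|_{L^2}$ combined with the $H^s$-bound on $\V$ from Lemma \ref{lem:fra-linear}.

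After writing $\nabla\U=\nabla\W+\nabla\V$ and decomposing $\nabla\V=\sum_N\nabla\V_N$ dyadically, the nonlinearity splits into (i) the $\nabla\W$ contribution, which closes via Corollary \ref{cor:fra-local-u} and produces a self-small factor $T_0^{-\theta}\|\W\|_{L^\infty_t \dot H^1_x}$; (ii) the high-high interaction $|\U_{\gsm N}|^p|\nabla\V_N|$, handled directly by the $\jb{\nabla}^s$-Strichartz bounds on $\U$ and $\V$ from Corollary \ref{cor:fra-local-u} and Lemma \ref{lem:fra-linear}; and (iii) the high-low interaction $|\U_{\ll N}|^p|\nabla\V_N|$, which is the sole source of derivative loss on $\V$. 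For (iii) I invoke the bilinear Strichartz estimate (Lemma \ref{lem:bilinearstrichartz}) on the product $\U_{N_1}\nabla\V_N$ with $N_1\ll N$ to trade one derivative on $\V_N$ for a gain $N_1^{-s+}N^{1/2-s}$, at the cost of an auxiliary $L^{(2-)}_t L^{((2d/(d+2))+)}_x$-type norm of $\tau^{dp/2-2}|\U|^p\U$, which is again controlled by $\|\jb{\nabla}^s \U\|_{S^0}$ together with H\"older in $t$.

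The principal obstacle is verifying that each piece yields a genuinely negative power of $T_0$ (i.e.\ decay as $T_0\to\infty$) uniformly across the declared ranges of $(d,p)$, and that the dyadic sums $\sum_{N\gsm N_0}$ and $\sum_{N_1\ll N}$ converge with total bound $N_0^{2(1-s)}$ whenever $s>s_0(d,p)$. The regime $p<\frac{2}{d-2}$ parallels the 3D proof since the Sobolev embedding $H^1\hookrightarrow L^{dp}$ is directly available; the regime $\frac{2}{d-2}\le p<\frac{2}{d-4}$ with $5\le d\le 11$ is harder, as one must control $\W$ in a global Strichartz norm above $L^{2d/(d-2)}_x$ via the equation itself, using a fractional Sobolev inequality of the form $\|\W\|_{L^{dp}_x}\lesssim\|\jb{\nabla}^{(1-p)/p}\W\|_{L^{2d/(d-2)}_x}$ for $d\le 7$ and $\|\W\|_{L^{dp}_x}\lesssim\|\jb{\nabla}^{1-}\W\|_{L^{2d/(d-2)}_x}$ for $8\le d\le 11$, and exploiting $p<1$ (automatic since $d\ge 5$ and $p<4/d$) to close a contractive fixed point for $\|\jb{\nabla}\W\|_{L^2_t L^{2d/(d-2)}_x}$. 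Collecting all pieces yields an inequality of the form
\begin{equation*}
\|\W\|_{L^\infty_t\dot H^1_x} \lesssim N_0^{2(1-s)} + T_0^{-\theta}\bigl(1+\|\W\|_{L^\infty_t\dot H^1_x}\bigr),
\end{equation*}
and the proposition follows by absorbing the last term into the left-hand side upon choosing $T_0=T_0(d,p,\|\jb{x}^s u_0\|_{L^2_x})$ large enough.
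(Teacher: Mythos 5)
Your skeleton coincides with the paper's: Proposition \ref{prop:fra-local-w-dd} is reduced to an estimate of the Duhamel integral of $t^{\frac{dp}{2}-2}|\U|^p\nabla\U$ in a dual Strichartz norm (Lemma \ref{lem:fra-local-nonlinear-dd}), after which one splits $\nabla\U=\nabla\W+\sum_N\nabla\V_N$, treats the $\nabla\W$ piece and the high-high piece with Corollary \ref{cor:fra-local-u} and Lemma \ref{lem:fra-linear}, treats the high-low piece $|\U_{\ll N}|^p|\nabla\V_N|$ with the bilinear Strichartz estimate (paying an auxiliary dual norm of $t^{\frac{dp}{2}-2}|\U|^p\U$ controlled by $\norm{\jb{\nabla}^s\U}_{S^0}$), and absorbs the resulting $T_0^{-\th}\norm{\W}_{L_t^\I\dot H_x^1}$ by taking $T_0$ large. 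The genuine divergence is your claimed dichotomy at $p=\frac{2}{d-2}$: in the paper no such case split occurs in this proposition. The obstruction you identify (failure of $H^1\hookrightarrow L_x^{dp}$ for $p>\frac{2}{d-2}$) is an artifact of a H\"older splitting that places $\W$ (or $\U$) in $L_x^{dp}$; the paper instead puts $|\U|^p$ in $L_t^\I L_x^2$ (resp.\ in admissible Strichartz norms of $\jb{\nabla}^s\U$ with spatial exponent below $\frac{2d}{d-2}$) and pays for it with the time weight, using that $t^{\frac{dp}{2}-2}\in L_t^{\frac{4}{4-dp}}([T_0,\infty))$ with a decaying factor $T_0^{-\frac{4-dp}{4}}$, so the argument runs uniformly over the whole stated $(d,p)$ range. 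The fractional Sobolev inequalities and the contraction for $\norm{\jb{\nabla}\W}_{L_t^2L_x^{\frac{2d}{d-2}}}$ that you invoke are, in the paper, reserved for the energy estimate towards the origin (Lemmas \ref{lem:fra-energy-global-spacetime-estimate}--\ref{lem:fra-energy-global-spacetime-estimate-sobolev} and Lemma \ref{lem:fra-energy-5d}), where the temporal singularity really forces them. If you insist on your route near infinity, be aware of a subtlety you gloss over: closing a fixed point for $\norm{\jb{\nabla}\W}_{L_t^2L_x^{\frac{2d}{d-2}}}$ requires estimating $\nabla(|\U|^p\U)$, whose $\nabla\V$ contribution is not in $L_x^2$, so the bilinear/high-low machinery must be rerun inside the iteration; this can presumably be arranged on $[T_0,\infty)$ thanks to the time decay, but it is considerably more delicate than the one-line description suggests, and it is unnecessary for this statement.
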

Similar to the proof of 3D case in Proposition \ref{prop:fra-local-w-3d}, it is reduced to the following lemma.
\begin{lem}\label{lem:fra-local-nonlinear-dd}
Suppose that $s_0$ is defined in Definition \ref{defn:fra-scattering-s0} and the assumptions in Theorem \ref{thm:frac-weighted-dd} hold. Fix any $T_0>T$, where $T$ is defined in Corollary \ref{cor:fra-local-u}. Then, there exists some constant $\th=\th(d,p)>0$ such that
\EQ{
\normb{\int_t^{+\I} S(t-\ta)(\ta^{\frac{dp}{2}-2}|\U|^p\U) \dd \ta}_{L_t^\I \dot H_x^1([T_0,+\I)\times\R^d)} \lsm T_0^{-\th} (1+\norm{\W}_{L_t^\I \dot H_x^1([T_0,+\I)\times\R^d)}) .
}
\end{lem}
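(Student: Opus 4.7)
The plan is to mirror the strategy of Lemma \ref{lem:fra-local-nonlinear-3d}, but replace the explicit 3D admissible pairs by $d$-dimensional analogues and, for the ranges where $p\goe \frac{2}{d-2}$, pay extra attention to how $|\U|^p$ is distributed in $L^r_x$ through Sobolev embedding. As in the 3D case, we restrict to $(t,x)\in[T_0,+\I)\times\R^d$ with $T_0>T$ from Corollary \ref{cor:fra-local-u}, and begin with the Duhamel form together with an $L^2$-Strichartz estimate to dominate the quantity of interest by
\EQ{
\normb{t^{\frac{dp}{2}-2}|\U|^p\nabla\U}_{L_t^{\tilde q'}L_x^{\tilde r'}([T_0,+\I)\times\R^d)}
}
for a choice of $L^2$-admissible pair $(\tilde q,\tilde r)$ adapted to the dimension. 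A convenient $d$-dimensional substitute for $(2,\frac{6}{5})$ is $(2,\frac{2d}{d+2})$, and for $(1,2)$ one simply keeps $L_t^1L_x^2$; both dual norms leave room to put a power of $t^{\frac{dp}{2}-2}$ into an integrable space near $+\I$ since $\frac{dp}{2}-2<0$, producing the desired $T_0^{-\th}$ gain.

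The next step is a ``high/low'' splitting
$\nabla\U\lsm \nabla\W+\sum_{N}|\U_{\gsm N}|^{p}|\nabla\V_N|+\sum_{N}|\U_{\ll N}|^p|\nabla\V_N|$
so that the $\dot H^1$-estimate reduces to three terms, paralleling \eqref{esti:frac-local-nonlinear-term-3d-1}--\eqref{esti:frac-local-nonlinear-term-3d-3}. The first term carries the factor $\norm{\nabla\W}_{L_t^\I L_x^2}$, which appears linearly on the right-hand side of the lemma. The second term is handled by placing $|\U_{\gsm N}|^p$ in an $L^2$-admissible space-time norm at regularity $s$ using Corollary \ref{cor:fra-local-u}; the high-frequency projection converts the lost derivative on $\V_N$ into a factor $N^{1-(1+p)s}$ which is summable in $N$ for $s>\frac{1}{1+p}$ (true for our range since $s_0$ is taken close to one). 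The third term, the genuine high-low interaction, is treated with the bilinear Strichartz estimate of Lemma \ref{lem:bilinearstrichartz} applied to the pair $\U_{\ll N}\nabla\V_N$; the source term $t^{\frac{dp}{2}-2}|\U|^p\U$ entering $\norm{\cdot}_{S^*}$ in \eqref{eq:bilinearstrichartz} is again estimated by Corollary \ref{cor:fra-local-u} combined with H\"older in time, and the exponents can be chosen so that the bilinear gain $N^{-1/2+}N_1^{0-}$ together with $N^{\frac12-s}$ from the derivative loss yields an absolutely convergent double sum in $N_1\ll N$ whenever $s_0$ is sufficiently close to $1$.

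The main obstacle, and the place where the choice of admissible pairs will differ most between subcases, is the range $\frac{2}{d-2}\loe p<\frac{4}{d}$ in dimensions $d\goe5$, because there one cannot directly place $|\U|^p$ in $L_t^qL_x^r$ with $r$ below $\frac{2d}{d-2}$ using only the mass bound; following the general-dimensional ideas already sketched in the introduction, we interpolate $L^{dp}_x\hookrightarrow \jb{\nabla}^{\frac{1-p}{p}}L^{\frac{2d}{d-2}}_x$ (for $5\loe d\loe 7$) or $\jb{\nabla}^{1-}L^{\frac{2d}{d-2}}_x$ (for $8\loe d\loe 11$) so that only a fractional derivative $<s$ is required on $\U$, which is controlled by Corollary \ref{cor:fra-local-u}. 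In every case a positive power $T_0^{-\th}$ is extracted from $\|t^{\frac{dp}{2}-2}\|_{L^{\rho}_t([T_0,+\I))}$ for the appropriate $\rho$, and the remaining non-integrable endpoint issues are dealt with via an $\ep$-perturbation as in \eqref{eq:frac-local-nonlinear-term-3d-3-1}.

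Combining the three bounds gives
\EQ{
\normb{t^{\frac{dp}{2}-2}|\U|^p\nabla\U}_{L_t^{\tilde q'}L_x^{\tilde r'}}\lsm T_0^{-\th}\brk{1+\norm{\W}_{L_t^\I\dot H_x^1}},
}
which is precisely the desired estimate. After this, Proposition \ref{prop:fra-local-w-dd} follows in the same way as Proposition \ref{prop:fra-local-w-3d}: insert the Duhamel formula \eqref{eq:nls-w-pc-duhamel} into the inequality above, absorb $\norm{\W}_{L_t^\I\dot H_x^1}$ after fixing $T_0$ large, and use $\norm{\W_+}_{\dot H_x^1}\lsm N_0^{1-s}$ from Lemma \ref{lem:fra-initial-data}.
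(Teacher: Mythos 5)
Your skeleton for $d\ge2$ (Strichartz reduction to the dual norm $L_t^{2}L_x^{\frac{2d}{d+2}}$, splitting the nonlinearity into $|\U|^p|\nabla\W|$, a high-frequency piece $\sum_N|\U_{\gsm N}|^p|\nabla\V_N|$, and a high-low piece handled with Lemma \ref{lem:bilinearstrichartz}, with $T_0^{-\th}$ coming from the time weight) does match the paper's proof of this lemma, but two parts of your plan have genuine gaps. First, Theorem \ref{thm:frac-weighted-dd} includes $d=1$, and your proposal contains no valid argument there: the pair $(2,\frac{2d}{d+2})$ is meaningless for $d=1$ (and is the forbidden endpoint for $d=2$), and the paper runs a separate one-dimensional proof with the dual norms $L_t^{4/3}L_x^1+L_t^1L_x^2$, a different trilinear decomposition exploiting $p>2$ (terms of the form $|\U|^{p-2}O(\U_{\gsm N}\U\,\nabla\V_N)$ and $|\U|^{p-2}O(\U_{\ll N}^2\nabla\V_N)$), and the $L_t^4L_x^\I$ Strichartz norm; this is not obtained by formally substituting exponents into your scheme.

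Second, your treatment of the term $|\U|^p|\nabla\W|$ in the regime $\frac{2}{d-2}\loe p<\frac4d$, $d\goe5$, is both off-track and not actually closed. Because you keep $\nabla\W$ in $L_t^\I L_x^2$, the $\U$-factors must be placed in $L_x^{dp}$, which is what forces you into the Sobolev embeddings $\jb{\nabla}^{\frac{1-p}{p}}L_x^{\frac{2d}{d-2}}\hra L_x^{dp}$, etc. Two problems arise: (i) the claimed derivative cost is not always below $s$ — for $d=7$ and $p$ near $\frac{2}{d-2}=\frac25$ one has $\frac{1-p}{p}$ near $\frac32>1>s$ (the paper uses $s_1=\frac{1-p}{p}$ only in the restricted range \eqref{eq:fra-energy-bound-wv-high-high-5d-case-1} and a different exponent in \eqref{eq:fra-energy-bound-wv-high-high-5d-case-2}); (ii) even when the regularity is admissible, the time H\"older fails on part of the range: at spatial exponent $\frac{2d}{d-2}$ admissibility forces the time exponent $2$, so with $\nabla\W\in L_t^\I L_x^2$ the weight $t^{\frac{dp}{2}-2}$ must lie in $L_t^{\frac{2}{1-p}}([T_0,\I))$, which requires $(d-1)p<3$ and fails, e.g., for $d=5$ and $\frac34<p<\frac45$. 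Repairing this would mean rebuilding the global space-time machinery of Lemmas \ref{lem:fra-energy-global-spacetime-estimate}, \ref{lem:fra-energy-global-spacetime-estimate-sobolev} and Corollary \ref{cor:fra-energy-global-spacetime-estimate} — which is unnecessary here: near $t=+\I$ the paper avoids $L_x^{dp}$ entirely by the opposite H\"older split, putting all $\U$-factors in $L_t^\I L_x^2$ (mass and Corollary \ref{cor:fra-local-u}) and the gradient factor in the admissible pair $(\frac{4}{dp-2},\frac{2d}{d+2-dp})$, with $t^{\frac{dp}{2}-2}\in L_t^{\frac{4}{4-dp}}([T_0,+\I))$ supplying $T_0^{-\frac{4-dp}{4}}$; this works uniformly in the whole range of Theorem \ref{thm:frac-weighted-dd}, and the dichotomy at $p=\frac{2}{d-2}$ plays no role in this lemma — it only enters the energy estimate towards the origin (Lemmas \ref{lem:fra-energy-dd} and \ref{lem:fra-energy-5d}). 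Your handling of the remaining two pieces (the $\V_N$ terms via Corollary \ref{cor:fra-local-u} and the bilinear Strichartz estimate with a small interpolation exponent) does parallel the paper's $d\ge2$ argument.
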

\begin{proof}[Proof of Lemma \ref{lem:fra-local-nonlinear-dd}]
In this lemma, we will restrict the variable $(t,x)$ on $[T_0,+\I)\times\R^d$. Now, we split the proof into two subcases: $d=1$ and $d\goe2$.

$\bullet$ \textbf{Proof of 1D case:} By the Strichartz estimate,
\EQ{
	\normb{\int_t^{+\I} S(t-\ta)(\ta^{\frac{p}{2}-2}|\U|^p\U) \dd \ta}_{L_t^\I\dot H_x^1} \lsm & \normb{t^{\frac{p}{2}-2}|\U|^p\nabla \U }_{L_t^{\frac43}L_x^1 +L_t^1 L_x^2}.
}
Then, we decompose 
\EQ{
	|\U|^p\nabla \U = & |\U|^p\nabla\W + \sum_{N\in2^\N} |\U|^{p-2}O(\U^2\nabla \V_N) \\
	= & |\U|^p\nabla\W +  \sum_{N\in2^\N} |\U|^{p-2}O(\U_{\gsm N}(\U_{\gsm N} + \U_{\ll N})\nabla \V_N)\\
	& + \sum_{N\in2^\N} |\U|^{p-2}O(\U_{\ll N}^2\nabla \V_N).
}
It suffices to consider
\begin{subequations}
	\EQnn{
		\normb{t^{\frac{p}{2}-2}|\U|^p\nabla \U }_{L_t^{\frac43}L_x^1} \lsm & \normb{t^{\frac{p}{2}-2}|\U|^p\nabla\W }_{L_t^{\frac43}L_x^1} \label{esti:frac-local-nonlinear-term-1}\\
		& + \normb{t^{\frac{p}{2}-2}\sum_{N\in2^\N} |\U|^{p-2}O(\U_{\gsm N}(\U_{\gsm N} + \U_{\ll N})\nabla \V_N) }_{L_t^{\frac43}L_x^1}\label{esti:frac-local-nonlinear-term-2}\\
		& + \normb{t^{\frac{p}{2}-2}\sum_{N\in2^\N} |\U|^{p-2}O(\U_{\ll N}^2\nabla \V_N) }_{L_t^{1}L_x^2}.\label{esti:frac-local-nonlinear-term-3}
	}
\end{subequations}
We first estimate \eqref{esti:frac-local-nonlinear-term-1}. By H\"older's inequality,  Corollary \ref{cor:fra-local-u}, and noting that $(\frac{4p}{p-1},2p)$ is $L_x^2$-admissible,
\EQn{\label{esti:frac-local-nonlinear-1}
\eqref{esti:frac-local-nonlinear-term-1} \lsm & \normb{t^{\frac{1}{2}-\frac2p}\U}_{L_t^{\frac43p}L_x^{2p}}^p \norm{\nabla\W}_{L_t^\I L_x^2} \\
\lsm & T_0^{-\frac{4-p}{4}}\norm{\U}_{L_t^{\frac{4p}{p-1}}L_x^{2p}}^p \norm{\nabla\W}_{L_t^\I L_x^2} \\
\lsm & T_0^{-\frac{4-p}{4}} \norm{\W}_{L_t^\I \dot H_x^1}.
}
For \eqref{esti:frac-local-nonlinear-term-2}, by H\"older's inequality, Corollary \ref{cor:fra-local-u}, and Lemma \ref{lem:fra-linear},
\EQn{\label{esti:frac-local-nonlinear-2}
& \normb{t^{\frac{p}{2}-2}\sum_{N\in2^\N} |\U|^{p-2}O(\U_{\gsm N}(\U_{\gsm N} + \U_{\ll N})\nabla \V_N)}_{L_t^{\frac43}L_x^1} \\
\lsm & \sum_{N\in2^\N} \normb{t^{\frac{p}{2}-2}|\U|^{p-2}(\U_{\gsm N} + \U_{\ll N})}_{L_{t}^{\frac43} L_x^\I} \normb{\U_{\gsm N}}_{L_t^\I L_x^2} \normb{\nabla\V_N}_{L_t^\I L_x^2} \\
\lsm & \sum_{N\in2^\N} \normb{t^{\frac{p}{2}-2}|\U|^{p-2}(\U_{\gsm N} + \U_{\ll N})}_{L_{t}^{\frac43} L_x^\I} \normb{N^{\frac12}\U_{\gsm N}}_{L_t^\I L_x^2} \normb{N^{\frac12}\V_N}_{L_t^\I L_x^2} \\
\lsm & \normb{t^{\frac{p-4}{2(p-1)}}\U}_{L_t^{\frac43(p-1)}L_x^\I}^{p-1} \norm{\U}_{L_t^\I H_x^s}\norm{\V}_{L_t^\I H_x^s} \\
\lsm & T_0^{-\frac{4-p}{4}} \norm{\U}_{L_t^{4}L_x^\I}^{p-1} \norm{\U}_{L_t^\I H_x^s}\norm{\V}_{L_t^\I H_x^s} \\
\lsm & T_0^{-\frac{4-p}{4}}.
}
Finally, we consider \eqref{esti:frac-local-nonlinear-term-3}. To this end, we first use bilinear Strichartz estimate, the fractional chain rule in Lemma \ref{lem:frac-chain}, Lemma \ref{lem:fra-initial-data}, and Corollary \ref{cor:fra-local-u}, for any $N_1\ll N$,
\EQn{\label{esti:frac-local-nonlinear-3-bi}
	\norm{\U_{N_1}\nabla \V_N}_{L_{t,x}^2} \lsm & N_1^{-s} \normb{N^{\frac12}\V_{+,N}}_{L_x^2} \brkb{\normb{N_1^{s}\U_{+,N_1}}_{L_x^2} + \normb{t^{\frac p2-2}N_1^{s}P_{N_1}(|\U|^p\U)}_{L_t^{\frac{4}{3}}L_x^1}} \\
	\lsm & N^{\frac12-s} N_1^{-s} \norm{\V_+}_{H_x^s}\brkb{\norm{\U_+}_{H_x^s} + \normb{t^{\frac p2-2}|\nabla|^{s}(|\U|^p\U)}_{L_t^{\frac43}L_x^1}} \\
	\lsm & N^{\frac12-s} N_1^{-s} \norm{\V_+}_{H_x^s}\brkb{\norm{\U_+}_{H_x^s} + \normb{t^{\frac 12-\frac 2p}\U}_{L_t^{\frac43p}L_x^{2p}}^p\norm{\U}_{L_t^\I \dot H_x^s}} \\
	\lsm & N^{\frac12-s} N_1^{-s} \norm{\V_+}_{H_x^s}\brkb{\norm{\U_+}_{H_x^s} + T_0^{-\frac{4-p}{4}} \norm{\U}_{L_t^{\frac{4p}{p-1}}L_x^{2p}}^p\norm{\U}_{L_t^\I \dot H_x^s}} \\
	\lsm & N^{\frac12-s} N_1^{-s}.
}
Then, by H\"older's, Bernstein's inequalities, \eqref{esti:frac-local-nonlinear-3-bi}, and Corollary \ref{cor:fra-local-u},
\EQn{\label{esti:frac-local-nonlinear-3-1}
\eqref{esti:frac-local-nonlinear-term-3}
\lsm & \sum_{N_2\loe N_1\ll N} \normb{t^{\frac{p}{2}-2}|\U|^{p-2}O(\U_{N_2}\U_{N_1}\nabla \V_N)}_{L_t^{1}L_x^2} \\
\lsm & \sum_{N_2\loe N_1\ll N} \normb{t^{\frac p2-2}|\U|^{p-2}}_{L_t^2 L_x^\I} \norm{\U_{N_2}}_{L_t^\I L_x^\I} \norm{\U_{N_1}\nabla \V_N}_{L_{t,x}^2} \\
\lsm & \sum_{N_2\loe N_1\ll N} \normb{t^{\frac{p-4}{2(p-2)}}\U}_{L_t^{2(p-2)} L_x^\I}^{p-2}  N_2^{\frac12-s}\norm{N_2^s\U_{N_2}}_{L_t^\I L_x^2}  N^{\frac12-s} N_1^{-s} \\
\lsm & \normb{t^{\frac{p-4}{2(p-2)}}\U}_{L_t^{2(p-2)}L_x^\I}^{p-2} \sum_{N_2\loe N_1\ll N} N_2^{\frac12-s} N^{\frac12-s} N_1^{-s}\\
\lsm & \normb{t^{\frac{p-4}{2(p-2)}}\U}_{L_t^{2(p-2)}L_x^\I}^{p-2}.
}
By the change of variable (see Remark \ref{rem:spacetime-exponent-transform}) and H\"older's inequality, we have
\EQn{\label{esti:frac-local-nonlinear-3-2}
	\normb{t^{\frac{p-4}{2(p-2)}}\U}_{L_t^{2(p-2)}L_x^\I([T_0,+\I)\times\R)}^{p-2} = & \norm{u}_{L_t^{2(p-2)}L_x^\I([0,1/T_0]\times\R)}^{p-2} \\
	\lsm & T_0^{-\frac{4-p}{4}} \norm{u}_{L_t^4L_x^\I([0,1/T_0]\times\R)}^{p-2} \\
	\lsm & T_0^{-\frac{4-p}{4}}.
}
Therefore, by \eqref{esti:frac-local-nonlinear-3-1} and \eqref{esti:frac-local-nonlinear-3-2},
\EQn{\label{esti:frac-local-nonlinear-3}
	\eqref{esti:frac-local-nonlinear-term-3}
	\lsm T_0^{-\frac{4-p}{4}}.
}
The lemma follows by \eqref{esti:frac-local-nonlinear-1}, \eqref{esti:frac-local-nonlinear-2}, and \eqref{esti:frac-local-nonlinear-3}, with $\th= \frac{4-p}{4}$.

$\bullet$ \textbf{Proof of 2D and higher dimensional cases:} by the Strichartz estimate,
\EQ{
\normb{\int_t^{+\I} S(t-\ta)(\ta^{\frac{dp}{2}-2}|\U|^p\U) \dd \ta}_{L_t^\I \dot H_x^1} \lsm & \normb{t^{\frac{dp}{2}-2}|\U|^p\nabla \U }_{L_t^{2}L_x^{\frac{2d}{d+2}}}.
}
Then, we decompose 
\EQ{
|\U|^p|\nabla \U| \lsm & |\U|^p|\nabla\W| + \sum_{N\in2^\N} |\U_{\gsm N} + \U_{\ll N}|^{p}|\nabla \V_N| \\
\lsm & |\U|^p|\nabla\W| + \sum_{N\in2^\N} |\U_{\gsm N}|^{p}|\nabla \V_N|
 + \sum_{N\in2^\N} | \U_{\ll N}|^{p}|\nabla \V_N|.
}
It suffices to consider
\EQnnsub{
\normb{t^{\frac{dp}{2}-2}|\U|^p\nabla \U }_{L_t^{2}L_x^{\frac{2d}{d+2}}} \lsm & \normb{t^{\frac{dp}{2}-2}|\U|^p|\nabla\W| }_{L_t^{2}L_x^{\frac{2d}{d+2}}} \label{esti:frac-local-nonlinear-term-dd-1}\\
& + \normb{t^{\frac{dp}{2}-2}\sum_{N\in2^\N} |\U_{\gsm N}|^{p}|\nabla \V_N| }_{L_t^{2}L_x^{\frac{2d}{d+2}}}\label{esti:frac-local-nonlinear-term-dd-2}\\
& + \normb{t^{\frac{dp}{2}-2}\sum_{N\in2^\N} | \U_{\ll N}|^{p}|\nabla \V_N| }_{L_t^2L_x^{\frac{2d}{d+2}}}.\label{esti:frac-local-nonlinear-term-dd-3}
}

For \eqref{esti:frac-local-nonlinear-term-dd-1}, by H\"older's inequality,
\EQ{
\eqref{esti:frac-local-nonlinear-term-dd-1} 
\lsm &  \normb{t^{\frac{dp}{2}-2}}_{L_t^{\frac{4}{4-dp}}} \norm{\U}_{L_t^{\I}L_x^{2}}^p \norm{\nabla\W}_{L_t^{\frac{4}{dp-2}} L_x^{\frac{2d}{d+2-dp}}}.
}
Note that
\EQ{
\normb{t^{\frac{dp}{2}-2}}_{L_t^{\frac{4}{4-dp}}} \lsm  \brkb{\int_{T_0}^{+\I} t^{-2} \dd t}^{\frac{4-dp}{4}} \lsm T_0^{-\frac{4-dp}{4}},
}
and that $(\frac{4}{dp-2},\frac{2d}{d+2-dp})$ is $L^2$ admissible for $\frac2d<p<\frac4d$, then combining Corollary \ref{cor:fra-local-u},
\EQn{\label{eq:frac-local-nonlinear-term-dd-1}
\eqref{esti:frac-local-nonlinear-term-dd-1} 
\lsm T_0^{-\frac{4-dp}{4}} \norm{\W}_{L_t^\I \dot H_x^1}.
}

For \eqref{esti:frac-local-nonlinear-term-dd-2}, by $s>\frac{1}{1+p}$, H\"older's inequality, and Corollary \ref{cor:fra-local-u},
\EQn{\label{eq:frac-local-nonlinear-term-dd-2}
\eqref{esti:frac-local-nonlinear-term-dd-2} 
\lsm & \sum_{N\in2^\N} \normb{t^{\frac{dp}{2}-2}}_{L_t^{\frac{4}{4-dp}}} \norm{\U_{\gsm N}}_{L_t^{\I}L_x^2}^p \norm{\nabla\V_N}_{L_t^{\frac{4}{dp-2}} L_x^{\frac{2d}{d+2-dp}}} \\
\lsm & T_0^{-\frac{4-dp}{4}} \sum_{N\in2^\N} N^{1-(1+p)s} \normb{N^s\U_{\gsm N}}_{L_t^{\I}L_x^2}^p \norm{\jb{\nabla}^s\V_N}_{L_t^{\frac{4}{dp-2}} L_x^{\frac{2d}{d+2-dp}}} \\
\lsm & T_0^{-\frac{4-dp}{4}}  \normb{\jb{\nabla}^{s}\U}_{L_t^{\I}L_x^{2}}^p \norm{\jb{\nabla}^s\V}_{L_t^{\frac{4}{dp-2}} L_x^{\frac{2d}{d+2-dp}}} \\
\lsm & T_0^{-\frac{4-dp}{4}}.
}

Finally, we estimate \eqref{esti:frac-local-nonlinear-term-dd-3}. We first take
\EQ{
\al:=\frac{4-dp}{10d}.
}
Note that $(\frac{4(1-\al)}{dp-2-d\al},\frac{2d(1-\al)}{d+2-dp})$ is $L^2$ admissible. By \eqref{eq:bound-for-up}, H\"older's inequality,  Lemma \ref{lem:fra-linear}, and Corollary \ref{cor:fra-local-u},
\EQn{\label{eq:frac-local-nonlinear-term-dd-3-1}
\eqref{esti:frac-local-nonlinear-term-dd-3} 
\lsm & \sum_{N_1\ll N} N_1^{0+}\norm{\U_{N_1}\nabla \V_N}^{\al}_{L_{t,x}^{2}}\normb{t^{\frac{dp}{2}-2}}_{L_t^{\frac{4}{4-dp+(d-2)\al}}} \norm{\U_{N_1}}_{L_t^\I L_x^2}^{p-\al} \\
& \quad\quad\quad\cdot  \normb{\nabla \V_N}_{L_t^{\frac{4(1-\al)}{dp-2-d\al}}L_x^{\frac{2d(1-\al)}{d+2-dp}}}^{1-\al} \\
\lsm & T_0^{-\frac{4-dp-(d-2)\al}{4}} \sum_{N_1\ll N} N_1^{0+} N^{(1-\al)(1-s)}\norm{\U_{N_1}\nabla \V_N}^{\al}_{L_{t,x}^{2}} \norm{\U_{N_1}}_{L_t^\I L_x^2}^{p-\al}.
}
By the bilinear Strichartz estimate in Lemma \ref{lem:bilinearstrichartz}, fractional chain rule in Lemma \ref{lem:frac-chain}, Lemma \ref{lem:fra-initial-data}, and Corollary \ref{cor:fra-local-w-dd},
\EQn{\label{eq:frac-local-nonlinear-term-dd-3-bi}
\norm{\U_{N_1}\nabla \V_N}_{L_{t,x}^{2}} \lsm & \frac{N_1^{\frac{d-1}{2}}}{N^{1/2}} \brkb{\norm{P_{N_1}\U_+}_{L_x^2} + \normb{t^{\frac{dp}{2}-2}P_{N_1}\brkb{|\U|^p\U}}_{L_t^{\frac{2}{1+\ep}}L_x^{\frac{2d}{d+2-2\ep}}}}\norm{P_N\V_+}_{L_x^2} \\
\lsm & N_1^{\frac{d-1}{2}-s} N^{\frac12-s} \brkb{\norm{\jb{\nabla}^sP_{N_1}\U_+}_{L_x^2} + \normb{t^{\frac{dp}{2}-2}\jb{\nabla}^s\brkb{|\U|^p\U}}_{L_t^{\frac{2}{1+\ep}}L_x^{\frac{2d}{d+2-2\ep}}}}\\
& \cdot\norm{\jb{\nabla}^sP_N\V_+}_{L_x^2} \\
\lsm & N_1^{\frac{d-1}{2}-s} N^{\frac12-s} \brkb{1 + \normb{t^{\frac{dp}{2}-2}}_{L_t^{\frac{4}{4-dp}}} \norm{\U}_{L_t^{\frac{2p}{\ep}} L_x^{\frac{2dp}{dp-2\ep}}}^p \normb{\jb{\nabla}^s\U}_{L_t^{\frac{4}{dp-2}}L_x^{\frac{2d}{d+2-dp}}}} \\
\lsm & N_1^{\frac{d-1}{2}-s} N^{\frac12-s} \brkb{1 + T_0^{-\frac{4-dp}{4}}  \normb{\jb{\nabla}^s\U}_{S^0}^{p+1}} \\
\lsm & N_1^{\frac{d-1}{2}-s} N^{\frac12-s}.
}
Then, by \eqref{eq:frac-local-nonlinear-term-dd-3-1},  \eqref{eq:frac-local-nonlinear-term-dd-3-bi}, $4-dp-(d-2)\al > \frac12(4-dp)$, $1-s<1-s_0<\frac{1}{20d}(4-dp)$, and Corollary \ref{cor:fra-local-w-dd},
\EQn{\label{eq:frac-local-nonlinear-term-dd-3}
\eqref{esti:frac-local-nonlinear-term-dd-3} 
\lsm & T_0^{-\frac{4-dp}{8}} \sum_{N_1\ll N} N_1^{0+} N^{(1-\al)(1-s)} \brkb{N_1^{\frac{d-1}{2}-s} N^{\frac12-s}}^\al  \norm{\U_{N_1}}_{L_t^\I L_x^2}^{p-\al} \\
\lsm & T_0^{-\frac{4-dp}{8}} \sum_{N_1\ll N} N^{-\frac12\al +(1-s)} \normb{\jb{\nabla}^{\frac{\al}{p-\al}(\frac{d-1}{2}-s)+}\U_{N_1}}_{L_t^\I L_x^2}^{p-\al} \\
\lsm & T_0^{-\frac{4-dp}{8}} \sum_{N\in2^\N} N^{-\frac12\al +(1-s)} \normb{\jb{\nabla}^{s}\U_{N_1}}_{L_t^\I L_x^2}^{p-\al} \\
\lsm & T_0^{-\frac{4-dp}{8}}.
}
By \eqref{eq:frac-local-nonlinear-term-dd-1}, \eqref{eq:frac-local-nonlinear-term-dd-2}, and \eqref{eq:frac-local-nonlinear-term-dd-3}, we have
\EQ{
\normb{t^{\frac{dp}{2}-2}|\U|^p\nabla \U }_{L_t^{2}L_x^{\frac{2d}{d+2}}} \lsm T_0^{-\th}\brkb{1+\norm{\W}_{L_t^\I \dot H_x^1}}.
}
This finishes the proof.
\end{proof}
Using the same method of the proof for Corollary \ref{cor:fra-local-w-3d}, we derive that:
\begin{cor}\label{cor:fra-local-w-dd}
Suppose that $s_0$ is defined in Definition \ref{defn:fra-scattering-s0} and the assumptions in Theorem \ref{thm:frac-weighted-dd} hold. Then, there exists a constant $A=A(d,p,\norm{\jb{x}^su_0}_{L_x^2(\R^d)},T_0)>0$ such that
\EQ{
\E(T_0) \loe AN_0^{2(1-s)}.
}
\end{cor}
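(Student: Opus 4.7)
The plan is to follow the template of Corollary~\ref{cor:fra-local-w-3d} and reduce the bound on $\E(T_0)$ to the a priori control on $\W$ already provided by Proposition~\ref{prop:fra-local-w-dd} and on $\V$ by Lemma~\ref{lem:fra-linear}. I would first expand $\E(T_0)$ via the definition~\eqref{defn:pseudo-conformal-energy} and split the potential term using $\U = \V + \W$, obtaining
\begin{align*}
\E(T_0) \lsm T_0^{2-\frac{dp}{2}}\norm{\nabla\W(T_0)}_{L_x^2}^2 + \norm{\W(T_0)}_{L_x^{p+2}}^{p+2} + \norm{\V(T_0)}_{L_x^{p+2}}^{p+2}.
\end{align*}

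Second, to treat the two $L_x^{p+2}$ terms I would use distinct tools adapted to $\W$ and $\V$. For $\W$, the sharp Gagliardo--Nirenberg inequality in Lemma~\ref{lem:GN} yields
\begin{align*}
\norm{\W(T_0)}_{L_x^{p+2}}^{p+2} \lsm \norm{\W(T_0)}_{L_x^2}^{p+2-\frac{dp}{2}} \norm{\nabla\W(T_0)}_{L_x^2}^{\frac{dp}{2}},
\end{align*}
which is valid since $p$ lies in the subcritical range in each case of Theorem~\ref{thm:frac-weighted-dd}. For $\V$, I would invoke the Sobolev embedding $H_x^s(\R^d) \hookrightarrow L_x^{p+2}(\R^d)$, which requires only $s \geq \frac{dp}{2(p+2)}$. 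This condition holds in all three dimensional subranges of Theorem~\ref{thm:frac-weighted-dd}, because in each case $(d-2)p < 4$ forces $\frac{dp}{2(p+2)} < 1$, while $s_0 < s < 1$ with $s_0$ chosen close enough to $1$ by Definition~\ref{defn:fra-scattering-s0}.

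Third, I would substitute the available bounds: from Proposition~\ref{prop:fra-local-w-dd}, $\sup_{t \geq T_0} \norm{\W(t)}_{H_x^1}$ is controlled by a power of $N_0^{1-s}$; from Lemma~\ref{lem:fra-linear}, $\norm{\jb{\nabla}^s\V}_{L_t^\I L_x^2} \lsm \de_0$; and mass conservation together with $\V \in L_t^\I L_x^2$ gives $\norm{\W(T_0)}_{L_x^2} \lsm 1$. Inserting these into the three terms above and using that $p < 4/d$ makes $\frac{dp}{2} < 2$, one collects a total power of $N_0^{2(1-s)}$, with a multiplicative constant $A = A(d,p,\norm{\jb{x}^s u_0}_{L_x^2}, T_0)$, exactly the claim.

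No new analytical input beyond what was used in the 3D case is required; the only nontrivial step is the case-by-case bookkeeping to verify that (i) $\frac{dp}{2(p+2)} < s_0$ in all three dimensional subranges, so that Sobolev embedding applies to $\V$, and (ii) the combination of Gagliardo--Nirenberg with Proposition~\ref{prop:fra-local-w-dd} indeed produces a power of $N_0$ not exceeding $2(1-s)$. The latter is an elementary polynomial check once the exponents in parts (1)--(3) of Theorem~\ref{thm:frac-weighted-dd} are substituted, and constitutes the main, though purely algebraic, obstacle.
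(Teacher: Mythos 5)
Your proposal is correct and is essentially the paper's own argument: the paper gives no separate proof here, stating only that Corollary~\ref{cor:fra-local-w-dd} follows by the same method as Corollary~\ref{cor:fra-local-w-3d}, i.e.\ exactly your decomposition of $\E(T_0)$ with Gagliardo--Nirenberg (Lemma~\ref{lem:GN}) for $\W$, the embedding $H_x^s\hookrightarrow L_x^{p+2}$ for $\V$, and the bounds from Proposition~\ref{prop:fra-local-w-dd}, Lemma~\ref{lem:fra-linear}, and mass conservation. Your exponent bookkeeping (using $\frac{dp}{2}<2$ and $\frac{dp}{2(p+2)}<s$) matches the 3D computation, with the $N_0^{1-s}$ reading of the $\dot H^1$ bound being the one actually produced by the proof of Proposition~\ref{prop:fra-local-w-dd}.
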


\subsection{Energy estimate towards the origin}
\begin{prop}\label{prop:fra-energy-dd}
Suppose that $s_0$ is defined in Definition \ref{defn:fra-scattering-s0} and the assumptions in Theorem \ref{thm:frac-weighted-dd} hold. Let $A$ be the constant in Corollary \ref{cor:fra-local-w-dd}. Then, we have
	\EQ{
		\sup_{t\in(0,T_0]} \E(t) \loe 2A N_0^{2(1-s)}. 
	}
\end{prop}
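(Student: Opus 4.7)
The plan is to mirror the structure of the 3D argument in Proposition \ref{prop:fra-energy-3d} but to split it according to the three regimes built into the definition of $s_0$ in Definition \ref{defn:fra-scattering-s0}: the 1D case, the case $d\goe 2$ with $p<\frac{2}{d-2}$, and the case $d\goe 5$ with $\frac{2}{d-2}\loe p<\frac{2}{d-4}$. In every regime I will run a bootstrap on the statement
\EQ{
\sup_{t\in(0,T_0]}\E(t)\loe 2AN_0^{2(1-s)} \;\Longrightarrow\; \sup_{t\in(0,T_0]}\E(t)\loe \tfrac32 AN_0^{2(1-s)},
}
where $A$ is the constant from Corollary \ref{cor:fra-local-w-dd}. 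Given the bootstrap, the pointwise energy bound $\norm{\nabla\W(t)}_{L_x^2}\lsm t^{\frac{dp}{4}-1}N_0^{1-s}$ and the mass bound $\norm{\W(t)}_{L_x^2}\lsm 1$ are immediate, exactly as in \eqref{eq:fra-energy-bound-w-3d}--\eqref{eq:fra-mass-bound-w-3d}, and Lemma \ref{lem:fra-linear} supplies $\norm{\jb\nabla^s\V}_{S^0}\lsm\de_0$.

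Next I would redo the differential identity leading to \eqref{eq:fra-energy-3d-identity-1}: testing the equation for $\W$ against $t^{2-\frac{dp}{2}}\wb{\W_t}$, integrating by parts, and using $dp<4$ gives
\EQ{
\sup_{t\in(0,T_0]}\E(t) \loe \E(T_0) + \absb{\int_0^{T_0}\!\!\int_{\R^d}|\U|^p\U\,\wb{\V_t}\dx\dt}.
}
Substituting $i\pd_t\V=-\tfrac12\De\V$ and integrating by parts in $x$ reduces the problem to controlling $\int |\U|^p|\nabla\W||\nabla\V|\dx\dt$ and $\int|\U|^p|\nabla\V|^2\dx\dt$. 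Using the inequality \eqref{eq:bound-for-up} (which is valid for any $d$ and $p$ in the prescribed ranges via the same $l^p\subset l^1$ or H\"older argument) and a dyadic decomposition $\U=\V+\W=\sum_N \V_N+\sum_N\W_N$, I would split each integrand into high-high and high-low interactions in the spirit of \eqref{eq:fra-energy-bound-wv-high-high-v-3d}--\eqref{eq:fra-energy-bound-vv-high-low-w-3d}. The principal term in every regime is the analogue of \eqref{eq:fra-energy-bound-wv-high-low-w-3d},
\EQ{
\sum_{N_1\ll N}N_1^{0+}\int_0^{T_0}\!\!\int_{\R^d}|\nabla\W||\nabla\V_N||\W_{N_1}|^p\dx\dt.
}

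For the main term I would apply the ``high-low'' strategy outlined in the introduction: use bilinear Strichartz to pair $\nabla\V_N$ with $\W_{N_1}$ so as to extract a factor $N^{1/2-s}$ (and its lower-dimensional analogue $N^{(d-1)/2-s}$ via Lemma \ref{lem:bilinearstrichartz}), absorb the remaining powers of $\W$ in a Strichartz norm $L_t^{q_2}L_x^{r_2}$ interpolated between $L_x^{p+2}$, $L_x^2$ and (when $p$ is close to $4/d$) the dyadic Morawetz type norm, and control the factor $\nabla\V_N$ by $\norm{\jb\nabla^s\V}_{S^0}\lsm\de_0$. The dyadic-time technique of Proposition \ref{prop:interaction-morawetz} is not needed in dimensions $\ne 3$ for the $p<\frac{2}{d-2}$ range, since the natural Strichartz pairs suffice; however, I do need to verify dyadic-in-$k$ summability of the resulting $2^{\al(d,p)k}$ factor and dyadic-in-$N,N_1$ summability of the resulting $N_1^{-\be_1}N^{\frac{d-1}{2}-s+\cdots}$ factors by an elementary check that the exponents have the right signs once $s>s_0$. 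The remainder terms \eqref{eq:fra-energy-bound-wv-high-high-v-3d}--\eqref{eq:fra-energy-bound-vv-high-low-w-3d} are all easier and are handled by standard Strichartz pairings, using Lemma \ref{lem:fra-linear} to pick up one or two factors of $\de_0$.

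The main obstacle lies in the regime $\frac{2}{d-2}\loe p<\frac{2}{d-4}$ with $d\goe 5$, where a purely local Strichartz estimate on $\W$ in $L_t^\I L_x^{dp}$ is unavailable and I must introduce a global inhomogeneous estimate for $\jb\nabla\W$ in $L_t^2 L_x^{2d/(d-2)}$. This is exactly the argument sketched in the introduction: after moving the derivative onto the nonlinearity via $i\pd_t\W+\tfrac12\De\W=t^{\frac{dp}{2}-2}|\U|^p\U$ and applying Strichartz, one ends with a factor $\normb{t^{\frac{dp}{2}-2}\norm{\nabla\W}_{L_x^2}}_{L_t^{2/(1-p)}}$ which is finite only because $p<1$ in these dimensions (so a fractional Sobolev inequality $\norm{\W}_{L_x^{dp}}\lsm\norm{\jb\nabla^{\sigma}\W}_{L_x^{2d/(d-2)}}$ with $\si=\frac{1-p}{p}$ for $5\loe d\loe 7$ or $\si=1-$ for $8\loe d\loe 11$ is used as in Lemma \ref{lem:fra-energy-global-spacetime-estimate-sobolev}). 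Closing this involves delicate bookkeeping of the temporal singularity near $t=0$ against the power of $N_0$ obtained after invoking the bilinear estimate, and is where the smallness $1-s_0\ll 1$ is consumed. Assuming these estimates are summable to a bound $\de_0 C(A)N_0^{2(1-s)}$, the choice $\de_0=\de_0(A)$ small enough yields $\sup\E(t)\loe (A+\de_0 C(A))N_0^{2(1-s)}\loe \tfrac32 AN_0^{2(1-s)}$, closing the bootstrap and proving the proposition.
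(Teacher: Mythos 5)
Your reduction is the same as the paper's: bootstrap hypothesis, the identity $\sup_{t\in(0,T_0]}\E(t)\loe \E(T_0)+\absb{\int_0^{T_0}\int|\U|^p\U\,\wb{\V_t}\,\dd x\dd t}$, Corollary \ref{cor:fra-local-w-dd} for $\E(T_0)$, and a bound $\de_0\,C(A)N_0^{2(1-s)}$ on the interaction integral to close the bootstrap. But the proposal stops exactly where the proposition actually lives. The sentence ``Assuming these estimates are summable to a bound $\de_0 C(A)N_0^{2(1-s)}$'' assumes, without proof, precisely the content of the paper's Lemmas \ref{lem:fra-energy-dd} and \ref{lem:fra-energy-5d}, which occupy Sections \ref{sec:fra-energy-dd-1}--\ref{sec:fractional-dd} and are the entire analytic substance here. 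Nothing in your write-up verifies that the exponents ``have the right signs once $s>s_0$'': in the $d\goe2$, $p<\frac{2}{d-2}$ regime this requires the specific interpolation parameters $\th_1=\frac{2-(d-2)p}{10}$, $\th_2=\frac{(dp-2)^2}{2(d+2)}$ and the checks that the resulting time integrals converge and the $N_0$-powers do not exceed $2(1-s)$; in the regime $\frac{2}{d-2}\loe p<\frac{2}{d-4}$ one needs the global spacetime estimates of Lemmas \ref{lem:fra-energy-global-spacetime-estimate}--\ref{lem:fra-energy-global-spacetime-estimate-sobolev} and Corollary \ref{cor:fra-energy-global-spacetime-estimate} (proved by an absorption/Young argument that uses $p<1$, not just quoted), followed by verification of sign conditions on explicit polynomials in $(d,p)$ such as $-\frac{dp^2}{4}+\frac34(d+2)p-\frac52>0$ and $d(d-4)p^3+(d^2-4d+8)p^2+(8-6d)p+8>0$, together with $\frac{(d-4)p^2}{2(1-p)}<1$. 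These inequalities are exactly what produce the dimension/nonlinearity restrictions of Theorem \ref{thm:frac-weighted-dd}; a proof that does not establish them has not proved the proposition, only restated its reduction.

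Two further points where the sketch diverges from what actually works. First, the ``main term'' is not the same in all regimes: in $d=1$ (where $p>2$) the paper decomposes via $|\U|^{p-2}O(\U^2\nabla\V_N)$ and the critical term is $\sum_{N_1\ll N}\int|\nabla\W||\nabla\V_N||\U_{N_1}||\U|^{p-1}$, handled with the $L_x^\I$ bound \eqref{eq:fra-energy-bound-w-linfty} and a $\frac29$-power of the bilinear estimate, not with $|\W_{N_1}|^p$; in the $p\goe\frac{2}{d-2}$ case the decomposition keeps $|\U_{N_1}|^p$ and everything is done on dyadic time intervals. Second, your description of the bilinear gain as ``$N^{1/2-s}$ and its lower-dimensional analogue $N^{(d-1)/2-s}$'' misplaces the frequencies: Lemma \ref{lem:bilinearstrichartz} and \eqref{eq:fra-energy-bound-bilinear} give $N^{\frac12-s}$ on the high frequency but a \emph{lossy} factor $N_1^{\frac{d-1}{2}}$ (combined with $N_1^{\frac{dp}{2(p+2)}}$ from the dual $L_t^1L_x^2$ bound) on the low frequency, and compensating this loss so that the $N_1$-sum converges is one of the places where the smallness of $1-s_0$ and the interpolation with mass/$L_x^{p+2}$ bounds is genuinely consumed. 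As it stands the proposal is a correct roadmap of the paper's strategy, but the energy-increment estimates that make the bootstrap close are missing.
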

To this end, we establish the bootstrap framework. Denote that $I:=(0,T_0]$. Then, it suffices to prove that if we assume the bootstrap hypothesis
\EQn{\label{eq:fra-energy-bootstrap-hypothesis-dd}
	\sup_{t\in I} \E(t) \loe 2A N_0^{2(1-s)},
}
then
\EQn{\label{eq:fra-energy-bootstrap-conclusion-dd}
	\sup_{t\in I} \E(t) \loe \frac32A N_0^{2(1-s)}.
}
Now, we reduce the proof of \eqref{eq:fra-energy-bootstrap-conclusion-dd} to the following two lemmas:
\begin{lem}[$p<\frac{2}{d-2}$ case]\label{lem:fra-energy-dd}
Suppose that $s_0$ is defined in Definition \ref{defn:fra-scattering-s0} ,and the assumptions in Theorem \ref{thm:frac-weighted-dd} and the bootstrap hypothesis \eqref{eq:fra-energy-bootstrap-hypothesis-dd} hold. Suppose also that
\EQn{\label{eq:fra-energy-dd-assumption-dp}
d\goe 1\text{, and }\frac2d<p<\min\fbrk{\frac4d,\frac{2}{d-2}}.
}
Then, we have
\EQ{
\absb{\int_0^{T_0}\int_{\R}|\U|^p\U\wb{\V_t}\dd x\dd t} \loe C(A)\de_0  N_0^{2(1-s)}. 
}
\end{lem}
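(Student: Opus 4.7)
The plan is to adapt the 3D strategy of Proposition \ref{prop:fra-energy-3d} to the case $p<\frac{2}{d-2}$, exploiting that under this condition $dp$ stays strictly below the Sobolev critical exponent $\frac{2d}{d-2}$, which is exactly what lets us close the energy increment with one full derivative on $\W$. Starting from $i\pd_t\V=-\frac12\De\V$ and integrating by parts in $x$, I reduce
\[
\absb{\int_0^{T_0}\!\!\int_{\R^d}|\U|^p\U\wb{\V_t}\dx\dt} \lsm \int_0^{T_0}\!\!\int_{\R^d}|\U|^p|\nabla\W\cdot\nabla\V|\dx\dt+\int_0^{T_0}\!\!\int_{\R^d}|\U|^p|\nabla\V|^2\dx\dt.
\]
Writing $\U=\V+\W$ and using the dyadic pointwise bound $|\U|^p\lsm\sum_{N\in 2^\N}N^{0+}|\U_N|^p$ exactly as in \eqref{eq:bound-for-up}, both integrands split into the analogues of the seven pieces \eqref{eq:fra-energy-bound-wv-high-high-v-3d}--\eqref{eq:fra-energy-bound-vv-high-low-w-3d}: the $(\W,\V)$ bilinear contributions according to whether $\V$ is at the higher or lower dyadic frequency and whether the $|\U|^p$ factor lands on $\V$ or $\W$, and similarly for the $(\V,\V)$ pieces.

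\textbf{Main term.} The hardest piece is the high-frequency $\nabla\V_N$ times low-frequency $\W_{N_1}^p$ contribution. Heuristically (at $s=1$), I control it via H\"older in $x$ by
\[
\norm{\nabla\W}_{L_x^2}\norm{\nabla\V_N}_{L_x^{\frac{2d}{d-2}}}\norm{\W_{N_1}}_{L_x^{dp}}^p,
\]
and the condition $p<\frac{2}{d-2}$ is precisely what permits the Sobolev embedding $\norm{\W_{N_1}}_{L_x^{dp}}\lsm\norm{\jb{\nabla}^\sigma\W_{N_1}}_{L_x^{\frac{2d}{d-2}}}$ for some $\sigma=\sigma(d,p)<1$, so the $\W^p$-factor can be absorbed through the bootstrap bound \eqref{eq:fra-energy-bootstrap-hypothesis-dd}. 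To reduce the derivative cost on $\V$ from order $1$ to order $s<1$, I interpolate between a direct Strichartz bound and the bilinear Strichartz bound of Lemma \ref{lem:bilinearstrichartz} applied to $\nabla\V_N\,\W_{N_1}$, feeding in the dual Strichartz estimates for $t^{\frac{dp}{2}-2}|\U|^p\U$ analogous to Lemma \ref{lem:est-FU}; the resulting bilinear gain $N^{-\theta/2}N_1^{\theta}$ converts into the needed $(1-s)$-reduction of the derivative on $\V$.

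\textbf{Remainders and case split.} The $(\W,\V)$ high-high term and the three $(\V,\V)$ pieces are handled by direct H\"older--Strichartz computations: $\V$-factors are absorbed into $\norm{\jb\nabla^s\V}_{S^0}\lsm\de_0$ via Lemma \ref{lem:fra-linear}, while $\W$-factors are controlled through the bootstrap energy bound \eqref{eq:fra-energy-bound-w-3d} and the mass conservation \eqref{eq:fra-mass-bound-w-3d}. As indicated in the introduction, the argument naturally splits into two regimes: when $d=1$ the $L_x^2$-admissibility forces $q\goe 4$ in $L_t^qL_x^r$, so the admissible time exponents are tighter; when $d\goe 2$ one may have $p<1$, which enters the dyadic decomposition of $|\U|^p$ and the summability in $N$. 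These two cases will be placed in two separate subsections, and I expect to invoke Schur's test (Lemma \ref{lem:schurtest}) to handle the double dyadic sums in $N,N_1$.

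\textbf{Main obstacle.} The principal difficulty is balancing three simultaneous constraints: (i) integrability in $t$ near the origin, since $\norm{\nabla\W(t)}_{L_x^2}\lsm t^{\frac{dp}{4}-1}N_0^{1-s}$ is only borderline integrable when $p$ is close to $\frac{2}{d}$; (ii) the total $N_0$-power must not exceed $2(1-s)$; and (iii) the bilinear gain must be enough to convert a full derivative on $\V$ into $s$ derivatives without destroying summability of the dyadic sums. The quantitative choice $1-s_0\lsm\frac{1}{d^2}\min\fbrk{dp-2,\,4-dp,\,2-(d-2)p}$ in Definition \ref{defn:fra-scattering-s0} is tailored precisely to reconcile these three demands at all three endpoints of the parameter range; verifying that each of the remainder pieces fits inside the $\de_0 N_0^{2(1-s)}$ budget under this choice of $s_0$ is where the bulk of the routine but delicate exponent arithmetic will reside.
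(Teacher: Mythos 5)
Your overall architecture coincides with the paper's: the same integration by parts reducing the increment to $|\U|^p|\nabla\W\cdot\nabla\V|$ and $|\U|^p|\nabla\V|^2$, the same dyadic decomposition via \eqref{eq:bound-for-up} into the seven pieces, the identification of the $\nabla\V_{hi}\,\W_{low}^p$ contribution as the main term treated by interpolating a bilinear Strichartz bound (fed by an $L_t^1L_x^2$-type dual estimate on $t^{\frac{dp}{2}-2}|\U|^p\U$) against the $\jb{\nabla}^s\V$ Strichartz bound, the same $d=1$ versus $d\goe 2$ split for the same reasons, and the same use of $p<\frac{2}{d-2}$ to absorb $\W^p$ with strictly less than one derivative. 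However, there is a genuine gap in your treatment of the remaining high--low pieces. You claim the three $(\V,\V)$ pieces (and, implicitly, the $(\W,\V)$ piece $\sum_{N_1\ll N}\int|\nabla\W||\nabla\V_N||\V_{N_1}|^p$) close by ``direct H\"older--Strichartz computations'' with every $\V$-factor absorbed into $\norm{\jb{\nabla}^s\V}_{S^0}\lsm\de_0$. This fails whenever the highest frequency sits on a $\V$-factor while the $p$-power factor is at low frequency: each high-frequency $\nabla\V$ at frequency $N_2$ costs $N_2^{1-s}$, and the low-frequency factor supplies no compensating negative power of $N_2$, so the dyadic sum $\sum_{N_2}N_2^{1-s}$ (respectively $N_2^{2(1-s)}$ for the $(\V,\V)$ high--low pieces) diverges. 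Only the genuinely high--high pieces close with linear Strichartz alone, because there the extra factor $\U_{\gsm N_2}$ contributes $N_2^{-s}$-type decay. In the paper, exactly the terms \eqref{eq:fra-energy-bound-wv-high-low-v-dd}, \eqref{eq:fra-energy-bound-vv-high-low-v-dd}, \eqref{eq:fra-energy-bound-vv-high-low-w-dd} (and their 1D counterparts) are handled by raising a bilinear Strichartz estimate---either the linear--linear one or \eqref{eq:fra-energy-bound-bilinear}---to a small power $\th_1$ or $\th_2$, whose $N_2^{-\th/2}$ gain restores summability since $1-s\ll\th_i$; your plan allocates the bilinear input only to the single main term, so as stated these sums do not converge.

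A second, more local error: in your heuristic for the main term you invoke $\norm{\W_{N_1}}_{L_x^{dp}}\lsm\norm{\jb{\nabla}^{\si}\W_{N_1}}_{L_x^{\frac{2d}{d-2}}}$. Since $dp<\frac{2d}{d-2}$, no such bound holds (neither Sobolev embedding nor Bernstein lowers the Lebesgue exponent, even with frequency localization). The correct use of $p<\frac{2}{d-2}$ is the $L^2$-based embedding $\norm{\W}_{L_x^{dp}}\lsm\normb{\jb{\nabla}^{\frac d2-\frac1p}\W}_{L_x^{2}}$ with $\frac d2-\frac1p<1$, so the $\W^p$-factor is controlled by interpolating the mass bound with the bootstrap energy bound; this is what the paper does, with the exponents perturbed by $\th_2$ to accommodate the bilinear fraction. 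The role you assign to the condition $p<\frac{2}{d-2}$ is correct, but the inequality as written is false and would need to be replaced by the $L^2$-based version before the exponent bookkeeping can be carried out.
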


\begin{lem}[$p\goe\frac{2}{d-2}$ case]\label{lem:fra-energy-5d}
Suppose that $s_0$ is defined in Definition \ref{defn:fra-scattering-s0} ,and the assumptions in Theorem \ref{thm:frac-weighted-dd} and the bootstrap hypothesis \eqref{eq:fra-energy-bootstrap-hypothesis-dd} hold. Suppose also that
\EQn{\label{eq:fra-energy-5d-assumption-dp}
5\loe d\loe 11\text{, and }\frac{2}{d-2}\loe p<\min\fbrk{\frac{4}{d},\frac{2}{d-4}}.
}
Then, we have
\EQ{
\absb{\int_0^{T_0}\int_{\R}|\U|^p\U\wb{\V_t}\dd x\dd t} \loe C(A)\de_0  N_0^{2(1-s)}.  
}
\end{lem}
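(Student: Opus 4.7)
The plan is to adapt the strategy outlined in the paper's ``Ideas of the fractional weighted scattering in general dimensional cases'' for the regime $p\ge\frac{2}{d-2}$. The starting point is the same integration-by-parts identity used in the 3D case: since $i\pd_t\V=-\frac12\De\V$, one has
\EQ{
\absb{\int_0^{T_0}\!\!\int_{\R^d}|\U|^p\U\,\wb{\V_t}\dx\dt}\lsm \int_0^{T_0}\!\!\int_{\R^d}|\U|^p|\nabla\U||\nabla\V|\dx\dt.
}
Writing $\nabla\U=\nabla\W+\nabla\V$ and using the pointwise bound $|\U|^p\lsm\sum_{N}N^{0+}(|\W_N|^p+|\V_N|^p)$ together with a Littlewood--Paley decomposition of each factor, the integral splits into a main ``high--low'' piece of the form $|\nabla\W||\nabla\V_N||\W_{N_1}|^p$ (with $N_1\ll N$) plus other terms which are treated exactly as \eqref{eq:fra-energy-bound-wv-high-high-v-3d}--\eqref{eq:fra-energy-bound-vv-high-low-w-3d} in the 3D proof, using the bilinear Strichartz estimate to shed derivatives off $\nabla\V$.

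For the main piece, the first step is to run H\"older in $x$ at the endpoint $L^2\cdot L^{2d/(d-2)}\cdot L^{dp}$:
\EQ{
\int_0^{T_0}\!\!\int_{\R^d}|\nabla\W||\nabla\V||\W|^p\dx\dt\lsm\int_0^{T_0}\norm{\nabla\W}_{L_x^2}\norm{\nabla\V}_{L_x^{\frac{2d}{d-2}}}\norm{\W}_{L_x^{dp}}^p\dt.
}
The energy bound $\norm{\nabla\W(t)}_{L_x^2}\lsm t^{\frac{dp}{4}-1}N_0^{1-s}$ together with the $L^2$-admissibility of $(2,\frac{2d}{d-2})$ and Lemma \ref{lem:fra-linear} reduces everything to producing a space-time estimate for $\norm{\W}_{L_t^{q_0}L_x^{dp}}^p$ on a suitable exponent $q_0$, which matches the integrability of $t^{\frac{dp}{4}-1}$ in the complementary H\"older exponent. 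Because $p<1$ for $d\ge5$, one can gain an $L_t^{2/(1-p)}$ room.

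The second and central step is the global spacetime estimate for $\jb{\nabla}^\sigma\W$ in an $L_t^2 L_x^{2d/(d-2)}$-type norm, for a suitable $\sigma<1$ depending on $d$. Using the Duhamel formula for $\W$ in \eqref{eq:nls-w-pc-duhamel} and Strichartz,
\EQ{
\norm{\jb{\nabla}^\sigma\W}_{L_t^2L_x^{\frac{2d}{d-2}}}\lsm\norm{\jb{\nabla}^\sigma\W_+}_{L_x^2}+\normb{t^{\frac{dp}{2}-2}\jb{\nabla}^\sigma(|\U|^p\U)}_{L_t^2L_x^{\frac{2d}{d+2}}},
}
and the nonlinear term is handled by the fractional chain rule to put the derivative on a single factor, followed by H\"older in $t$ with exponent $\frac{2}{1-p}$ absorbing the singular weight $t^{\frac{dp}{2}-2}$ against $\norm{\nabla\W}_{L_x^2}\lsm t^{\frac{dp}{4}-1}N_0^{1-s}$. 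The linear piece of $\U$ is bounded by Lemma \ref{lem:fra-linear}, and one obtains a bound depending only on $T_0$, $A$ and a power of $N_0^{1-s}$ of at most order $N_0^{2(1-s)}$. To feed this back into the Sobolev step, the paper instructs us to use two \emph{different} embeddings into $L_x^{dp}$:
\EQ{
\norm{\W}_{L_x^{dp}}\lsm\normb{\jb{\nabla}^{\frac{1-p}{p}}\W}_{L_x^{\frac{2d}{d-2}}}\quad(5\le d\le 7),\qquad
\norm{\W}_{L_x^{dp}}\lsm\normb{\jb{\nabla}^{1-}\W}_{L_x^{\frac{2d}{d-2}}}\quad(8\le d\le 11),
}
so the proof will be split into these two sub-cases, applying Sobolev with the appropriate $\sigma$ in each regime; this is the content analogous to Lemma \ref{lem:fra-energy-global-spacetime-estimate-sobolev} hinted at in the overview.

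The third step reduces the derivative on $\nabla\V_N$ via a bilinear Strichartz estimate of the form $\norm{\W_{N_1}\nabla\V_N}_{L_{t,x}^2}\lsm N_1^{-s+}N^{\frac12-s}(\cdots)$, analogous to Lemma \ref{lem:fra-energy-bound-bilinear-3d}, where the dual Strichartz norm of the nonlinear source in the $\W$-equation is controlled using the global estimate obtained in the previous step. The final summations over $N_1\ll N\gsm N_0$ will each cost a positive power of $N_0^{1-s}$, and the game is to verify that all accumulated powers of $N_0^{1-s}$ total at most $N_0^{2(1-s)}$, with a small $\de_0$ coming from the linear factor $\norm{\jb{\nabla}^s\V}_{S^0}\lsm\de_0$, which closes the bootstrap.

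\emph{Main obstacle.} The genuinely hard part is bookkeeping: because the energy bound for $\nabla\W$ forces $\norm{\nabla\W(t)}_{L_x^2}\sim t^{\frac{dp}{4}-1}N_0^{1-s}$, every time-integration near $t=0$ must either use $p<1$ (to get $t^{\frac{dp}{2}-2}\in L_t^{2/(1-p)}$) or be compensated by a dyadic $2^k$-gain from the bilinear Strichartz dual norm; simultaneously, the total polynomial exponent of $N_0^{1-s}$ has to stay $\le 2$. Establishing both simultaneously in the full range $\frac{2}{d-2}\le p<\min\{4/d,\,2/(d-4)\}$, $5\le d\le 11$, forces the implicit choice of $s_0<1$ to be driven by the worst-case exponents in the Sobolev inequalities above; choosing $\th_1=\frac12(1-s_0)$ sufficiently small (as in Definition \ref{defn:fra-scattering-s0}) is what makes the final geometric series in $N$ and $N_1$ converge and produces the extra smallness $\de_0$ from the linear piece.
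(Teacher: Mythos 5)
Your outline follows the same overall strategy as the paper (integration by parts, linear--nonlinear decomposition, energy bound, a spacetime estimate for $\W$ obtained from the Duhamel formula plus the fractional chain rule with $p<1$, two Sobolev embeddings into $L_x^{dp}$, and a bilinear Strichartz estimate to shed derivatives off $\nabla\V$), but the central step is stated in a form that genuinely fails on part of the claimed range. You propose a \emph{global-in-time} estimate on $(0,T_0]$ in which the singular weight is absorbed by H\"older with exponent $\tfrac{2}{1-p}$ against $\norm{\nabla\W(t)}_{L_x^2}\lsm t^{\frac{dp}{4}-1}N_0^{1-s}$. The resulting weight is $t^{\frac{3dp}{4}-3}$, and $\normb{t^{\frac{3dp}{4}-3}}_{L_t^{2/(1-p)}(0,T_0]}<\infty$ requires $(\tfrac{3dp}{4}-3)\tfrac{2}{1-p}>-1$, i.e.\ $p>\tfrac{10}{3d-2}$; this is violated for instance for $d=5$ with $\tfrac23\le p\le\tfrac{10}{13}$ and for the entire range $\tfrac29\le p<\tfrac27$ when $d=11$. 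This is precisely why the paper does \emph{not} work globally in time: Lemmas \ref{lem:fra-energy-global-spacetime-estimate}, \ref{lem:fra-energy-global-spacetime-estimate-bilinear} and Corollary \ref{cor:fra-energy-global-spacetime-estimate} are all localized to the dyadic intervals $I_k=[2^{k-1},2^k]$, where the weight is just a factor $2^{(\frac{dp}{2}-2)k}$, and the divergence you would face near $t=0$ is converted into geometric sums over $k$ whose convergence is obtained by combining with other positive $2^k$-exponents and checked by verifying the sign of explicit $(d,p)$-polynomials (e.g.\ $-\tfrac{dp^2}{4}+\tfrac34(d+2)p-\tfrac52$ and $d(d-4)p^3+(d^2-4d+8)p^2+(8-6d)p+8$), with $\th_1$ small. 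Note also that $p<1$ is used in the paper mainly to close the self-bounding estimate by Young's inequality, not to gain time integrability.

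Two further discrepancies with the paper's argument: the decomposition in this regime keeps $|\U_{N_1}|^p$ intact (four terms \eqref{eq:fra-energy-bound-wv-high-high-5d}--\eqref{eq:fra-energy-bound-vv-high-low-5d}) rather than splitting the power factor into $\V$ and $\W$ pieces, and the remaining terms are \emph{not} ``treated exactly as in the 3D proof'': since $dp>\tfrac{2d}{d-2}$ here, even the $|\nabla\V_{N_1}||\nabla\V_{N_2}||\U|^p$ terms need the new dyadic-interval spacetime estimates (the 3D treatment relies on subcritical $L_x^{dp}$, 3D admissible pairs and the interaction Morawetz bound, none of which transfer). Finally, the case split in the actual proof is driven jointly by $(d,p)$ (\eqref{eq:fra-energy-bound-wv-high-high-5d-case-1} versus \eqref{eq:fra-energy-bound-wv-high-high-5d-case-2}, with $d=5,6,7$ appearing in both), and the Sobolev embeddings use the tailored exponents $r_1,r_2$ of Lemma \ref{lem:fra-energy-global-spacetime-estimate-sobolev} rather than the endpoint $L_x^{2d/(d-2)}$; taking the introduction's ``$5\le d\le7$ versus $8\le d\le11$'' description literally, as you do, does not match the range where each embedding is actually usable.
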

Now, we give the proof of Proposition \ref{prop:fra-energy-dd} assuming that Lemmas \ref{lem:fra-energy-dd} and \ref{lem:fra-energy-5d} hold:
\begin{proof}[Proof of Proposition \ref{prop:fra-energy-dd}]
Note that \eqref{eq:fra-energy-dd-assumption-dp} and \eqref{eq:fra-energy-5d-assumption-dp} together imply the assumption on $(d,p)$ in Theorem \ref{thm:frac-weighted-dd}, namely
\begin{enumerate}
\item $\frac2d< p < \frac4d$, when $1\loe d\loe 8$;
\item $\frac2d< p < \frac{2}{d-4}$, when $9\loe d\loe 11$;
\item $\frac2d< p < \frac{2}{d-2}$, when $d\goe 12$.
\end{enumerate}
Recall the proof in 3D case,
\EQ{
\sup_{t\in(0,T_0]}\E(t)\loe \E(T_0) + |\int_{0}^{T_0}\int_{\R^d}|\U|^p\U\wb{\V_t}\dd x\dd t|.
}
Then, noting that $\de_0=\de_0(A)$ is sufficiently small, by Corollary \ref{cor:fra-local-w-dd}, Lemmas \ref{lem:fra-energy-dd} and \ref{lem:fra-energy-5d}, 
\EQ{
\sup_{t\in(0,T_0]}\E(t)\loe & A N_0^{2(1-s)} + C(A)\de_0 N_0^{2(1-s)} \\
\loe &  A N_0^{2(1-s)} + \frac{1}{2} A N_0^{2(1-s)} \\
= & \frac{3}{2} A N_0^{2(1-s)}.
}
Now, we have proven \eqref{eq:fra-energy-bootstrap-conclusion-dd}, thereby completing the proof of Proposition \ref{prop:fra-energy-dd}.
\end{proof}

Next, we start to prove Lemmas \ref{lem:fra-energy-dd} and \ref{lem:fra-energy-5d}. In the proofs, we regard $A$ and $T_0$ as constants, and omit their dependence, namely we write $C$ for $C(A,T_0)$. 
\subsection{Useful estimates of the nonlinear part}
We first gather some useful bounds for $\W$ for general dimensions under the bootstrap hypothesis \eqref{eq:fra-energy-bootstrap-hypothesis-dd}. This lemma will be applied to the proof of both Lemmas \ref{lem:fra-energy-dd} and \ref{lem:fra-energy-5d}.
\begin{lem}
Suppose that $s_0$ is defined in Definition \ref{defn:fra-scattering-s0} ,and the assumptions in Theorem \ref{thm:frac-weighted-dd} and the bootstrap hypothesis \eqref{eq:fra-energy-bootstrap-hypothesis-dd} hold. Recall that $I:=(0,T_0]$. Then, the following estimates hold.
\begin{enumerate}
\item (Energy bound) For any $t\in I$,
\EQn{\label{eq:fra-energy-bound-w} 
&\norm{\nabla \W(t)}_{L_x^2(\R^d)} \lsm t^{\frac{dp}{4}-1} N_0^{1-s}\text{, and }\norm{\W(t)}_{L_x^{p+2}(\R^d)} \lsm N_0^{\frac{2}{p+2}(1-s)} .
}
\item (Mass bound) For any $t\in I$,
\EQn{\label{eq:fra-mass-bound-w}
\norm{\W(t)}_{L_x^2(\R^d)} \lsm 1. 
}
\item (Bilinear Strichartz estimate) For any $t\in I$ and $N_1,N\in2^\N$ such that $N_1\ll N$,
\EQn{\label{eq:fra-energy-bound-bilinear}
\norm{\nabla \V_{N} \W_{N_1}}_{L_{t,x}^2(I\times\R^d)} + \norm{\nabla \V_{N} \U_{N_1}}_{L_{t,x}^2(I\times\R^d)} \lsm \de_0 N^{\frac12-s}  N_1^{\frac{2dp-p+2d-2}{2(p+2)}} N_0^{\frac{2(p+1)}{p+2}(1-s)}.
}
\end{enumerate}
\end{lem}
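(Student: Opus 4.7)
The plan is to dispatch the three bounds sequentially, as (1) and (2) are direct consequences of the bootstrap hypothesis and standard conservation laws, while (3) requires genuine bilinear harmonic analysis.

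For (1), I would simply unpack the definition of the modified energy $\E(t)$ in \eqref{defn:pseudo-conformal-energy} together with \eqref{eq:fra-energy-bootstrap-hypothesis-dd}. The first summand $\tfrac14 t^{2-dp/2}\norm{\nabla\W}_{L_x^2}^2$ immediately yields $\norm{\nabla\W(t)}_{L_x^2}\lsm t^{dp/4-1}N_0^{1-s}$, while the second summand controls $\norm{\U(t)}_{L_x^{p+2}}\lsm N_0^{2(1-s)/(p+2)}$. The $L_x^{p+2}$ bound on $\W$ then follows from the triangle inequality applied to $\W=\U-\V$, with the linear piece $\norm{\V(t)}_{L_x^{p+2}}\lsm\de_0$ uniformly in $t$ by Hausdorff-Young (exactly as done just before Proposition \ref{prop:fra-energy-3d}, writing $\V(t)=\F(e^{-\frac12it|\cdot|^2}\wb{v_0})$ and using the weighted condition $\jb{x}^sv_0\in L_x^2$).

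For (2), I write $\W=\U-\V$: the solution $\U$ to \eqref{eq:nls-pc} conserves mass so $\norm{\U(t)}_{L_x^2}=\norm{\U_+}_{L_x^2}$, and $\V=S(t)\V_+$ is a free evolution so $\norm{\V(t)}_{L_x^2}=\norm{\V_+}_{L_x^2}$; both are controlled via Lemma \ref{lem:fra-initial-data}.

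The substantive step is (3). I would apply Lemma \ref{lem:bilinearstrichartz} with $(q,r)=(2,2)$ and dual pair $(\wt q,\wt r)=(\I,2)$ to $\nabla\V_N$ (high frequency, $\sim N$) and $\W_{N_1}$ (low frequency, $\sim N_1\ll N$) on $I$. This yields a prefactor $N_1^{(d-1)/2}/N^{1/2}$ times $\norm{\nabla P_N\V_+}_{L_x^2}\lsm\de_0 N^{1-s}$ (from Lemma \ref{lem:fra-initial-data}) and an $S^*(I)$-norm on $P_{N_1}\W$. The latter splits into an anchor-time $L_x^2$-bound (controlled via $\W=\U-\V$ combined with (2)) and the dual $L_t^1L_x^2$-norm of $\ta^{dp/2-2}|\U|^p\U$, which I bound by Bernstein's inequality with exponent $(p+2)/(p+1)\to 2$ (gaining $N_1^{dp/(2(p+2))}$), the $L_x^{p+2}$ energy estimate from (1) (yielding $N_0^{2(p+1)(1-s)/(p+2)}$), and integrability of $\ta^{dp/2-2}$ on $(0,T_0]$ since $dp>2$. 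Adding exponents produces exactly the advertised $N_1^{(2dp-p+2d-2)/(2(p+2))}$. For the $\nabla\V_N\U_{N_1}$ estimate, I split $\U=\V+\W$: the $\W$ piece reduces to what was just done, and the $\V\V$ piece is handled by purely linear bilinear Strichartz with bound $\de_0^2N^{1/2-s}N_1^{(d-1)/2-s}$, which is dominated by the target since the gap $(2dp-p+2d-2)/(2(p+2))-(d-1)/2=dp/(2(p+2))>0$.

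The main obstacle is the low-dimensional endpoint: Lemma \ref{lem:bilinearstrichartz} requires $\rev q+(d-1)/r<d-1$, which forbids $(q,r)=(2,2)$ when $d\loe2$. For $d=2$ I would take an $\ep$-perturbation $(2-,2+)$, absorbing the resulting $N^{0+}$ losses into $N_0^{0+}$ factors since $s$ can be chosen arbitrarily close to $1$ (Definition \ref{defn:fra-scattering-s0}). For $d=1$ I would bypass Lemma \ref{lem:bilinearstrichartz} and invoke the classical 1D bilinear $L^2$ estimate $\norm{uv}_{L_{t,x}^2}\lsm(M/N)^{1/2}\norm{u}_{S^*}\norm{v}_{S^*}$, which plays precisely the same role and is available without the admissibility restriction on $(q,r)$.
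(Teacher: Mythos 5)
Your proof is correct and follows essentially the same route as the paper: parts (1) and (2) are read off from the definition of $\E$, the bootstrap hypothesis, the triangle inequality with the linear bound on $\V$, and mass conservation, exactly as in the paper; for (3) the paper likewise combines Lemma \ref{lem:bilinearstrichartz} at $(q,r)=(2,2)$ with the dual $L_t^1L_x^2$ bound $\normo{t^{\frac{dp}{2}-2}P_{N_1}(|\U|^p\U)}_{L_t^1L_x^2}\lsm N_1^{\frac{dp}{2(p+2)}}N_0^{\frac{2(p+1)}{p+2}(1-s)}$ obtained from Bernstein, the $L_t^\I L_x^{p+2}$ energy bound, and the integrability of $t^{\frac{dp}{2}-2}$. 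The only structural difference is that the paper estimates $\nabla\V_N\,\U_{N_1}$ directly, feeding the Duhamel data of $\U$ (which solves the same forced equation as $\W$) into the $S^*$ norm, whereas you split $\U=\V+\W$ and treat the free--free product separately; both work, and your exponent comparison for the $\V\V$ piece is fine. On the low-dimensional endpoint: you are right that the hypotheses of Lemma \ref{lem:bilinearstrichartz} ($q,r\le 2$ and $\frac1q+\frac{d-1}{r}<d-1$) exclude $(2,2)$ when $d\le 2$ — a point the paper silently ignores — and your $d=1$ remedy (the classical bilinear $L^2_{t,x}$ estimate with factor $M^{(d-1)/2}N^{-1/2}$ in its inhomogeneous form) is the correct one. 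However, your proposed $d=2$ fix does not work as stated: $(2-,2+)$ also violates $r\le2$, and more importantly an $L_t^{2-}L_x^{2+}$ bound does not imply the claimed $L^2_{t,x}$ bound, so "absorbing the loss into $N_0^{0+}$" is not available. The repair is simply to invoke for $d=2$ the same classical endpoint bilinear Strichartz estimate you use for $d=1$ (valid in every dimension), which is also what the paper implicitly relies on; with that substitution your argument is complete.
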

\begin{proof}
\eqref{eq:fra-energy-bound-w} follows directly from \eqref{eq:fra-energy-bootstrap-hypothesis-dd}.
	
	By the conservation of mass, \eqref{eq:fra-mass-bound-w} holds.

Finally, we prove \eqref{eq:fra-energy-bound-bilinear}. We first note that for $N_1\in2^\N$, by Sobolev's inequality and \eqref{eq:fra-energy-bound-w}, 
\EQn{\label{esti:fra-energy-bound-bilinear-global-l2}
\normb{t^{\frac{dp}{2} -2}P_{N_1}(|\U|^p\U)}_{L_t^1 L_x^2} \lsm & N_1^{\frac{dp}{2(p+2)}}\normb{t^{\frac {dp}{2} -2}P_{N_1}(|\U|^p\U)}_{L_t^1 L_x^{\frac{p+2}{p+1}}} \\
\lsm &  N_1^{\frac{dp}{2(p+2)}}\norm{|\U|^{p+1}}_{L_t^\I L_x^{\frac{p+2}{p+1}}} \\
\lsm & N_1^{\frac{dp}{2(p+2)}}\brkb{\norm{\V}_{L_t^\I L_x^{p+2}} + \norm{\W}_{L_t^\I L_x^{p+2}}}^{p+1} \\
\lsm & N_1^{\frac{dp}{2(p+2)}} N_0^{\frac{2(p+1)}{p+2}(1-s)}.
}
Then, by bilinear Strichartz estimate in Lemma \ref{lem:bilinearstrichartz} and \eqref{esti:fra-energy-bound-bilinear-global-l2}, for $N_1\ll N$,
\EQ{
\norm{\nabla \V_{N} \U_{N_1}}_{L_{t,x}^2} \lsm &  N^{-\frac12}N_1^{\frac{d-1}{2}}\norm{N \V_{N}(T)}_{L_x^2}\brkb{\norm{P_{N_1}\U(T)}_{L_x^2} + \normb{t^{\frac p2 -2}P_{N_1}(|\U|^p\U)}_{L_t^1 L_x^2}} \\
\lsm & N^{-\frac12}N_1^{\frac{d-1}{2}} \norm{N \V_{N}(T)}_{L_x^2} N_1^{\frac{dp}{2(p+2)}} N_0^{\frac{2(p+1)}{p+2}(1-s)} \\
\lsm & N^{\frac12-s}N_1^{\frac{d-1}{2}} \norm{N^s \V_{N}(T)}_{L_x^2} N_1^{\frac{dp}{2(p+2)}} N_0^{\frac{2(p+1)}{p+2}(1-s)} \\
\lsm & \de_0N^{\frac12-s}  N_1^{\frac{2dp-p+2d-2}{2(p+2)}} N_0^{\frac{2(p+1)}{p+2}(1-s)}.
}
Similarly, we also have 
\EQ{
\norm{\nabla \V_{N} \W_{N_1}}_{L_{t,x}^2} \lsm &  N^{-\frac12}N_1^{\frac{d-1}{2}}\norm{N \V_{N}(T)}_{L_x^2}\brkb{\norm{P_{N_1}\W(T)}_{L_x^2} + \normb{t^{\frac p2 -2}P_{N_1}(|\U|^p\U)}_{L_t^1 L_x^2}} \\
\lsm & \de_0N^{\frac12-s}  N_1^{\frac{2dp-p+2d-2}{2(p+2)}} N_0^{\frac{2(p+1)}{p+2}(1-s)}.
}
This completes the proof of \eqref{eq:fra-energy-bound-bilinear}.
\end{proof}

\subsection{Energy estimate when $p<\frac{2}{d-2}$ and $d=1$}\label{sec:fra-energy-dd-1}
\begin{proof}[Proof of Lemma \ref{lem:fra-energy-dd} in 1D case]
We make the following decomposition:
\begin{subequations}
	\EQnn{
		|\int_0^{T_0}\int_{\R}|\U|^p\U\wb{\V_t}\dd x\dd t| \lsm & \sum_{N\lsm N_1} \int_0^{T_0}\int_\R |\nabla \W| |\nabla \V_N||\V_{ N_1}||\U|^{p-1} \dd x \dd t \label{eq:fra-energy-bound-wv-high-high-v}\\
		& + \sum_{N\lsm N_1} \int_0^{T_0}\int_\R |\nabla \W| |\nabla \V_N||\W_{ N_1}||\U|^{p-1} \dd x \dd t \label{eq:fra-energy-bound-wv-high-high-w}\\
		& + \sum_{N_1\ll N} \int_0^{T_0}\int_\R |\nabla \W| |\nabla \V_N||\U_{ N_1}||\U|^{p-1} \dd x \dd t \label{eq:fra-energy-bound-wv-high-low}\\
		& + \sum_{N_1\loe N_2}\int_0^{T_0} \int_\R |\nabla\V_{N_1}||\nabla\V_{N_2}||\U_{\gsm N_2}||\U|^{p-1}\dd x\dd t \label{eq:fra-energy-bound-vv-high-high}\\
		& + \sum_{N_1\loe N_2}\int_0^{T_0} \int_\R |\nabla\V_{N_1}||\nabla\V_{N_2}||\U_{\ll N_2}||\U|^{p-1}\dd x\dd t. \label{eq:fra-energy-bound-vv-high-low}
	}
\end{subequations}
Next, we estimate the terms \eqref{eq:fra-energy-bound-wv-high-high-v}-\eqref{eq:fra-energy-bound-vv-high-low} one by one. In the proof of the 1D case, the spacetime norms are taken over $(t,x) \in (0,T_0]\times\R$.

$\bullet$ \textbf{Estimate of \eqref{eq:fra-energy-bound-wv-high-high-v}.} For this term, we can directly transfer the first-order derivative to $\V_{N_1}$. First, by H\"older's inequality and \eqref{eq:fra-energy-bound-w},
\EQ{
	\eqref{eq:fra-energy-bound-wv-high-high-v}\lsm & \sum_{N\lsm N_1} \int_0^{T_0} \norm{\nabla \W}_{L_x^2} \norm{N \V_N}_{L_x^2}\norm{\V_{N_1}}_{L_x^\I}\brkb{\norm{\V}_{L_x^\I}^{p-1} + \norm{\W}_{L_x^\I}^{p-1}}  \dd t \\
	\lsm & \sum_{N\lsm N_1} \int_0^{T_0} t^{\frac p4-1} N_0^{1-s} \frac{N^{1-s}}{N_1^{1-s}}\norm{\V_N}_{H_x^s}\norm{N_1^{1-s}\V_{N_1}}_{L_x^\I}\brkb{\norm{\V}_{L_x^\I}^{p-1} + \norm{\W}_{L_x^\I}^{p-1}}  \dd t \\
	\lsm & \de_0 N_0^{1-s} \sum_{N_1} \int_0^{T_0} t^{\frac p4-1} \norm{N_1^{1-s}\V_{N_1}}_{L_x^\I}\brkb{\norm{\V}_{L_x^\I}^{p-1} + \norm{\W}_{L_x^\I}^{p-1}}  \dd t.
}
Then, we consider the estimate of
\EQnnsub{
& \sum_{N_1} \int_0^{T_0} t^{\frac p4-1} \norm{N_1^{1-s}\V_{N_1}}_{L_x^\I}\brkb{\norm{\V}_{L_x^\I}^{p-1} + \norm{\W}_{L_x^\I}^{p-1}}  \dd t \nonumber\\
\lsm &\sum_{N_1} \int_0^{T_0} t^{\frac p4-1} \norm{N_1^{1-s}\V_{N_1}}_{L_x^\I}\norm{\V}_{L_x^\I}^{p-1} \dd t \label{eq:fra-energy-bound-wv-high-high-v-1}\\
+& \sum_{N_1} \int_0^{T_0} t^{\frac p4-1} \norm{N_1^{1-s}\V_{N_1}}_{L_x^\I} \norm{\W}_{L_x^\I}^{p-1} \dd t.\label{eq:fra-energy-bound-wv-high-high-v-2}
}
For \eqref{eq:fra-energy-bound-wv-high-high-v-1}, noting that
\EQ{
	\normb{t^{\frac p4-1}}_{L_t^{\frac43}}^{\frac43}=\int_0^{T_0} t^{\frac43(\frac p4-1)} \dd t \lsm \int_0^{T_0} t^{-\frac23} \dd t \lsm 1,
}
then by Sobolev's inequality, H\"older's inequality in $t$, and Lemma \ref{lem:fra-linear},
\EQn{\label{esti:fra-energy-bound-wv-high-high-v-1}
	\eqref{eq:fra-energy-bound-wv-high-high-v-1}
	\lsm & \int_0^{T_0} t^{\frac p4-1} \norm{\jb{\nabla}^s\V}_{L_x^\I}\norm{\V}_{L_x^\I}^{p-1} \dd t \\
	\lsm & \normb{t^{\frac p4-1}\jb{\nabla}^s\V}_{L_t^1 L_x^\I} \norm{\V}_{L_{t,x}^\I}^{p-1} \\
	\lsm & \normb{\jb{\nabla}^s\V}_{L_t^4 L_x^\I} \norm{\V}_{L_{t}^\I H_x^s}^{p-1} \\
	\lsm & \de_0^p.
}
Next, we consider \eqref{eq:fra-energy-bound-wv-high-high-v-2}. We first claim an $L^\I$-estimate for $\W$: for any $t\in I$, \EQn{\label{eq:fra-energy-bound-w-linfty}
\norm{\W(t)}_{L_x^\I(\R)} \lsm t^{\frac{p-4}{2(p+4)}-} N_0^{\frac{4}{p+4}(1-s)+}.
}
Let $0<\ep<1$ be any sufficiently small parameter. By Sobolev's inequality, interpolation, \eqref{eq:fra-energy-bound-w}, and  \eqref{eq:fra-mass-bound-w}, then
\EQ{
\norm{\W(t)}_{L_x^\I} 
\lsm & \normb{\jb{\nabla}^{\frac{1}{p+2}+\ep}\W(t)}_{L_x^{p+2}} \\
\lsm & \norm{\W(t)}_{L_x^{p+2}}^{\frac{p+2}{p+4}-2\ep}  \normb{\jb{\nabla}^{\frac{p+4}{2(p+2)}}\W(t)}_{L_x^{p+2}}^{\frac{2}{p+4}+2\ep} \\
\lsm & \norm{\W(t)}_{L_x^{p+2}}^{\frac{p+2}{p+4}-2\ep}  \normb{\jb{\nabla}\W(t)}_{L_x^{2}}^{\frac{2}{p+4}+2\ep} \\
\lsm & t^{\frac{2}{p+4}\brko{\frac {p}{4}-1}-\frac{4-p}{2}\ep} N_0^{(\frac{4}{p+4}+\frac{2p}{p+2}\ep)(1-s)}.
}
This finishes the proof of \eqref{eq:fra-energy-bound-w-linfty}. Then, by \eqref{eq:fra-energy-bound-w-linfty}, H\"older's inequality in $t$, and Lemma \ref{lem:fra-linear},
\EQ{
\eqref{eq:fra-energy-bound-wv-high-high-v-2}
\lsm & \sum_{N_1\gsm N_0}N_1^{1-2s} \int_0^{T_0} t^{\frac p4-1} \norm{N_1^s\V_{N_1}}_{L_x^\I} \brkb{t^{\frac{p-4}{2(p+4)}-}N_0^{\frac{4}{p+4}(1-s)+}}^{p-1} \dd t \\
\lsm & N_0^{1-2s} \int_0^{T_0} t^{\frac p4-1} \norm{\jb{\nabla}^s\V}_{L_x^\I} t^{\frac{(p-4)(p-1)}{2(p+4)}-}N_0^{\frac{4(p-1)}{p+4}(1-s)+} \dd t \\
\lsm & N_0^{1-s}N_0^{-s + \frac{4(p-1)}{p+4}(1-s)+} \norm{\jb{\nabla}^s\V}_{L_t^4L_x^\I} \brkb{\int_0^{T_0}  t^{-\frac{(4-p)(3p+2)}{3(p+4)}-}\dd t}^{\frac43} \\
	\lsm & \de_0 N_0^{1-s}N_0^{-s + \frac{4(p-1)}{p+4}(1-s)} \brkb{\int_0^{T_0}  t^{-\frac{(4-p)(3p+2)}{3(p+4)}}\dd t}^{\frac43}.
}
Since for $2<p<4$, we can verify that
\EQ{
	0<\frac{(4-p)(3p+2)}{3(p+4)} < \frac89,
}
then by choosing suitable implicit small parameter depending on $p$,
\EQ{
	\int_0^{T_0}  t^{-\frac{(4-p)(3p+2)}{3(p+4)}-}\dd t \lsm 1.
}
Note also that $s>\frac35>\frac{4(p-1)}{5p}$, then
\EQ{
N_0^{-s}N_0^{\frac{4(p-1)}{p+4}(1-s)+}\lsm1.
}
Combining the above two inequalities,
\EQn{\label{esti:fra-energy-bound-wv-high-high-v-2}
	\eqref{eq:fra-energy-bound-wv-high-high-v-2}
	\lsm \de_0 N_0^{1-s}.
}
By \eqref{esti:fra-energy-bound-wv-high-high-v-1} and \eqref{esti:fra-energy-bound-wv-high-high-v-2},
\EQn{\label{esti:fra-energy-bound-wv-high-high-v}
	\eqref{eq:fra-energy-bound-wv-high-high-v} \lsm \de_0 N_0^{1-s}\brkb{\de_0^p + \de_0 N_0^{1-s}} \lsm \de_0 N_0^{2(1-s)}.
}

$\bullet$ \textbf{Estimate of \eqref{eq:fra-energy-bound-wv-high-high-w}.} For this term, we can directly transfer the first-order derivative to $\W_{N_1}$. By H\"older's inequality,
\begin{subequations}
	\EQnn{
		\eqref{eq:fra-energy-bound-wv-high-high-w} \lsm & \sum_{N\lsm N_1} \int_0^{T_0}\norm{\nabla \W}_{L_x^2} \norm{N \V_N}_{L_x^\I} \norm{\W_{N_1}}_{L_x^2} \norm{\V}_{L_x^\I}^{p-1} \dd t \label{eq:fra-energy-bound-wv-high-high-w-1}\\
		& + \sum_{N\lsm N_1} \int_0^{T_0}\norm{\nabla \W}_{L_x^2} \norm{N \V_N}_{L_x^\I} \norm{\W_{N_1}}_{L_x^2}  \norm{\W}_{L_x^\I}^{p-1} \dd t \label{eq:fra-energy-bound-wv-high-high-w-2}
	}
\end{subequations}
To this end, we first give an estimate that can transfer derivative. By Schur's test in Lemma \ref{lem:schurtest}, and interpolating with \eqref{eq:fra-energy-bound-w} and \eqref{eq:fra-mass-bound-w}, we have that for any $0\loe l\loe 1$,
\EQn{\label{esti:fra-energy-bound-wv-high-high-w-transfer-derivative}
	\sum_{N\lsm N_1}\norm{N \V_N}_{L_x^\I} \norm{\W_{N_1}}_{L_x^2} \lsm & \sum_{N\lsm N_1} \brkb{\frac{N}{N_1}}^{1-l}\norm{N^l \V_N}_{L_x^\I} \norm{N_1^{1-l}\W_{N_1}}_{L_x^2} \\
	\lsm & \norm{N^l\V_N}_{l_N^2L_x^\I} \norm{N_1^{1-l}\W_{N_1}}_{l_{N_1}^2L_x^2} \\
	\lsm & \norm{N^l\V_N}_{l_N^2L_x^\I} \norm{\jb{\nabla}^{1-l}\W}_{L_x^2} \\
	\lsm & \brkb{t^{\frac p4-1} N_0^{1-s}}^{1-l} \norm{N^l\V_N}_{l_N^2L_x^\I}.
}
For \eqref{eq:fra-energy-bound-wv-high-high-w-1}, we can transfer all the derivative to $\W_{N_1}$. Applying \eqref{esti:fra-energy-bound-wv-high-high-w-transfer-derivative} with $l=0$, Sobolev's inequality, and \eqref{eq:fra-energy-bound-w},
\EQn{\label{esti:fra-energy-bound-wv-high-high-w-1}
\eqref{eq:fra-energy-bound-wv-high-high-w-1} \lsm & \int_0^{T_0} t^{\frac p4-1} N_0^{1-s} \cdot t^{\frac p4-1} N_0^{1-s} \norm{\V_N}_{l_N^2L_x^\I}\norm{\V}_{L_x^\I}^{p-1} \dd t\\
	\lsm & N_0^{2(1-s)}\int_0^{T_0} t^{\frac p2-2} \dd t \norm{\V}_{L_t^\I H_x^s}^{p} \\ \lsm & \de_0^p N_0^{2(1-s)} .
}
For \eqref{eq:fra-energy-bound-wv-high-high-w-2}, we can only transfer at most $\frac16$-order derivative due to the singularity near the time origin. Applying \eqref{esti:fra-energy-bound-wv-high-high-w-transfer-derivative} with $l=\frac56$, H\"older's inequality,  \eqref{eq:fra-energy-bound-w-linfty}, and $\normb{N^{\frac56}\V_N}_{l_N^2L_t^4L_x^\I}\lsm N_0^{\frac56-s+} \normb{\jb{\nabla}^s\V}_{L_t^4L_x^\I}$, we have that
\EQn{\label{esti:fra-energy-bound-wv-high-high-w-2-1}
\eqref{eq:fra-energy-bound-wv-high-high-w-2} \lsm & \int_0^{T_0} t^{\frac p4-1}N_0^{1-s} \cdot \brkb{t^{\frac p4-1} N_0^{1-s}}^{\frac16} \normb{N^{\frac56}\V_N}_{l_N^2L_x^\I} \norm{\W}_{L_x^\I}^{p-1} \dd t \\
\lsm & N_0^{1-s} \normb{N^{\frac56}\V_N}_{l_N^2L_t^4L_x^\I} \normb{ t^{\frac76(\frac p4-1)} N_0^{\frac16(1-s)} \norm{\W}_{L_x^\I}^{p-1} }_{L_t^{\frac43}} \\
\lsm &  N_0^{2(1-s)}N_0^{-\frac16s+}\normb{\jb{\nabla}^{s}\V}_{L_t^4L_x^\I} \normb{t^{\frac76(\frac p4-1)}   t^{\frac{(p-4)(p-1)}{2(p+4)}-}N_0^{\frac{4(p-1)}{p+4}(1-s)+}}_{L_t^{\frac43}} \\
\lsm & \de_0 N_0^{2(1-s)} N_0^{-\frac16s + \frac{4(p-1)}{p+4}(1-s)+} \normb{ t^{-\frac{(4-p)(19p+16)}{24(p+4)}-}}_{L_t^{\frac43}}.
}
For $2<p<4$, we have that
\EQ{
-1<-\frac43\cdot\frac{(4-p)(19p+16)}{24(p+4)} = -\frac{(4-p)(19p+16)}{18(p+4)}<0,
}
then by choosing suitable implicit small parameter depending on $p$,
\EQn{\label{esti:fra-energy-bound-wv-high-high-w-2-time}
\normb{ t^{-\frac{(4-p)(19p+16)}{24(p+4)}-}}_{L_t^{\frac43}} \lsm 1.
}
Furthermore, when $s>\frac{9}{10}$,
\EQn{\label{esti:fra-energy-bound-wv-high-high-w-2-energy}
-\frac16s + \frac{4(p-1)}{p+4}(1-s) + < -\frac16s + \frac32(1-s) <0.
}
By \eqref{esti:fra-energy-bound-wv-high-high-w-2-1}, \eqref{esti:fra-energy-bound-wv-high-high-w-2-time}, and \eqref{esti:fra-energy-bound-wv-high-high-w-2-energy},
\EQn{\label{esti:fra-energy-bound-wv-high-high-w-2-2}
	\eqref{eq:fra-energy-bound-wv-high-high-w-2} \lsm \de_0 N_0^{2(1-s)}.
}
Then, by \eqref{esti:fra-energy-bound-wv-high-high-w-1} and \eqref{esti:fra-energy-bound-wv-high-high-w-2-2},
\EQn{\label{esti:fra-energy-bound-wv-high-high-w}
	\eqref{eq:fra-energy-bound-wv-high-high-w} \lsm \eqref{eq:fra-energy-bound-wv-high-high-w-1} + \eqref{eq:fra-energy-bound-wv-high-high-w-2} \lsm \de_0 N_0^{2(1-s)}.
}

$\bullet$ \textbf{Estimate of \eqref{eq:fra-energy-bound-wv-high-low}.} This is the main term that decides the lower bound of $s$, namely $\frac{18}{19}$. For this term, we need to use bilinear Strichartz estimate to transfer derivative. 
%Even though we are not worried about the increase of $N_0^{C(1-s)}$(since we can always take $s$ near 1 to finish the energy estimate), we can only apply the bilinear Strichartz estimate on some part of $\nabla\V_N\W_{N_1}$ to avoid the temporal singularity. 
By H\"older's inequality, \eqref{eq:fra-energy-bound-bilinear}, and Lemma \ref{lem:fra-linear},
\EQn{\label{esti:fra-energy-bound-wv-high-low-1}
	\eqref{eq:fra-energy-bound-wv-high-low} \lsm & \sum_{N_1\ll N} \norm{\nabla \V_N \U_{N_1}}_{L_{t,x}^2}^{\frac{2}{9}} \normb{|\nabla \W| |\nabla\V_N|^{\frac{7}{9}} |\U_{N_1}|^{\frac{7}{9}}|\U|^{p-1}}_{L_{t,x}^{\frac{9}{8}}} \\
	\lsm & \sum_{N_1\ll N} \norm{\nabla \V_N \U_{N_1}}_{L_{t,x}^2}^{\frac{2}{9}} \norm{\nabla \V_N}_{L_t^4 L_x^\I}^{\frac79} \normb{ \norm{\nabla \U}_{ L_x^2} \norm{\U_{N_1}}_{L_{x}^2}^{\frac79} \norm{\U}_{L_{x}^\I}^{p-1} }_{L_t^{\frac{36}{25}}} \\
	\lsm & \de_0 \sum_{N_1\ll N} \norm{\nabla \V_N \U_{N_1}}_{L_{t,x}^2}^{\frac{2}{9}} N^{\frac79(1-s)} \norm{\jb{\nabla}^{s} \V_N}_{L_t^4 L_x^\I}^{\frac79} \normb{\norm{\nabla \W}_{ L_x^2}\norm{\U_{N_1}}_{L_{x}^2}^{\frac79} \norm{\U}_{L_{x}^\I}^{p-1}}_{L_t^{\frac{36}{25}}} \\
	\lsm & \de_0 \sum_{N_1\ll N} N^{\frac29(\frac12-s)}  N_1^{\frac{p}{9(p+2)}} N_0^{\frac49(1-s)} N^{\frac79(1-s)} \normb{t^{\frac p4-1}N_0^{1-s}\norm{\U_{N_1}}_{L_{x}^2}^{\frac79} \norm{\U}_{L_{x}^\I}^{p-1}}_{L_t^{\frac{36}{25}}} \\
	\lsm & \de_0 N_0^{\frac{13}{9}(1-s)} \sum_{N_1\ll N}   N^{-\frac1{9}} N^{1-s} \normb{t^{\frac p4-1}\normb{\jb{\nabla}^{\frac{p}{7(p+2)}}\U_{N_1}}_{L_{x}^2}^{\frac79} \norm{\U}_{L_{x}^\I}^{p-1}}_{L_t^{\frac{36}{25}}}.
}
By interpolation and \eqref{eq:fra-energy-bound-w},
\EQ{
	\normb{\jb{\nabla}^{\frac{p}{7(p+2)}}\U_{N_1}}_{L_{x}^2} \lsm & \normb{\jb{\nabla}^{\frac{p}{7(p+2)}}\V_{N_1}}_{L_{x}^2} + \normb{\jb{\nabla}^{\frac{p}{7(p+2)}}\W_{N_1}}_{L_{x}^2} \\
	\lsm & \norm{\V_{N_1}}_{H_x^s} + \normb{\jb{\nabla}^{\frac{p}{7(p+2)}}\W_{N_1}}_{L_{x}^2} \\
	\lsm & \norm{\V_{N_1}}_{H_x^s} + \norm{\W_{N_1}}_{L_{x}^{2}}^{\frac{6p+14}{7(p+2)}}\normb{\jb{\nabla}\W_{N_1}}_{L_{x}^2}^{\frac{p}{7(p+2)}} \\
	\lsm & 1 + \brkb{t^{\frac p4-1}N_0^{1-s}}^{\frac{p}{7(p+2)}} \\
	\lsm & t^{\frac{p}{7(p+2)}(\frac p4-1)}N_0^{\frac{p}{7(p+2)}(1-s)}.
}
By Sobolev's inequality, \eqref{eq:fra-energy-bound-w-linfty}, and Lemma \ref{lem:fra-linear},
\EQn{\label{eq:fra-energy-bound-u-linfty}
\norm{\U}_{L_{x}^\I} \lsm & \norm{\V}_{L_{x}^\I} + \norm{\W}_{L_{x}^\I} \\
	\lsm & \norm{\V}_{H_x^s} + t^{\frac{p-4}{2(p+4)}-} N_0^{\frac{4}{p+4}(1-s)+} \\
	\lsm & t^{\frac{p-4}{2(p+4)}-} N_0^{\frac{4}{p+4}(1-s)+}.
}
Thus, combining the above two inequalities,
\EQn{\label{esti:fra-energy-bound-wv-high-low-2}
&\normb{t^{\frac p4-1}\normb{\jb{\nabla}^{\frac{p}{7(p+2)}}\U_{N_1}}_{L_{x}^2}^{\frac79} \norm{\U}_{L_{x}^\I}^{p-1}}_{L_t^{\frac{36}{25}}} \\
\lsm & \normb{t^{\frac p4-1} t^{\frac{p}{9(p+2)}(\frac p4-1)}N_0^{\frac{p}{9(p+2)}(1-s)} t^{\frac{(p-4)(p-1)}{2(p+4)}-} N_0^{\frac{4(p-1)}{p+4}(1-s)+}}_{L_t^{\frac{36}{25}}} \\
\lsm & \normb{t^{\frac{(p-4)(7p^2+19p+9)}{9(p+2)(p+4)}-}}_{L_t^{\frac{36}{25}}} N_0^{\frac{37p^2+40p-72}{9(p+2)(p+4)}(1-s)+}.
}
For $2<p<4$, we can check that
\EQ{
-1<\frac{4(p-4)(7p^2+19p+9)}{25(p+2)(p+4)}<0\text{, and }\frac{37p^2+40p-72}{9(p+2)(p+4)}<\frac{85}{54}.
}
Then, by \eqref{esti:fra-energy-bound-wv-high-low-1} and \eqref{esti:fra-energy-bound-wv-high-low-2},
\EQn{\label{esti:fra-energy-bound-wv-high-low}
	\eqref{eq:fra-energy-bound-wv-high-low} \lsm & \de_0 N_0^{\frac{13}{9}(1-s)} \sum_{N_1,N:N_0\loe N_1\ll N}   N^{-\frac1{9}} N^{1-s}N_0^{\frac{85}{54}(1-s)} \\
	\lsm & \de_0 N_0^{\frac{13}{9}(1-s)}  N_0^{-\frac1{9} +1-s} N_0^{\frac{85}{54}(1-s)} \\
	\lsm & \de_0 N_0^{2(1-s)} N_0^{-\frac1{9} + \frac{109}{54}(1-s)} \\
	\lsm & \de_0 N_0^{2(1-s)},
}
where in the last inequality we use $s>\frac{18}{19}>\frac{103}{109}$.

$\bullet$ \textbf{Estimate of \eqref{eq:fra-energy-bound-vv-high-high}.} For this term, we can transfer the derivative to $\U_{\gsm N_2}$. First, by H\"older's inequality,
\EQ{
	\eqref{eq:fra-energy-bound-vv-high-high} \lsm & \sum_{N_1\loe N_2}\int_0^{T_0} \norm{\nabla\V_{N_1}}_{L_x^2} \norm{\nabla \V_{N_2}}_{L_x^2} \norm{\U_{\gsm N_2}}_{L_x^\I} \norm{\U}_{L_x^\I}^{p-1}\dd t \\
	\lsm & \sum_{N_1\loe N_2}\int_0^{T_0} \normb{N_1^{\frac{4}{5}} \V_{N_1}}_{L_x^2} \normb{N_2^{\frac{4}{5}} \V_{N_2}}_{L_x^2} \normb{\jb{\nabla}^{\frac{2}{5}}\U_{\gsm N_2}}_{L_x^\I} \norm{\U}_{L_x^\I}^{p-1}\dd t.
}
Note that by Lemma \ref{lem:fra-linear},
\EQ{
	\sum_{N_1\loe N_2}  \normb{N_1^{\frac{4}{5}} \V_{N_1}}_{L_x^2} \normb{N_2^{\frac{4}{5}} \V_{N_2}}_{L_x^2} \lsm N_0^{2({\frac{4}{5}}-s)} \norm{\V}_{H_x^s}^2 \lsm \de_0^2N_0^{2(1-s)}N_0^{-\frac25}.
}
Moreover, by $s>\frac{9}{10}$, Sobolev's inequality, \eqref{eq:fra-energy-bound-w}, and Lemma \ref{lem:fra-linear},
\EQ{
	\normb{\jb{\nabla}^{\frac25}\U_{\gsm N_2}}_{L_x^\I} \lsm  \normb{\jb{\nabla}^{\frac{9}{10}}\U}_{L_x^2} \lsm \normb{\jb{\nabla}^{s}\U}_{L_x^2} \lsm \norm{\W}_{H^1} + \norm{\V}_{H^s}
	\lsm  t^{\frac p4-1} N_0^{1-s}.
}
Combining the above three inequalities and \eqref{eq:fra-energy-bound-u-linfty},
\EQ{
\eqref{eq:fra-energy-bound-vv-high-high} \lsm & \de_0^2 N_0^{2(1-s)}N_0^{-\frac25} \int_0^{T_0} t^{\frac p4-1} N_0^{1-s} t^{\frac{(p-4)(p-1)}{2(p+4)}-} N_0^{\frac{4(p-1)}{p+4}(1-s)+}  \dd t \\
\lsm & \de_0^2N_0^{2(1-s)} N_0^{-\frac25 + \frac{5p}{p+4}(1-s)} \int_0^{T_0} t^{\frac{(3p+2)(p-4)}{4(p+4)}-} \dd t. 
}
For $2<p<4$, we have that
\EQ{
-\frac23<\frac{(3p+2)(p-4)}{4(p+4)}<0,
}
then by choosing suitable implicit small parameter depending on $p$,
\EQ{
\int_0^{T_0} t^{\frac{(3p+2)(p-4)}{4(p+4)}-} \dd t \lsm 1.
}
Therefore, by $2<p<4$ and $s>\frac{21}{25}$,
\EQn{\label{esti:fra-energy-bound-vv-high-high}
\eqref{eq:fra-energy-bound-vv-high-high} \lsm & \de_0^2N_0^{2(1-s)} N_0^{-\frac25 + \frac{5}{2}(1-s)}  \\
\lsm & \de_0^2N_0^{2(1-s)}.
}

$\bullet$ \textbf{Estimate of \eqref{eq:fra-energy-bound-vv-high-low}.} For this term, we also need to invoke bilinear Strichartz estimate to transfer derivative. We split this term into two sub-cases:
\EQnnsub{
	\eqref{eq:fra-energy-bound-vv-high-low} \lsm & \sum_{N_1\loe N_2} \sum_{N:N\ll N_2} \int_0^{T_0} \int_\R |\nabla\V_{N_1}||\nabla\V_{N_2}||\V_{N}||\U|^{p-1}\dd x\dd t \label{eq:fra-energy-bound-vv-high-low-1}\\
	& + \sum_{N_1\loe N_2} \sum_{N:N\ll N_2} \int_0^{T_0} \int_\R |\nabla\V_{N_1}||\nabla\V_{N_2}||\W_{N}||\U|^{p-1}\dd x\dd t. \label{eq:fra-energy-bound-vv-high-low-2}
}
For \eqref{eq:fra-energy-bound-vv-high-low-1}, note that by Lemma \ref{lem:bilinearstrichartz} and Lemma \ref{lem:fra-linear},
\EQ{
	\norm{\nabla\V_{N_2}\V_{N}}_{L_{t,x}^2} \lsm N_2^{\frac12}\norm{P_{N_2}\V_+}_{L_x^2}\norm{P_{N}\V_+}_{L_x^2} \lsm N_2^{\frac12-s}N^{-s}\de_0^2.
}
Applying this inequality, and combining H\"older's inequality, \eqref{eq:fra-energy-bound-u-linfty}, and Lemma \ref{lem:fra-linear},
\EQ{
	\eqref{eq:fra-energy-bound-vv-high-low-1} \lsm & \sum_{N_1\loe N_2} \sum_{N:N\ll N_2} \norm{\nabla\V_{N_2}\V_{N}}_{L_{t,x}^2} \norm{\nabla\V_{N_1}\U^{p-1}}_{L_{t,x}^2} \\
	\lsm & \de_0^2 \sum_{N_1\loe N_2} \sum_{N:N\ll N_2} N_2^{\frac12-s}N^{-s} \normb{\norm{\nabla\V_{N_1}}_{L_x^2} \norm{\U}_{L_x^\I}^{p-1}}_{L_{t}^2} \\
	\lsm & \de_0^3 \sum_{N_1\loe N_2} \sum_{N:N\ll N_2} N_2^{\frac12-s}N^{-s}N_1^{1-s} \brkb{\int_0^{T_0} t^{-\frac{(4-p)(p-1)}{(p+4)}-}\dd t}^{1/2} N_0^{\frac{4(p-1)}{p+4}(1-s)+}.
}
Then, note that by $s>\frac12$,
\EQ{
	\sum_{N_1\loe N_2} \sum_{N:N\ll N_2} N_2^{\frac12-s}N^{-s}N_1^{1-s} \lsm \sum_{N_1\loe N_2}  N_2^{\frac12-2s}N_1^{1-s} \lsm \sum_{N_2}  N_2^{\frac32-3s} \lsm 1,
}
and for $2<p<4$,
\EQ{
	0<\frac{(4-p)(p-1)}{(p+4)}<\frac13\text{, and }\frac{4(p-1)}{p+4}<2.
}
Therefore, we can obtain
\EQ{
	\eqref{eq:fra-energy-bound-vv-high-low-1} \lsm \de_0^3 N_0^{2(1-s)}.
}
Next, we consider the term \eqref{eq:fra-energy-bound-vv-high-low-2}. First, by H\"older's inequality, Lemma \ref{lem:fra-linear}, and \eqref{eq:fra-energy-bound-bilinear}
\EQ{
\eqref{eq:fra-energy-bound-vv-high-low-2} \lsm  \sum_{N_1\loe N_2}\sum_{N:N\ll N_2} & \norm{\nabla\V_{N_1}}_{L_t^\I L_x^2}\norm{\nabla \V_{N_2}\W_N}_{L_{t,x}^2}^{\frac34}\norm{\nabla\V_{N_2}}_{L_t^4 L_x^\I}^{\frac14} \\
& \cdot  \normb{\norm{\W_{N}}_{L_x^2}^{\frac14}\norm{\U}_{L_x^\I}^{p-1}}_{L_t^{\frac{16}{9}}} \\
\lsm \sum_{N_1\loe N_2}\sum_{N:N\ll N_2} & \de_0 N_1^{1-s}\brkb{N_2^{\frac12-s}N^{\frac{p}{2(p+2)}}N_0^{\frac{2(p+1)}{p+2}(1-s)}}^{\frac34}N_2^{\frac14(1-s)} \\
& \cdot  \normb{\norm{\W_{N}}_{L_x^2}^{\frac14}\norm{\U}_{L_x^\I}^{p-1}}_{L_t^{\frac{16}{9}}} \\
\lsm \sum_{N_1\loe N_2}\sum_{N:N\ll N_2} & \de_0 N_1^{1-s}N_2^{-\frac38+(1-s)}N_0^{\frac{3(p+1)}{2(p+2)}(1-s)} \normb{\normo{N^{\frac{3p}{2(p+2)}}\W_{N}}_{L_x^2}^{\frac14}\norm{\U}_{L_x^\I}^{p-1}}_{L_t^{\frac{16}{9}}}.
}
Note that by $\frac{3p}{2(p+2)}<1$,
\EQ{
\normb{N^{\frac{3p}{2(p+2)}}\W_{N}}_{L_x^2}^{\frac14} \lsm N^{0-} \norm{\jb{\nabla}\W}_{L_x^2}^{\frac14} \lsm N^{0-} t^{\frac14(\frac p4-1)} N_0^{\frac14(1-s)},
}
then by \eqref{eq:fra-energy-bound-u-linfty}, we further get
\EQ{
\eqref{eq:fra-energy-bound-vv-high-low} \lsm & \sum_{N_1\loe N_2} \de_0 N_1^{1-s}N_2^{-\frac38+(1-s)}N_0^{\frac{3(p+1)}{2(p+2)}(1-s)} \normb{t^{\frac14(\frac p4-1)} N_0^{\frac14(1-s)} \norm{\U}_{L_x^\I}^{p-1}}_{L_t^{\frac{16}{9}}} \\
\lsm & \sum_{N_1\loe N_2} \de_0 N_1^{1-s} N_2^{-\frac38+(1-s)}N_0^{\frac{p(23p+52)}{4(p+2)(p+4)}(1-s)} \normb{t^{\frac{(p-4)(9p-4)}{16(p+4)}-} }_{L_t^{\frac{16}{9}}}.
}
For $2<p<4$, we have
\EQ{
-\frac{14}{27}<\frac{(p-4)(9p-4)}{16(p+4)}<0,
}
then by choosing suitable implicit small parameter depending on $p$,
\EQ{
\normb{t^{\frac{(p-4)(9p-4)}{16(p+4)}-} }_{L_t^{\frac{16}{9}}} \lsm 1.
}
Moreover, by $s>\frac{13}{16}$,
\EQ{
\sum_{N_1\loe N_2}  N_1^{1-s} N_2^{-\frac38+(1-s)} \lsm \sum_{N_2}  N_2^{-\frac38+2(1-s)} \lsm N_0^{-\frac38+2(1-s)}.
}
Then, combining the above two inequalities,
\EQ{
\eqref{eq:fra-energy-bound-vv-high-low} 
\lsm & \de_0  N_0^{2(1-s)}N_0^{-\frac38+\frac{31p^2+100p+64}{4(p+2)(p+4)}(1-s)}. 
}
For $2<p<4$, we have
\EQ{
\frac{10p^2+17p-12}{4(p+2)(p+4)}<5.
}
Therefore, by $s>\frac{37}{40}$,
\EQn{\label{esti:fra-energy-bound-vv-high-low}
\eqref{eq:fra-energy-bound-vv-high-low} \lsm \de_0  N_0^{2(1-s)} N_0^{-\frac38+5(1-s)} \lsm \de_0  N_0^{2(1-s)}.
}

Finally, Lemma \ref{lem:fra-energy-dd} in 1D case follows from \eqref{esti:fra-energy-bound-wv-high-high-v}, 
\eqref{esti:fra-energy-bound-wv-high-high-w}, 
\eqref{esti:fra-energy-bound-wv-high-low}, 
\eqref{esti:fra-energy-bound-vv-high-high}, and
\eqref{esti:fra-energy-bound-vv-high-low}.
\end{proof}
\subsection{Energy estimate when $p<\frac{2}{d-2}$ and $d\goe2$}\label{sec:fra-energy-dd-2}
\begin{proof}[Proof of Lemma \ref{lem:fra-energy-dd} in 2D and higher dimensional cases]
Throughout Section \ref{sec:fra-energy-dd-2}, we use two parameters
\EQn{\label{defn:fra-energy-dd-2-th}
\th_1 := \frac{2-(d-2)p}{10}\text{, and }\th_2 := \frac{(dp-2)^2}{2(d+2)}.
}
We make the following decomposition:
\EQnnsub{
|\int_{t_0}^{T_0}\int_{\R^d}|\U|^p\U\wb{\V_t}\dd x\dd t| \lsm & \sum_{N\lsm N_1} N_1^{0+} \int_{0}^{T_0}\int_{\R^d} |\nabla \W| |\nabla \V_N||\V_{ N_1}|^p \dd x \dd t \label{eq:fra-energy-bound-wv-high-high-v-dd}\\
& + \sum_{N\lsm N_1} N_1^{0+} \int_{0}^{T_0}\int_{\R^d} |\nabla \W| |\nabla \V_N||\W_{ N_1}|^p \dd x \dd t \label{eq:fra-energy-bound-wv-high-high-w-dd}\\
& + \sum_{N_1\ll N} N_1^{0+} \int_{0}^{T_0}\int_{\R^d} |\nabla \W| |\nabla \V_N||\V_{ N_1}|^p \dd x \dd t \label{eq:fra-energy-bound-wv-high-low-v-dd}\\
& + \sum_{N_1\ll N} N_1^{0+} \int_{0}^{T_0}\int_{\R^d} |\nabla \W| |\nabla \V_N||\W_{ N_1}|^p \dd x \dd t \label{eq:fra-energy-bound-wv-high-low-w-dd}\\
& + \sum_{N_1\loe N_2}\int_{0}^{T_0} \int_{\R^d} |\nabla\V_{N_1}||\nabla\V_{N_2}||\U_{\gsm N_2}|^p\dd x\dd t \label{eq:fra-energy-bound-vv-high-high-dd}\\
& + \sum_{N_1\loe N_2}\int_{0}^{T_0} \int_{\R^d} |\nabla\V_{N_1}||\nabla\V_{N_2}||\V_{\ll N_2}|^p\dd x\dd t \label{eq:fra-energy-bound-vv-high-low-v-dd} \\
& + \sum_{N_1\loe N_2}\int_{0}^{T_0} \int_{\R^d} |\nabla\V_{N_1}||\nabla\V_{N_2}||\W_{\ll N_2}|^p\dd x\dd t. \label{eq:fra-energy-bound-vv-high-low-w-dd}
}
Next, we estimate \eqref{eq:fra-energy-bound-wv-high-high-v-dd}-\eqref{eq:fra-energy-bound-vv-high-low-w-dd} one by one.
In the proof of the $d\goe 2$ case, the spacetime norms are taken over $(t,x) \in (0,T_0]\times\R^d$.

$\bullet$ \textbf{Estimate of \eqref{eq:fra-energy-bound-wv-high-high-v-dd}.} By H\"older's,  Bernstein's inequality, and Lemma \ref{lem:fra-linear},
\EQ{
\eqref{eq:fra-energy-bound-wv-high-high-v-dd} 
\lsm & \sum_{N\lsm N_1} N_1^{0+} \norm{\nabla \W}_{L_{t,x}^2} \norm{\nabla \V_N}_{L_t^2 L_x^{\frac{2d}{d-2}}}\norm{\V_{N_1}}_{L_t^{\I}L_x^{dp}}^p \\
\lsm & \de_0 \sum_{N\lsm N_1} N^{1-s} \norm{\nabla \W}_{L_{t,x}^2} \normb{\jb{\nabla}^{\frac{dp-2}{2p}+}\V_{N_1}}_{L_t^{\I}L_x^{2}}^p \\
\lsm & \de_0 \sum_{N\lsm N_1} N^{1-s} N_1^{-ps+\frac{dp-2}{2}+} \norm{\nabla \W}_{L_{t,x}^2} \normb{\jb{\nabla}^{s}\V_{N_1}}_{L_t^{\I}L_x^{2}}^p \\
\lsm & \de_0^{p+1} \norm{\nabla \W}_{L_{t,x}^2}\sum_{N\lsm N_1}  N^{1-s}N_1^{-ps+\frac{dp-2}{2}+}.
}
By \eqref{eq:fra-energy-bound-w},
\EQn{\label{eq:fra-energy-bound-wv-high-high-v-dd-1}
\norm{\nabla \W}_{L_{t,x}^2} \lsm & N_0^{1-s} \normb{t^{\frac{dp}{4}-1}}_{L_t^{2}} \\
\lsm & N_0^{1-s} \brkb{\int_{0}^{T_0} t^{\frac{dp}{2}-2}\dd t}^{\frac12} \\
\lsm & N_0^{1-s} T_0^{\frac{dp}{4}-\frac12} \\
\lsm & N_0^{1-s}.
}
Moreover, by the choice of $s_0$ in Definition \ref{defn:fra-scattering-s0}, we have
\EQ{
1-s<\frac{2-(d-2)p}{2(1+p)}<\frac{2-(d-2)p}{2p},
}
which is well defined since we impose that $p<\frac{2}{d-2}$. Thus,
\EQ{
-ps+\frac{dp-2}{2}<0\text{, and }\frac{dp}{2}-(1+p)s<0.
}
Consequently,
\EQn{\label{eq:fra-energy-bound-wv-high-high-v-dd-2}
\sum_{N\lsm N_1}  N^{1-s}N_1^{-ps+\frac{dp-2}{2}+} \lsm \sum_{N\in2^\N}  N^{\frac{dp}{2}-(1+p)s+} \lsm 1.
}
By \eqref{eq:fra-energy-bound-wv-high-high-v-dd-1} and \eqref{eq:fra-energy-bound-wv-high-high-v-dd-2}, we have
\EQn{\label{esti:fra-energy-bound-wv-high-high-v-dd}
\eqref{eq:fra-energy-bound-wv-high-high-v-dd} 
\lsm \de_0^{p+1} N_0^{1-s}.
}

$\bullet$ \textbf{Estimate of \eqref{eq:fra-energy-bound-wv-high-high-w-dd}.} By H\"older's, Sobolev's inequalities,
\EQ{
\eqref{eq:fra-energy-bound-wv-high-high-w-dd} \lsm & \sum_{N\lsm N_1} N_1^{0+} \norm{\nabla\V_{N}}_{L_t^2 L_x^{\frac{2d}{d-2}}} \normb{\norm{\nabla\W}_{L_x^2}\norm{\W_{N_1}}_{L_x^{dp}}^p}_{L_t^2} \\
\lsm & \sum_{N\lsm N_1} N_1^{0+} N^{1-s} \norm{\jb{\nabla}^s\V_{N}}_{L_t^2 L_x^{\frac{2d}{d-2}}} \normb{\norm{\nabla\W}_{L_x^2}\normb{\jb{\nabla}^{\frac{dp-2}{2p}}\W_{N_1}}_{L_x^{2}}^p}_{L_t^2} \\
\lsm & \de_0\sum_{N\lsm N_1} N_1^{0-} \normb{\norm{\nabla\W}_{L_x^2}\normb{\jb{\nabla}^{\frac{dp-2}{2p}+\frac1p(1-s)++}\W_{N_1}}_{L_x^{2}}^p}_{L_t^2} \\
\lsm & \de_0 \normb{\norm{\nabla\W}_{L_x^2}\normb{\jb{\nabla}^{\frac{dp-2}{2p}+\frac1p(1-s)+}\W}_{L_x^{2}}^p}_{L_t^2}.
}
Note that by the choice of $s_0$ in Definition \ref{defn:fra-scattering-s0}, we have
\EQ{
1-s<\frac{2-(d-2)p}{2},
}
which gives
\EQ{
\frac{dp-2}{2p}+\frac1p(1-s)<1.
}
Therefore, we can apply the interpolation,  \eqref{eq:fra-energy-bound-w}, and \eqref{eq:fra-mass-bound-w} to obtain
\EQ{
\normb{\jb{\nabla}^{\frac{dp-2}{2p}+\frac1p(1-s)+}\W}_{L_x^{2}}\lsm & \normb{\W}_{L_x^{2}}^{\frac{2-(d-2)p}{2p}-\frac1p(1-s)-} \normb{\jb{\nabla}\W}_{L_x^{2}}^{\frac{dp-2}{2p}+\frac1p(1-s)+} \\
\lsm & t^{(\frac d4p-1)\brkb{\frac{dp-2}{2p}+\frac1p(1-s)}-} N_0^{\brkb{\frac{dp-2}{2p}+\frac1p(1-s)}(1-s)+}.
}
Thus, using this inequality and \eqref{eq:fra-energy-bound-w},
\EQ{
\eqref{eq:fra-energy-bound-wv-high-high-w-dd} \lsm & \de_0 N_0^{(\frac{dp}{2}+1-s+)(1-s)} \normb{t^{(\frac{dp}{4}-1)(\frac{dp}{2}+1-s+)}}_{L_t^2}.
}
By the choice of $s_0$ in Definition \ref{defn:fra-scattering-s0},
\EQ{
1-s<\min\fbrk{\frac{4-dp}{2},\frac{(dp-2)^2}{2(4-dp)}},
}
then
\EQ{
\frac{dp}{2}+1-s<2\text{, and }-\frac12<(\frac{dp}{4}-1)(\frac{dp}{2}+1-s)<0,
}
Therefore, 
\EQn{\label{esti:fra-energy-bound-wv-high-high-w-dd}
\eqref{eq:fra-energy-bound-wv-high-high-w-dd} \lsm \de_0 N_0^{2(1-s)}.
}

$\bullet$ \textbf{Estimate of \eqref{eq:fra-energy-bound-wv-high-low-v-dd}.}
Recall that $\th_1= \frac{2-(d-2)p}{10}$ in  \eqref{defn:fra-energy-dd-2-th}. By H\"older's inequality,
\EQ{
\eqref{eq:fra-energy-bound-wv-high-low-v-dd} 
\lsm & \sum_{N_1\ll N} N_1^{0+} \norm{\nabla \V_N \V_{N_1}}_{L_{t,x}^2}^{\th_1} \norm{\nabla \V_N}_{L_t^2 L_x^{\frac{2d}{d-2}}}^{1-\th_1} \normb{\norm{\nabla\W}_{L_x^2} \norm{\V_{N_1}}_{L_x^{\frac{d(p-\th_1)}{1-\th_1}}}^{p-\th_1}}_{L_t^2}.
}
By the bilinear Strichartz estimate in Lemma \ref{lem:bilinearstrichartz} and Lemma \ref{lem:fra-initial-data}, for $N_1\ll N$,
\EQn{\label{eq:fra-energy-bound-wv-high-low-v-dd-bi}
\norm{\nabla \V_N \V_{N_1}}_{L_{t,x}^2} \lsm & N^{-\frac12} N_1^{\frac{d-1}{2}} \norm{\nabla P_N\V_+}_{L_x^2} \norm{P_{N_1}\V_+}_{L_x^2} \\
\lsm & N^{\frac12-s} N_1^{\frac{d-1}{2}-s} \norm{\jb{\nabla}^s P_N\V_+}_{L_x^2} \norm{\jb{\nabla}^sP_{N_1}\V_+}_{L_x^2} \\
\lsm & N^{\frac12-s} N_1^{\frac{d-1}{2}-s}\de_0^2.
}
Applying this inequality, and by Sobolev's inequality, Lemma \ref{lem:fra-linear}, and \eqref{eq:fra-energy-bound-wv-high-high-v-dd-1},
\EQ{
\eqref{eq:fra-energy-bound-wv-high-low-v-dd} 
\lsm & \de_0^{2\th_1} \sum_{N_1\ll N} N^{\th_1(\frac12-s)} N_1^{\th_1(\frac{d-1}{2}-s)+} \norm{\nabla \V_N}_{L_t^2 L_x^{\frac{2d}{d-2}}}^{1-\th_1} \normb{\norm{\nabla\W}_{L_x^2} \norm{\V_{N_1}}_{L_x^{\frac{d(p-\th_1)}{1-\th_1}}}^{p-\th_1}}_{L_t^2} \\
\lsm & \de_0^{2} \sum_{N_1\ll N} N^{-\frac12\th_1 +1 -s} N_1^{\th_1(\frac{d-1}{2}-s)+} \normb{\norm{\nabla\W}_{L_x^2} \normb{\jb{\nabla}^{\frac{dp-2-(d-2)\th_1}{2(p-\th_1)}}\V_{N_1}}_{L_x^{2}}^{p-\th_1}}_{L_t^2} \\
\lsm & \de_0^{2} \sum_{N_1\ll N} N^{-\frac12\th_1 +1 -s} N_1^{\th_1(\frac{d-1}{2}-s) + \frac{dp-2-(d-2)\th_1}{2}-(p-\th_1)s+} \norm{\nabla\W}_{L_{t,x}^2} \\
\lsm & \de_0^{2} \sum_{N_1\ll N} N^{-\frac12\th_1 +1 -s} N_1^{\frac{dp-2}{2}-ps + \frac12\th_1+} N_0^{1-s}.
}
Recall that $\th_1=\frac{2-(d-2)p}{10}$ and the choice of $s_0$ in Definition \ref{defn:fra-scattering-s0}, we have
\EQ{
1-s<\frac{2-(d-2)p}{20},
}
then we have
\EQ{
-\frac12\th_1 +1 -s < 0,
}
and 
\EQ{
\frac{dp-2}{2}-ps + \frac12\th_1 = & -\frac{2-(d-2)p}{2} + p(1-s)+ \frac12\th_1 \\
\loe & -\frac{2-(d-2)p}{2} + \frac15(2-(d-2)p) + \frac{2-(d-2)p}{20} \\
= & -\frac{2-(d-2)p}{4}<0.
}
Therefore, we can obtain
\EQn{\label{esti:fra-energy-bound-wv-high-low-v-dd} 
\eqref{eq:fra-energy-bound-wv-high-low-v-dd} 
\lsm \de_0^{2}  N_0^{1-s}.
}

$\bullet$ \textbf{Estimate of \eqref{eq:fra-energy-bound-wv-high-low-w-dd}.}
Recall that $\th_2 = \frac{(dp-2)^2}{2(d+2)}$. By H\"older's, Sobolev's inequalities, and \eqref{eq:fra-energy-bound-bilinear},
\EQ{
\eqref{eq:fra-energy-bound-wv-high-low-w-dd} \lsm & \sum_{N_1\ll N} N_1^{0+} \norm{\nabla\V_N\W_{N_1}}_{L_{t,x}^2}^{\th_2} \norm{\V_N}_{L_t^2 L_x^{\frac{2d}{d-2}}}^{1-\th_2} \normb{\norm{\nabla\W}_{L_x^2} \norm{\W_{N_1}}_{L_x^{\frac{d(p-\th_2)}{1-\th_2}}}^{p-\th_2}}_{L_t^2} \\
\lsm & \de_0 \sum_{N_1\ll N} N^{-\frac12\th_2+1-s} N_0^{\frac{2(p+1)\th_2}{p+2}(1-s)} \normb{\norm{\nabla\W}_{L_x^2} \normb{\jb{\nabla}^{\frac{(2dp-p+2d-2)\th_2}{2(p+2)(p-\th_2)}+}\W_{N_1}}_{L_x^{\frac{d(p-\th_2)}{1-\th_2}}}^{p-\th_2}}_{L_t^2}.
}
By Bernstein's inequality,  interpolation, and \eqref{eq:fra-mass-bound-w},
\EQ{
\normb{\jb{\nabla}^{\frac{(2dp-p+2d-2)\th_2}{2(p+2)(p-\th_2)}+}\W_{N_1}}_{L_x^{\frac{d(p-\th_2)}{1-\th_2}}} \lsm & N_1^{0-}\normb{\jb{\nabla}^{\frac{d\th_2}{p-\th_2}}\W_{N_1}}_{L_x^{\frac{d(p-\th_2)}{1-\th_2}}} \\
\lsm & N_1^{0-} \normb{\jb{\nabla}^{\frac{d\th_2}{p-\th_2}+\frac{dp-2-(d-2)\th_2}{2(p-\th_2)}}\W_{N_1}}_{L_x^{2}} \\
\lsm & N_1^{0-} \norm{\nabla\W}_{L_x^2}^{\frac{dp-2}{2(p-\th_2)} + \frac{d+2}{2(p-\th_2)}\th_2}.
}
Applying this inequality and \eqref{eq:fra-energy-bound-w},
\EQ{
\eqref{eq:fra-energy-bound-wv-high-low-w-dd} \lsm & \de_0 \sum_{N\in2^\N} N^{-\frac12\th_2+1-s} N_0^{\frac{2(p+1)\th_2}{p+2}(1-s)} \normb{\norm{\nabla\W}_{L_x^2}^{\frac{dp}{2} + \frac{d+2}{2}\th_2}}_{L_t^2} \\
\lsm & \de_0 \sum_{N\in2^\N} N^{-\frac12\th_2+1-s} N_0^{(\frac{2(p+1)\th_2}{p+2} + \frac{dp}{2} + \frac{d+2}{2}\th_2)(1-s)} \normb{t^{(\frac{dp}{4}-1)(\frac{dp}{2} + \frac{d+2}{2}\th_2)}}_{L_t^2} \\
\lsm & \de_0 N_0^{2(1-s) -\frac12\th_2+ ( \frac{dp-2}{2} + (\frac{2(p+1)}{p+2} + \frac{d+2}{2})\th_2)(1-s)} \normb{t^{(\frac{dp}{4}-1)(\frac{dp}{2} + \frac{d+2}{2}\th_2)}}_{L_t^2}.
}
Recall that $\th_2 = \frac{(dp-2)^2}{2(d+2)}$,
then
\EQ{
-\frac12<(\frac{dp}{4}-1)(\frac{dp}{2} + \frac{d+2}{2}\th_2)<0,
}
Consequently,
\EQ{
\normb{t^{(\frac{dp}{4}-1)(\frac{dp}{2} + \frac{d+2}{2}\th_2)}}_{L_t^2} \lsm 1.
}
Moreover, by the choice of $s_0$ in Definition \ref{defn:fra-scattering-s0}, we have
\EQ{
1-s<\frac{dp-2}{2(d+2)(dp-1)},
}
Note also that $\frac{2(p+1)}{p+2}<\frac{d+2}{2}$, then we have
\EQ{
&-\frac12\th_2+ ( \frac{dp-2}{2} + (\frac{2(p+1)}{p+2} + \frac{d+2}{2})\th_2)(1-s) \\
< & -\frac{(dp-2)^2}{4(d+2)}+ ( \frac{dp-2}{2} + (d+2)\frac{(dp-2)^2}{2(d+2)})(1-s) \\
< & -\frac{(dp-2)^2}{4(d+2)}+ \frac12(dp-2)(dp-1)(1-s) \\
<&0.
}
This gives
\EQ{
N_0^{2(1-s) -\frac12\th_2+ ( \frac{dp-2}{2} + (\frac{2(p+1)}{p+2} + \frac{d+2}{2})\th_2)(1-s)} \lsm N_0^{2(1-s)}.
}
Therefore, we have
\EQn{\label{esti:fra-energy-bound-wv-high-low-w-dd}
\eqref{eq:fra-energy-bound-wv-high-low-w-dd} \lsm & \de_0 N_0^{2(1-s)}.
}

$\bullet$ \textbf{Estimate of \eqref{eq:fra-energy-bound-vv-high-high-dd}.} By H\"older's inequality,
\EQ{
\eqref{eq:fra-energy-bound-vv-high-high-dd} 
\lsm & \sum_{N_1\loe N_2\lsm N} N^{0+} \norm{\nabla\V_{N_1}}_{L_t^2 L_x^{\frac{2d}{d-2}}} \norm{\nabla\V_{N_2}}_{L_t^{\frac{4}{dp-2}} L_x^{\frac{2d}{d+2-dp}}} \norm{\U_{N}}_{L_t^{\frac{4p}{4-dp}}L_x^2}^p \\
\lsm & \de_0^2 \sum_{N\in2^\N} N^{0+} N^{2(1-s)} \norm{\U_{N}}_{L_t^{\frac{4p}{4-dp}}L_x^2}^p \\
\lsm & \de_0^2 \normb{\jb{\nabla}^{\frac2p(1-s)+}\U}_{L_t^{\frac{4p}{4-dp}}L_x^2}^p.
}
By Lemma \ref{lem:fra-linear}, interpolation, and \eqref{eq:fra-energy-bound-w},
\EQ{
\normb{\jb{\nabla}^{\frac2p(1-s)+}\U}_{L_x^2} \lsm & \norm{\jb{\nabla}^s\V}_{L_x^2} + \normb{\jb{\nabla}^{\frac2p(1-s)+}\W}_{L_x^2} \\
\lsm & \de_0 + \norm{\nabla\W}_{L_x^2}^{\frac2p(1-s)+}.
}
By this inequality and \eqref{eq:fra-mass-bound-w},
\EQ{
\eqref{eq:fra-energy-bound-vv-high-high-dd} 
\lsm & \de_0^2 \normb{\jb{\nabla}^{\frac2p(1-s)+}\U}_{L_t^{\frac{4p}{4-dp}}L_x^2}^p \\
\lsm & \de_0^2 \normb{t^{(\frac{dp}{4}-1)\frac2p(1-s)}}_{L_t^{\frac{4p}{4-dp}}}^p N_0^{2(1-s)^2}.
}
Since $s>\frac12$,
\EQ{
(\frac{dp}{4}-1)\frac2p(1-s)\cdot\frac{4p}{4-dp}=-2(1-s)>-1,
}
which yields
\EQ{
\normb{t^{(\frac{dp}{4}-1)\frac2p(1-s)}}_{L_t^{\frac{4p}{4-dp}}} \lsm 1.
}
Consequently,
\EQn{\label{esti:fra-energy-bound-vv-high-high-dd}
\eqref{eq:fra-energy-bound-vv-high-high-dd} 
\lsm \de_0^2 N_0^{2(1-s)}.
}

$\bullet$ \textbf{Estimate of \eqref{eq:fra-energy-bound-vv-high-low-v-dd}.} Recall that $\th_1= \frac{2-(d-2)p}{10}$, then by H\"older's inequality, \eqref{eq:fra-energy-bound-wv-high-low-v-dd-bi}, Lemma \ref{lem:fra-linear}, and Sobolev's inequality,
\EQ{
\eqref{eq:fra-energy-bound-vv-high-low-v-dd} 
\lsm & \sum_{N_1\loe N_2}\sum_{N:N\ll N_2} N^{0+} \norm{\nabla \V_{N_2} \V_{N}}_{L_{t,x}^2}^{\th_1} \norm{\nabla \V_{N_2}}_{L_t^2 L_x^{\frac{2d}{d-2}}}^{1-\th_1} \normb{\norm{\nabla\V_{N_1}}_{L_x^2} \norm{\V_{N}}_{L_x^{\frac{d(p-\th_1)}{1-\th}}}^{p-\th_1}}_{L_t^2} \\
\lsm & \de_0^{2\th_1} \sum_{N_1\loe N_2}\sum_{N:N\ll N_2} N_2^{\th_1(\frac12-s)} N^{\th_1(\frac{d-1}{2}-s)+} \norm{\nabla \V_{N_2}}_{L_t^2 L_x^{\frac{2d}{d-2}}}^{1-\th_1} \normb{\norm{\nabla\V_{N_1}}_{L_x^2} \norm{\V_{N}}_{L_x^{\frac{d(p-\th_1)}{1-\th_1}}}^{p-\th_1}}_{L_t^2} \\
\lsm & \de_0^{2} \sum_{N_1\loe N_2}\sum_{N:N\ll N_2} N_2^{-\frac12\th_1 +1 -s} N^{\th_1(\frac{d-1}{2}-s)+}\norm{\nabla\V_{N_1}}_{L_t^\I L_x^2} \normb{\jb{\nabla}^{\frac{dp-2-(d-2)\th_1}{2(p-\th_1)}}\V_{N}}_{L_t^\I L_x^{2}}^{p-\th_1} \\
\lsm & \de_0^{2} \sum_{N_1\loe N_2}\sum_{N:N\ll N_2} N_2^{-\frac12\th_1 +1 -s} N^{\th_1(\frac{d-1}{2}-s) + \frac{dp-2-(d-2)\th_1}{2}-(p-\th_1)s+}N_1^{1-s} \\
\lsm & \de_0^{2} \sum_{N\ll N_2} N_2^{-\frac12\th_1 +2(1 -s)} N^{\frac{dp-2}{2}-ps + \frac12\th_1+}.
}
Recall the choice of $s_0$ in Definition \ref{defn:fra-scattering-s0}, we have
\EQ{
1-s<\frac{2-(d-2)p}{40},
}
then we have
\EQ{
	-\frac12\th_1 +2(1 -s) < 0,
}
and 
\EQ{
	\frac{dp-2}{2}-ps + \frac12\th_1 = & -\frac{2-(d-2)p}{2} + p(1-s)+ \frac12\th_1 \\
	\loe & -\frac{2-(d-2)p}{2} + \frac1{10}(2-(d-2)p) + \frac{2-(d-2)p}{20} \\
	= & -\frac{7(2-(d-2)p)}{20}<0.
}
Therefore, we can obtain
\EQn{\label{esti:fra-energy-bound-vv-high-low-v-dd}  
\eqref{eq:fra-energy-bound-vv-high-low-v-dd}  
	\lsm \de_0^{2}.
}

$\bullet$ \textbf{Estimate of \eqref{eq:fra-energy-bound-vv-high-low-w-dd}.} 
Recall that $\th_2 = \frac{(dp-2)^2}{2(d+2)}$, then by H\"older's inequality,
\EQ{
\eqref{eq:fra-energy-bound-vv-high-low-w-dd} \lsm & \sum_{N_1\loe N_2}\sum_{N:N\ll N_2} N^{0+} \norm{\nabla\V_{N_2}\W_{N}}_{L_{t,x}^2}^{\th_2} \norm{\V_{N_2}}_{L_t^2 L_x^{\frac{2d}{d-2}}}^{1-\th_2} \normb{\norm{\nabla\V_{N_1}}_{L_x^2} \norm{\W_{N}}_{L_x^{\frac{d(p-\th_2)}{1-\th_2}}}^{p-\th_2}}_{L_t^2} \\
\lsm & \de_0 \sum_{N \ll N_2} N_2^{-\frac12\th_2+2(1-s)} N_0^{\frac{2(p+1)\th_2}{p+2}(1-s)}  \normb{\jb{\nabla}^{\frac{(2dp-p+2d-2)\th_2}{2(p+2)(p-\th_2)}+}\W_{N_1}}_{L_t^{2(p-\th_2)}L_x^{\frac{d(p-\th_2)}{1-\th_2}}}^{p-\th_2}.
}
By Bernstein's inequality,  interpolation, and \eqref{eq:fra-mass-bound-w},
\EQ{
\normb{\jb{\nabla}^{\frac{(2dp-p+2d-2)\th_2}{2(p+2)(p-\th_2)}+}\W_{N}}_{L_x^{\frac{d(p-\th_2)}{1-\th_2}}} 
\lsm N^{0-} \norm{\nabla\W}_{L_x^2}^{\frac{dp-2}{2(p-\th_2)} + \frac{d+2}{2(p-\th_2)}\th_2}.
}
Therefore, by Lemma \ref{lem:fra-linear} and \eqref{eq:fra-energy-bound-w},
\EQ{
\eqref{eq:fra-energy-bound-vv-high-low-w-dd} \lsm & \de_0 \sum_{N_2\in2^\N} N_2^{-\frac12\th_2+2(1-s)} N_0^{\frac{2(p+1)\th_2}{p+2}(1-s)} \normb{\norm{\nabla\W}_{L_x^2}^{\frac{dp-2}{2} + \frac{d+2}{2}\th_2}}_{L_t^2} \\
\lsm & \de_0 \sum_{N_2\in2^\N} N_2^{-\frac12\th_2+2(1-s)} N_0^{(\frac{2(p+1)\th_2}{p+2} + \frac{dp-2}{2} + \frac{d+2}{2}\th_2)(1-s)} \normb{t^{(\frac{dp}{4}-1)(\frac{dp-2}{2} + \frac{d+2}{2}\th_2)}}_{L_t^2} \\
\lsm & \de_0 N_0^{2(1-s) -\frac12\th_2+ ( \frac{dp-2}{2} + (\frac{2(p+1)}{p+2} + \frac{d+2}{2})\th_2)(1-s)} \normb{t^{(\frac{dp}{4}-1)(\frac{dp-2}{2} + \frac{d+2}{2}\th_2)}}_{L_t^2}.
}
Recall the definition of $\th_2= \frac{(dp-2)^2}{2(d+2)}$, then
\EQ{
-\frac12<(\frac{dp}{4}-1)(\frac{dp-2}{2} + \frac{d+2}{2}\th_2)<0,
}
which gives
\EQ{
\normb{t^{(\frac{dp}{4}-1)(\frac{dp-2}{2} + \frac{d+2}{2}\th_2)}}_{L_t^2} \lsm 1.
}
Moreover, by the choice of $s_0$ in Definition \ref{defn:fra-scattering-s0}, we have
\EQ{
	1-s<\frac{dp-2}{2(d+2)(dp-1)},
}
and by $\frac{2(p+1)}{p+2}<\frac{d+2}{2}$ and $\th_2= \frac{(dp-2)^2}{2(d+2)}$,
\EQ{
& -\frac12\th_2+ ( \frac{dp-2}{2} + (\frac{2(p+1)}{p+2} + \frac{d+2}{2})\th_2)(1-s) \\
< & -\frac12\th_2+ ( \frac{dp-2}{2} + (d+2)\th_2)(1-s)\\
< & -\frac{(dp-2)^2}{4(d+2)}+ \frac{(dp-2)(dp-1)}{2}(1-s) \\
<& 0.
}
Therefore, we have
\EQn{\label{esti:fra-energy-bound-vv-high-low-w-dd}
\eqref{eq:fra-energy-bound-vv-high-low-w-dd} \lsm & \de_0 N_0^{2(1-s)}.
}

Finally, Lemma \ref{lem:fra-energy-dd} in 2D and higher dimensional cases follows from \eqref{esti:fra-energy-bound-wv-high-high-v-dd}, \eqref{esti:fra-energy-bound-wv-high-high-w-dd}, \eqref{esti:fra-energy-bound-wv-high-low-v-dd}, \eqref{esti:fra-energy-bound-wv-high-low-w-dd}, \eqref{esti:fra-energy-bound-vv-high-high-dd}, \eqref{esti:fra-energy-bound-vv-high-low-v-dd}, and \eqref{esti:fra-energy-bound-vv-high-low-w-dd}.
\end{proof}

\subsection{Global spacetime estimate}
Define $I_k:=[2^{k-1},2^k]$ with $k\in\Z$ such that $2^k\loe T_0$. In order to consider the case when $p\goe\frac{2}{d-2}$, we will exploit some spacetime estimate $\norm{\U}_{L_t^q L_x^r}$ for $r>\frac{2d}{d-2}$. 
\begin{lem}[Sobolev inequality implies spacetime estimate]\label{lem:fra-energy-global-spacetime-estimate}
Let $5\loe d\loe 11$, $\frac{2}{d-2}\loe p <\min\fbrk{\frac4d,\frac{2}{d-4}}$, $T_0>0$, and $k\in\Z$ such that $2^k\loe T_0$. Suppose that $s_0$ is defined in Definition \ref{defn:fra-scattering-s0} ,and the assumptions in Theorem \ref{thm:frac-weighted-dd} and the bootstrap hypothesis \eqref{eq:fra-energy-bootstrap-hypothesis-dd} hold. 
If there exists some $0\loe l<s$ and $2<r_0<\frac{2d}{d-2}$ such that the Sobolev's inequality holds: for any $f\in W_x^{l,r_0}$,
\EQn{\label{eq:fra-energy-global-spacetime-estimate-sobolev}
\norm{f}_{L_x^{dp}(\R^d)} \lsm \norm{\jb{\nabla}^lf}_{L_x^{r_0}(\R^d)},
}
then, for any $L^2$-admissible pair $(q,r)$,
\EQ{
\normb{\jb{\nabla}^{l}\U}_{L_t^qL_x^r(I_k\times\R^d)} \lsm & 2^{(\frac{dp}{4}-1)lk} N_0^{l(1-s)} +  2^{(\frac{2+l}{1-p}\cdot(\frac{dp}{4}-1) + (\frac1{2p}-\frac{1}{q_0})\cdot \frac{p}{1-p})k} N_0^{\frac{l}{1-p}(1-s)},
}
where $q_0$ denotes the exponent such that $(q_0,r_0)$ is $L^2$-admissible.
\end{lem}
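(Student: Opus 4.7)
The plan is to apply Strichartz to the pseudo-conformally transformed equation \eqref{eq:nls-pc} on $I_k=[2^{k-1},2^k]$, handle the initial datum by interpolation between mass and energy bounds, and close the forcing term via the assumed Sobolev inequality together with a H\"older-in-time split and a bootstrap/Young's absorption. Concretely, Lemma~\ref{lem:strichartz} with dual admissible pair $(q_0,r_0)$ yields, for any admissible $(q,r)$,
\begin{align*}
\|\langle\nabla\rangle^l\U\|_{L_t^qL_x^r(I_k)} \lsm \|\langle\nabla\rangle^l\U(2^{k-1})\|_{L_x^2} + \|\langle\nabla\rangle^l(t^{\frac{dp}{2}-2}|\U|^p\U)\|_{L_t^{q_0'}L_x^{r_0'}(I_k)}.
\end{align*}

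For the initial datum I would decompose $\U=\V+\W$: the linear piece is controlled by $\|\langle\nabla\rangle^l\V\|_{L_t^\infty L_x^2}\lsm\delta_0$ since $l<s$ (Lemma~\ref{lem:fra-linear}), while for $\W$ the interpolation between the mass bound \eqref{eq:fra-mass-bound-w} and the energy bound \eqref{eq:fra-energy-bound-w} gives $\|\langle\nabla\rangle^l\W(2^{k-1})\|_{L_x^2}\lsm 2^{(\frac{dp}{4}-1)lk}N_0^{l(1-s)}$, which is exactly the first summand of the claim. For the forcing term, the fractional chain rule (Lemma~\ref{lem:frac-chain}) followed by the assumed Sobolev inequality \eqref{eq:fra-energy-global-spacetime-estimate-sobolev} yields, pointwise in $t$,
\begin{align*}
\|\langle\nabla\rangle^l(|\U|^p\U)(t)\|_{L_x^{r_0'}} \lsm \|\U(t)\|_{L_x^{dp}}^p\|\langle\nabla\rangle^l\U(t)\|_{L_x^{r_0}} \lsm \|\langle\nabla\rangle^l\U(t)\|_{L_x^{r_0}}^{p+1}.
\end{align*}
A three-way H\"older in time on $I_k$, separating one copy of $\|\langle\nabla\rangle^l\U\|_{L_x^{r_0}}$ into $L_t^{q_0}$ (matching the norm $X:=\|\langle\nabla\rangle^l\U\|_{L_t^{q_0}L_x^{r_0}(I_k)}$ produced on the left of Strichartz when $(q,r)=(q_0,r_0)$), placing the remaining $p$ copies into $L_t^{q_0/p}$, and collecting the weight $t^{\frac{dp}{2}-2}$ into the leftover $L_t^\beta$ slot, results in
\begin{align*}
\|t^{\frac{dp}{2}-2}\langle\nabla\rangle^l(|\U|^p\U)\|_{L_t^{q_0'}L_x^{r_0'}(I_k)} \lsm 2^{k\mu}X^{p+1},
\end{align*}
with an exponent $\mu=\mu(d,p,q_0)$ explicitly determined by the H\"older arithmetic.

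Specializing $(q,r)=(q_0,r_0)$ I arrive at the closed scalar relation $X\lsm A+2^{k\mu}X^{p+1}$ with $A\sim 2^{(\frac{dp}{4}-1)lk}N_0^{l(1-s)}+\delta_0$. Since $p<\frac{4}{d}\le\frac{4}{5}<1$ in the regime $d\geq 5$, a continuity/bootstrap argument on a sub-interval of $I_k$ (initiated from the local $H^s$-control in Corollary~\ref{cor:fra-local-u} together with the $L^2$ well-posedness in Lemma~\ref{lem:local-l2}) lets me absorb the nonlinear term via Young's inequality, producing $X\lsm A+(2^{k\mu})^{1/(1-p)}A^{p/(1-p)}$. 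Substituting this estimate back into the Strichartz inequality for arbitrary admissible $(q,r)$ on the left produces the claimed two-term bound.

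The main obstacle I anticipate is the algebraic bookkeeping: one must verify that the explicit $\mu$ emerging from the H\"older-in-time step, once passed through the $(1-p)^{-1}$ dilation coming from Young's inequality, reproduces precisely the exponent $\frac{2+l}{1-p}(\frac{dp}{4}-1)+\frac{p}{1-p}(\frac{1}{2p}-\frac{1}{q_0})$ in the claim. The appearance of $\frac{1}{2p}$ in the final answer strongly suggests that the correct time-H\"older split must place the $p$ extra copies of the Sobolev factor in an $L_t^{q_0/p}$ slot, which together with the $L^2$-admissibility identity $\frac{2}{q_0}+\frac{d}{r_0}=\frac{d}{2}$ naturally produces an $L_t^{2p}$-type exponent; verifying this matching, while simultaneously tracking the consistent absorption of $N_0^{l(1-s)}$ into the $\frac{1}{1-p}$-th power, is the delicate but ultimately computational part of the argument.
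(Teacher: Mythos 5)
Your overall skeleton (Strichartz on $I_k$, mass--energy interpolation for the datum, the assumed Sobolev inequality plus H\"older in time for the forcing, then a self-improving scalar inequality specialized at $(q_0,r_0)$ and re-inserted for general admissible pairs) is the same as the paper's, but two steps in your forcing estimate do not work as written. First, the exponents in your chain-rule step do not match: with the forcing measured in $L_t^{q_0'}L_x^{r_0'}$, the split $\norm{\jb{\nabla}^l(|\U|^p\U)}_{L_x^{r_0'}}\lsm \norm{\U}_{L_x^{dp}}^p\norm{\jb{\nabla}^l\U}_{L_x^{r_0}}$ requires $\frac{1}{r_0'}=\frac1d+\frac1{r_0}$, i.e.\ $r_0=\frac{2d}{d-1}$, which fails for the general $r_0\in(2,\frac{2d}{d-2})$ allowed by the lemma (and in particular for the pairs $(q_1,r_1)$, $(q_2,r_2)$ the lemma is later applied to). The paper instead puts the forcing in the endpoint dual space $L_t^2L_x^{\frac{2d}{d+2}}$ (legitimate since $d\goe5$) and splits $\frac{d+2}{2d}=\frac12+\frac1d$, placing the derivative factor in $L_x^2$, where it is bounded pointwise in time by $2^{(\frac{dp}{4}-1)lk}N_0^{l(1-s)}$ via the mass--energy interpolation, and only $\norm{\U}_{L_x^{dp}}^p$ is converted through the Sobolev inequality and the $L_t^{2p}\to L_t^{q_0}$ H\"older step into $\norm{\jb{\nabla}^l\U}_{L_t^{q_0}L_x^{r_0}}^p$.

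Second, and more fundamentally, because you keep the derivative factor inside the bootstrap norm, your closed relation is $X\lsm A+2^{k\mu}X^{p+1}$, which is \emph{superlinear} in the unknown $X$. Young's inequality cannot absorb a term of degree $p+1>1$ into a linear term, and a continuity argument on a sub-interval closes such a relation only under a smallness condition of the type $2^{k\mu}A^p\ll1$, which is not available here (indeed the second, $\frac{1}{1-p}$-amplified term in the lemma's conclusion exists precisely to cover the regime where this quantity is not small); your claimed output $X\lsm A+(2^{k\mu})^{1/(1-p)}A^{p/(1-p)}$ simply does not follow from that relation. The missing idea is to keep the $\jb{\nabla}^l\U$ factor out of the unknown: estimating it in $L_x^2$ by the energy bound leaves the forcing of degree only $p<\frac4d<1$ in $\norm{\jb{\nabla}^l\U}_{L_t^{q_0}L_x^{r_0}}$ (after splitting $\U=\V+\W$ and using Lemma \ref{lem:fra-linear} for $\V$), so that the sublinear Young inequality $B\,Y^p\loe\frac12 Y+CB^{\frac{1}{1-p}}$ absorbs unconditionally and produces exactly the exponents $\frac{2+l}{1-p}(\frac{dp}{4}-1)+(\frac1{2p}-\frac1{q_0})\frac{p}{1-p}$ and $N_0^{\frac{l}{1-p}(1-s)}$ in the statement.
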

\begin{proof}
In this lemma, we restrict the spacetime variable $(t,x)$ on $I_k\times\R^d$. By the equation and Strichartz estimate,
\EQ{
\normb{\jb{\nabla}^l\U}_{L_t^qL_x^r} \lsm \norm{\U(2^{k-1},\cdot)}_{H_x^l} + \normb{t^{\frac{dp}{2}-2}\jb{\nabla}^l(|\U|^p\U)}_{L_t^2 L_x^{\frac{2d}{d+2}}}.
}
By the fractional chain rule in Lemma \ref{lem:frac-chain},
\EQ{
\normb{\jb{\nabla}^l(|\U|^p\U)(t)}_{L_x^{\frac{2d}{d+2}}} \lsm \normb{\jb{\nabla}^l\U(t)}_{L_x^2} \norm{\U(t)}_{L_x^{dp}}^p.
}
By Lemma \ref{lem:fra-linear}, interpolation, \eqref{eq:fra-energy-bound-w}, and \eqref{eq:fra-mass-bound-w}, for any $t\in[2^{k-1},2^k]$,
\EQ{
\normb{\jb{\nabla}^l\U(t)}_{L_x^2} \lsm & \normb{\jb{\nabla}^l\V(t)}_{L_x^2} + \normb{\jb{\nabla}^l\W(t)}_{L_x^2}\\
\lsm & \de_0 + 2^{(\frac{dp}{4}-1)lk}N_0^{l(1-s)} \\
\lsm & 2^{(\frac{dp}{4}-1)lk}N_0^{l(1-s)}.
}
Particularly, we have that $\norm{\U(2^{k-1},\cdot)}_{H_x^l} \lsm 2^{(\frac{dp}{4}-1)lk}N_0^{l(1-s)}$. Therefore, applying H\"older's inequality,
\EQ{
\normb{\jb{\nabla}^l\U}_{L_t^qL_x^r} 
\lsm & 2^{(\frac{dp}{4}-1)lk} N_0^{l(1-s)} + 2^{(\frac{dp}{4}-1)(2+l)k} N_0^{l(1-s)} \cdot \norm{\jb{\nabla}^l\U}_{L_t^{2p} L_x^{r_0}}^p  \\
\lsm & 2^{(\frac{dp}{4}-1)lk} N_0^{l(1-s)} + 2^{\brkb{(\frac{dp}{4}-1)(2+l) + (\frac{1}{2p}-\frac{1}{q_0})p}k} N_0^{l(1-s)} \cdot \norm{\jb{\nabla}^l\U}_{L_t^{q_0} L_x^{r_0}}^p.
}
By Lemma \ref{lem:fra-linear},
\EQ{
\norm{\jb{\nabla}^l\U}_{L_x^{q_0}L_x^{r_0}} \lsm &  \norm{\jb{\nabla}^l\V}_{L_x^{q_0}L_x^{r_0}} + \norm{\jb{\nabla}^l\W}_{L_x^{q_0}L_x^{r_0}} \\
\lsm & \de_0 + \norm{\jb{\nabla}^l\W}_{L_x^{q_0}L_x^{r_0}},
}
Then, we get that 
\EQ{
\normb{\jb{\nabla}^l\U}_{L_t^qL_x^r} 
\lsm & 2^{(\frac{dp}{4}-1)lk} N_0^{l(1-s)}  \\
& + 2^{\brkb{(\frac{dp}{4}-1)(2+l) + (\frac{1}{2p}-\frac{1}{q_0})p}k} N_0^{l(1-s)} \cdot \brkb{\de_0^p + \norm{\jb{\nabla}^s\W}_{L_x^{q_0}L_x^{r_0}}^p} .
}
By Young's inequality and noting that $p<\frac4d<1$,
\EQ{
\normb{\jb{\nabla}^l\U}_{L_t^qL_x^r} 
\loe &  \frac12\norm{\jb{\nabla}^s\W}_{L_t^{q_0}L_x^{r_0}} + C\de_0 + C 2^{(\frac{dp}{4}-1)lk} N_0^{l(1-s)} \\
& + C 2^{\brkb{\frac{2+l}{1-p}\cdot(\frac{dp}{4}-1) + (\frac{1}{2p}-\frac{1}{q_0})\cdot \frac{p}{1-p}}k} N_0^{\frac{l}{1-p}(1-s)}.
}
Taking $(q,r)=(q_0,r_0)$ first, we obtain the $L_x^{q_0}L_x^{r_0}$-estimate, and then inserting this back again, we can  finish the proof of the lemma.
\end{proof}

\begin{lem}\label{lem:fra-energy-global-spacetime-estimate-bilinear}
Let the assumptions in Lemma \ref{lem:fra-energy-global-spacetime-estimate} hold, and $l,q_0,r_0$ be defined in Lemma \ref{lem:fra-energy-global-spacetime-estimate}. Then, for $N,N_1\in2^\N$ with $N_1\ll N$,
\EQ{
\norm{\nabla \V_N \U_{N_1}}_{L_{t,x}^2(I_k\times\R^d)} \lsm & \de_0 N_1N^{-1/2+(1-s)} \cdot\Big(2^{\brkb{(\frac{dp}{4}-1)(1+lp) +\frac{1}{2}-\frac{p}{q_0}}k} N_0^{l(1-s)} \\
& + 2^{\brkb{(1+\frac{(2+l)p}{1-p})\cdot(\frac{dp}{4}-1) + (\frac{1}{2p}-\frac{1}{q_0})\cdot \frac{p}{1-p}}k} N_0^{\frac{l}{1-p}(1-s)}\Big).
}
\end{lem}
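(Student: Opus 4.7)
The plan is to apply the bilinear Strichartz estimate of Lemma \ref{lem:bilinearstrichartz} on $I_k=[2^{k-1},2^k]$ with output pair $(q,r)=(2,2)$ and $L^2$-admissible dual pair $(\tilde q,\tilde r)=(q_0,r_0)$, anchored at $a=2^{k-1}\in I_k$. This yields
\begin{equation*}
\norm{\nabla \V_N\,\U_{N_1}}_{L^2_{t,x}(I_k)}\lsm \frac{N_1^{(d-1)/2}}{N^{1/2}}\,\norm{\nabla \V_N}_{S^*(I_k)}\,\norm{\U_{N_1}}_{S^*(I_k)}.
\end{equation*}
Since $\nabla\V_N$ is a free Schr\"odinger solution, $\norm{\nabla \V_N}_{S^*(I_k)}=\norm{\nabla \V_N(2^{k-1})}_{L^2}\lsm N\norm{\V_{+,N}}_{L^2}\lsm \delta_0 N^{1-s}$ by Lemma \ref{lem:fra-linear}. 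Inserting this, together with Bernstein on $\U_{N_1}$ trading $N_1^{(d-3)/2}$ for $l$ derivatives, produces the outer prefactor $\delta_0 N_1 N^{-1/2+(1-s)}$.

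It remains to bound $\norm{\U_{N_1}}_{S^*(I_k)}$, which splits as the initial-data piece $\norm{\U_{N_1}(2^{k-1})}_{L^2}$ plus the forcing piece $\normb{t^{\frac{dp}{2}-2}P_{N_1}(|\U|^p\U)}_{L^{q_0'}_tL^{r_0'}_x(I_k)}$. For the data piece, Bernstein gives $\norm{\U_{N_1}(2^{k-1})}_{L^2}\lsm N_1^{-l}\norm{\jb{\nabla}^l\U(2^{k-1})}_{L^2}$, and the pointwise bound $\norm{\jb{\nabla}^l\U(t)}_{L^2}\lsm 2^{(\frac{dp}{4}-1)lk}N_0^{l(1-s)}$ extracted from the proof of Lemma \ref{lem:fra-energy-global-spacetime-estimate} (interpolation between \eqref{eq:fra-energy-bound-w} and \eqref{eq:fra-mass-bound-w}) controls it. After accounting for the time-interval length $2^k$ via H\"older (the factor $2^{(\frac12-\frac{p}{q_0})k}$), this produces the first term in the bracket, with $2^k$-exponent $(\frac{dp}{4}-1)(1+lp)+\frac12-\frac{p}{q_0}$ and $N_0$-exponent $l(1-s)$.

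For the forcing piece, the fractional chain rule (Lemma \ref{lem:frac-chain}) combined with the hypothesized Sobolev embedding \eqref{eq:fra-energy-global-spacetime-estimate-sobolev} gives
\begin{equation*}
\normb{\jb{\nabla}^l(|\U|^p\U)}_{L^{r_0'}_x}\lsm \normb{\jb{\nabla}^l\U}_{L^{r_0}_x}\norm{\U}_{L^{dp}_x}^p\lsm \normb{\jb{\nabla}^l\U}_{L^{r_0}_x}^{p+1},
\end{equation*}
and H\"older in time together with Bernstein in $N_1$ yields
\begin{equation*}
\normb{t^{\frac{dp}{2}-2}P_{N_1}(|\U|^p\U)}_{L^{q_0'}_tL^{r_0'}_x(I_k)}\lsm N_1^{-l}\,2^{(\frac{dp}{2}-2)k}2^{(\frac{1}{2p}-\frac{1}{q_0})pk}\normb{\jb{\nabla}^l\U}_{L^{q_0}_tL^{r_0}_x(I_k)}^{p+1}.
\end{equation*}
Plugging in the two-term bound of Lemma \ref{lem:fra-energy-global-spacetime-estimate} and keeping the dominant branch (the nonlinear-driven one, which produces the factor $N_0^{l(1-s)/(1-p)}$ and the $2^k$ exponent $(1+\frac{(2+l)p}{1-p})(\frac{dp}{4}-1)+(\frac{1}{2p}-\frac{1}{q_0})\frac{p}{1-p}$) yields the second bracket term. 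Assembling the two contributions and factoring out the prefactor completes the proof.

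The main obstacle is the careful bookkeeping of exponents: one must confirm that the Sobolev pair $(r_0,l)$ in \eqref{eq:fra-energy-global-spacetime-estimate-sobolev} matches the admissible pair $(q_0,r_0)$ appearing in the dual Strichartz norm, and that the two branches of Lemma \ref{lem:fra-energy-global-spacetime-estimate} produce precisely the two declared bracket terms after being raised to the $(p+1)$-th power. The gain from the hypothesis $p<1$ (which holds since $p<4/d$ and $d\ge 5$) is what makes the nonlinear term close, and this explains the appearance of the denominators $1-p$ in the second bracket term.
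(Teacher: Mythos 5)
Your high-level strategy is the same as the paper's (bilinear Strichartz with the free factor $\nabla\V_N$, data-plus-Duhamel splitting of the $S^*$-norm of $\U_{N_1}$, then the Sobolev hypothesis \eqref{eq:fra-energy-global-spacetime-estimate-sobolev} and Lemma \ref{lem:fra-energy-global-spacetime-estimate} to close), but two of your steps fail as written. The first is the prefactor: Lemma \ref{lem:bilinearstrichartz} with $q=r=2$ gives the constant $N_1^{(d-1)/2}N^{-1/2}$, and you propose to reach the stated $N_1N^{-1/2+(1-s)}$ by ``Bernstein on $\U_{N_1}$ trading $N_1^{(d-3)/2}$ for $l$ derivatives''. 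This is quantitatively impossible: in both cases of Lemma \ref{lem:fra-energy-global-spacetime-estimate-sobolev} one has $l<1$ (either $l=s<1$ or $l=s_1-100\th_1<1$), whereas $(d-3)/2\goe 1$ for $d\goe5$ (and equals $4$ for $d=11$), so a Bernstein gain of $N_1^{-l}$ cannot absorb $N_1^{(d-3)/2}$. The paper does not attempt such a trade: its proof invokes the bilinear estimate directly in the form with constant $N_1N^{-1/2}$ and bounds the data piece $\norm{P_{N_1}\U(2^{k-1})}_{L_x^2}\lsm1$ by mass, so your proposed mechanism is not what closes this step.

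The second gap is the forcing estimate. Your display $\normb{\jb{\nabla}^l(|\U|^p\U)}_{L_x^{r_0'}}\lsm\normb{\jb{\nabla}^l\U}_{L_x^{r_0}}\norm{\U}_{L_x^{dp}}^p$ is exponent-inconsistent: the chain rule/H\"older pairing forces $\frac1{r_0'}=\frac1{r_0}+\frac1d$, i.e. $r_0=\frac{2d}{d-1}$, which is not the pair produced in Lemma \ref{lem:fra-energy-global-spacetime-estimate-sobolev}; likewise your temporal factor $2^{(\frac12-\frac{p}{q_0})k}$ only matches a correct H\"older count when $q_0=4$. More importantly, even after repairing these exponents, your structure carries $(p+1)$ copies of $\norm{\jb{\nabla}^l\U}_{L_t^{q_0}L_x^{r_0}}$, so the nonlinear branch of Lemma \ref{lem:fra-energy-global-spacetime-estimate} produces $N_0^{\frac{(p+1)l}{1-p}(1-s)}$, which strictly overshoots the claimed $N_0^{\frac{l}{1-p}(1-s)}$. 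The paper avoids both problems by measuring the forcing in the endpoint dual norm $L_t^2L_x^{\frac{2d}{d+2}}$ and splitting it, with no chain rule, as $2^{(\frac{dp}4-1)k}\norm{\U}_{L_t^\I L_x^2}\norm{\U}_{L_t^{2p}L_x^{dp}}^p$: the first factor is bounded by mass, and only $\norm{\U}_{L_t^{2p}L_x^{dp}}$ is converted, via the Sobolev hypothesis and H\"older in time, into $\norm{\jb{\nabla}^l\U}_{L_t^{q_0}L_x^{r_0}}$, to which Lemma \ref{lem:fra-energy-global-spacetime-estimate} is applied with power $p$ only. You would need to restructure your forcing estimate along these lines to land on the stated exponents.
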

\begin{proof}
In this lemma, we restrict the spacetime variable $(t,x)$ on $I_k\times\R^d$. By the bilinear Strichartz estimate in Lemma \ref{lem:bilinearstrichartz},
\EQ{
\norm{\nabla \V_N \U_{N_1}}_{L_{t,x}^2} \lsm & \frac{N_1}{N^{1/2}} \norm{NP_N\V(2^{k-1})}_{L_x^2}\\
& \cdot \brkb{\norm{P_N\U(2^{k-1})}_{L_x^2}+\normb{t^{\frac{dp}{4}-1}P_N(|\U|^{p}\U)}_{L_t^2 L_x^{\frac{2d}{d+2}}}} \\
\lsm & N_1N^{-1/2+(1-s)} \norm{N^sP_N\V(2^{k-1})}_{L_x^2}\\
& \cdot\brkb{\norm{P_N\U(2^{k-1})}_{L_x^2}+2^{(\frac{dp}{4}-1)k}\norm{\U}_{L_t^\I L_x^2} \norm{\U}_{L_t^{2p} L_x^{dp}}^p}.
}
Note that by Lemma \ref{lem:fra-linear},
\EQ{
\norm{N^sP_N\V(2^{k-1})}_{L_x^2} \lsm \de_0,
}
and by Lemma \ref{lem:fra-initial-data} and \eqref{eq:fra-mass-bound-w},
\EQ{
\norm{P_N\U(2^{k-1})}_{L_x^2} + \norm{\U}_{L_t^\I L_x^2} \lsm 1.
}
By the Sobolev's and H\"older's inequalities,
\EQ{
\norm{\U}_{L_t^{2p} L_x^{dp}} \lsm & 2^{(\frac{1}{2p}-\frac{1}{q_0})k} \norm{\jb{\nabla}^l\U}_{L_t^{q_0} L_x^{r_0}} \\
\lsm & 2^{\brkb{(\frac{dp}{4}-1)l +\frac{1}{2p}-\frac{1}{q_0}}k} N_0^{l(1-s)} + 2^{\brkb{\frac{2+l}{1-p}\cdot(\frac{dp}{4}-1) + (\frac{1}{2p}-\frac{1}{q_0})\cdot \frac{1}{1-p}}k} N_0^{\frac{l}{1-p}(1-s)}.
}
This finishes the proof.
\end{proof}

We will apply Lemmas \ref{lem:fra-energy-global-spacetime-estimate} and \ref{lem:fra-energy-global-spacetime-estimate-bilinear} in two different settings for $l,q_0,r_0$. Note that our assumptions on $(d,p)$ on Lemma \ref{lem:fra-energy-global-spacetime-estimate} can be rephrased as
\EQn{\label{eq:fra-energy-bound-wv-high-high-5d-case}
\frac{2}{d-2}\loe p<\frac{4}{d}\text{, when } 5\loe d\loe 8;\quad \frac{2}{d-2}\loe p<\frac{2}{d-4}\text{, when }9\loe d\loe 11.
}
Then, we split the condition \eqref{eq:fra-energy-bound-wv-high-high-5d-case} into two subcases:
\begin{gather}
\label{eq:fra-energy-bound-wv-high-high-5d-case-1}\tag{Case A}
\frac{2}{d-2}<p<\frac4d\text{, when }d=5,6;\quad \frac12<p<\frac47\text{, when }d=7.
\end{gather}
and
\begin{gather}
\label{eq:fra-energy-bound-wv-high-high-5d-case-2} \tag{Case B}
\aligned
p=\frac{2}{d-2}\text{, when }d=5,6;\quad \frac{2}{d-2}\loe p\loe \frac12\text{, when }d=7;\\
\frac{2}{d-2}\loe p <\frac{2}{d-4}\text{, when }8\loe d\loe 11.
\endaligned
\end{gather}
We gather two inequalities that will be used frequently below:
\begin{lem}
[Two Sobolev's inequalities]
\label{lem:fra-energy-global-spacetime-estimate-sobolev}
\begin{enumerate}
\item 
Let the condition \eqref{eq:fra-energy-bound-wv-high-high-5d-case-1} holds. Denote that $s_1,q_1,r_1$ by
\EQ{
s_1:=\frac{1-p}{p}\text{, }\frac{1}{q_1}:=\frac d4-\frac{s_1}2-\frac{1}{2p} +50d\th_1\text{,  }\frac{1}{r_1}:=\frac{s_1}{d}+\frac{1}{dp}-100\th_1,
}
then $(q_1,r_1)$ is $L_x^2$-admissible, and for any $f\in W_x^{s_1-100\th_1,r_1}$,
\EQ{
\norm{f}_{L_x^{dp}(\R^d)} \lsm \norm{\jb{\nabla}^{s_1-100\th_1}f}_{L_x^{r_1}(\R^d)}.
}
\item Let the condition \eqref{eq:fra-energy-bound-wv-high-high-5d-case-2} holds. Denote $q_2,r_2$ by
\EQ{
\frac{1}{q_2}:=\frac{d}{4}-\frac{s}{2}-\frac{1}{2p}\text{, and }\frac{1}{r_2}:= \frac{s}{d} + \frac{1}{dp},
}
then $(q_2,r_2)$ is $L_x^2$-admissible, and for any $f\in W^{s,r_2}$,
\EQ{
\norm{f}_{L_x^{dp}(\R^d)} \lsm \norm{\jb{\nabla}^{s}f}_{L_x^{r_2}(\R^d)}.
}
\end{enumerate}
\end{lem}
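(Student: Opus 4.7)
The statement contains two assertions of identical flavor---admissibility of a Strichartz pair $(q_i,r_i)$ together with a Sobolev embedding into $L^{dp}$---so my plan is a direct algebraic verification, one subcase at a time. For admissibility, I would compute $\tfrac{2}{q_i}+\tfrac{d}{r_i}$ from the defining formulas and check the range constraints $2\le q_i,r_i\le\infty$ against the specified bounds on $p$, using the smallness of $\theta_1=\tfrac12(1-s_0)$ from Definition~\ref{defn:fra-scattering-s0}. For the Sobolev step, I would apply the standard embedding $W^{\alpha,r}(\mathbb R^d)\hookrightarrow L^{dp}(\mathbb R^d)$, valid provided $r\le dp$ and $\alpha\ge d(\tfrac1r-\tfrac1{dp})$.

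For Part (2), after substitution one has $\tfrac{2}{q_2}+\tfrac{d}{r_2}=2(\tfrac{d}{4}-\tfrac{s}{2}-\tfrac{1}{2p})+s+\tfrac{1}{p}=\tfrac{d}{2}$, so the scaling identity holds automatically. The conditions $q_2\ge 2$ and $r_2\ge 2$ reduce respectively to $s\ge \tfrac{d-2}{2}-\tfrac{1}{p}$ and $s\le\tfrac{d}{2}-\tfrac{1}{p}$; both hold in (Case B) by taking $s_0$ close to $1$ depending on $(d,p)$, since $p\ge\tfrac{2}{d-2}$ forces the first threshold to lie strictly below $1$, while the second is $\ge 1$. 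The Sobolev embedding is critical: $d(\tfrac{1}{r_2}-\tfrac{1}{dp})=s$ exactly, so $W^{s,r_2}\hookrightarrow L^{dp}$ is the scaling-invariant embedding, and $r_2\le dp$ reduces to $s\ge 0$.

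For Part (1), substituting $s_1=\tfrac{1-p}{p}$ and expanding the definitions of $q_1,r_1$ gives $\tfrac{2}{q_1}+\tfrac{d}{r_1}=\tfrac{d}{2}$ after the $s_1$ and $\theta_1$ contributions cancel. For the range, $q_1\ge 2$ holds because $p<\tfrac{4}{d}$ makes $\tfrac{d}{4}-\tfrac{1}{p}<0$, leaving room for $50d\theta_1$ once $\theta_1$ is small; $r_1\ge 2$ follows from the lower bound on $p$ in each dimension listed in (Case A), which gives $(d+2)p>4$. For the Sobolev step, $\alpha=s_1-100\theta_1$ exceeds the scaling threshold $d(\tfrac{1}{r_1}-\tfrac{1}{dp})=s_1-100d\theta_1$ by $100(d-1)\theta_1>0$ since $d\ge 5$, so the embedding is strictly subcritical; we also need $r_1\le dp$, i.e.\ $\tfrac{1-p}{dp}\ge 100\theta_1$, which holds once $\theta_1$ is chosen small.

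The main obstacle is the simultaneous calibration of $\theta_1$: several inequalities become tight near the boundary of (Case A), most notably $q_1\ge 2$ as $p\nearrow\tfrac{4}{d}$ (requiring $\theta_1\lesssim\tfrac{4-dp}{dp}$) and $r_1\le dp$ as $p\nearrow 1$ (requiring $\theta_1\lesssim\tfrac{1-p}{dp}$). Since Definition~\ref{defn:fra-scattering-s0} permits $s_0$, and hence $\theta_1$, to depend on both $d$ and $p$, one can take $\theta_1$ smaller than the minimum of the required upper bounds, closing the argument uniformly on the stated range of $(d,p)$.
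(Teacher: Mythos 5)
Your proposal is correct and takes essentially the same route as the paper, whose (very terse) proof likewise just verifies the scaling relation $\frac2q+\frac dr=\frac d2$, the range constraints (stated there as $r_i\le\frac{2d}{d-2}$, equivalent to $q_i\ge2$), and the Sobolev conditions $r_i\le dp$ and $\alpha\ge d(\frac{1}{r_i}-\frac{1}{dp})$, using the smallness of $\th_1$ from Definition \ref{defn:fra-scattering-s0}; your explicit calibration of $\th_1$ against $\frac{4-dp}{dp}$ and $\frac{1-p}{dp}$ is exactly what the paper's choice of $s_0$, $\th_1$ is implicitly doing. One small slip: in Part (2) the fact that the threshold $\frac{d-2}{2}-\frac1p$ lies strictly below $1$ follows from the upper bound $p<\frac{2}{d-4}$ (together with the explicit ranges when $d\le7$), not from $p\ge\frac{2}{d-2}$, which instead yields the other inequality $s\le\frac d2-\frac1p$; the conclusion is unaffected.
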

These two inequalities follow directly from the Sobolev inequality. In fact, if \eqref{eq:fra-energy-bound-wv-high-high-5d-case-1} holds, then by the choice of $s_0$ and $\th_1$ in Definition \ref{defn:fra-scattering-s0}, we have that
\EQ{
s_1-100\th_1\le s_0\text{, }-\frac1p\le s_1-100\th_1-\frac{d}{r_1}\text{, and }r_1\le \frac{2d}{d-2}
} 
Moreover, if \eqref{eq:fra-energy-bound-wv-high-high-5d-case-2} holds, we also have that 
\EQ{
-\frac1p\le s-\frac{d}{r_2}\text{ and }r_2\le \frac{2d}{d-2}.
}

Combining Lemmas \ref{lem:fra-energy-global-spacetime-estimate}, \ref{lem:fra-energy-global-spacetime-estimate-bilinear}, and \ref{lem:fra-energy-global-spacetime-estimate-sobolev}, we get the spacetime estimates and the bilinear Strichartz estimates that will be used in the energy estimate.
\begin{cor}
\label{cor:fra-energy-global-spacetime-estimate}
Let $5\loe d\loe 11$, $\frac{2}{d-2}\loe p <\min\fbrk{\frac4d,\frac{2}{d-4}}$, $T_0>0$, and $k\in\Z$ such that $2^k\loe T_0$. Suppose that $s_0$ is defined in Definition \ref{defn:fra-scattering-s0} ,and the assumptions in Theorem \ref{thm:frac-weighted-dd} and the bootstrap hypothesis \eqref{eq:fra-energy-bootstrap-hypothesis-dd} hold. Denote that $s_1,q_1,r_1,q_2,r_2$ as in Lemma \ref{lem:fra-energy-global-spacetime-estimate-sobolev}. Let also $(q,r)$ be any $L_x^2$-admissible pair.
\begin{enumerate}
\item 
If \eqref{eq:fra-energy-bound-wv-high-high-5d-case-1} holds, then
\EQ{
\normb{\jb{\nabla}^{s_1-100\th_1}\U}_{L_t^qL_x^r(I_k\times\R^d)} \lsm & 2^{(\frac{dp}{4}-1)(s_1-100\th_1)k} N_0^{(s_1-100\th_1)(1-s)} \\
& +  2^{(\frac{2+s_1-100\th_1}{1-p}\cdot(\frac{dp}{4}-1) + (\frac1{2p}-\frac{1}{q_1})\cdot \frac{p}{1-p})k} N_0^{\frac{s_1-100\th_1}{1-p}(1-s)},
}
and
\EQ{
\norm{\nabla \V_N \U_{N_1}}_{L_{t,x}^2(I_k\times\R^d)} \lsm & \de_0 N_1N^{-1/2+(1-s)}  \\
& \cdot \Big(2^{\brkb{(\frac{dp}{4}-1)(1+(s_1-100\th_1)p) +\frac{1}{2}-\frac{p}{q_0}}k} N_0^{(s_1-100\th_1)(1-s)} \\
& \quad+ 2^{\brkb{(1+\frac{(2+s_1-100\th_1)p}{1-p})\cdot(\frac{dp}{4}-1) + (\frac{1}{2p}-\frac{1}{q_1})\cdot \frac{p}{1-p}}k} N_0^{\frac{s_1-100\th_1}{1-p}(1-s)}\Big).
}
\item
If \eqref{eq:fra-energy-bound-wv-high-high-5d-case-2} holds, then
\EQ{
\normb{\jb{\nabla}^{s}\U}_{L_t^qL_x^r(I_k\times\R^d)} \lsm & 2^{(\frac{dp}{4}-1)sk} N_0^{s(1-s)} +  2^{(\frac{2+s}{1-p}\cdot(\frac{dp}{4}-1) + (\frac1{2p}-\frac{1}{q_2})\cdot \frac{p}{1-p})k} N_0^{\frac{s}{1-p}(1-s)},
}
and
\EQ{
\norm{\nabla \V_N \U_{N_1}}_{L_{t,x}^2(I_k\times\R^d)} \lsm & \de_0 N_1N^{-1/2+(1-s)} \cdot\Big(2^{\brkb{(\frac{dp}{4}-1)(1+sp) +\frac{1}{2}-\frac{p}{q_0}}k} N_0^{s(1-s)} \\
& + 2^{\brkb{(1+\frac{(2+s)p}{1-p})\cdot(\frac{dp}{4}-1) + (\frac{1}{2p}-\frac{1}{q_2})\cdot \frac{p}{1-p}}k} N_0^{\frac{s}{1-p}(1-s)}\Big).
}
\end{enumerate}
\end{cor}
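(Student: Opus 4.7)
The plan is to obtain both conclusions as a direct concatenation of the three preceding lemmas: Lemma~\ref{lem:fra-energy-global-spacetime-estimate-sobolev} supplies the Sobolev embedding hypothesis \eqref{eq:fra-energy-global-spacetime-estimate-sobolev} required by Lemma~\ref{lem:fra-energy-global-spacetime-estimate}, after which the $L_t^q L_x^r$ bound on $\jb{\nabla}^l \U$ and the bilinear bound on $\nabla \V_N \U_{N_1}$ follow by plugging this embedding into Lemmas~\ref{lem:fra-energy-global-spacetime-estimate} and~\ref{lem:fra-energy-global-spacetime-estimate-bilinear}, respectively, reading off the conclusion with the specific values of $l$ and $q_0$ substituted.

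For Case~A, corresponding to condition~\eqref{eq:fra-energy-bound-wv-high-high-5d-case-1}, I would take $l := s_1 - 100\th_1$ and $(q_0, r_0) := (q_1, r_1)$ with the exponents from part~(1) of Lemma~\ref{lem:fra-energy-global-spacetime-estimate-sobolev}. The verification reduces to three points: (i) $L_x^2$-admissibility of $(q_1, r_1)$, which is a direct scaling count from their definitions; (ii) the range restriction $2 < r_1 < \tfrac{2d}{d-2}$, which uses the smallness of $\th_1$ built into Definition~\ref{defn:fra-scattering-s0}; and (iii) the inequality $l < s$, guaranteed by the same choice of $s_0$. These are exactly the conditions arranged in Lemma~\ref{lem:fra-energy-global-spacetime-estimate-sobolev}(1), so the hypothesis \eqref{eq:fra-energy-global-spacetime-estimate-sobolev} holds, and the first conclusion is the output of Lemmas~\ref{lem:fra-energy-global-spacetime-estimate} and~\ref{lem:fra-energy-global-spacetime-estimate-bilinear} with these parameters.

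For Case~B, corresponding to condition~\eqref{eq:fra-energy-bound-wv-high-high-5d-case-2}, I would set $l := s$ and $(q_0, r_0) := (q_2, r_2)$ from part~(2) of Lemma~\ref{lem:fra-energy-global-spacetime-estimate-sobolev}. Here the admissibility of $(q_2, r_2)$ and the embedding $W^{s, r_2} \hookrightarrow L^{dp}$ are immediate from the definitions, and no perturbation by $\th_1$ is needed because $p$ is large enough (or is at the endpoint $\tfrac{2}{d-2}$) that the full regularity $s$ fits the Sobolev bound. Applying Lemmas~\ref{lem:fra-energy-global-spacetime-estimate} and~\ref{lem:fra-energy-global-spacetime-estimate-bilinear} with these parameters yields the two displayed inequalities.

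There is no substantive obstacle in the corollary itself; the real work has been front-loaded into choosing $(s_1, q_1, r_1)$ and $(q_2, r_2)$ so that the three constraints of Lemma~\ref{lem:fra-energy-global-spacetime-estimate}, namely admissibility of $(q_0,r_0)$, the ceiling $r_0 < \tfrac{2d}{d-2}$, and the embedding $W^{l,r_0} \hookrightarrow L^{dp}$, hold simultaneously. The mild subtlety is why the split into Cases~A and~B is necessary: in Case~A one cannot afford $l = s$ because the resulting $r_0$ would equal or exceed $\tfrac{2d}{d-2}$ and the Strichartz endpoint would be lost, so one trades $100\th_1$ units of regularity to push $r_0$ strictly back into the admissible interior; in Case~B this sacrifice is unnecessary since the exponent relations allow $l = s$ directly. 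Once this bookkeeping is accepted, the corollary is a one-line composition.
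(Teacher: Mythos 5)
Your proposal is correct and is exactly the paper's argument: the corollary is obtained by feeding the Sobolev embeddings and admissible pairs of Lemma \ref{lem:fra-energy-global-spacetime-estimate-sobolev} into Lemmas \ref{lem:fra-energy-global-spacetime-estimate} and \ref{lem:fra-energy-global-spacetime-estimate-bilinear}, with $(l,q_0,r_0)=(s_1-100\th_1,q_1,r_1)$ under \eqref{eq:fra-energy-bound-wv-high-high-5d-case-1} and $(l,q_0,r_0)=(s,q_2,r_2)$ under \eqref{eq:fra-energy-bound-wv-high-high-5d-case-2}, and then reading off the displayed exponents.

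One caveat on your closing explanation of the case split: it is not true that taking $l=s$ in Case A would force $r_0\geq\frac{2d}{d-2}$. With $l=s\approx 1$ and $\frac1{r_0}=\frac sd+\frac1{dp}$ one has $s+\frac1p>\frac{d-2}{2}$ throughout Case A (e.g.\ $d=5$, $p<\frac45$ gives $r_0$ slightly above $2$, well inside $(2,\frac{2d}{d-2})$), so admissibility is not the obstruction. The reason for dropping to $s_1=\frac{1-p}{p}$ in Case A is downstream: the larger $l$ is, the worse the factors $2^{(\frac{dp}{4}-1)lk}$ and $2^{\frac{2+l}{1-p}(\frac{dp}{4}-1)k}$ (temporal singularity as $k\to-\infty$) and the larger the $N_0^{l(1-s)}$-type increments become, and the dyadic-in-$k$ sums in the proof of \eqref{eq:fra-energy-bound-wv-high-high-5d-main} (positivity of $-\frac{dp^2}{4}+\frac34(d+2)p-\frac52$, of $\frac{(d-2)p-2}{4(1-p)}$, and the bound $\frac{(d-4)p^2}{2(1-p)}<1$) only close with the reduced regularity when $p$ is large; the $100\th_1$ is a small technical margin subtracted from $s_1$, not a trade from $s$. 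Finally, note that in Case B you apply Lemma \ref{lem:fra-energy-global-spacetime-estimate} at $l=s$, which formally sits outside its stated hypothesis $l<s$; this is an imprecision of the paper's own statement (its proof only needs $l\le s$, so that $\jb{\nabla}^l\V$ is controlled by Lemma \ref{lem:fra-linear}), so it does not affect your argument, but it is worth being aware that the strict inequality is not what is actually used.
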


\subsection{Energy estimate when $p\goe \frac{2}{d-2}$}
\begin{proof}[Proof of Lemma \ref{lem:fra-energy-5d}]
We make the following decomposition:
\EQnnsub{
|\int_{0}^{T_0}\int_{\R^d}|\U|^p\U\wb{\V_t}\dd x\dd t| \lsm & \sum_{N\lsm N_1} N_1^{0+} \int_{0}^{T_0}\int_{\R^d} |\nabla \W| |\nabla \V_N||\U_{ N_1}|^p \dd x \dd t \label{eq:fra-energy-bound-wv-high-high-5d}\\
& + \sum_{N_1\ll N} N_1^{0+} \int_{0}^{T_0}\int_{\R^d} |\nabla \W| |\nabla \V_N||\U_{ N_1}|^p \dd x \dd t \label{eq:fra-energy-bound-wv-high-low-5d}\\
& + \sum_{N_1\loe N_2}\int_{0}^{T_0} \int_{\R^d} |\nabla\V_{N_1}||\nabla\V_{N_2}||\U_{\gsm N_2}|^p\dd x\dd t \label{eq:fra-energy-bound-vv-high-high-5d}\\
& + \sum_{N_1\loe N_2}\int_{0}^{T_0} \int_{\R^d} |\nabla\V_{N_1}||\nabla\V_{N_2}||\U_{\ll N_2}|^p\dd x\dd t \label{eq:fra-energy-bound-vv-high-low-5d}. 
}
Then, we will deal with \eqref{eq:fra-energy-bound-wv-high-high-5d}--\eqref{eq:fra-energy-bound-vv-high-low-5d} one by one.

$\bullet$ \textbf{Estimate of \eqref{eq:fra-energy-bound-wv-high-high-5d}}.
We first make a dyadic decomposition in $t$,
\EQ{
\eqref{eq:fra-energy-bound-wv-high-high-5d} \lsm \sum_{N\lsm N_1} \sum_{k\in\Z:2^k\loe T_0} N_1^{0+} \int_{2^{k-1}}^{2^k}\int_{\R^d} |\nabla \W| |\nabla \V_N||\U_{ N_1}|^p \dd x \dd t.
}
By H\"older's inequality, Lemma \ref{lem:fra-linear}, and \eqref{eq:fra-energy-bound-w},
\EQ{
& N_1^{0+} \int_{2^{k-1}}^{2^k}\int_{\R^d} |\nabla \W| |\nabla \V_N||\U_{ N_1}|^p \dd x \dd t \\
\lsm & N^{1-s}N_1^{-\frac12\th_1} \norm{\nabla\W}_{L_{t\in I_k}^\I L_x^2}  \norm{\jb{\nabla}^{s}\V_{N}}_{L_{t\in I_k}^2L_x^{\frac{2d}{d-2}}} \normb{N_1^{\frac{\th_1}{2p}+}\U_{N_1}}_{L_{t\in I_k}^{2p}L_x^{dp}}^{p} \\
\lsm & \de_0 N^{1-s}N_1^{-\frac12\th_1} 2^{(\frac{dp}{4}-1)k} N_0^{1-s}   \normb{N_1^{\frac{\th_1}{2p}+}\U_{N_1}}_{L_{t\in I_k}^{2p}L_x^{dp}}^{p},
}
and also by the choice of $\th_1$,
\EQ{
	\sum_{N\lsm N_1}N^{1-s}N_1^{-\frac12\th_1} \lsm \sum_{N\in2^\N}N^{-\frac12\th_1 + 1-s} \lsm 1.
}
Therefore, we have
\EQ{
\eqref{eq:fra-energy-bound-wv-high-high-5d} \lsm \de_0 N_0^{1-s}  \sum_{k\in\Z:2^k\loe T_0} 2^{(\frac{dp}{4}-1)k}  \sup_{N_1:N_1\gsm N_0} \normb{N_1^{\frac{\th_1}{2p}+}\U_{N_1}}_{L_{t\in I_k}^{2p}L_x^{dp}}^{p},
}
thereby it suffices to prove that
\EQn{\label{eq:fra-energy-bound-wv-high-high-5d-main}
\sum_{k\in\Z:2^k\loe T_0} 2^{(\frac{dp}{4}-1)k}  \sup_{N_1:N_1\gsm N_0} \normb{N_1^{\frac{\th_1}{2p}+}\U_{N_1}}_{L_{t\in I_k}^{2p}L_x^{dp}}^{p} \lsm N_0^{1-s}.
}
In the following, we will prove this in two subcases \eqref{eq:fra-energy-bound-wv-high-high-5d-case-1} and \eqref{eq:fra-energy-bound-wv-high-high-5d-case-2}, using different estimates.

First, we consider the case when \eqref{eq:fra-energy-bound-wv-high-high-5d-case-1} holds, namely
\EQ{
\frac{2}{d-2}<p<\frac4d\text{, when }d=5,6;\quad \frac12<p<\frac47\text{, when }d=7.
}
Recall the definition of $s_1,q_1,r_1$ in Lemma \ref{lem:fra-energy-global-spacetime-estimate-sobolev}: $s_1=\frac{1-p}{p}$, $\frac{1}{q_1}=\frac d4-\frac{s_1}2-\frac{1}{2p}+50d\th_1$, and $\frac{1}{r_1}:=\frac{s_1}{d}+\frac{1}{dp}-100\th_1$. Note that
\EQ{
\frac{\th_1}{2p} + d\cdot(\frac{1}{r_1}-\frac{1}{dp}) < s_1-100\th_1, 
}
then by Bernstein's inequality in $x$, H\"older's in $t$, Corollary \ref{cor:fra-energy-global-spacetime-estimate}, $s_1<\frac{s_1p}{1-p}$, and $-100(\frac{dp}{4}-1)>0$,
\EQ{
\normb{N_1^{\frac{\th_1}{2p}+}\U_{N_1}}_{L_{t\in I_k}^{2p}L_x^{dp}} \lsm & 2^{(\frac{1}{2p}-\frac{1}{q_1})k} \normb{N_1^{s_1-100\th_1}\U_{N_1}}_{L_{t\in I_k}^{q_1}L_x^{r_1}} \\
\lsm & 2^{\brkb{(\frac{dp}{4}-1)(s_1-100\th_1) + \frac{1}{2p}-\frac{1}{q_1}}k} N_0^{(s_1-100\th_1)(1-s)} \\
& +  2^{(\frac{2+s_1-100\th_1}{1-p}\cdot(\frac{dp}{4}-1) + (\frac1{2p}-\frac{1}{q_1})\cdot \frac{1}{1-p})k} N_0^{\frac{s_1-100\th_1}{1-p}(1-s)} \\
\lsm & N_0^{\frac{s_1-100\th_1}{1-p}(1-s)} \Big( 2^{\brkb{(\frac{dp}{4}-1)(s_1-100\th_1) + \frac{1}{2p}-\frac{1}{q_1}}k}  \\
& + 2^{(\frac{2+s_1}{1-p}\cdot(\frac{dp}{4}-1) + \frac{1}{1-p}\cdot(\frac1{2p}-\frac{1}{q_1}))k} \Big).
}
Therefore, by the definition of $s_1$, $q_1$,  $\frac{s_1p}{1-p}\le1$, $100-75dp>-200$, and $-\frac{50dp}{1-p}>-1000$,
\EQn{\label{eq:fra-energy-bound-wv-high-high-5d-main-1}
& \text{(LHS) of \eqref{eq:fra-energy-bound-wv-high-high-5d-main}} \\
\lsm &N_0^{( \frac{s_1p}{1-p} -10\th_1)(1-s)} \sum_{k\in\Z:2^k\loe T_0} \Big( 2^{\brkb{(\frac{dp}{4}-1)(1+p(s_1-100\th_1)) + \frac{1}{2}-\frac{p}{q_1}}k} \\
& \quad\qquad\qquad\qquad\qquad\qquad + 2^{((1+\frac{(2+s_1)p}{1-p})\cdot(\frac{dp}{4}-1) + \frac{p}{1-p}\cdot(\frac1{2p}-\frac{1}{q_1}) )k}  \Big) \\
\lsm & N_0^{(1 -10\th_1)(1-s)} \sum_{k\in\Z:2^k\loe T_0} \Big( 2^{\brkb{-\frac{dp^2}{4}+\frac34(d+2)p-\frac52+(100-75dp)\th_1}k} 
 +  2^{(\frac{(d-2)p-2}{4(1-p)} - \frac{50dp}{1-p}\th_1)k} \Big)
 \\
\lsm & N_0^{(1 -10\th_1)(1-s)} \sum_{k\in\Z:2^k\loe T_0} \brkb{ 2^{\brkb{-\frac{dp^2}{4}+\frac34(d+2)p-\frac52-200\th_1}k} 
+  2^{(\frac{(d-2)p-2}{4(1-p)} - 1000\th_1)k} }.
}
For the polynomial $-\frac{dp^2}{4}+\frac34(d+2)p-\frac52$ under \eqref{eq:fra-energy-bound-wv-high-high-5d-case-1}, we can check that it is bounded below by $>\frac49$ if $d=5$, $>\frac18$ if $d=6$, and $>\frac{7}{16}$ if $d=7$, then by the choice of $\th_1$,
\EQn{\label{eq:fra-energy-bound-wv-high-high-5d-main-1-com-1}
\sum_{k\in\Z:2^k\loe T_0} 2^{\brkb{-\frac{dp^2}{4}+\frac34(d+2)p-\frac52-200\th_1}k} \lsm 1.
}
Note that for $d\goe 5$, $p<4/d<1$, then for $p>\max\fbrk{\frac{2}{d-2},\frac12}$, we have that $\frac{(d-2)p-2}{4(1-p)}>0$. 
By the choice of $\th_1$,
\EQn{\label{eq:fra-energy-bound-wv-high-high-5d-main-1-com-2}
	\sum_{k\in\Z:2^k\loe T_0} 2^{(\frac{(d-2)p-2}{4(1-p)} - 1000\th_1)k}\lsm 1.
}
Applying \eqref{eq:fra-energy-bound-wv-high-high-5d-main-1-com-1} and \eqref{eq:fra-energy-bound-wv-high-high-5d-main-1-com-2} to \eqref{eq:fra-energy-bound-wv-high-high-5d-main-1},
\EQn{\label{eq:fra-energy-bound-wv-high-high-5d-main-1-1}
\text{(LHS) of \eqref{eq:fra-energy-bound-wv-high-high-5d-main}} 
\lsm & N_0^{(1 -10\th_1)(1-s)}.
}
This finishes the proof of \eqref{eq:fra-energy-bound-wv-high-high-5d-main} when \eqref{eq:fra-energy-bound-wv-high-high-5d-case-1} holds.

Second, we consider the case when \eqref{eq:fra-energy-bound-wv-high-high-5d-case-2} holds, namely
\EQ{
p=\frac{2}{d-2}&\text{, when }d=5,6;\quad \frac{2}{d-2}\loe p\loe \frac12\text{, when }d=7;\\
& \frac{2}{d-2}\loe p <\frac{2}{d-4}\text{, when }8\loe d\loe 11.
}
Denote that
\EQn{\label{defn:wtr2al1}
\frac{1}{\wt r_2}:= \frac{s}{d} + \frac{2-\th_1}{2dp}-\text{, and }\al_1:=\frac d2-\frac{d}{\wt r_2}.
}
We can check that under the condition \eqref{eq:fra-energy-bound-wv-high-high-5d-case-2}, $2<\wt r_2<\frac{2d}{d-2}$ and $0<\al_1<1$. Note that
\EQ{
\frac{\th_1}{2p}+ d(\frac{1}{\wt r_2}-\frac{1}{dp})<s,
}
then by choosing suitable implicit parameters, and using Bernstein's inequality, interpolation,  H\"older's inequality, and Corollary \ref{cor:fra-energy-global-spacetime-estimate},
\EQ{
& \normb{N_1^{\frac{\th_1}{2p}+}\U_{N_1}}_{L_{t\in I_k}^{2p}L_x^{dp}} \\
\lsm &  \normb{N_1^{s}\U_{N_1}}_{L_{t\in I_k}^{2p}L_x^{\wt r_2}} \\
\lsm &  \normb{\normo{N_1^{s}\U_{N_1}}_{L_x^2}^{1-\al_1} \normo{N_1^{s}\U_{N_1}}_{L_x^{\frac{2d}{d-2}}}^{\al_1} }_{L_{t\in I_k}^{2p}} \\
\lsm &  2^{(1-\al_1)s(\frac{dp}{4}-1)k} N_0^{(1-\al_1)s(1-s)} \normb{N_1^{s}\U_{N_1}}_{L_{t\in I_k}^{2\al_1p}L_x^{\frac{2d}{d-2}}}^{\al_1} \\
\lsm &  2^{((1-\al_1)s(\frac{dp}{4}-1)+ \frac{1}{2p}-\frac{\al_1}{2})k} N_0^{(1-\al_1)s(1-s)} \normb{N_1^{s}\U_{N_1}}_{L_{t\in I_k}^{2}L_x^{\frac{2d}{d-2}}}^{\al_1} \\
\lsm & 2^{((1-\al_1)s(\frac{dp}{4}-1)+ \frac{1}{2p}-\frac{\al_1}{2})k} N_0^{(1-\al_1)s(1-s)} \\
& \cdot\brkb{2^{(\frac{dp}{4}-1)sk} N_0^{s(1-s)} +  2^{(\frac{2+s}{1-p}\cdot(\frac{dp}{4}-1) + (\frac1{2p}-\frac{1}{q_2})\cdot \frac{p}{1-p})k} N_0^{\frac{s}{1-p}(1-s)}}^{\al_1}\\
\lsm &  2^{\brkb{s(\frac{dp}{4}-1)+ \frac{1}{2p}-\frac{\al_1}{2}}k}N_0^{s(1-s)} \\
& + 2^{\brkb{(1-\al_1)s(\frac{dp}{4}-1)+ \frac{1}{2p}-\frac{\al_1}{2}+\frac{\al_1(2+s)}{1-p}\cdot(\frac{dp}{4}-1) + (\frac1{2p}-\frac{1}{q_2})\cdot \frac{\al_1p}{1-p}}k}N_0^{((1-\al_1)s+\frac{s\al_1}{1-p})(1-s)},
}
where $\frac{1}{q_2}=\frac{d}{4}-\frac{s}{2}-\frac{1}{2p}$ is defined in Lemma \ref{lem:fra-energy-global-spacetime-estimate-sobolev}. Then recall the definition of $\al_1=\frac d2-s-\frac1p+\frac{\th_1}{2p}+$ in \eqref{defn:wtr2al1}, 
\EQn{\label{eq:fra-energy-bound-wv-high-high-5d-main-2}
& \text{(LHS) of \eqref{eq:fra-energy-bound-wv-high-high-5d-main}} \\
\lsm &  \sum_{k\in\Z:2^k\loe T_0} \Big( 2^{\brkb{(1+sp)(\frac{dp}{4}-1)+ \frac{1}{2}-\frac{\al_1p}{2}}k}N_0^{sp(1-s)} \\
& + 2^{\brkb{(1+(1-\al_1)sp)(\frac{dp}{4}-1)+ \frac{1}{2}-\frac{\al_1p}{2}+\frac{\al_1p(2+s)}{1-p}\cdot(\frac{dp}{4}-1) + (\frac1{2p}-\frac{1}{q_2})\cdot \frac{\al_1p^2}{1-p}}k}N_0^{((1-\al_1)s+\frac{s\al_1}{1-p})p(1-s)} \Big) \\
\lsm & N_0^{((1-\al_1)s+\frac{s\al_1}{1-p})p(1-s)} \cdot \sum_{k\in\Z:2^k\loe T_0} \Big( 2^{\brkb{(1+p)(\frac{dp}{4}-1)+ \frac{1}{2}-\frac{\al_1p}{2}-10(1-s)}k} \\
& + 2^{\brkb{(1+(1-\al_1)p+ \frac{3\al_1p}{1-p})(\frac{dp}{4}-1)+ \frac{1}{2}-\frac{\al_1p}{2} + (\frac1{2p}-\frac{1}{q_2})\cdot \frac{\al_1p^2}{1-p}-10(1-s)}k} \Big) \\
\lsm & N_0^{\brkb{\frac{(d-4)p^2}{2(1-p)}+100\th_1+}(1-s)}\\
&\cdot \sum_{k\in\Z:2^k\loe T_0}2^{(-100(1-s)-100\th_1-)k} \Big( 2^{\frac14p(dp-2)k} 
+ 2^{\brkb{\frac{d(d-4)p^3+(d^2-4d+8)p^2+(8-6d)p+8}{8(1-p)}}k} \Big).
}
First, since $p\goe\frac{2}{d-2}>\frac2d$, by the choice of $s$ and $\th_1$,
\EQn{\label{eq:fra-energy-bound-wv-high-high-5d-main-2-com-1}
\sum_{k\in\Z:2^k\loe T_0} 2^{\brko{\frac14p(dp-2)-100(1-s)-100\th_1}k} \lsm 1.
}
Note that $p<4/d<1$ for $d\goe 5$. We can also check that for $5\loe d\loe11$ and $\frac{2}{d-2}\loe p<\min\fbrk{\frac{2}{d-2},\frac{4}{d}}$,
\EQ{
d(d-4)p^3+(d^2-4d+8)p^2+(8-6d)p+8>0,
}
then by the choice of $s$ and $\th_1$,
\EQn{\label{eq:fra-energy-bound-wv-high-high-5d-main-2-com-2}
\sum_{k\in\Z:2^k\loe T_0} 2^{(\frac{d(d-4)p^3+(d^2-4d+8)p^2+(8-6d)p+8}{8(1-p)}-100(1-s)-100\th_1)k} \lsm 1.
}
Note also that for $d=5,6$, and $p=\frac{2}{d-2}$, $\frac{(d-4)p^2}{2(1-p)}\loe \frac23<1$; for $d=7$ and $\frac{2}{5}\loe p<\frac12$, $\frac{(d-4)p^2}{2(1-p)} \loe\frac34<1$; and for $8\loe d \loe 10$ and $\frac{2}{d-2}\loe p<\frac{2}{d-4}$, $\frac{(d-4)p^2}{2(1-p)} < \frac{(d-4)(\frac{2}{d-4})^2}{2(1-\frac{2}{d-4})} \loe 1$. Therefore, under the condition \eqref{eq:fra-energy-bound-wv-high-high-5d-case-2}, we have that $\frac{(d-4)p^2}{2(1-p)}<1$. Consequently, by the choice of $\th_1$ and the implicit parameter, 
\EQn{\label{eq:fra-energy-bound-wv-high-high-5d-main-2-com-3}
N_0^{\brkb{\frac{(d-4)p^2}{2(1-p)}+100\th_1+}(1-s)} \lsm N_0^{1-s}.
}
Applying \eqref{eq:fra-energy-bound-wv-high-high-5d-main-2-com-1}, \eqref{eq:fra-energy-bound-wv-high-high-5d-main-2-com-2}, and \eqref{eq:fra-energy-bound-wv-high-high-5d-main-2-com-3} to \eqref{eq:fra-energy-bound-wv-high-high-5d-main-2},
\EQ{
\text{(LHS) of \eqref{eq:fra-energy-bound-wv-high-high-5d-main}} 
\lsm  N_0^{1-s}.
}
Thus, we finish the proof of \eqref{eq:fra-energy-bound-wv-high-high-5d-main} when \eqref{eq:fra-energy-bound-wv-high-high-5d-case-2} holds.

Now, noting that \eqref{eq:fra-energy-bound-wv-high-high-5d-case-1} and \eqref{eq:fra-energy-bound-wv-high-high-5d-case-2} together imply \eqref{eq:fra-energy-bound-wv-high-high-5d-case}, we finish the proof of \eqref{eq:fra-energy-bound-wv-high-high-5d-main}. Then,
\EQn{\label{esti:fra-energy-bound-wv-high-high-5d}
\eqref{eq:fra-energy-bound-wv-high-high-5d} \lsm \de_0 N_0^{2(1-s)}.
}

$\bullet$ \textbf{Estimate of \eqref{eq:fra-energy-bound-wv-high-low-5d}}.
We make the dyadic decomposition in $t$,
\EQ{
\eqref{eq:fra-energy-bound-wv-high-low-5d} \lsm \sum_{N_1\ll N} \sum_{k\in\Z:2^k\loe T_0} N_1^{0+} \int_{2^{k-1}}^{2^k}\int_{\R^d} |\nabla \W| |\nabla \V_N||\U_{ N_1}|^p \dd x \dd t.
}
By H\"older's inequality,
\EQn{\label{eq:fra-energy-bound-wv-high-low-5d-1}
& N_1^{0+} \int_{2^{k-1}}^{2^k}\int_{\R^d} |\nabla \W| |\nabla \V_N||\U_{ N_1}|^p \dd x \dd t\\
\lsm & \norm{\nabla\W}_{L_{t\in I_k}^\I L_x^2} \norm{\nabla\V_N\W_{N_1}}_{L_{t\in I_k}^2L_x^2}^{\th_1} \norm{\nabla\V_{N}}_{L_{t\in I_k}^2L_x^{\frac{2d}{d-2}}}^{1-\th_1} \norm{\U_{N_1}}_{L_{t\in I_k}^{2(p-\th_1)}L_x^{\frac{d(p-\th_1)}{1-\th_1}}}^{p-\th_1}.
}
We then claim the estimate:
\EQn{\label{eq:fra-energy-bound-wv-high-low-5d-bi}
\norm{\nabla \V_N \U_{N_1}}_{L_{t\in I_k}^2L_x^2}^{\th_1} \lsm \de_0^{\th_1} N_1^{\th_1}N^{-1/2\th_1+\th_1(1-s)} 2^{-100\th_1k} N_0^{10\th_1(1-s)}.
}
In fact, by Corollary \ref{cor:fra-energy-global-spacetime-estimate}, if \eqref{eq:fra-energy-bound-wv-high-high-5d-case-1} holds, then
\EQ{
\norm{\nabla \V_N \U_{N_1}}_{L_{t\in I_k}^2L_x^2}^{\th_1} \lsm & \de_0^{\th_1} N_1^{\th_1}N^{-1/2\th_1+\th_1(1-s)}  \\
& \cdot \Big(2^{\brkb{(\frac{dp}{4}-1)(1+(s_1-100\th_1)p) +\frac{1}{2}-\frac{p}{q_0}}k} N_0^{(s_1-100\th_1)(1-s)} \\
& \quad+ 2^{\brkb{(1+\frac{(2+s_1-100\th_1)p}{1-p})\cdot(\frac{dp}{4}-1) + (\frac{1}{2p}-\frac{1}{q_1})\cdot \frac{p}{1-p}}k} N_0^{\frac{s_1-100\th_1}{1-p}(1-s)}\Big)^{\th_1} \\
\lsm & \de_0^{\th_1} N_1^{\th_1}N^{-1/2\th_1+\th_1(1-s)} 2^{-100\th_1k} N_0^{10\th_1(1-s)}.
}
Moreover, if \eqref{eq:fra-energy-bound-wv-high-high-5d-case-2} holds, then
\EQ{
\norm{\nabla \V_N \U_{N_1}}_{L_{t\in I_k}^2L_x^2}^{\th_1} \lsm & \de_0^{\th_1} N_1^{\th_1}N^{-1/2\th_1+\th_1(1-s)} \\  
& \cdot\Big(2^{\brkb{(\frac{dp}{4}-1)(1+sp) +\frac{1}{2}-\frac{p}{q_0}}k} N_0^{s(1-s)} \\
& + 2^{\brkb{(1+\frac{(2+s)p}{1-p})\cdot(\frac{dp}{4}-1) + (\frac{1}{2p}-\frac{1}{q_2})\cdot \frac{p}{1-p}}k} N_0^{\frac{s}{1-p}(1-s)}\Big)^{\th_1} \\
\lsm & \de_0^{\th_1} N_1^{\th_1}N^{-1/2\th_1+\th_1(1-s)} 2^{-100\th_1k} N_0^{10\th_1(1-s)}.
}
This finishes the proof of \eqref{eq:fra-energy-bound-wv-high-low-5d-bi}.

Then, by \eqref{eq:fra-energy-bound-wv-high-low-5d-1} and \eqref{eq:fra-energy-bound-wv-high-low-5d-bi},
\EQ{
& N_1^{0+} \int_{2^{k-1}}^{2^k}\int_{\R^d} |\nabla \W| |\nabla \V_N||\U_{ N_1}|^p \dd x \dd t\\
\lsm & \de_0 2^{(\frac{dp}{4}-1 - 100\th_1)k} N_0^{(1+10\th_1)(1-s)} N^{-\frac12\th_1 +(1-s)} N_1^{0-} \normb{N_1^{\frac{\th_1}{p-\th_1}+}\U_{N_1}}_{L_{t\in I_k}^{2(p-\th_1)}L_x^{\frac{d(p-\th_1)}{1-\th_1}}}^{p-\th_1}.
}
Applying this inequality and using $\sum_{N_1\ll N}N^{-\frac12\th_1 +(1-s)} N_1^{0-} \lsm 1$, we have
\EQ{
\eqref{eq:fra-energy-bound-wv-high-low-5d} \lsm \de_0  N_0^{(1+10\th_1)(1-s)} \sum_{k\in\Z:2^k\loe T_0} 2^{(\frac{dp}{4}-1 - 100\th_1)k} \sup_{N_1:N_1\gsm N_0} \normb{N_1^{\frac{\th_1}{p-\th_1}+}\U_{N_1}}_{L_{t\in I_k}^{2(p-\th_1)}L_x^{\frac{d(p-\th_1)}{1-\th_1}}}^{p-\th_1},
}
thus it suffices to prove
\EQn{\label{eq:fra-energy-bound-wv-high-low-5d-main}
\sum_{k\in\Z:2^k\loe T_0} 2^{(\frac{dp}{4}-1 - 100\th_1)k} \sup_{N_1:N_1\gsm N_0} \normb{N_1^{\frac{\th_1}{p-\th_1}+}\U_{N_1}}_{L_{t\in I_k}^{2(p-\th_1)}L_x^{\frac{d(p-\th_1)}{1-\th_1}}}^{p-\th_1} \lsm N_0^{(1-10\th_1)(1-s)}.
}
In the following, we will prove this in two subcases \eqref{eq:fra-energy-bound-wv-high-high-5d-case-1} and \eqref{eq:fra-energy-bound-wv-high-high-5d-case-2}, similar as the proof of \eqref{eq:fra-energy-bound-wv-high-high-5d-main}.

First, we prove \eqref{eq:fra-energy-bound-wv-high-low-5d-main} under the condition \eqref{eq:fra-energy-bound-wv-high-high-5d-case-1}:
\EQ{
\frac{2}{d-2}<p<\frac4d\text{, when }d=5,6;\quad \frac12<p<\frac47\text{, when }d=7.
}
Recall the definition of $s_1,q_1,r_1$ in Lemma \ref{lem:fra-energy-global-spacetime-estimate-sobolev}: $s_1=\frac{1-p}{p}$, $\frac{1}{q_1}=\frac d4-\frac{s_1}2-\frac{1}{2p}+50d\th_1$, and $\frac{1}{r_1}:=\frac{s_1}{d}+\frac{1}{dp}-100\th_1$.
Note that
\EQ{
\frac{\th_1}{p-\th_1} +d(\frac{1}{r_1}-\frac{1-\th_1}{d(p-\th_1)}) <s_1-100\th_1.
}
then by Bernstein's inequality in $x$,  H\"older's in $t$, and Corollary \ref{cor:fra-energy-global-spacetime-estimate},
\EQ{
\normb{N_1^{\frac{\th_1}{p-\th_1}+}\U_{N_1}}_{L_{t\in I_k}^{2(p-\th_1)}L_x^{\frac{d(p-\th_1)}{1-\th_1}}} \lsm &  2^{(\frac{1}{2(p-\th_1)}-\frac{1}{q_1})k} \normb{N_1^{s_1-100\th_1}\U_{N_1}}_{L_{t\in I_k}^{q_1}L_x^{r_1}} \\
\lsm & N_0^{\frac{s_1-100\th_1}{1-p}(1-s)} \Big( 2^{\brkb{(\frac{dp}{4}-1)(s_1-100\th_1) + \frac{1}{2p}-\frac{1}{q_1}-10\th_1}k}  \\
& + 2^{(\frac{2+s_1}{1-p}\cdot(\frac{dp}{4}-1) + \frac{1}{1-p}\cdot(\frac1{2p}-\frac{1}{q_1})-10\th_1)k} \Big).
}
Then, applying this inequality to the (LHS) of \eqref{eq:fra-energy-bound-wv-high-low-5d-main}, and by the definition of $s_1$, $q_1$,  $\frac{s_1p}{1-p}\le1$, $100-75dp>-200$, and $-\frac{50dp}{1-p}>-1000$,
\EQn{\label{eq:fra-energy-bound-wv-high-low-5d-main-1}
& \text{(LHS) of \eqref{eq:fra-energy-bound-wv-high-low-5d-main}} \\
\lsm &N_0^{( \frac{s_1p}{1-p} -10\th_1)(1-s)} \sum_{k\in\Z:2^k\loe T_0} \Big( 2^{\brkb{(\frac{dp}{4}-1)(1+p(s_1-100\th_1)) + \frac{1}{2}-\frac{p}{q_1}-10\th_1}k} \\
& \quad\qquad\qquad\qquad\qquad\qquad + 2^{((1+\frac{(2+s_1)p}{1-p})\cdot(\frac{dp}{4}-1) + \frac{p}{1-p}\cdot(\frac1{2p}-\frac{1}{q_1}) -10\th_1)k}  \Big) \\
\lsm & N_0^{(1 -10\th_1)(1-s)} \sum_{k\in\Z:2^k\loe T_0} \Big( 2^{\brkb{-\frac{dp^2}{4}+\frac34(d+2)p-\frac52+(100-75dp)\th_1-10\th_1}k} 
+  2^{(\frac{(d-2)p-2}{4(1-p)} - \frac{50dp}{1-p}\th_1-10\th_1)k} \Big)
\\
\lsm & N_0^{(1 -10\th_1)(1-s)} \sum_{k\in\Z:2^k\loe T_0} \brkb{ 2^{\brkb{-\frac{dp^2}{4}+\frac34(d+2)p-\frac52-210\th_1}k} 
+  2^{(\frac{(d-2)p-2}{4(1-p)} - 1010\th_1)k} }.
}
We observe that this is the same as the case in \eqref{eq:fra-energy-bound-wv-high-high-5d-main-1} up to a $2^{-10\th_1}$-factor, thus applying \eqref{eq:fra-energy-bound-wv-high-high-5d-main-1-com-1} and \eqref{eq:fra-energy-bound-wv-high-high-5d-main-1-com-2} to \eqref{eq:fra-energy-bound-wv-high-low-5d-main-1},
\EQ{
\text{(LHS) of \eqref{eq:fra-energy-bound-wv-high-low-5d-main}}
\lsm  N_0^{(1 -10\th_1)(1-s)}.
}
This gives \eqref{eq:fra-energy-bound-wv-high-low-5d-main} when \eqref{eq:fra-energy-bound-wv-high-high-5d-case-1} holds.

Next, we prove \eqref{eq:fra-energy-bound-wv-high-low-5d-main} under the condition \eqref{eq:fra-energy-bound-wv-high-high-5d-case-2}:
\EQ{
p=\frac{2}{d-2}&\text{, when }d=5,6;\quad \frac{2}{d-2}\loe p\loe \frac12\text{, when }d=7;\\
& \frac{2}{d-2}\loe p <\frac{2}{d-4}\text{, when }8\loe d\loe 11.
}
Denote that
\EQ{
\frac{1}{\wt r_2}:= \frac{s}{d} + \frac{1-2\th_1}{d(p-\th_1)}+\text{, and }\al_1:=\frac d2-\frac{d}{r_2}.
}
We can check that under the condition \eqref{eq:fra-energy-bound-wv-high-high-5d-case-2}, $2<r_2<\frac{2d}{d-2}$ and $0<\al_1<1$. Note that
\EQ{
\frac{\th_1}{p-\th_1}+ d(\frac{1}{r_2}-\frac{1-\th_1}{d(p-\th_1)})<s,
}
then by Bernstein's, interpolation,  H\"older's inequalities, and Corollary \ref{cor:fra-energy-global-spacetime-estimate},
\EQ{
& \normb{N_1^{\frac{\th_1}{p-\th_1}+}\U_{N_1}}_{L_{t\in I_k}^{2(p-\th_1)}L_x^{\frac{d(p-\th_1)}{1-\th_1}}} \\
\lsm &  \normb{N_1^{s}\U_{N_1}}_{L_{t\in I_k}^{2(p-\th_1)}L_x^{r_2}} \\
\lsm &  \normb{\normo{N_1^{s}\U_{N_1}}_{L_x^2}^{1-\al_1} \normo{N_1^{s}\U_{N_1}}_{L_x^{\frac{2d}{d-2}}}^{\al_1} }_{L_{t\in I_k}^{2(p-\th_1)}} \\
\lsm &  2^{(1-\al_1)s(\frac{dp}{4}-1)k} N_0^{(1-\al_1)s(1-s)} \normb{N_1^{s}\U_{N_1}}_{L_t^{2\al_1(p-\th_1)}L_x^{\frac{2d}{d-2}}}^{\al_1} \\
\lsm &  2^{((1-\al_1)s(\frac{dp}{4}-1)+ \frac{1}{2(p-\th_1)}-\frac{\al_1}{2})k} N_0^{(1-\al_1)s(1-s)} \normb{N_1^{s}\U_{N_1}}_{L_t^{2}L_x^{\frac{2d}{d-2}}}^{\al_1} \\
\lsm &  2^{\brkb{s(\frac{dp}{4}-1)+ \frac{1}{2p}-\frac{\al_1}{2}-10\th_1}k}N_0^{((1-\al_1)s+s\al_1)(1-s)} \\
& + 2^{\brkb{(1-\al_1)s(\frac{dp}{4}-1)+ \frac{1}{2p}-\frac{\al_1}{2}+\frac{\al_1(2+s)}{1-p}\cdot(\frac{dp}{4}-1) + (\frac1{2p}-\frac{1}{q_2})\cdot \frac{\al_1p}{1-p}-10\th_1}k}N_0^{((1-\al_1)s+\frac{s\al_1}{1-p})(1-s)},
}
where $q_2$ is defined in Lemma \ref{lem:fra-energy-global-spacetime-estimate-sobolev}. Then, similar to \eqref{eq:fra-energy-bound-wv-high-high-5d-main-2}, recall the definition of $\al_1$, 
\EQn{\label{eq:fra-energy-bound-wv-high-low-5d-2-1}
& \text{(LHS) of \eqref{eq:fra-energy-bound-wv-high-low-5d-main}} \\
\lsm & \sum_{k\in\Z:2^k\loe T_0} 2^{(\frac{dp}{4}-1 - 100\th_1)k} N_0^{10\th_1(1-s)} \sup_{N_1:N_1\gsm N_0} \normb{N_1^{\frac{\th_1}{p-\th_1}+}\U_{N_1}}_{L_t^{2(p-\th_1)}L_x^{\frac{d(p-\th_1)}{1-\th_1}}}^{p-\th_1} \\
\lsm & N_0^{\brkb{\frac{(d-4)p^2}{2(1-p)}+100\th_1+}(1-s)} \\
& \cdot \sum_{k\in\Z:2^k\loe T_0}2^{(-100(1-s)-110\th_1-)k} \Big( 2^{\brko{\frac14p(dp-2)}k} 
+ 2^{\brkb{\frac{d(d-4)p^3+(d^2-4d+8)p^2+(8-6d)p+8}{8(1-p)}}k} \Big),
}
which is up to a $2^{-10\th_1}$-factor compared to \eqref{eq:fra-energy-bound-wv-high-high-5d-main-2}. Therefore, applying \eqref{eq:fra-energy-bound-wv-high-high-5d-main-2-com-1}, \eqref{eq:fra-energy-bound-wv-high-high-5d-main-2-com-2}, and \eqref{eq:fra-energy-bound-wv-high-high-5d-main-2-com-3} to \eqref{eq:fra-energy-bound-wv-high-low-5d-2-1}, we have that 
\EQ{
\text{(LHS) of \eqref{eq:fra-energy-bound-wv-high-low-5d-main}} 
\lsm  N_0^{(1-10\th_1)(1-s)}.
}
This gives \eqref{eq:fra-energy-bound-wv-high-low-5d-main} when \eqref{eq:fra-energy-bound-wv-high-high-5d-case-2} holds.

Now, noting that \eqref{eq:fra-energy-bound-wv-high-high-5d-case-1} and \eqref{eq:fra-energy-bound-wv-high-high-5d-case-2} together imply \eqref{eq:fra-energy-bound-wv-high-high-5d-case}, we finish the proof of \eqref{eq:fra-energy-bound-wv-high-low-5d-main}. Then,
\EQn{\label{esti:fra-energy-bound-wv-high-low-5d}
	\eqref{eq:fra-energy-bound-wv-high-low-5d} \lsm \de_0 N_0^{2(1-s)}.
}

$\bullet$ \textbf{Estimate of \eqref{eq:fra-energy-bound-vv-high-high-5d}}. First, we make a dyadic decomposition in $t$:
\EQ{
\eqref{eq:fra-energy-bound-vv-high-high-5d} \lsm \sum_{N_1\loe N_2,N_2\lsm N}\sum_{k\in\Z:2^k\loe T_0} N^{0+} \int_{2^{k-1}}^{2^k} \int_{\R^d} |\nabla\V_{N_1}||\nabla\V_{N_2}||\U_{N}|^p\dd x\dd t.
}

By H\"older's inequality and Lemma \ref{lem:fra-linear},
\EQ{
& N^{0+} \int_{2^{k-1}}^{2^k}\int_{\R^d} |\nabla \V_{N_1}| |\nabla \V_{N_2}||\U_{ N}|^p \dd x \dd t \\
\lsm & N_1^{1-s} N_2^{1-s} N^{-\frac12\th_1} \norm{|\nabla|^s\V_{N_1}}_{L_{t\in I_k}^\I L_x^2}  \norm{|\nabla|^{s}\V_{N_2}}_{L_{t\in I_k}^2L_x^{\frac{2d}{d-2}}} \normb{N^{\frac{\th_1}{2p}+}\U_{N}}_{L_{t\in I_k}^{2p}L_x^{dp}}^{p} \\
\lsm & \de_0^2 N_1^{1-s} N_2^{1-s} N^{-\frac12\th_1}  \normb{N^{\frac{\th_1}{2p}+}\U_{N}}_{L_{t\in I_k}^{2p}L_x^{dp}}^{p}.
}
By the choice of $\th_1$, we have
\EQ{
\sum_{N_1\loe N_2,N_2\lsm N} N_1^{1-s} N_2^{1-s} N^{-\frac12\th_1} \lsm \sum_{N_2:N_2\gsm N_0} N_2^{2(1-s)-\frac12\th_1}  \lsm 1.
}
Then, by $1\lsm 2^{(\frac{dp}{4}-1)k}$ and \eqref{eq:fra-energy-bound-wv-high-high-5d-main},
\EQn{\label{esti:fra-energy-bound-vv-high-high-5d}
\eqref{eq:fra-energy-bound-vv-high-high-5d} \lsm & \de_0^2  \sum_{k\in\Z:k\lsm \log T_0}\sup_{N\in2^\N}  \normb{N^{\frac{\th_1}{2p}+}\U_{N}}_{L_{t\in I_k}^{2p}L_x^{dp}}^{p} \\
\lsm & \de_0^2  \sum_{k\in\Z:k\lsm \log T_0}\sup_{N\in2^\N} 2^{(\frac{dp}{4}-1)k} \normb{N^{\frac{\th_1}{2p}+}\U_{N}}_{L_{t\in I_k}^{2p}L_x^{dp}}^{p} \\
\lsm & \de_0^2 N_0^{1-s}.
}

$\bullet$ \textbf{Estimate of \eqref{eq:fra-energy-bound-vv-high-low-5d}}. First, we make a dyadic decomposition in $t$:
\EQ{
\eqref{eq:fra-energy-bound-vv-high-low-5d} \lsm \sum_{N_1\loe N_2,N\ll N_2} \sum_{k\in\Z:2^k\loe T_0} N^{0+} \int_{2^{k-1}}^{2^k} \int_{\R^d} |\nabla\V_{N_1}||\nabla\V_{N_2}||\U_{N}|^p\dd x\dd t.
}
By H\"older's inequality, Lemma \ref{lem:fra-linear}, and \eqref{eq:fra-energy-bound-wv-high-low-5d-bi},
\EQ{
& N^{0+} \int_{2^{k-1}}^{2^k} \int_{\R^d} |\nabla\V_{N_1}||\nabla\V_{N_2}||\U_{N}|^p\dd x\dd t \\
\lsm & N^{0+} \norm{\nabla\V_{N_1}}_{L_{t\in I_k}^\I L_x^2} \norm{\nabla\V_{N_2}\U_{N}}_{L_{t\in I_k}^2L_x^2}^{\th_1} \norm{\nabla\V_{N_2}}_{L_{t\in I_k}^2L_x^{\frac{2d}{d-2}}}^{1-\th_1} \norm{\U_{N}}_{L_{t\in I_k}^{2(p-\th_1)}L_x^{\frac{d(p-\th_1)}{1-\th_1}}}^{p-\th_1} \\
\lsm & \de_0^2 N^{0+}\cdot  N_1^{1-s} \cdot  N^{\th_1}N_2^{-1/2\th_1+\th_1(1-s)} 2^{-100\th_1k} N_0^{10\th_1(1-s)} \\
& \cdot N_2^{(1-\th_1)(1-s)} \norm{\U_{N}}_{L_{t\in I_k}^{2(p-\th_1)}L_x^{\frac{d(p-\th_1)}{1-\th_1}}}^{p-\th_1} \\
\lsm & \de_0^2 2^{-100\th_1k} N_1^{1-s} N_2^{-\frac12\th_1 + 1-s} N_0^{10\th_1(1-s)} N^{0-} \normb{N^{\frac{\th_1}{p-\th_1}+}\U_{N}}_{L_{t\in I_k}^{2(p-\th_1)}L_x^{\frac{d(p-\th_1)}{1-\th_1}}}^{p-\th_1}.
}
By the choice of $\th_1$, we have
\EQ{
\sum_{N_1\loe N_2,N\ll N_2} N_1^{1-s} N_2^{-\frac12\th_1 + 1-s} N^{0-} \lsm \sum_{N_2:N_2\gsm N_0} N_2^{-\frac12\th_1+2(1-s)}  \lsm 1.
}
Then, by $1\lsm 2^{(\frac{dp}{4}-1)k}$ and \eqref{eq:fra-energy-bound-wv-high-low-5d-main},
\EQn{\label{esti:fra-energy-bound-vv-high-low-5d}
\eqref{eq:fra-energy-bound-vv-high-low-5d} \lsm & \de_0^2 N_0^{10\th_1(1-s)} \sum_{k\in\Z:2^k\loe T_0} 2^{-100\th_1k} \sup_{N:N\gsm N_0} \normb{N^{\frac{\th_1}{p-\th_1}+}\U_{N}}_{L_{t\in I_k}^{2(p-\th_1)}L_x^{\frac{d(p-\th_1)}{1-\th_1}}}^{p-\th_1} \\
\lsm & \de_0^2 N_0^{10\th_1(1-s)} \sum_{k\in\Z:2^k\loe T_0} 2^{(\frac{dp}{4}-1 - 100\th_1)k} \sup_{N_1:N_1\gsm N_0} \normb{N_1^{\frac{\th_1}{p-\th_1}+}\U_{N_1}}_{L_{t\in I_k}^{2(p-\th_1)}L_x^{\frac{d(p-\th_1)}{1-\th_1}}}^{p-\th_1} \\
\lsm & \de_0^2 N_0^{1-s}.
}

Finally, Lemma \ref{lem:fra-energy-5d} follows by \eqref{esti:fra-energy-bound-wv-high-high-5d},
\eqref{esti:fra-energy-bound-wv-high-low-5d},
\eqref{esti:fra-energy-bound-vv-high-high-5d}, and
\eqref{esti:fra-energy-bound-vv-high-low-5d}.
\end{proof}

\vskip 1.5cm
\section{Almost sure scattering}\label{sec:almost-sure-scattering}
\vskip .5cm

Let $u$ be the solution to \eqref{eq:nls-random}. We denote 
\EQ{
v=S(t)f^\om\text{, and } \V(t,x)=\mathcal Tv(t,x).
}
Since we are considering a fixed equation, the exponent $p$ does not affect the subsequent argument, and can be viewed as a constant. Therefore, in the following, we will omit the dependence on $p$.

\subsection{Almost sure Strichartz estimate for $Jv$ and $\V$}
In this subsection, we derive the almost sure Strichartz estimate for $Jv$ and $\V$. Fix a sufficiently small parameter $\ep>0$, and denote 
\EQ{
X^{-\ep}(I) :=
L^{q_1}_tL^{r_1}_x(I\times \R^d),
}
where $I\subset\R$, $\frac{1}{q_1}=\frac{d}{2(d+2)} + \frac\ep2$, and $\frac{1}{r_1}=\frac{d}{2(d+2)}$. Here, we use the notation $X^{-\ep}$ to indicate that it is a $\dot H_x^{-\ep}$-level spacetime norm, due to the fact that 
$$
 \frac{2}{q_1}+\frac{d}{r_1}=\frac d2+\ep.
$$
Then, we define $Y$ norm for the linear solution $v=S(t)f^\om$:
\EQ{
	\norm{v}_{Y} := \norm{Jv}_{X^{-\ep}([0,1])} + \norm{\jb{\nabla}^2 \mathcal Tv }_{ L_t^\I L_x^{p+2}(\R\times\R^d)}.
} 
The main result in this subsection is the following linear estimate.
\begin{prop}\label{prop:bound-v-Y}
	Let $d\goe 1$, $0<\ep\loe \frac{d+4}{d+2}$, $\frac2d<p<\frac4d$, and $f^\om$ be defined by Definition \ref{defn:randomization}. Then, there exist $c>0$ and $C>0$ independent of $\norm{f}_{L_x^2(\R^d)}$, such that for any $\la>0$,
	\EQ{
		\PP\brkb{\fbrkb{\om\in\Om: \norm{v}_{Y}>\la}} \loe C e^{-c\la^2\norm{f}_{L_x^2(\R^d)}^{-2}}.
	}
	Moreover, for almost every $\om\in\Om$,
	\EQ{
		\norm{v}_{Y}<\I.
	}
\end{prop}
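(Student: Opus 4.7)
The strategy is the standard probabilistic Strichartz argument adapted to the narrowed randomization: by Lemma \ref{lem:probability-estimate}, it suffices to establish, for every $\rho\ge\rho_0$, a moment bound of the form $\|\,\|v\|_Y\,\|_{L^\rho_\omega} \lsm \sqrt{\rho}\,\|f\|_{L_x^2}$. Two key identities turn both constituents of the $Y$-norm into Gaussian series of deterministic linear flows. First, since $J(t)=S(t)\,x\,S(-t)$, we have $Jv = S(t)(xf^\om) = \sum_j g_j(\om)\,S(t)(x\psi_j f)$. Second, by Lemma \ref{lem:J-T}(4), $\TT v = S(t)\F^{-1}\overline{f^\om} = \sum_j \overline{g_j(\om)}\,S(t)\F^{-1}\overline{\psi_j f}$. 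For each norm, Minkowski's inequality (valid once $\rho\ge\max\{q_1,r_1,p+2\}$) moves $L^\rho_\omega$ to the innermost slot, Lemma \ref{lem:large-deviation} converts the Gaussian sum at each spacetime point into an $\ell^2_j$ sum, and a final Minkowski step brings $\ell^2_j$ outside.

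For the $Jv$ piece, each $x\psi_j f$ is supported in $2Q_j$, which for $Q_j\in\A(Q_N)$ satisfies $|x|\lsm N$ on its support and has volume $\lsm N^{-ad}$. Combining the $\dot H^{-\ep}$-Strichartz estimate, the dual Sobolev embedding $\|\cdot\|_{\dot H^{-\ep}}\lsm\|\cdot\|_{L^{2d/(d+2\ep)}}$, and H\"older's inequality yields
\begin{equation*}
\normb{S(t)(x\psi_j f)}_{X^{-\ep}([0,1])}\lsm\normb{x\psi_j f}_{L_x^{2d/(d+2\ep)}}\lsm N^{1-a\ep}\|\psi_j f\|_{L_x^2}.
\end{equation*}
Squaring and summing over $Q_j\in\A(Q_N)$ and dyadic $N$, the hypothesis $a\ep\ge(d+4)/d>1$ forces the exponent $2-2a\ep<0$, and the essential disjointness $\sum_N \|\chi_{Q_N} f\|_{L_x^2}^2\le\|f\|_{L_x^2}^2$ closes the estimate by $\|f\|_{L_x^2}$.

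For the $\TT v$ piece, the crucial observation is that $\F^{-1}\overline{\psi_j f}$ has \emph{Fourier} support contained in $2Q_j$ (since $\F(\F^{-1}\overline{\psi_j f})=\overline{\psi_j f}$), which for $Q_j\in\A(Q_N)$ is contained in a ball of radius $\lsm N^{-a}$ centered at frequency $\sim N$. Because both $S(t)$ (a pure phase in Fourier) and $\langle\nabla\rangle^2$ preserve this support, and $(1+|\xi|^2)\lsm N^2$ on it, the sharp Bernstein inequality (via modulation of the Fourier support to the origin) combined with the $L^2$-isometry of $S(t)$ produces the $t$-uniform bound
\begin{equation*}
\normb{\langle\nabla\rangle^2 S(t)\F^{-1}\overline{\psi_j f}}_{L_x^{p+2}}\lsm N^{2}\cdot (N^{-a})^{\,d/2-d/(p+2)}\,\|\psi_j f\|_{L_x^2}=N^{\,2-\frac{adp}{2(p+2)}}\|\psi_j f\|_{L_x^2}.
\end{equation*}
The hypothesis $a\ge 5(p+2)/(dp)$ then guarantees the exponent $2-adp/(2(p+2))\le-1/2$, and the dyadic sum in $N$ again collapses to $\|f\|_{L_x^2}^2$. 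Combining both contributions gives the claimed $L^\rho_\omega$ moment bound; Lemma \ref{lem:probability-estimate} converts it into the exponential tail estimate, and almost-sure finiteness is immediate from Borel--Cantelli.

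The main obstacle is the second (Fourier-side) bound: the factor $\langle\nabla\rangle^2$, which will be indispensable for the nonlinear analysis, contributes an unavoidable $N^2$ loss, and only the Bernstein gain $N^{-adp/(2(p+2))}$ coming from the narrow scale $N^{-a}$ can absorb it. This dictates the large lower bound on the narrowing parameter $a$ in Definition \ref{defn:randomization}, whose two thresholds correspond precisely to the physical-side estimate ($a\ep\ge(d+4)/d$) and the Fourier-side estimate ($a\ge 5(p+2)/(dp)$) above.
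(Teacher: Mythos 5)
Your overall architecture (reduce to moment bounds $\|\,\|v\|_Y\|_{L^\rho_\omega}\lsm\sqrt\rho\|f\|_{L^2}$, apply Lemma \ref{lem:probability-estimate}, and exploit the $N^{-a}$-scale of the cubes to absorb the losses) is the same as the paper's, and your observation that $\F^{-1}\overline{\psi_j f}$ has Fourier support in $2Q_j$, so that Bernstein on the tiny frequency cube yields the gain $N^{-\frac{adp}{2(p+2)}}$, is a legitimate and arguably cleaner substitute for the paper's Hausdorff--Young-plus-weight computation. However, there are two concrete gaps. First, the ``$\dot H^{-\ep}$-Strichartz estimate'' you invoke for the $Jv$ piece, i.e. $\|S(t)h\|_{X^{-\ep}([0,1])}\lsm\|h\|_{\dot H^{-\ep}}$, is false: it asserts a free gain of $\ep$ derivatives. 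Taking $\wh h=\chi_{B(Ne_1,1)}$, the solution is a unit-scale wave packet that does not spread for $|t|\le1$, so the left side is $\sim1$ while $\|h\|_{\dot H^{-\ep}}\sim N^{-\ep}$. Your composite inequality $\|S(t)(x\psi_jf)\|_{X^{-\ep}([0,1])}\lsm\|x\psi_jf\|_{L^{2d/(d+2\ep)}}$ happens to be salvageable, but only through the ingredient you never use: the dispersive estimate on the unit time interval interpolated against the admissible Strichartz bound (this is exactly the paper's Lemma \ref{lem:local-Jv}, which lands on the slightly weaker exponent $\frac{1}{r_2'}=\frac12+\frac{\ep}{d+4}$, still sufficient because $a\ge\frac{d+4}{d\ep}$). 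As written, the first display of your argument has no valid proof.

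Second, and more seriously, your treatment of the $\jb{\nabla}^2\TT v$ component breaks at the interchange of norms: Minkowski lets you move $L^\rho_\omega$ inside $L^q_t$ only when $q\le\rho$, so it cannot pass through the $L^\infty_t$ norm for any finite $\rho$. Your estimate $\sup_t\|\jb{\nabla}^2 S(t)\F^{-1}\overline{\psi_jf}\|_{L^{p+2}_x}\lsm N^{2-\frac{adp}{2(p+2)}}\|\psi_jf\|_{L^2}$ gives, via Khintchine, a bound on $\sup_t\|\,\cdot\,\|_{L^{p+2}_xL^\rho_\omega}$, but the quantity you need is $\|\sup_t\|\cdot\|_{L^{p+2}_x}\|_{L^\rho_\omega}$, and these do not compare. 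This is precisely why the paper proves the ``Bernstein in time'' inequality \eqref{eq:local-V-T-bernstein}, trading $L^\infty_t$ for $L^{1/\ep_0}_t$ at the cost of $N^{2\ep_0}$ (using $\pd_t\V_N=\tfrac i2\De\V_N$), after which Minkowski applies. Your route can be repaired the same way — your $F_j$ are frequency-localized at scale $N$, so the identical time-regularity argument works and the loss $N^{2\ep_0}$ is absorbed since your exponent is $\le-\frac12$ — but without some such device (time Bernstein, Sobolev embedding in $t$, or a discretization of the time axis) the moment bound for the $L^\infty_t L^{p+2}_x$ piece is not established.
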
 

Proposition \ref{prop:bound-v-Y} follows directly by Lemmas  \ref{lem:probability-estimate},  \ref{lem:local-Jv}, and \ref{lem:local-V-T}.
\begin{lem}[$L^2$ subcritical estimate]\label{lem:local-Jv}
Let $d\goe 1$, $0<\ep\loe \frac{d+4}{d+2}$, and $f^\om$ be defined by Definition \ref{defn:randomization}. 
Then, for any $\rho\goe 2$,
\EQ{
\norm{JS(t)f^\om}_{L_\om^\rho X^{-\ep}(\Om\times[0,1])} \lsm \sqrt{\rho} \norm{f}_{L_x^2(\R^d)}.
}
\end{lem}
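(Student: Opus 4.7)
The plan is to reduce the almost-sure bound to a deterministic Strichartz-type estimate on each narrow piece $\psi_j f$, with the randomness absorbed by the Gaussian large deviation inequality. Since $JS(t)f^\om=S(t)(xf^\om)$ and $xf^\om=\sum_j g_j(\om)\,x\psi_j f$, I would start by moving $L_\om^\rho$ inside $L_t^{q_1}L_x^{r_1}$. This is legitimate when $\rho\ge\max(q_1,r_1)$ by Minkowski; note that $q_1,r_1\ge 2$ once $\ep$ is small, and the case $\rho<\max(q_1,r_1)$ follows from monotonicity of $L_\om^\rho$ on the probability space. After swapping norms, Lemma~\ref{lem:large-deviation} gives pointwise (in $(t,x)$) the bound
\EQ{
\normb{\sum_j g_j(\om)S(t)(x\psi_j f)}_{L_\om^\rho}\lsm\sqrt{\rho}\brkb{\sum_j |S(t)(x\psi_j f)|^2}^{1/2},
}
so after a second application of Minkowski (again using $q_1,r_1\ge 2$), the task is reduced to showing
\EQ{
\brkb{\sum_j \norm{S(t)(x\psi_j f)}_{X^{-\ep}([0,1])}^2}^{1/2}\lsm \norm{f}_{L_x^2}.
}

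The heart of the argument is the deterministic bound on each $S(t)(x\psi_j f)$. For $Q_j\in \A(Q_N)$, the function $x\psi_j f$ is supported on $2Q_j$, a cube of side $\lsm N^{-a}$ sitting at distance $\sim N$ from the origin, so $\norm{x\psi_j f}_{L_x^2}\lsm N\norm{\psi_j f}_{L_x^2}$. To compensate the factor $N$, I would extract a factor $N^{-a\ep}$ from the narrow support. The cleanest route is a scaling argument: with $\la=N^a$ and $h_\la(x)=\la^{d/2}(x\psi_j f)(\la x)$, the unitary rescaling $S(t)h(x)=\la^{d/2}S(\la^2 t)h_\la(\la x)$ together with the admissibility relation $\frac{2}{q_1}+\frac{d}{r_1}=\frac{d}{2}+\ep$ yields
\EQ{
\norm{S(t)(x\psi_j f)}_{L_t^{q_1}[0,1]L_x^{r_1}}=\la^{-\ep}\norm{S(s)h_\la}_{L_s^{q_1}[0,\la^2]L_x^{r_1}}.
}
Since $h_\la$ is now supported in a unit cube, I would bound the rescaled flow globally in time through the Strichartz estimate at the $\dot H^{-\ep}$ level (which in turn follows from the standard $L^2$ Strichartz plus Sobolev embedding applied to $|\nabla|^{-\ep}h_\la$, using that $h_\la$ on a unit cube lies in $L^{2d/(d+2\ep)}\subset \dot H^{-\ep}$). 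This gives $\norm{S(s)h_\la}_{L_s^{q_1}L_x^{r_1}}\lsm \norm{h_\la}_{L_x^2}=\norm{x\psi_j f}_{L_x^2}$, so altogether
\EQ{
\norm{S(t)(x\psi_j f)}_{X^{-\ep}([0,1])}\lsm N^{-a\ep}\norm{x\psi_j f}_{L_x^2}\lsm N^{1-a\ep}\norm{\psi_j f}_{L_x^2}.
}

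The summation in $j$ then closes because the $\psi_j$'s are essentially disjoint on each $Q_N$ with $\sum_j\psi_j^2\le 1$:
\EQ{
\sum_j \norm{S(t)(x\psi_j f)}_{X^{-\ep}([0,1])}^2\lsm \sum_{N\in 2^\N} N^{2(1-a\ep)}\norm{f\chi_{Q_N}}_{L_x^2}^2+\norm{f\chi_{O_1}}_{L_x^2}^2.
}
By the hypothesis in Definition~\ref{defn:randomization}, $a\ep\ge \frac{d+4}{d}>1$, so every weight $N^{2(1-a\ep)}$ is bounded by $1$, and the right-hand side collapses to $\norm{f}_{L_x^2}^2$. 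This produces the claimed bound with constant $\sqrt{\rho}$, and Lemma~\ref{lem:probability-estimate} converts it into the exponential tail and the almost-sure finiteness in Proposition~\ref{prop:bound-v-Y}.

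The principal obstacle is step four: turning the narrow support of $\psi_j f$ into the critical gain $N^{-a\ep}$. Standard Strichartz on $[0,1]$ only yields the bound $\norm{x\psi_j f}_{L_x^2}\lsm N\norm{\psi_j f}_{L_x^2}$, which after summing in $j$ would reproduce $\norm{xf}_{L_x^2}$ and therefore fail for generic $f\in L_x^2$. The indispensable ingredient is the $\dot H^{-\ep}$-level Strichartz inequality combined with the ``narrowed'' scale $N^{-a}$ of the randomization; it is precisely this interplay, unavailable to the usual Wiener randomization with unit-scale cubes, that makes the proof work and accounts for the lower bound $a\ge \frac{d+4}{d\ep}$ fixed in Definition~\ref{defn:randomization}.
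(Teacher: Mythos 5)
Your reduction to a deterministic square-function bound (Minkowski, then the large deviation estimate, then Minkowski again) is the same as the paper's, and your instinct that the gain must come from the $N^{-a}$-scale support is correct. The gap is in the deterministic per-piece estimate. You invoke a ``Strichartz estimate at the $\dot H^{-\ep}$ level'' globally in time, i.e.\ $\norm{S(s)h}_{L_s^{q_1}L_x^{r_1}(\R\times\R^d)}\lsm\norm{h}_{\dot H^{-\ep}}$ with $\frac{2}{q_1}+\frac{d}{r_1}=\frac d2+\ep$, $\ep>0$. No such estimate exists: replacing $h$ by $e^{ix\cdot\xi_0}h$ leaves the left-hand side unchanged (a Galilean boost only translates the solution and changes its phase), while $\norm{e^{ix\cdot\xi_0}h}_{\dot H^{-\ep}}\to 0$ as $|\xi_0|\to\I$, so negative-regularity data cannot control any global spacetime norm, and deriving it from ``$L^2$ Strichartz plus Sobolev applied to $|\nabla|^{-\ep}h$'' cannot work. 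Nor does the unit-cube support rescue the statement as you use it: for compactly supported data the best available decay is the dispersive bound $\norm{S(s)h}_{L_x^{r_1}}\lsm s^{-\frac{d}{d+2}}\norm{h}_{L_x^{r_1'}}$, and $s^{-\frac{d}{d+2}}$ is $L_s^{q_1}$-integrable at infinity only when $\ep<\frac{d}{d+2}$; the lemma (and the application, where $\ep=\frac{4-dp}{2}$ can be close to $1$) goes beyond that range, so your claimed uniform bound $\norm{S(s)h_\la}_{L_s^{q_1}L_x^{r_1}}\lsm\norm{h_\la}_{L_x^2}$ over the window $[0,N^{2a}]$ is false in part of the stated range. (There is also a sign slip in the rescaling: with $h_\la(x)=\la^{d/2}h(\la x)$ you must take $\la=N^{-a}$ to blow the cube up to unit scale; as written, $\la=N^a$ shrinks it further.)

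The paper avoids any global-in-time estimate: it works directly on $[0,1]$, interpolating the admissible Strichartz bound $\norm{S(t)(x\psi_jf)}_{L_{t,x}^{r_1}}\lsm\norm{x\psi_jf}_{L_x^2}$ with the $L_t^1([0,1])L_x^{r_1}$ bound coming from the dispersive inequality (integrable since $\frac{d}{d+2}<1$), which lands the data in $L_x^{r_2'}$ with $\frac1{r_2}=\frac12-\frac{\ep}{d+4}$; H\"older on the support of volume $\sim N^{-da}$ then converts the weight $N$ into $N^{1-\frac{d}{d+4}a\ep}\loe 1$ using $a\goe\frac{d+4}{d\ep}$. Your scheme could in principle be repaired along the same lines—replace the nonexistent $\dot H^{-\ep}$ Strichartz estimate by local-in-time admissible Strichartz plus dispersive decay and track the polynomial growth of the $L_s^{q_1}L_x^{r_1}$ norm over the finite window $[0,N^{2a}]$, whose exponent $1-\frac{ad}{d+2}$ is nonpositive under the constraints on $a$ and $\ep$—but as written the key step is unjustified and false in the generality you assert, so the proof does not close.
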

\begin{proof}
We restrict the variables $\om\in\Om$, $j\in\N$, $t\in[0,1]$, and $x\in\R^d$. 
Let $(q_1,r_1)$ such that $\frac{1}{q_1}=\frac{d}{2(d+2)} + \frac\ep2$ and $\frac{1}{r_1}=\frac{d}{2(d+2)}$. For any $\rho\goe \frac{2(d+2)}{d}$, by Minkowski inequality,
\EQ{
\norm{JS(t)f^\om}_{L_\om^\rho L_t^{q_1} L_x^{r_1}} = & \norm{S(t)(xf^\om)}_{L_\om^\rho L_t^{q_1} L_x^{r_1}} \\
\lsm & \sqrt{\rho} \norm{S(t)(xf^\om)}_{L_t^{q_1} L_x^{r_1}L_\om^\rho} \\
\lsm & \sqrt{\rho} \norm{S(t)(x\psi_jf)}_{L_t^{q_1} L_x^{r_1}l_j^2}\\
\lsm & \sqrt{\rho} \norm{S(t)(x\psi_jf)}_{l_j^2L_t^{q_1} L_x^{r_1}}.
}
If $2\loe\rho< \frac{2(d+2)}{d}$,
we first use H\"older's inequality in $\om$,
\EQ{
\norm{JS(t)f^\om}_{L_\om^\rho L_t^{q_1} L_x^{r_1}} \lsm \norm{JS(t)f^\om}_{L_\om^{\frac{2(d+2)}{d}} L_t^{q_1} L_x^{r_1}},
}
then by similar argument as above,
\EQ{
\norm{JS(t)f^\om}_{L_\om^{\frac{2(d+2)}{d}} L_t^{q_1} L_x^{r_1}} \lsm_d \norm{S(t)(x\psi_jf)}_{l_j^2L_t^{q_1} L_x^{r_1}} \lsm_d \sqrt{\rho} \norm{S(t)(x\psi_jf)}_{l_j^2L_t^{q_1} L_x^{r_1}}.
}
Now, we omit the dependence on $d$, and obtain for all $\rho\goe2$,
\EQ{
\norm{JS(t)f^\om}_{L_\om^\rho L_t^{q_1} L_x^{r_1}} \lsm \sqrt{\rho} \norm{S(t)(x\psi_jf)}_{l_j^2L_t^{q_1} L_x^{r_1}}.
}

Fix $j\in\N$. First, by Strichartz estimate, 
\EQn{\label{esti:bound-Jv-str}
\norm{S(t)(x\psi_jf)}_{L_{t,x}^{r_1}} \lsm \norm{x\psi_jf}_{L_x^2}.
}
Second, using the dispersive inequality,
\EQ{
\norm{S(t)(x\psi_jf)}_{L_{x}^{r_1}} \lsm t^{-\frac{d}{d+2}}\norm{x\psi_jf}_{L_x^{r_1'}}.
}
Noting that $t\in[0,1]$, this implies that  
\EQn{\label{esti:bound-Jv-dispersive}
\norm{S(t)(x\psi_jf)}_{L_t^1L_{x}^{r_1}} \lsm \norm{x\psi_jf}_{L_x^{r_1'}}.
}
Then, interpolating between \eqref{esti:bound-Jv-str} and \eqref{esti:bound-Jv-dispersive}, it infers that 
\EQ{
\norm{S(t)(x\psi_jf)}_{L_t^{q_1} L_x^{r_1}} \lsm \norm{x\psi_jf}_{L_x^{r_2'}},
}
where $r_2$ satisfies
$$
\frac1{r_2}=\frac12-\frac{\ep}{d+4}.
$$

Now, we turn to estimate $\norm{x\psi_jf}_{L_x^{r_2'}}$. For each $j\in\N$, there exists $N\in2^\N$ such that $|x|\sim N$, and the volume of the support set for $\psi_j$ is $CN^{-da}$. Using H\"older's inequality and by the choice of $a$,
\EQ{
\norm{x\psi_jf}_{L_x^{r_2'}} \lsm N\cdot N^{-da(\frac{1}{r_2'}-\frac{1}{2})} \norm{\psi_jf}_{L_x^{2}} \lsm N^{1-\frac{d}{d+4}a\ep} \norm{\psi_jf}_{L_x^{2}} \lsm \norm{\psi_jf}_{L_x^{2}}.
}
This gives that 
\EQ{
\norm{S(t)(x\psi_jf)}_{L_t^{q_1} L_x^{r_1}} \lsm \norm{\psi_jf}_{L_x^{2}}.
}
Therefore,
\EQ{
\norm{S(t)(x\psi_jf)}_{l_j^2L_t^{q_1} L_x^{r_1}} \lsm \norm{\psi_jf}_{l_j^2L_x^{2}}\lsm \norm{f}_{L_x^2}.
}
This finishes the proof.
\end{proof}

Next, we recall an explicit formula of $\mathcal V$, which plays an important role in our analysis. 
\begin{lem}\label{lem:explicit-V}
Let $v=S(t)f^\om$, $\V=\TT v$, and $V(t,x):=e^{- \frac12 i t|x|^2}\overline{f^\om}(x)$, 
then 
\EQn{\label{esti:bound-V-representation-nabla}
\V(t,x) =\wh{V}(t,-x).
}
\end{lem}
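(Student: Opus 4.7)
The proof is essentially a direct computation, using the explicit formula \eqref{linear-explicit} for the Schr\"odinger propagator and unpacking the definition \eqref{def:PC-transform} of the pseudo-conformal transform. My plan is simply to carry this out and identify the Fourier transform at the end.

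First I would write
\EQ{
v(1/t,x/t)=S(1/t)f^\om(x/t)=\frac{t^{d/2}}{(2\pi i)^{d/2}}\int_{\R^d}e^{it|x/t-y|^2/2}f^\om(y)\,\dd y.
}
Expanding $|x/t-y|^2=|x|^2/t^2-2(x/t)\cdot y+|y|^2$ and multiplying by $t/2$ produces the three terms $|x|^2/(2t)$, $-x\cdot y$, and $t|y|^2/2$. Conjugating then yields
\EQ{
\bar v(1/t,x/t)=\frac{t^{d/2}}{(-2\pi i)^{d/2}}\,e^{-i|x|^2/(2t)}\int_{\R^d}e^{ix\cdot y}e^{-it|y|^2/2}\,\overline{f^\om}(y)\,\dd y.
}

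Next I would apply $\mathcal T$. The key observation is that the prefactor $e^{i|x|^2/(2t)}$ in \eqref{def:PC-transform} exactly cancels the quadratic phase $e^{-i|x|^2/(2t)}$ produced above, while the constants collapse: $(it)^{-d/2}\cdot t^{d/2}(-2\pi i)^{-d/2}=((i)(-2\pi i))^{-d/2}=(2\pi)^{-d/2}$. So
\EQ{
\V(t,x)=\frac{1}{(2\pi)^{d/2}}\int_{\R^d}e^{ix\cdot y}e^{-it|y|^2/2}\,\overline{f^\om}(y)\,\dd y,
}
which is by definition $\mathcal F^{-1}\bigl(e^{-it|\cdot|^2/2}\overline{f^\om}\bigr)(x)=\wh V(t,-x)$, using the paper's convention for $\F$ and the identity $\mathcal F^{-1}g(x)=\wh g(-x)$.

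The proof is entirely a bookkeeping exercise; there is no real obstacle. The only point that deserves care is tracking the factors of $i$, $2\pi$, and the sign conventions of $\mathcal F$ versus $\mathcal F^{-1}$ (the paper defines both in Section~\ref{sec:preliminary}), so I will keep the constants explicit throughout. A remark at the end could note the conceptual content: $\TT S(t)=S(t)\F^{-1}\bar{(\cdot)}$ already appears in Lemma \ref{lem:J-T}(4), and the lemma refines this identity by making explicit how the Fourier variable carries the quadratic modulation $e^{-it|y|^2/2}$, which is what makes estimates on $\V$ for $f^\om$ merely in $L^2$ tractable via randomization of the physical-space data $\overline{f^\om}$.
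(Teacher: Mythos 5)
Your computation is correct, and it reaches the stated identity by a different (more hands-on) route than the paper. The paper's proof is a two-line affair: it invokes Lemma \ref{lem:J-T}(4), $\TT S(t) f = S(t)\F^{-1}\wb f$, and then writes $S(t)\F^{-1}(\wb{f^\om}) = \F\bigl(e^{-\frac12 it|\cdot|^2}\wb{f^\om}\bigr)(-x)$ using the Fourier-multiplier representation of $e^{\frac12 it\De}$ together with $\F^{-1}g(x)=\F g(-x)$. You instead unpack everything from the kernel formula \eqref{linear-explicit} and the definition \eqref{def:PC-transform}, expand the quadratic phase, and watch the $e^{\pm i|x|^2/(2t)}$ factors cancel; this in effect re-proves Lemma \ref{lem:J-T}(4) (which the paper only asserts ``can be checked directly'') simultaneously with the multiplier identity. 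What the paper's route buys is brevity and reuse of an already-recorded structural fact; what yours buys is a self-contained verification in which the constants $(it)^{-d/2}\, t^{d/2}(-2\pi i)^{-d/2}=(2\pi)^{-d/2}$ and the sign conventions of $\F$ versus $\F^{-1}$ are checked explicitly rather than delegated. The only point to keep an eye on in your write-up is the branch bookkeeping for the half-integer powers of $i$ (harmless here since $t>0$ and the conventions are consistent), which you already flag. Your closing remark correctly identifies the relationship to Lemma \ref{lem:J-T}(4), so there is no gap.
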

\begin{proof}
This lemma follows directly from Lemma \ref{lem:J-T}:
\EQ{
\V(t,x) = \TT S(t) f^\om = S(t) \F^{-1}(\wb{f^\om}) = \F(e^{-\frac12it|\cdot|^2}\wb{f^\om})(-x).
}
\end{proof}

Applying Lemma \ref{lem:explicit-V}, we have the following lemma.
\begin{lem}[Improved weighted estimate] \label{lem:local-V-T}
Let $d\goe 1$, $f^\om$ be defined by Definition \ref{defn:randomization}, $v=S(t)f^\om$, and $\V=\TT v$.  Then, for any $\rho\goe 2$,
\EQ{
\norm{\jb{\nabla}^2\V}_{L_\om^\rho L_t^\I L_x^{p+2}(\Om\times\R\times\R^d)} \lsm \sqrt{\rho}\norm{f}_{L_x^2(\R^d)}.
}
\end{lem}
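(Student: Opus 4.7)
The plan is to transfer everything to Fourier side via Lemma \ref{lem:explicit-V}, which reduces the weighted regularity estimate for $\mathcal{T}v$ to an estimate for $\jb{y}^2 f^\om$ in a Lebesgue space where the narrowed-cube structure of the randomization can be exploited.

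First, by Lemma \ref{lem:explicit-V} and a direct computation,
\EQ{
\jb{\nabla}^2 \V(t,x) = (1-\De_x)\wh{V}(t,-x) = \F^{-1}\brkb{\jb{y}^2 V(t,y)}(x),
}
where $V(t,y) = e^{-\frac12 it|y|^2}\wb{f^\om}(y)$, hence $|V(t,y)| = |f^\om(y)|$ pointwise. Since $p\ge 0$ we have $(p+2)'\le 2$, so the Hausdorff--Young inequality yields
\EQ{
\normb{\jb{\nabla}^2\V(t)}_{L_x^{p+2}} \lsm \normb{\jb{y}^2 V(t,y)}_{L_y^{(p+2)'}} = \normb{\jb{y}^2 f^\om(y)}_{L_y^{(p+2)'}},
}
which is independent of $t$. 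Thus the $L_t^\I$ norm costs nothing, and it suffices to prove
\EQn{\label{eq:prop-bound-randomized-weight}
\normb{\jb{y}^2 f^\om}_{L_\om^\rho L_y^{(p+2)'}} \lsm \sqrt{\rho}\,\norm{f}_{L_x^2}.
}

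To establish \eqref{eq:prop-bound-randomized-weight}, set $F_j(y):=\jb{y}^2\psi_j(y)f(y)$, so that $\jb{y}^2 f^\om = \sum_j g_j(\om) F_j(y)$. Applying Minkowski ($\rho\ge 2\ge (p+2)'$) and Lemma \ref{lem:large-deviation} pointwise in $y$, followed by another Minkowski in the reverse direction (valid since $(p+2)'\le 2$),
\EQ{
\normb{\jb{y}^2 f^\om}_{L_\om^\rho L_y^{(p+2)'}} \le \normbb{\normb{\jb{y}^2 f^\om(y)}_{L_\om^\rho}}_{L_y^{(p+2)'}} \lsm \sqrt{\rho}\,\normb{\norm{F_j(y)}_{l_j^2}}_{L_y^{(p+2)'}} \le \sqrt{\rho}\,\norm{F_j}_{l_j^2 L_y^{(p+2)'}}.
}

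It remains to estimate each $\|F_j\|_{L^{(p+2)'}}$ and sum. This is where the narrowed randomization is essential: $F_j$ is supported in $2Q_j$, which has volume $\lsm N_j^{-da}$ (with $N_j=1$ for the central cube), while $\jb{y}\lsm N_j$ on that support. H\"older's inequality on the cube gives, with $\alpha := \frac{1}{(p+2)'} - \frac12 = \frac{p}{2(p+2)}$,
\EQ{
\norm{F_j}_{L_y^{(p+2)'}} \lsm N_j^{2}\cdot N_j^{-d a\alpha}\norm{\psi_j f}_{L_y^2}.
}
Because the assumption $a \ge \frac{5(p+2)}{dp} > \frac{2}{d\alpha}$ yields $N_j^{2-da\alpha}\le 1$ for every $N_j\ge 1$, and because $\sum_j \psi_j^2 \le \sum_j \psi_j = 1$ gives $\sum_j \|\psi_j f\|_{L^2}^2 \le \|f\|_{L^2}^2$, one obtains
\EQ{
\norm{F_j}_{l_j^2 L_y^{(p+2)'}}^2 \lsm \sum_j \norm{\psi_j f}_{L_y^2}^2 \lsm \norm{f}_{L_x^2}^2,
}
which combined with the chain of inequalities above proves \eqref{eq:prop-bound-randomized-weight} and hence the lemma. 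The only delicate point is Step 4, where the smallness of the randomization scale $N^{-a}$ must be precisely strong enough to absorb the two powers of $\jb{y}$ via H\"older on each cube; this is exactly the role of the condition $a\ge \frac{5(p+2)}{dp}$ imposed in Definition \ref{defn:randomization}.
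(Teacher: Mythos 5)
Your reduction fails at the final ``reverse Minkowski'' step, and in fact the intermediate claim it is meant to establish is false. Generalized Minkowski moves the \emph{larger} exponent inward: with $q=(p+2)'<2$ it gives $\norm{\,\norm{F_j}_{L^q_y}}_{l^2_j}\le \norm{\,\norm{F_j(y)}_{l^2_j}}_{L^q_y}$, which is the opposite of the inequality you invoke. The direction matters here: since the $F_j=\jb{y}^2\psi_j f$ have essentially disjoint supports, $\norm{\,\norm{F_j(y)}_{l^2_j}}_{L^q_y}$ is comparable to $\big(\sum_j\norm{F_j}_{L^q_y}^q\big)^{1/q}$, which dominates $\big(\sum_j\norm{F_j}_{L^q_y}^2\big)^{1/2}$ because $q<2$, so the step cannot be repaired. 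Worse, the whole reduction is lossy beyond rescue: because the $g_j$ are Gaussian and at each point only boundedly many $\psi_j$ are nonzero with $\sum_j\psi_j=1$, Khintchine (an equality for Gaussians) gives $\mathbb{E}\,\norm{\jb{y}^2 f^\om}_{L^q_y}^q\sim\norm{\jb{y}^2 f}_{L^q_y}^q$, and one can choose $f\in L^2$ (e.g.\ spreading the mass at radius $\sim N$ evenly over the $\sim N^{d(a+1)}$ cubes of $\A(Q_N)$, with a suitable decay in $N$) for which $\norm{\jb{y}^2 f}_{L^q_y}=\infty$ while $\norm{f}_{L^2}<\infty$. Hence your displayed goal $\norm{\jb{y}^2 f^\om}_{L^\rho_\om L^{(p+2)'}_y}\lsm\sqrt{\rho}\,\norm{f}_{L^2}$ is not just unproven but false: applying Hausdorff--Young to the \emph{whole} randomized sum discards the only cancellation available, since on the physical side the randomized pieces are disjointly supported and the square function is pointwise $\sim\jb{y}^2|f|$.

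This is exactly why the paper orders the operations differently: it keeps the estimate on the Fourier side, where the pieces $\F\brko{\jb{x}^{2+2\ep_0}\ph_N\psi_j h_1}$ overlap, applies Khintchine and then Minkowski in $L^{p+2}_\xi$ (exponent $\ge 2$, so the $l^2_j$ norm can legitimately be pulled outside), and only afterwards applies Hausdorff--Young to each localized piece, where H\"older on the tiny cube $2Q_j$ (this is where $a\ge\frac{5(p+2)}{dp}$ enters) absorbs the weight $\jb{x}^2$ against $\norm{\psi_j f}_{L^2}$ and leaves an $l^2_j$ sum controlled by $\norm{f}_{L^2}$. Your observation that $|V(t,y)|=|f^\om(y)|$ makes the Hausdorff--Young bound $t$-independent is the same fact the paper uses at the end (via $|h_1|=|f|$), but it cannot be deployed before the randomization has been exploited; the paper instead handles the $L^\infty_t$ norm with the Littlewood--Paley decomposition and the Bernstein-in-time inequality, precisely because Minkowski does not allow $L^\rho_\om$ to pass through $L^\infty_t$.
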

\begin{proof}
In the proof of this lemma, we write the spatial variable of $\V$ as $\xi$ to avoid confusion, namely $\V(t,\xi)$. We also restrict the variables $\om\in\Om$, $t\in\R$, and $x,\xi\in\R^d$.  Recall the definition of $f^\om$ in \eqref{eq:randomization}, i.e.  $f^\om=\sum_{j\in\N} g_j(\om)\psi_j f$. Then by Lemma \ref{lem:explicit-V},
\begin{align*}
V(t,x)
&=\sum_{j\in\N} g_j(\om)\psi_j(x) e^{-\frac12i t|x|^2}  \wb{f} (x)\\
&= \sum_{j\in\N} g_j(\om)\psi_j(x) h_1(x),
\end{align*}
where we denote $h_1(x):= e^{- \frac12 i t|x|^2} \wb{f}$, and 
\EQ{
\V(t,\xi) = \sum_{j\in\N} g_j(\om)\>\widehat{\psi_j h_1}(-\xi),
}

Next, we give a ``Bernstein's inequality" in $t$:
\EQn{\label{eq:local-V-T-bernstein}
\norm{\V_N}_{L_t^{\I}L_\xi^{p+2}(\R\times\R^d)} \lsm N^{2\ep_0} \norm{\V_N}_{L_t^{\frac{1}{\ep_0}}L_\xi^{p+2}(\R\times\R^d)}.
}
Let $\ep_0>0$ be a sufficiently small absolute constant. For any $t_1,t_2\in\R$ and $N\in2^\N$, by the fundamental theorem of calculus, and $\pd_t\V=i\De \V$, we get that
\EQ{
\norm{\V_N(t_2)}_{L_\xi^{p+2}} \lsm & \norm{\V(t_1)}_{L_\xi^{p+2}} + \int_{t_1}^{t_2}\norm{\pd_s\V_N(s)}_{L_\xi^{p+2}} \dd s \\
\lsm & \norm{\V(t_1)}_{L_\xi^{p+2}} + N^2|t_2-t_1|^{1-\ep_0}\norm{\V_N}_{L_t^{\frac{1}{\ep_0}}L_\xi^{p+2}(\R\times\R^d)}.
}
Then,
\EQ{
N^{-2}\norm{\V_N(t_2)}_{L_\xi^{p+2}}^{\frac{1}{\ep_0}} \lsm & \int_{t_2-N^{-2}}^{t_2+N^{-2}}\norm{\V(t_1)}_{L_\xi^{p+2}}^{\frac{1}{\ep_0}} \dd t_1 \\
& + N^{\frac{2}{\ep_0}}\int_{t_2-N^{-2}}^{t_2+N^{-2}}|t_2-t_1|^{\frac{1-\ep_0}{\ep_0}}\dd t_1\norm{\V_N}_{L_t^{\frac{1}{\ep_0}}L_\xi^{p+2}(\R\times\R^d)}^{\frac{1}{\ep_0}} \\
\lsm & \norm{\V_N}_{L_t^{\frac{1}{\ep_0}}L_\xi^{p+2}(\R\times\R^d)}^{\frac{1}{\ep_0}}.
}
Then, \eqref{eq:local-V-T-bernstein} follows by taking the supremum in $t_2$ in the above inequality.

Now, we consider the case when $\rho\goe 1/\ep_0$. By Littlewood-Paley decomposition, Minkowski's inequality and \eqref{eq:local-V-T-bernstein},
\EQ{
\norm{\jb{\nabla}^2\V}_{L_t^{\I} L_\xi^{p+2}} \lsm \norm{\jb{\nabla}^{2+2\ep_0}\V_N}_{l_N^2L_t^{\frac{1}{\ep_0}} L_\xi^{p+2}}.
}
By Minkowski's and Hausdorff-Young's inequalities,
\EQ{
\norm{\jb{\nabla}^2\V}_{ L_t^{\I} L_\xi^{p+2}}
\lsm & \norm{\jb{\nabla}^{2+2\ep_0}\V_N}_{l_N^2L_t^{\frac{1}{\ep_0}} L_\xi^{p+2} L_\om^\rho} \\
\lsm & \normb{\sum_{j\in\N}g_j(\om)P_N\F\brkb{\jb{x}^{2+2\ep_0}\psi_j h_1}(-\xi)}_{l_N^2L_t^{\frac{1}{\ep_0}}L_\xi^{p+2}L_\om^\rho} \\
\lsm & \sqrt\rho\normb{\F\brkb{\jb{x}^{2+2\ep_0}\ph_N\psi_j h_1}(-\xi)}_{l_N^2L_t^{\frac{1}{\ep_0}}L_\xi^{p+2} l_j^2} \\ 
\lsm &  \sqrt\rho\normb{\F\brkb{\jb{x}^{2+2\ep_0}\ph_N\psi_j h_1}(-\xi)}_{l_j^2 l_N^2L_t^{\frac{1}{\ep_0}}L_\xi^{p+2}} \\ 
\lsm & \sqrt\rho\norm{\jb{x}^{2+2\ep_0}\ph_N\psi_j h_1}_{l_j^2l_N^2 L_x^{\frac{p+2}{p+1}} } \\
\sim &  \sqrt\rho\norm{\jb{x}^{2+2\ep_0}\ph_N\psi_j f}_{l_j^2l_N^2 L_x^{\frac{p+2}{p+1}}}.
}
If $2\loe \rho < 1/\ep_0$, using H\"older's inequality in $\om$,
\EQ{
\norm{\jb{\nabla}^2\V}_{L_\om^\rho L_t^\I L_\xi^{p+2}} \lsm \norm{\jb{\nabla}^2\V}_{L_\om^{1/\ep_0} L_t^\I L_\xi^{p+2}}.
}
Then, similar as above, 
\EQ{
\norm{\jb{\nabla}^2\V}_{L_\om^{1/\ep_0} L_t^\I L_\xi^{p+2}} & \lsm \sqrt{1/\ep_0} \norm{\jb{x}^{2+2\ep_0}\ph_N\psi_j h_1}_{l_j^2 l_N^2 L_x^{\frac{p+2}{p+1}}} \\
& \lsm \sqrt{\rho}\cdot\sqrt{\frac{1}{\rho\ep_0}} \norm{\jb{x}^{2+2\ep_0}\ph_N\psi_j h_1}_{l_j^2 l_N^2 L_x^{\frac{p+2}{p+1}}} \\
& \lsm_{\ep_0}  \sqrt{\rho}\norm{\jb{x}^{2+2\ep_0}\ph_N\psi_j h_1}_{l_j^2 l_N^2 L_x^{\frac{p+2}{p+1}}}.
}
Therefore, we have that for all $\rho\goe2$,
\EQ{
\norm{\jb{\nabla}^2\V}_{L_\om^{\rho} L_t^\I L_\xi^{p+2}} \lsm_{\ep_0} \sqrt\rho\norm{\jb{x}^{2+2\ep_0}\ph_N\psi_j f}_{l_j^2l_N^2 L_x^{\frac{p+2}{p+1}}}.
}

For each $j\in \N$, there exists $N$ such that $|x|\sim N$ and the volume of the support set for $\psi_j$ is $CN^{-da}$. Note that $a\ge \frac{5(p+2)}{dp}$, then by choosing suitable $\ep_0$ and H\"older's inequality,
\EQ{
\normb{\jb{x}^{2+2\ep_0}\ph_N \psi_j f}_{ L_x^{\frac{p+2}{p+1}} }
\lsm  N^{2+2\ep_0-\frac{dpa}{2(p+2)}} \norm{\ph_N\psi_jf}_{L_x^2} 
\lsm  \norm{\ph_N\psi_jf}_{L_x^2},
}
which yields that 
\EQ{
\norm{\jb{\nabla}^2\V}_{L_\om^\rho L_t^\I L_\xi^{p+2}}
\lsm \sqrt\rho \norm{\ph_N\psi_jf}_{l_N^2l_j^2 L_x^2}
\lsm \sqrt\rho\|f\|_{L^2}.
}
This finishes the proof.
\end{proof}

\subsection{Reduction to the deterministic problem}\label{sec:reduction}
Let $u$ be the solution of \eqref{eq:nls}, and let
\EQn{\label{def:v-w}
	v=S(t)u_0\text{, and }w=u-v.
}
Then, $w$ satisfies the equation
\EQn{
	\label{eq:nls-w}
	\left\{ \aligned
	&i\pd_t w + \frac12\De w = |u|^p u, \\
	& w(0,x) = 0.
	\endaligned
	\right.
}
Now, we denote  
\EQn{\label{def-mathcal-UVW}
	\U(t,x):=\mathcal Tu(t,x)\text{, }\V(t,x):=\mathcal Tv(t,x)\text{, and  }\W(t,x):=\mathcal Tw(t,x).
}
Then, the related final data is defined by
\EQ{
\U_+:=\F^{-1}\wb u_0 \text{, }\V_+:=\F^{-1}\wb u_0 \text{, and }\W_+:=0.
}
Moreover, $\W(t,x)$ satisfies the equation
\EQn{\label{eq:nls-W}
i\pd_t \W + \frac12 \De \W = t^{\frac{dp}{2}-2}|\U|^p\U.
}
%
%Recall that $u=v+w$ with
%\EQ{
%v=e^{it\De}u_0\text{, and }w=-i\int_{t_0}^{T_0}e^{i(t-s)\De}(|u(s)|^pu(s))\ds.
%}

Now, we reduce the proof of Theorem \ref{thm:main} to the following deterministic problem:
\begin{prop}\label{prop:deterministic}
Let $d\goe 1$, $A>1$, and $\frac{2}{d}<p<\frac 4d$. Let also $v$ and $w$ be defined as above. Assume that the initial data $u_0\in L_x^2(\R^d)$ and $v$ satisfies
\EQn{\label{Hyth-determined}
\norm{u_0}_{L_x^{2}(\R^d)} + \norm{v}_{Y}\loe A.
}
Then, there exists a global solution $w\in C(\R;L_x^2(\R^d))$ to \eqref{eq:nls-w} such that there exists $w_\pm\in L_x^2(\R^d)$ with
\EQ{
\lim_{t\ra\pm\I} \norm{w(t)-S(t)w_{\pm}}_{L_x^2(\R^d)}=0.
}
\end{prop}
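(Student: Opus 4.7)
The plan is to invoke the general scattering criterion (Proposition~\ref{prop:scattering-criterion}) with the decomposition $v_{0}=u_{0}$, $w_{0}=0$, and the trivial choice $s=0$. Both assumptions on the initial data, $\langle x\rangle^{0}u_{0}=u_{0}\in L^{2}$ and $\langle x\rangle w_{0}=0\in L^{2}$, are then immediate, and the hypothesis $\|\langle\nabla\rangle^{2}\mathcal V\|_{L^{\infty}_{t}L^{p+2}_{x}}\le A$ from the $Y$-norm (together with Sobolev embedding) already furnishes $\sup_{t\in\mathbb R}\|\mathcal Tv(t)\|_{L^{p+2}_{x}}\lesssim A$. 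Hence the whole task reduces to establishing
\[
\sup_{0<t\le T_{0}}\bigl\{ t^{1-dp/4}\|\mathcal W(t)\|_{\dot H^{1}_{x}}+\|\mathcal W(t)\|_{L^{p+2}_{x}}\bigr\} \le C(A)
\]
for some $T_{0}=T_{0}(A)>0$, where $\mathcal W=\mathcal Tw$ and $\mathcal U=\mathcal Tv+\mathcal W$ solve \eqref{eq:nls-W} with final data $\mathcal W_{+}=0$ and $\mathcal U_{+}=\mathcal F^{-1}\bar u_{0}\in L^{2}$. This will be done via the modified pseudo-conformal energy $\mathcal E(t)$ defined in \eqref{defn:pseudo-conformal-energy}.

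Mimicking the two-step architecture of Sections~\ref{sec:fractional} and \ref{sec:fractional-dd}, I first establish a near-infinity bound $\sup_{t\ge T_{0}}\|\mathcal W(t)\|_{H^{1}_{x}}\le C(A)$, following Lemma~\ref{lem:fra-local-nonlinear-dd}. The Duhamel formula \eqref{eq:nls-w-pc-duhamel} with $\mathcal W_{+}=0$ reduces the matter to controlling $\|t^{dp/2-2}|\mathcal U|^{p}\nabla\mathcal U\|$ in appropriate dual Strichartz spaces on $[T_{0},+\infty)$; here the role previously played by the fractional-weighted initial datum is now played directly by $\|\langle\nabla\rangle^{2}\mathcal V\|_{L^{\infty}_{t}L^{p+2}_{x}}\le A$ and by the almost-sure Strichartz input of Proposition~\ref{prop:bound-v-Y}. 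Once this is secured, Gagliardo--Nirenberg and Lemma~\ref{lem:fra-linear}-type bounds yield $\mathcal E(T_{0})\le C(A)$.

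The main step is a bootstrap on $(0,T_{0}]$ with hypothesis $\sup_{t\in(0,T_{0}]}\mathcal E(t)\le 2C(A)$, which I wish to upgrade to $\le \tfrac32 C(A)$. Differentiating $\mathcal E$, integrating from $t$ to $T_{0}$, and using $i\partial_{t}\mathcal V=-\tfrac12\Delta\mathcal V$ plus integration by parts (as in the derivation of \eqref{eq:fra-energy-3d-identity-2}) reduces the energy increment to bounding
\[
\int_{0}^{T_{0}}\!\!\!\int_{\mathbb R^{d}}|\mathcal U|^{p}|\nabla\mathcal W||\nabla\mathcal V|\,dx\,dt+\int_{0}^{T_{0}}\!\!\!\int_{\mathbb R^{d}}|\mathcal U|^{p}|\nabla\mathcal V|^{2}\,dx\,dt.
\]
Unlike in the deterministic weighted setting, no bilinear Strichartz or frequency-localized high/low decomposition is needed to reduce derivatives on $\mathcal V$: two full derivatives on $\mathcal V$ are already available from the $Y$-norm in $L^{\infty}_{t}L^{p+2}_{x}$. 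H\"older's inequality together with $\|\nabla\mathcal W(t)\|_{L^{2}}\lesssim t^{dp/4-1}\mathcal E(t)^{1/2}$, $\|\mathcal U(t)\|_{L^{p+2}}\lesssim \mathcal E(t)^{1/(p+2)}$, and mass conservation $\|\mathcal U(t)\|_{L^{2}}=\|u_{0}\|_{L^{2}}\le A$, interpolated to handle $|\mathcal U|^{p}$, should then close the estimate.

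The real obstacle is the temporal singularity $t^{dp/4-1}$ near $t=0$ when $p$ approaches the short-range exponent $2/d$: a na\"ive use of the $L^{\infty}_{t}$-based $Y$-norm alone produces a non-integrable factor in time. This is precisely what the component $\|Jv\|_{X^{-\epsilon}([0,1])}$ of the $Y$-norm is designed for. Via Lemma~\ref{lem:J-T}(1), $\nabla\mathcal V=i\mathcal T(Jv)$, and Remark~\ref{rem:spacetime-exponent-transform}, this bound transfers into a subcritical spacetime estimate for $\nabla\mathcal V$ near $t=0$, whose temporal integrability is genuinely better than $L^{\infty}_{t}$. Interpolating this spacetime bound against the $L^{\infty}_{t}\langle\nabla\rangle^{2}$-bound gives a family of estimates for $\nabla\mathcal V$ with enough time integrability to absorb $t^{dp/4-1}$ in the energy increment for every $p\in(2/d,4/d)$. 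Choosing $T_{0}=T_{0}(A)$ sufficiently large so that the constant from the Duhamel step in the first part is $\le \tfrac12 C(A)$ closes the bootstrap, yielding the required bound and hence the scattering statement by Proposition~\ref{prop:scattering-criterion}; the backward-in-time case follows by the symmetry $u(t,x)\mapsto \bar u(-t,x)$.
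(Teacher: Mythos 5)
Your overall skeleton (scattering criterion with $s=0$, $w_0=0$; bound $\E(T_0)$; then control $\E$ on $(0,T_0]$) matches the paper, but the execution of the crucial step — the energy estimate towards the pseudo-conformal origin — contains a genuine gap. Your mechanism for taming the factor $t^{\frac{dp}{4}-1}$ is to transfer the bound $\norm{Jv}_{X^{-\ep}([0,1])}\le A$ into ``a subcritical spacetime estimate for $\nabla\V$ near $t=0$.'' By Remark \ref{rem:spacetime-exponent-transform}, however, original times $t\in[0,1]$ correspond to pseudo-conformal times in $[1,\infty)$: the $X^{-\ep}$ component controls $\nabla\V$ near pseudo-conformal \emph{infinity}, while near the pseudo-conformal origin (original $t\to\infty$) the only available information is $\norm{\jb{\nabla}^2\V}_{L_t^\I L_x^{p+2}}\le A$. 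So the interpolation you propose is simply not available in the time region where you need it. (In the paper, the $X^{-\ep}$ norm is used exactly where it lives: in Lemma \ref{lem:local-Jw}, a local Strichartz estimate for $Jw$ on $[0,\de]$ in the \emph{original} variables, exploiting $Jw(0)=0$; this is what produces $\E(T_0)\lsm A^2$.) Moreover, your decision to integrate by parts and reintroduce $\nabla\W$ creates a second problem: estimating the term $\int_0^{T_0}\!\int |\U|^p|\nabla\W||\nabla\V|$ via $\norm{\nabla\W}_{L^2}\lsm t^{\frac{dp}{4}-1}\E^{1/2}$ and $\norm{\U}_{L^{p+2}}^p\lsm\E^{p/(p+2)}$ gives an increment of size $C(A,T_0)\,\sup\E^{\frac12+\frac{p}{p+2}}$, and for $d=1$, $p\ge 2$ this exponent is $\ge 1$, so the bootstrap you set up does not close without smallness (which you do not have). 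Incidentally, the singular factor $t^{\frac{dp}{4}-1}$ appearing to the first power is integrable on $(0,T_0]$ for every $p>0$, so the obstacle you identify is not where the real difficulty lies.

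The paper's actual argument (Proposition \ref{prop:energy-pseudo}) sidesteps all of this, precisely because — as you yourself observe — two derivatives of $\V$ in $L_t^\I L_x^{p+2}$ are available: there is no integration by parts at all. One keeps $\pd_t\E \goe \re\int |\U|^p\U\,\overline{\V_t}\,\dd x$, substitutes $\V_t=\tfrac i2\De\V$, and applies H\"older to get $\pd_t\E \goe -C\norm{\De\V}_{L_x^{p+2}}\E^{\frac{p+1}{p+2}}$, hence $\pd_t\brk{\E^{\frac{1}{p+2}}}\goe -C\norm{\De\V(t)}_{L_x^{p+2}}$; integrating from $t$ to $T_0$ and using $\norm{\De\V}_{L_t^\I L_x^{p+2}}\lsm A$ yields $\sup_{0<t\le T_0}\E(t)\le C(T_0,A)$ with no time singularity, no bootstrap, and no use of the $X^{-\ep}$ norm in this region, uniformly in $\frac2d<p<\frac4d$ and in all dimensions. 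To repair your write-up you should (i) replace the integration-by-parts step by this direct estimate (or at least reorganize your H\"older scheme into a differential Gr\"onwall form so that the $p\ge2$, $d=1$ case closes), and (ii) move the use of the $X^{-\ep}$ bound to the near-infinity step, carried out in the original variables as in Lemma \ref{lem:local-Jw}.
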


Applying this proposition, we give the proof of Theorem \ref{thm:main}.
\begin{proof}[Proof of Theorem \ref{thm:main}]
Let 
\EQ{
	u(t)=S(t)f^\om + w(t),
}
with $u_0=f^\om$. Then, $w$ satisfies the equation \eqref{eq:nls-w}, and 
\EQ{
v(0)=f^\om\text{, }w(0)=0\text{, and }v=S(t)f^\om.
}

By Proposition \ref{prop:bound-v-Y}, we have for almost every $\om\in\Om$,
\EQ{
	\norm{v}_{Y}<\I.
}
Moreover, by Lemma \ref{lem:large-deviation}, for any $\rho\goe 2$,
\EQ{
\norm{f^\om}_{L_\om^\rho L_x^2} \lsm \sqrt{\rho}\norm{\psi_jf}_{L_j^2 L_x^2} \lsm\sqrt{\rho} \norm{f}_{L_x^2}.
}
Therefore, we also have for almost every $\om\in\Om$
\EQ{
\norm{f^\om}_{L_x^2(\R^d)} <+\I.
}
Since $w_0=0$, we can apply Proposition \ref{prop:deterministic} to obtain the global well-posedness and scattering of $w\in C(\R; L_x^2(\R^d))$ for almost every $\om\in\Om$.
\end{proof}

\subsection{Energy estimates near the infinity time}\label{sec:Finialdata}
Recall the modified energy in \eqref{defn:pseudo-conformal-energy}:
\EQ{
	\E(t) = \frac{1}{4}t^{2-\frac{dp}{2}}\int_{\R^d}|\nabla\W(t,x)|^2 \dx + \frac{1}{p+2} \int_{\R^d}|\U(t,x)|^{p+2} \dx.
}
Note that $\norm{\V}_{L_t^\I L_x^{p+2}} \lsm \norm{v}_Y\lsm A$, then by Proposition \ref{prop:scattering-criterion}, it suffices to prove
\EQ{
\sup_{0<t\le T_0} \E(t)\lsm A^2,
}
for some $T_0>0$. This will be finished in Proposition \ref{prop:finialdata} and \ref{prop:energy-pseudo}. We first have that 
\begin{prop}\label{prop:finialdata}
If the hypothesis \eqref{Hyth-determined} holds, then there exists $T_0=T_0(\norm{u_0}_{L_x^2})>0$ such that 
\begin{align}
\E(T_0)\lesssim A^2.
\end{align}
\end{prop}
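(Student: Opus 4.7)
The plan is to transfer the final-time bound on $\E(T_0)$ to a small-time bound via the pseudo-conformal dictionary and close the argument using short-time $L^2$-theory for the nonlinear remainder $w$, combined with the $Y$-norm control on $v$. By Lemma \ref{lem:J-T} and Remark \ref{rem:spacetime-exponent-transform}, setting $\tau_0:=1/T_0$ gives
\EQ{
T_0^{2-\frac{dp}{2}}\|\nabla\W(T_0)\|_{L^2_x}^2 = \tau_0^{\frac{dp}{2}-2}\|Jw(\tau_0)\|_{L^2_x}^2, \quad \|\U(T_0)\|_{L^{p+2}_x}^{p+2} = \tau_0^{\frac{dp}{2}}\|u(\tau_0)\|_{L^{p+2}_x}^{p+2},
}
so it suffices to produce $\tau_0=\tau_0(A)>0$ for which $\|Jw(\tau_0)\|_{L^2_x}\lsm \tau_0^{1-\frac{dp}{4}}$ and $\|u(\tau_0)\|_{L^{p+2}_x}\lsm \tau_0^{-\frac{dp}{2(p+2)}}$, with implicit constants depending only on $A$.

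Since $\|u_0\|_{L^2_x}\le A$, Lemma \ref{lem:local-l2} furnishes $\tau_*=\tau_*(A)>0$ with $\|u\|_{S^0([0,\tau_*])}\lsm A$; fix $\tau_0\le\tau_*\wedge 1$. Applying $J$ to the equation of $w$ and using that $J$ commutes with $i\pd_t+\frac12\Delta$ together with $Jw(0)=0$,
\EQ{
Jw(t)=-i\int_0^t S(t-s)J(|u|^pu)(s)\,ds, \quad |J(|u|^pu)|\lsm |u|^p\bigl(|Jv|+|Jw|\bigr).
}
I would bound $\|Jw\|_{S^0([0,\tau_0])}$ by a dual Strichartz/continuity argument. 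The self-interaction $|u|^p|Jw|$ is absorbed into the left via H\"older in time, using that $p<\frac{4}{d}$ is mass-subcritical so that the pairing $\|u\|_{S^0}^p\cdot\|Jw\|_{S^0}$ acquires a positive power $\tau_0^\beta$ from H\"older; this makes it $o(1)\cdot\|Jw\|_{S^0}$ once $\tau_0$ is chosen small in terms of $A$. The forcing $|u|^p|Jv|$ is estimated by pairing $\|Jv\|_{X^{-\ep}([0,\tau_0])}\le A$ with a mixed norm $\|u\|_{L_t^a L_x^b}^p$, the pair $(a,b)$ being selected so that the $\dot H^{-\ep}$ scaling defect of $X^{-\ep}$ is absorbed by H\"older in time; this produces an $O(\tau_0^{\alpha}A^{p+1})$ bound with some $\alpha=\alpha(d,p,\ep)>0$.

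Once $\|Jw\|_{S^0([0,\tau_0])}\lsm \tau_0^{\alpha}\,\mathrm{poly}(A)$ is closed, $\|Jw(\tau_0)\|_{L^2_x}\lsm \tau_0^{1-\frac{dp}{4}}$ follows for $\tau_0$ small in $A$, controlling the first summand of $\E(T_0)$. For the second summand I return to the conformal side and split $\|\U(T_0)\|_{L^{p+2}_x}\le\|\V(T_0)\|_{L^{p+2}_x}+\|\W(T_0)\|_{L^{p+2}_x}$; the $\V$-term is $\le A$ immediately from $\|\jb\nabla^2\V\|_{L_t^\infty L_x^{p+2}}\le A$, while Lemma \ref{lem:GN} yields
\EQ{
\|\W(T_0)\|_{L^{p+2}_x}^{p+2}\lsm \|\W(T_0)\|_{L^2_x}^{p+2-\frac{dp}{2}}\|\nabla\W(T_0)\|_{L^2_x}^{\frac{dp}{2}} = \|w(\tau_0)\|_{L^2_x}^{p+2-\frac{dp}{2}}\|Jw(\tau_0)\|_{L^2_x}^{\frac{dp}{2}},
}
which is bounded by a power of $A$ via the mass bound $\|w(\tau_0)\|_{L^2_x}\le\|u(\tau_0)\|_{L^2_x}+\|v(\tau_0)\|_{L^2_x}\lsm A$ together with the just-derived $J$-estimate. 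Plugging back, $\E(T_0)\lsm_A 1$ as required.

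The main obstacle I anticipate is the forcing estimate for $|u|^p|Jv|$: $X^{-\ep}$ sits at the $\dot H^{-\ep}$ level, i.e., is supercritical with respect to the natural $L^2$-Strichartz scaling, so the dual Strichartz pair, the H\"older exponents, and the norms of $u$ actually controlled (via $\|u\|_{S^0}\lsm A$ and, if needed, interpolation with $\|u\|_{L_t^\infty L_x^2}\lsm A$) must be coordinated so that both the spatial balance and the temporal integrability close while leaving a strictly positive power of $\tau_0$. This is precisely where the randomization gain $\ep>0$ from Proposition \ref{prop:bound-v-Y} enters; without that slack the pairing would be scaling-critical in time and no $\tau_0$-smallness would be available.
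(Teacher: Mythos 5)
Your proposal is correct and follows essentially the same route as the paper's own argument (the pseudo-conformal identity $\|\nabla\W(T_0)\|_{L^2_x}=\|Jw(1/T_0)\|_{L^2_x}$, a short-time Strichartz bootstrap for $Jw$ as in Lemma \ref{lem:local-Jw} in which the self-interaction $|u|^p|Jw|$ is absorbed by H\"older-in-time smallness coming from $p<\tfrac4d$ and the forcing $|u|^p|Jv|$ is paired against the $X^{-\ep}$ component of the $Y$-norm, followed by mass conservation, the bound $\|\V\|_{L_t^\I L_x^{p+2}}\le \norm{v}_Y\le A$, and Gagliardo--Nirenberg from Lemma \ref{lem:GN}). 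One clarification on your bookkeeping: you neither need $\|Jw(\tau_0)\|_{L^2_x}\lsm \tau_0^{1-\frac{dp}{4}}$ nor any positive power of $\tau_0$ on the forcing term --- a bound $\|Jw(\tau_0)\|_{L^2_x}\le C(A)$ already suffices because $T_0=T_0(\norm{u_0}_{L_x^2})$ is fixed, and at the choice $\ep=\tfrac{4-dp}{2}$ used in the paper the time exponents in the pairing of $\|u\|_{S^0}^p$ with $\|Jv\|_{X^{-\ep}}$ balance exactly (no $\tau_0$-slack there); the only place smallness is needed, and where it is available, is the absorption of the $|u|^p|Jw|$ term.
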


Using \eqref{H1-T}, it suffices to show the analogous estimate on $Jw(\frac{1}{T_0})$, which is given by the following lemma. 
\begin{lem}\label{lem:local-Jw}
If the hypothesis \eqref{Hyth-determined} holds, then there exists $\delta=\de(\norm{u_0}_{L_x^2})>0$ such that 
\EQ{
\sup_{t\in[0,\de]}\norm{Jw(t)}_{L_x^2} \loe A.
}
\end{lem}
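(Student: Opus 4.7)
The natural approach is to apply the vector field $J=x+it\nabla$ to the equation for $w$ and run a Strichartz fixed-point argument. Since $J$ commutes with $i\partial_t+\tfrac12\Delta$ by Lemma \ref{lem:J-T}(2) and $Jw(0)=xw(0)=0$, Duhamel's formula gives $Jw(t)=-i\int_0^t S(t-\ta)J(|u|^pu)(\ta)\,d\ta$, so Lemma \ref{lem:strichartz} with the dual $L^2$-admissible pair $(q_0,r_0)$ having $\tfrac{1}{r_0}=\tfrac{d+4-dp}{2(d+2)}$ reduces the problem to bounding $\|J(|u|^pu)\|_{L_t^{q_0'}L_x^{r_0'}([0,\de]\times\R^d)}$. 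Using $|J(|u|^pu)|\lsm |u|^p|Ju|$ from Lemma \ref{lem:J-T}(3) together with $Ju=Jv+Jw$, the task splits into a ``linear'' piece $|u|^p|Jv|$ and a ``nonlinear'' piece $|u|^p|Jw|$.

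For the nonlinear piece I would follow the template in the proof of Lemma \ref{lem:Ju-local}: placing both $u$ and $Jw$ in the $L^2$-admissible pair $(\tfrac{2(d+2)}{d},\tfrac{2(d+2)}{d})$ matches spatial H\"older exactly but leaves genuine temporal slack because $dp<4$, yielding a $\de^\th$-factor which absorbs $\|Jw\|_{S^0([0,\de])}$ into the left-hand side once $\de$ is small relative to the $L^2$-subcritical local bound $\|u\|_{S^0([0,\de])}\lsm\|u_0\|_{L_x^2}\le A$ from Lemma \ref{lem:local-l2}.

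The delicate step is the linear piece, where $Jv$ is only controlled in the $L^2$-supercritical norm $X^{-\ep}([0,\de])$ supplied by the hypothesis $\|v\|_Y\le A$. The key algebraic observation is that $\ep=(4-dp)/2$ from Definition \ref{defn:randomization} is tailored to close the scaling: with $u^p$ in $L_{t,x}^{\frac{2(d+2)}{d}}$ and $Jv$ in $L^{q_1}_tL^{r_1}_x$, the H\"older-summed scaling exponent becomes
\EQ{
\tfrac{2}{q_0'}+\tfrac{d}{r_0'}=p\left(\tfrac{2}{q}+\tfrac{d}{r}\right)+\tfrac{2}{q_1}+\tfrac{d}{r_1}=\tfrac{dp}{2}+\tfrac{d}{2}+\ep=2+\tfrac{d}{2},
}
which is precisely the dual $L^2$-admissible condition. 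Thus
\EQ{
\||u|^p|Jv|\|_{L_t^{q_0'}L_x^{r_0'}}\lsm \|u\|_{S^0([0,\de])}^p\,\|Jv\|_{X^{-\ep}([0,\de])}\lsm A^{p+1},
}
with no $\de$-gain; none is needed since we only seek a fixed bound. Combining both pieces and choosing $\de=\de(\|u_0\|_{L_x^2})$ small enough to absorb the nonlinear contribution yields $\|Jw\|_{S^0([0,\de])}\lsm A^{p+1}$, whence the lemma via $S^0\hookrightarrow L_t^\infty L_x^2$. The main obstacle is verifying that this scaling arithmetic for the $Jv$-term closes exactly with $\ep=(4-dp)/2$, which is precisely why the randomization in Definition \ref{defn:randomization} was set up with this specific value of $\ep$.
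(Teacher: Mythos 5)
Your argument is essentially the paper's own proof: Duhamel for $Jw$ with $Jw(0)=0$, dual Strichartz with the pair $(q_0,r_0)$, the split $Ju=Jv+Jw$, absorption of the $Jw$-piece via the temporal slack $dp<4$ and the local $L^2$ bound on $\norm{u}_{S^0}$, and the observation that $\ep=\tfrac{4-dp}{2}$ makes $X^{-\ep}$ scale exactly dual-admissibly for the $Jv$-piece. The only cosmetic difference is that you end with $\norm{Jw}_{S^0}\lsm A^{p+1}$ rather than the stated bound $\loe A$ (the paper's own absorption has the same constant looseness), which is harmless for the subsequent use in Proposition \ref{prop:finialdata}.
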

\begin{proof}
Here we adopt the argument  and the notations used in the proof of Lemma \ref{lem:Ju-local}. We restrict all the spacetime norms on $[0,\de]\times \R^d$. Then by Lemma \ref{lem:J-T}, \eqref{def:v-w}, and 	\eqref{eq:nls-w}, we have that 
\EQ{
Jw(t)=-i\int_{0}^t S(t-s)J(|u(s)|^pu(s))\ds.
}
By Lemma \ref{lem:strichartz} and H\"older's inequality, there exists an absolute constant $C_0>0$ such that
\EQn{\label{421}
\norm{Jw}_{L^\infty_tL_x^2}+\norm{Jw}_{S^0} \loe C_0 \norm{u^pJu}_{L_t^{q_0'} L_x^{r_0'}} \loe C_0 \norm{u}_{S^0}^p \norm{Ju}_{L_t^{q_1} L_x^{r}},
}
where the parameters $r_0,q_0,q_1,r$ are defined in the proof of Lemma \ref{lem:Ju-local}:
\begin{align*}
	\frac1{r_0}=\frac{d+4-dp}{2(d+2)};\quad 
	\frac1{q_0}=\frac{d(dp-2)}{4(d+2)};\quad 
	\frac1{q_1}=\frac{2d+8-(d+2)dp}{4(d+2)};\quad 
	\frac1q=\frac1r=\frac{d}{2(d+2)}.
\end{align*}
Note that 
\EQ{
\norm{Ju}_{L_t^{q_1} L_x^{r}} \loe \norm{Jw}_{L_t^{q_1} L_x^{r}} + \norm{Jv}_{L_t^{q_1} L_x^{r}}.
}
Moreover, when $\frac2d<p<\frac4d$, we have that $q_1<\frac{2(d+2)}{d}$. 
Using H\"older's inequality in $t$,
it gives
\EQ{
 \norm{Jw}_{L_t^{q_1} L_x^{r}}\loe \de^{\frac{1}{q_1}-\frac{d}{2(d+2)}} \norm{Jw}_{S^0}.
}
Furthermore, choosing $\ep=\frac{4-dp}{2}$, it holds that 
$$
\norm{Jv}_{L_t^{q_1} L_x^{r}}\loe \norm{Jv}_{X^{-\ep}}\loe A.
$$
Then, combining with \eqref{421}, these give that 
\EQ{
\norm{Jw}_{L^\infty_tL_x^2}+\norm{Jw}_{X^0} \loe C_0 \norm{u}_{X^0}^p\big( \de^{\frac{1}{q_1}-\frac{d}{2(d+2)}} \norm{Jw}_{X^0} + A\big).
}
By Lemma \ref{lem:local-l2}, we can set $\delta=\de(\norm{u_0}_{L_x^2})>0$ small enough such that 
$$
C_0\de^{\frac{1}{q_1}-\frac{d}{2(d+2)}}\norm{u}_{X^0}^p\le \frac12,
$$ 
which gives that 
\EQ{
\norm{Jw}_{L^\infty_tL_x^2} \le A.
}
This finishes the proof.
\end{proof}

\begin{proof}[Proof of Proposition \ref{prop:finialdata}]
Set $T_0=\frac1\delta$, where $\delta$ is determined in Lemma \ref{lem:local-Jw}. 
By \eqref{H1-T}, mass conservation law and Lemma  \ref{lem:local-Jw}, we have that 
\begin{align*} 
\big\|\W(t)\big\|_{L^2_x}&=\big\|w(\frac1t)\big\|_{L^2_x}
\le \big\|u(\frac1t)\big\|_{L^2_x}+\big\|v(\frac1t)\big\|_{L^2_x}
 =2\|u_0\|_{L^2}\le 2A;
 \end{align*}
 and 
 \begin{align*}
\|\nabla \W(T_0)\|_{L^2_x}&=\big\|Jw(\de)\big\|_{L^2_x}\le 2A.
\end{align*}
This proposition then follows by Gagliardo-Nirenberg's inequality in Lemma \ref{lem:GN}. 
\end{proof}

\subsection{Energy estimate towards the origin} \label{sec:energy}
Now, to finish the proof of Theorem \ref{thm:main}, it suffices to prove that:
\begin{prop}\label{prop:energy-pseudo}
Let $T_0$ be determined in Proposition  \ref{prop:finialdata}. Under the hypothesis \eqref{Hyth-determined}, 
\EQ{
\sup_{0<t\le T_0} \E(t) \loe C(T_0,A).
}
\end{prop}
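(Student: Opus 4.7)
\medskip
\noindent\textbf{Proof proposal for Proposition \ref{prop:energy-pseudo}.}

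The plan is to adapt the Tsutsumi--Yajima energy argument from Sections \ref{sec:fractional} and \ref{sec:fractional-dd} to the probabilistic setting, replacing the role of the fractional weighted bound by the stronger linear control provided by the $Y$-norm. Concretely, I would set up a bootstrap on the interval $I=(0,T_0]$: assume the hypothesis
\[
\sup_{t\in I}\E(t)\le 2M, \qquad M:=C_0(T_0,A),
\]
where $C_0(T_0,A)$ will be chosen at the end, and aim to recover the improved bound $\sup_{t\in I}\E(t)\le \frac32 M$. Proposition \ref{prop:finialdata} already supplies the terminal estimate $\E(T_0)\lesssim A^2$, so it remains to control the energy increment going backward in time. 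Differentiating $\E$ as in the proof of Proposition \ref{prop:fra-energy-3d}, integrating from $t$ to $T_0$, and using $i\V_t=-\tfrac12\De\V$ together with integration by parts yields
\[
\sup_{t\in(0,T_0]}\E(t)\;\le\;\E(T_0)\;+\;\Big|\int_0^{T_0}\!\!\int_{\R^d}|\U|^p\U\,\overline{\V_t}\,\dd x\,\dd t\Big|
\;\lesssim\; A^2+\int_0^{T_0}\!\!\int_{\R^d}|\U|^p|\nabla\U||\nabla\V|\,\dd x\,\dd t.
\]

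From the bootstrap hypothesis and Lemma \ref{lem:fra-initial-data}-type bounds (replacing $N_0^{1-s}$ by $\sqrt{M}$), one extracts the working estimates
\[
\|\nabla\W(t)\|_{L^2_x}\lesssim t^{\frac{dp}{4}-1}\sqrt{M},\qquad
\|\U(t)\|_{L^{p+2}_x}\lesssim M^{\frac{1}{p+2}},\qquad
\|\W(t)\|_{L^2_x}\lesssim A,
\]
while the randomized final data provides the crucial linear bound $\|\jb{\nabla}^2\V\|_{L^\infty_t L^{p+2}_x}\le \|v\|_Y\le A$. After writing $\U=\V+\W$ and using \eqref{eq:bound-for-up} to distribute a Littlewood--Paley decomposition over $|\U|^p$, the space-time integral splits into terms of the schematic form
\[
\mathrm{I}=\int_0^{T_0}\!\!\int |\nabla\W||\nabla\V||\W|^p+\int_0^{T_0}\!\!\int|\nabla\W||\nabla\V||\V|^p+\int_0^{T_0}\!\!\int|\nabla\V|^2|\U|^p,
\]
plus the analogous high-low and low-high frequency interactions. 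For each, I would apply H\"older's inequality in $x$, choose a Sobolev/Gagliardo--Nirenberg embedding to place $\W$ in $L^{dp}_x$ (interpolated between the $L^2$-mass bound and the energy bound on $\|\nabla\W\|_{L^2_x}$), and place $\V$ via the $Y$-norm control, leaving only the time singularity $t^{\alpha(dp/4-1)}$ to be integrated over $(0,T_0]$. The condition $dp>2$ ensures that the exponent $\alpha(dp/4-1)+\beta$ appearing after gathering all $t$-weights remains $>-1$ on the admissible range $\frac2d<p<\frac4d$, so the time integral is finite.

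The net bound one obtains is of the form $\mathrm{I}\le C(T_0,A)(1+M^{\theta})$ with $\theta<1$, so that Young's inequality absorbs $M^\theta$ into $\tfrac14 M$ and the bootstrap closes upon choosing $C_0(T_0,A)$ sufficiently large. The main obstacle --- and the reason for separating the $L^2_x$-mass bound from the energy bound on $\W$ in the modified energy \eqref{defn:pseudo-conformal-energy}, cf.\ Remark \ref{rem:reason-for-modified-energy} --- is to keep the power of the singular weight $t^{dp/4-1}$ strictly less than the critical $t^{-1}$ after summing the contributions coming from both the $\W^p$ and $\V^p$ factors; when $p$ is close to $2/d$ this is delicate and may require, as in Section \ref{sec:fractional}, a bilinear Strichartz reduction between a low-frequency piece of $\W$ and a high-frequency piece of $\V$ to gain a fractional derivative and avoid losing a full power of $t$. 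Crucially, since the $Y$-norm gives a full two derivatives of $\V$ in $L^\infty_t L^{p+2}_x$ (rather than only $s<1$ derivatives as in Theorems \ref{thm:frac-weighted-3d}--\ref{thm:frac-weighted-dd}), no weighted smallness parameter analogous to $\delta_0 N_0^{1-s}$ is needed to close the bootstrap here; the smallness comes instead from the fractional power $M^\theta$ with $\theta<1$ against $M$.
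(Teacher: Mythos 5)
Your proposal re-imports the deterministic machinery of Sections \ref{sec:fractional}--\ref{sec:fractional-dd} (bootstrap, integration by parts to produce $\int|\U|^p|\nabla\U||\nabla\V|$, Littlewood--Paley splitting, bilinear Strichartz for the high-low pieces), but this is not how the paper argues, and in this setting the route has genuine gaps. The paper's proof is far simpler and hinges precisely on the extra structure of the $Y$-norm: since $i\V_t=-\tfrac12\De\V$, one does \emph{not} integrate by parts at all, but bounds the increment directly,
\EQ{
\frac{\dd}{\dd t}\E(t)\;\goe\;\re\int_{\R^d}|\U|^p\U\,\wb{\De\V}\dx\;\goe\;-C\,\norm{\De\V(t)}_{L_x^{p+2}}\,\norm{\U(t)}_{L_x^{p+2}}^{p+1}\;\goe\;-C\,\norm{\De\V(t)}_{L_x^{p+2}}\,\E(t)^{\frac{p+1}{p+2}},
}
which rewrites as $0\loe \frac{\dd}{\dd t}\brk{\E^{\frac1{p+2}}}+C\norm{\De\V}_{L_x^{p+2}}$ and integrates over $(0,T_0]$ using $\norm{\jb{\nabla}^2\V}_{L_t^\I L_x^{p+2}}\loe A$, giving $\sup_{0<t\loe T_0}\E(t)\loe CA^2+C(AT_0)^{p+2}$. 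No bootstrap, no smallness, and no time singularity appears, because the sublinear power $\frac{p+1}{p+2}<1$ makes the differential inequality self-closing. This is exactly why two derivatives of $\V$ in $L_t^\I L_x^{p+2}$ were put into $Y$; your own observation that the $Y$-norm supplies $\jb{\nabla}^2\V$ should have suggested estimating $\V_t$ directly rather than integrating by parts and re-creating the difficulty.

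Concretely, the unresolved steps in your route are: (a) the bilinear Strichartz reductions you invoke for $p$ near $\frac2d$ rely in Sections \ref{sec:fractional}--\ref{sec:fractional-dd} on $\wh\V$ being supported at frequencies $\gsm N_0$, on $\V_+\in H^s$ with the smallness $\de_0$, and on summability of the dyadic factors; here $\V_+=\F^{-1}\wb{f^\om}$ is merely $L^2$, so $\norm{\nabla P_N\V_+}_{L_x^2}\sim N\norm{P_N\V_+}_{L_x^2}$ carries no decay in $N$ and the sums do not close, and the $L_t^\I L_x^{p+2}$ control of $\jb{\nabla}^2\V$ does not feed into that $L^2$-based machinery. (b) The $X^{-\ep}$ component of $Y$ controls $Jv$ only on original time $[0,1]$, i.e.\ $\nabla\V$ for pseudo-conformal times in $[1,\I)$; it gives no spacetime integrability of $\nabla\V$ as $t\to0^+$ beyond the uniform $L_x^{p+2}$ bound, so ``placing $\V$ via the $Y$-norm'' in your Hölder estimates is not available in the form you need near the time origin. (c) Your claimed absorption exponent $\th<1$ fails at least when $d=1$, where $p\in(2,4)$: the natural splitting $\norm{\nabla\W}_{L_x^2}\norm{\nabla\V}_{L_x^r}\norm{\W}_{L_x^{p+2}}^p$ produces the power $\E^{\frac12+\frac p{p+2}}$ with $\frac12+\frac p{p+2}>1$, so the bootstrap would not close without further interpolation that you do not supply.
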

\begin{proof}
By Proposition \ref{prop:finialdata} and Gagliardo-Nirenberg's inequality,
\EQn{\label{eq:bound-W-energy-de}
	\normb{\nabla \W(T_0,\cdot)}_{L_x^2(\R^d)}^2 + \normb{ \W(T_0,\cdot)}_{L_x^{p+2}(\R^d)}^{p+2} \lsm A^2.
}
This combining with the hypothesis \eqref{Hyth-determined} gives that 
\EQ{
\E(T_0) \lsm A^2.
}
By \eqref{eq:nls-W} and integration by parts,
\EQ{
\frac{\dd}{\dd t}\E(t) = & \re\int_{\R^d}|\U|^p\U\wb{\V_t}\dx + (2-\frac {dp}2)t^{1-\frac{dp}2} \int_{\R^d} |\nabla \W|^2 \dd x \\ 
\goe & \re\int_{\R^d}|\U|^p\U\wb{\Delta \V}\dx.
}
Therefore, using H\"older's inequality,
\EQ{
0 \loe \frac{\dd}{\dd t}\E(t) + \big\|\U\big\|_{L^{p+2}_x}^{p+1}\big\|\Delta\V\big\|_{L^{p+2}_x}
\loe \frac{\dd}{\dd t}\E(t) + C \big\|\Delta\V\big\|_{L^{p+2}_x}\E(t)^{\frac{p+1}{p+2}}.
}
Then,
\EQ{
0\loe \frac{\dd}{\dd t}\brk{\E(t)^{\frac{1}{p+2}}}+ C \normb{\De\V(t)}_{L_x^{p+2}}.
}
Then, integrating in $t$ and by the hypothesis \eqref{Hyth-determined}, it infers that 
\EQ{
\sup_{0<t\loe T_0}\E(t)\loe & \E(T_0) + C\brkb{\int_0^{T_0} \normb{\De\V(t)}_{L_x^{p+2}}\dd t}^{p+2}  \\
\loe & CA^2 + CAT_0^{p+2} \\
\loe & C(T_0,A).
} 
This finishes the proof.
\end{proof}

\section*{Acknowledgement}
The authors are very grateful to Professor Kenji Nakanishi for introducing them to the scattering problem with fractional weighted condition and for providing several insightful discussions that have improved this paper.

%\appendix
%
%\section{Fractional vector field}
%In this section, we give some facts about the fractional vector field for reader's convenience. The discussion is classical, and we refer the reader to Cazenave's textbook \cite{Caz03book} for more complete theory. We denote the vector field:
%\EQ{
%	J=x+it\nabla.
%}
%By direct computation, 
%\EQ{
%	J(i\partial_t+\frac12\Delta)=(i\partial_t+\frac12\Delta)J\text{, and }JS(t) = S(t)x.
%}
%Therefore, we obtain another presentation of $J=S(t)xS(-t)$. Inspired by this, we can define the fractional vector field through the linear Schr\"odinger operator: for $s\in\R$,
%\EQ{
%	|J|^s := S(t)|x|^sS(-t).
%}
%It is also easy to check that $|J|^s$ commutes with the linear operator $(i\partial_t+\frac12\Delta)$: in fact, note that
%\EQ{
%(i\partial_t+\frac12\Delta) = S(t)(i\pd_t)S(-t).
%}
%Then,
%\EQ{
%|J|^s(i\partial_t+\frac12\Delta) = &  S(t)|x|^sS(-t)S(t)(i\pd_t)S(-t) \\
%= & S(t)(i\pd_t)|x|^sS(-t) \\
%= & S(t)(i\pd_t)S(-t)S(t)|x|^sS(-t) \\
%= & (i\partial_t+\frac12\Delta) |J|^s.
%}
%
%Now, we define
%\EQ{
%	M(t) = e^{\frac{i|x|^2}{2t}}
%}
%and the dilation operator $D(t)$ by
%\EQ{
%	D(t)f(x)=\frac{1}{(it)^{\frac d2}} f(\frac{x}{t}).
%}
%Then, by \eqref{linear-explicit},
%\EQ{
%	S(t)f= M(t)D(t)\F(M(t)f)
%}
%Furthermore, 
%\EQ{
%D(t)^{-1} = i^dD(\frac1t).
%}
%Then,
%\EQ{
%S(-t) = M(-t)\F^{-1}D(t)^{-1}M(-t)f.
%}
%Therefore, we have another presentation
%\EQ{
%	|J|^s = &  S(t)|x|^sS(-t) \\
%	= & M(t)D(t)\F M(t) |x|^s M(-t)\F^{-1}D(t)^{-1}M(-t) \\
%	= & M(t)|t\nabla|^sM(-t).
%}

\end{document}